\let\pol\l
\renewcommand{\a}{\alpha}
\renewcommand{\b}{\beta}
\renewcommand{\d}{\delta}
\newcommand{\ve}{\varepsilon}
\newcommand{\f}{\varphi}
\newcommand{\g}{\gamma}
\newcommand{\Ga}{\Gamma}
\renewcommand{\l}{\lambda}
\renewcommand{\o}{\omega}
\newcommand{\s}{\sigma}
\newcommand{\N}{{\mathbb N}}
\newcommand{\Z}{{\mathbb Z}}
\newcommand{\C}{{\bf C}}
\newcommand{\wt}{\widetilde}
\newcommand{\fg}{\mathfrak{g}}
\newcommand\sm{\backslash}
\newcommand{\diam}{{\rm diam}}
\newcommand{\dist}{{\rm dist}}
\newcommand{\Id}{{\rm Id}}
\newcommand{\Stab}{{\rm Stab}}
\DeclareMathOperator{\Pop}{P}
\DeclareMathOperator{\RP}{RP}
\DeclareMathOperator{\HK}{HK}
\DeclareMathOperator{\Aut}{Aut}
\newcommand{\invlim}{\underset{\longleftarrow}{\lim}}
\def\be{\begin{equation}}
\def\ee{\end{equation}}
\theoremstyle{plain}
\newtheorem{lem}{Lemma}[section]
\newtheorem{thm}[lem]{Theorem}
\newtheorem{prp}[lem]{Proposition}
\newtheorem*{fact}{Fact}
\theoremstyle{definition}
\newtheorem{dfn}[lem]{Definition}
\newtheorem{rmk}[lem]{Remark}
\date{\today}
\begin{document}
\title[Inverse limit representations]{The structure theory of Nilspaces III: Inverse limit representations
and topological dynamics}
\author{Yonatan Gutman, Freddie Manners and P\'{e}ter P. Varj\'{u}}

\begin{abstract}
This paper forms the third part of a series by the authors \cites{GMV1,GMV2} concerning the structure theory of  \emph{nilspaces}. A nilspace is a compact space $X$ together with closed collections of \emph{cubes} $C^n(X)\subseteq X^{2^n}$, $n=1,2,\ldots$, satisfying some natural axioms. Our goal is to extend the structure theory of nilspaces obtained by Antol\'\i n Camarena and Szegedy, and to provide new proofs.

Our main result is that, under the technical assumption that $C^n(X)$ is a connected space for all $n$, then $X$ is isomorphic (in a strong sense) to an inverse limit of nilmanifolds.  This is a direct and slight generalization of the main result of Antol\'\i n Camarena and Szegedy.

We also apply our methods to obtain structure theorems in the setting of topological dynamics.  Specifically, if $H$ is a group (subject to very mild topological assumptions) and $(H,X)$ is a minimal dynamical system, then we give a simple characterization of the maximal pronilfactor of $X$.  This generalizes the case $H = \mathbb{Z}$, which is a theorem of Host, Kra and Maass, although even in that case we give a significantly different proof.

\end{abstract}

\address{Yonatan Gutman, Institute of Mathematics, Polish Academy of Sciences,
ul. \'{S}niadeckich~8, 00-656 Warszawa, Poland.}
\email{y.gutman@impan.pl}

\address{Freddie Manners, UCSD Department of Mathematics, 9500 Gilman Drive \#0112, La Jolla, CA 92093, USA}
\email{fmanners@ucsd.edu}

\address{P\'{e}ter P. Varj\'{u}, Centre for Mathematical Sciences,
Wilberforce Road, Cambridge CB3 0WA,
UK}
\email{pv270@dpmms.cam.ac.uk}
\keywords{Nilspace, nilmanifold, nilsystem, system of finite order, Lie groups, regionally proximal relation.}
\subjclass[2010]{Primary 37B05; Secondary 11B30, 54H20.}

\thanks{YG was partially supported by the ERC Grant \emph{Approximate Algebraic Structures and Applications} and the NCN (National Science Center, Poland) grants 2016/22/E/ST1/00448 and 2013/08/A/ST1/00275. PPV was supported by the Royal Society.}

\maketitle

\tableofcontents

\section{Introduction}\label{sc:intro}

This paper is the third and last in a series of papers devoted to the structure theory of \emph{cubespaces} and \emph{nilspaces},
the previous two parts of the project being \cite{GMV1} and \cite{GMV2}.

A cubespace is a structure consisting of a compact metric space $X$, together with a closed collection
of ``cubes'' $C^k(X)\subseteq X^{2^k}$ for each integer $k\ge 0$, satisfying certain axioms that we will recall later.
The structure $(X,\{C^k(X)\}_{k\in\N})$ is further called a nilspace 
if it also satisfies certain extra rigidity conditions.

The notion of nilspaces
has its origins in the work of Host and Kra \cite{HK08}, where these objects appeared under the name of  ``parallelepiped structures''.  The study of these objects was furthered by Antol\'\i n Camarena and Szegedy \cite{CS12}, who in the same work formulated a strong structure theorem for nilspaces, subject to certain further hypotheses.

The papers of Candela \cites{Can1,Can2} expand on \cite{CS12}, providing more detailed proofs. He also includes
several additional results implicit in \cite{CS12}, particularly about continuous systems of measures.

The purpose of our project is to provide a new exposition of this theory, and also
to derive new applications to topological dynamics.
Although we rely heavily on the ideas contained in earlier work \cites{HK05,HM07,CS12,HK08,HKM10,GT10}, our proofs differ from the existing literature in many respects
and we also obtain new results.

The study of nilspaces might be motivated in three different ways.  First, it can be a useful tool in the area of higher order Fourier analysis,
and in particular, forms the basis of Szegedy's approach \cite{S12} to proving an
inverse theorem for the Gowers norms (another approach being due to Green, Tao and Ziegler \cite{GTZ12}).

Second, nilspaces can be used in topological dynamics.
For example, we use them in this paper to generalize a result of Host, Kra and Maass
\cite{HKM10} characterizing the largest pronilfactor of a minimal group action.

Third, nilspaces can be used in the context of ergodic theory, in particular to give a new and more general proof of the structure theorem for \emph{characteristic factors}, introduced by Host and Kra \cite{HK05} and by Ziegler \cite{Z07} (using a different
framework).  For details, see \cite{Gut15,GL19}.

Given the close
connection of the subject to the study of \textit{nilsequences} arising in additive number theory and ergodic theory \cites{BHK05,GT12}, we might expect further applications to arise.

In this paper, we explain the notion of nilspaces, and outline our project, from the point of view of topological dynamics.
The reader whose main interest lies in combinatorics or higher order Fourier analysis may wish
to consult the paper \cite{GMV1}, where an outline is given from that perspective.
She may then continue directly to Section \ref{sec:cubeinvlim} of this paper, where the main result of this paper
is presented in a fashion that does not require familiarity with the dynamical material introduced
in Section \ref{sc:intro}.

\subsection{Regional proximality}\label{sc:RP}

Let $(T,X)$ be a {\bf topological dynamical system}, which for our purposes means that
$X$ is a compact  metric space and $T$ is a homeomorphism on $X$.

We begin by recalling some definitions.
We say that a pair of points $(x,y)\in X^2$ is {\bf proximal}, if there is a sequence
of integers $\{n_i\}$ such that $\lim \dist(T^{n_i}x,T^{n_i}y)=0$.
We say that $(x,y)\in X^2$ is {\bf regionally proximal}, if there are sequences of points
$\{x_i\},\{y_i\}\subseteq X$ and a sequence of integers $\{n_i\}$ such that
$\lim x_i=x$, $\lim y_i=y$ and $\lim \dist(T^{n_i}x_i,T^{n_i}y_i)=0$.
We write $(x,y)\in \Pop_T(X)$ if $(x,y)$ is proximal and $(x,y)\in \RP_T(X)$ if $(x,y)$ is
regionally proximal.

We say that the system $(T,X)$ is {\bf distal} if the proximality relation is trivial; that is, if
$(x,y)\in \Pop_T(X)$ if and only if $x=y$.
We say that the system $(T,X)$ is {\bf minimal} if the orbit of each point is dense.

It turns out that $\RP_T(X)$ is an equivalence relation, and the quotient $X/\RP_T(X)$
has the following property.
\begin{thm}[Ellis and Gottschalk \cite{EG60}*{Theorem 2}]
Let $(T,X)$ be a minimal
topological dynamical system.
Then $\RP_T(X)$ is a closed equivalence relation and $(T,X/\RP_T(X))$ is the maximal
equicontinuous factor of $(T,X)$.

In other words, there is a compact abelian group $K$ and an element $t\in K$ that generates
a dense subgroup of $K$, such that the system $(T,X/\RP_T(X))$ is isomorphic to $(t,K)$.
Moreover, $(T,X/\RP_T(X))$ is the maximal factor with this property.
\end{thm}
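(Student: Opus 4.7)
The proof falls into four stages: closedness of $\RP_T(X)$; transitivity, so that it becomes an equivalence relation; equicontinuity of the quotient system; and identification of the quotient with a group rotation. Closedness is immediate: for each $\ve>0$, set
\[
K_\ve := \overline{\{(x,y)\in X^2 : \exists\, n\in\Z,\ \dist(T^n x, T^n y)<\ve\}},
\]
so unwinding the definition gives $\RP_T(X) = \bigcap_{\ve>0} K_\ve$, an intersection of closed sets; reflexivity and symmetry are obvious from the definition.

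The heart of the argument will be to establish transitivity and equicontinuity of the quotient simultaneously, by identifying $\RP_T(X)$ with the \emph{equicontinuous structure relation} $S_\mathrm{eq}(X)$, defined as the intersection of all closed $T$-invariant equivalence relations $R$ on $X$ for which $(T,X/R)$ is equicontinuous. The inclusion $\RP_T(X)\subseteq S_\mathrm{eq}(X)$ is straightforward: if $\pi:X\to Y$ is an equicontinuous factor and $x_i\to x$, $y_i\to y$, $\dist(T^{n_i}x_i,T^{n_i}y_i)\to 0$, then by continuity of $\pi$ and by equicontinuity of $\{T^n:n\in\Z\}$ on $Y$ applied in reverse (via $T^{-n_i}$), one deduces $\dist(\pi(x_i),\pi(y_i))\to 0$, forcing $\pi(x)=\pi(y)$. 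Since $S_\mathrm{eq}(X)$ is by construction a closed equivalence relation, the reverse inclusion would yield transitivity and equicontinuity of the quotient in one stroke.

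The reverse inclusion --- producing, for any $(x,y)\notin\RP_T(X)$, an equicontinuous factor of $(T,X)$ that separates $x$ and $y$ --- is the main obstacle. My approach would go through the enveloping (Ellis) semigroup $E = E(T,X)\subseteq X^X$, the product-topology closure of $\{T^n\}_{n\in\Z}$, which is a compact right-topological semigroup. Standard Ellis theory produces a minimal left ideal $M\subseteq E$ and an idempotent $u\in M$, and one forms the Ellis group $G := uM$. Using minimality of $(T,X)$, one can endow $G$ with a compact Hausdorff group topology (the so-called $\tau$-topology), after which the maximal equicontinuous factor of $(T,X)$ is realised as a quotient of $G$ by a closed subgroup, simultaneously furnishing the quotient structure, equicontinuity, and the group rotation form. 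The delicate step will be exhibiting the right topology on $G$: strong enough to yield equicontinuity, yet weak enough to preserve compactness, and then verifying that the resulting coset space coincides with $X/\RP_T(X)$.

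Finally, for any minimal equicontinuous system $(T,Y)$: by Arzel\`a--Ascoli the closure $K$ of $\{T^n\}$ in $\Homeo(Y)$ under uniform convergence is a compact topological group, abelian since $\Z$ is. Minimality makes the $K$-action on $Y$ transitive; the stabiliser of any $y_0\in Y$ is trivial, since $k\in K$ fixing $y_0$ commutes with $T$ and so fixes the dense orbit $\{T^n y_0\}_{n\in\Z}$, hence all of $Y$. Thus $Y\cong K$ as $K$-spaces via $k\mapsto k\cdot y_0$, $T$ becomes translation by some $t\in K$, and density of the $T$-orbit becomes density of $\{t^n\}_{n\in\Z}$ in $K$.
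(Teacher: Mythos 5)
The theorem you are asked to prove is cited in the paper from Ellis--Gottschalk \cite{EG60} without proof, so there is no ``paper's own proof'' to compare against; I can only assess your argument on its merits.

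Your treatment of closedness (via $\RP_T(X)=\bigcap_{\ve>0}K_\ve$) and of the final step (Arzel\`a--Ascoli realising a minimal equicontinuous system as a rotation on a compact abelian group) are both correct. The easy inclusion $\RP_T(X)\subseteq S_{\mathrm{eq}}(X)$ is also handled properly: project along an equicontinuous factor, apply equicontinuity of $\{T^{-n}\}$ to contract back, and pass to the limit.

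The genuine gap is that the hard direction $S_{\mathrm{eq}}(X)\subseteq\RP_T(X)$ --- equivalently, that points \emph{not} regionally proximal are already separated by some equicontinuous factor --- is not proved; you explicitly defer it (``The delicate step will be exhibiting the right topology on $G$\ldots''). This is precisely where the content of the theorem lives: it is what yields transitivity of $\RP_T(X)$ and equicontinuity of the quotient at the same time, and it is where minimality must enter in an essential way. The Ellis-semigroup route you gesture at can work, but it requires constructing the $\tau$-topology on the Ellis group, showing compactness and Hausdorffness, verifying that the associated coset space is an equicontinuous factor, and then identifying its point-preimage relation with $\RP_T(X)$ --- each of which is a nontrivial lemma. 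There is also a circularity you need to be careful about: before $\RP_T(X)$ is known to be an equivalence relation, the quotient $X/\RP_T(X)$ is not yet a space, so the ``verifying that the resulting coset space coincides with $X/\RP_T(X)$'' step has to be phrased as showing that the kernel relation of the constructed factor map equals $\RP_T(X)$, from which transitivity follows \emph{a posteriori}. As it stands, the proposal is a correct outline with the central argument missing.
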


Motivated by this result in part, Host, Kra and Maass \cite{HKM10} introduced the notion
of the higher order regional proximal relation,
which can be used analogously to identify the maximal \emph{pronilfactor} of a topological dynamical system.

We recall some definitions.
We call a system $(T,X)$ a {\bf nilsystem} of degree $s$ if there is an $s$-step nilpotent
Lie group $G$, a  discrete cocompact subgroup $\Ga \le G$, and an
element $t\in G$, such that $(T,X)$ is isomorphic as a topological dynamical system to $(t,G/\Ga)$.

Here, by a {\bf Lie group} we mean a Hausdorff, second countable
topological group $G$ equipped with a differentiable structure, such that the map
$G^2\to G:(g,h)\mapsto gh^{-1}$
is differentiable.  We do not assume that Lie groups are connected, and so in particular any countable discrete group is Lie.

Some authors require the Lie
group $G$ in the definition of a nilsystem to be connected (which we do not).
We note that a connected nilmanifold $G/\Ga$ can always be realized in such a way that
$G$ is connected:
indeed, if $G^\circ$ denotes the connected component of the identity in $G$ then we may identify $G/\Ga$ with $G^\circ/(\Ga\cap G^\circ)$.
On the other hand, it may be that a connected nilsystem $(t,G/\Ga)$ cannot be
represented as a nilsystem in which $G$ is connected, if $t \notin G^\circ$.

We write $(T,X_\infty)=\invlim (T,X_i)$ if
the system $(T,X_\infty)$ is the {\bf inverse limit} of the systems $\{(T,X_i)\}$;
i.e., if there is a
family of morphisms (in the category of topological dynamical systems, i.e.~continuous $T$-equivariant maps)
\[\{\f_{i,j}:(T,X_j)\to (T,X_i)\}_{i<j\le\infty}\]
such that $\f_{i,j}\circ\f_{j,l}=\f_{i,l}$ for all triplets of indices $i<j<l\le\infty$,
and such that for any two points $x\neq y\in X_\infty$ there is $i<\infty$ such that $\f_{i,\infty}(x)\neq\f_{i,\infty}(y)$.
A collection of maps $\{\f_{i,j}\}$ with this property is called an {\bf inverse system}.

We say that a system $(T,X)$ is {\bf pronil} of degree $s$, if it is the inverse limit of nilsystems
of degree $s$.  We call a factor $X \to Y$ of $X$ a {\bf pronilfactor} of degree $s$ if $Y$ is pronil of degree $s$,
and say $Y$ is the {\bf maximal pronilfactor} if every other pronilfactor of $X$ factors through $Y$.

The {\bf discrete cube of dimension} $d$ is the set $\{0,1\}^d$.
We write $\vec 0=(0,\ldots,0)\in \{0,1\}^d$ and use the notation $\vec 1$ in a similar manner.
For a vertex $\o=(\o_1,\ldots,\o_d)\in\{0,1\}^d$ and a vector $n=(n_1,\ldots, n_d)\in\Z^d$, we write
$\langle\o, n\rangle=\sum_{i=1}^d\o_i n_i$.
If $G$ is a group and $g=(g_1,\ldots,g_d)\in G^d$, then we write $g^\o=g_1^{\o_1}\cdots g_d^{\o_d}$.

Let $(T,X)$ be a system.
Following Host, Kra and Maass, we say that the pair of points $x,y\in X$
is {\bf regionally proximal of order $s$} if
there are sequences of points $\{x_i\},\{y_i\}\subseteq X$ and a sequence of integer vectors
$\{n_i\}\subseteq\Z^{s}$ such that $\lim x_i = x$, $\lim y_i =y$ and
\[
\lim \dist(T^{\langle\o ,n_i\rangle}x_i,T^{\langle\o ,n_i\rangle} y_i)=0
\]
for all $\o\in\{0,1\}^{s}\sm \{\vec 0\}$.
We denote this relation by $\RP^s_T(X)$.
For a nilsystem $(T,X)$ of degree $s$,  $\RP^s_T(X)$ is trivial.
This fact is non-trivial, we return to it in Section \ref{sc:canonical}
after introducing some additional concepts.

The following theorem of Host, Kra and Maass characterizes the maximal pronilfactor of $(T,X)$ using the higher order regional
proximal relation. \footnote{The case $s=2$ had been proven previously in \cite{HM07}*{Theorem 2}.}

\begin{thm}[\cite{HKM10}*{Theorem 1.3}]\label{th:HKM}
Let $(T,X)$ be a minimal distal system.
Then $\RP^s_T(X)$ is a closed equivalence relation and
$(T,X/\RP^s_T(X))$ is the maximal pronilfactor of $(T,X)$ of degree $s$.
\end{thm}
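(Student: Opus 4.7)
The plan is to recast the dynamical hypotheses in nilspace language and then to invoke the inverse-limit machinery that is the focus of this paper. For a minimal distal system $(T,X)$ one defines the \emph{Host--Kra cubes}
\[
C^k(X) := \overline{\{(T^{\langle \omega, n\rangle} x)_{\omega\in\{0,1\}^k} : x \in X,\ n \in \Z^k\}} \subseteq X^{2^k},
\]
and I expect, on the basis of \cite{GMV1,GMV2}, that these endow $X$ with the structure of a nilspace. Distality is the crucial ingredient: it underpins the uniqueness of corner completion and so lifts the cubespace axioms to the full nilspace axioms, while minimality supplies a transitive face action. This reduces the theorem from a dynamical question to a purely nilspace-theoretic one.

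Next I would identify $\RP^s_T(X)$ with an intrinsic nilspace relation. Unpacking the definition, $(x,y)\in \RP^s_T(X)$ if and only if there is a cube $c\in C^{s+1}(X)$ with $c(\vec 0)=x$, $c(e_{s+1})=y$, and $c(\omega)=c(\omega+e_{s+1})$ for every nonzero $\omega\in\{0,1\}^{s+1}$ with $\omega_{s+1}=0$ --- the canonical ``$s$-arrow'' relation on the nilspace. General nilspace theory then delivers two things automatically: $\RP^s_T(X)$ is a closed equivalence relation, and the quotient $Y:=X/\RP^s_T(X)$ inherits a nilspace structure of step $s$ to which $T$ descends.

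At this point I would apply the main inverse-limit theorem of this paper to $Y$. The connected-cubes hypothesis is not automatic in our setting, so some care is required: one would reduce to it by splitting off a totally disconnected profinite factor and treating the connected pieces separately. Granted this, $Y$ is isomorphic as a nilspace to an inverse limit of $s$-step nilmanifolds $Y_i$; since the $1$-cubes transport the $\Z$-action, each $(T,Y_i)$ is a nilsystem of degree $s$, and $(T,Y)=\invlim(T,Y_i)$ is pronil of degree $s$.

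Maximality is comparatively routine. If $\pi:X\to Z$ is any pronilfactor of degree $s$, then $\RP^s_T(Z)$ is trivial --- the fact flagged just before the theorem statement handles a single nilsystem, and it passes to inverse limits by a standard approximation argument. Continuity and $T$-equivariance of $\pi$ force $\pi(\RP^s_T(X))\subseteq \RP^s_T(Z)=\Delta_Z$, so $\pi$ factors through $Y$, establishing maximality. The main technical obstacle throughout is the first step: verifying the nilspace axioms for $C^k(X)$ from distality alone, and subsequently finessing the connectedness hypothesis when appealing to the inverse-limit theorem.
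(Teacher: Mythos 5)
Your high-level strategy matches the paper's: pass from dynamics to nilspace language, quotient by $\RP^s_T(X)$ to obtain an ergodic nilspace of degree $\le s$, invoke the inverse limit theorem to approximate it by Lie-fibered nilspaces, upgrade those to nilmanifolds, and then handle maximality by noting $\RP^s$ is trivial on pronilsystems. That is indeed how the paper proceeds in Section~\ref{sc:dyninverse}, via Theorem~\ref{th:dyn-nilspace} (quoting \cite{GGY}), Theorem~\ref{th:invlim}, Theorem~\ref{th:dyninvlim}, and Theorem~\ref{th:dyn-Lie2}, with maximality in Section~\ref{sc:largest}.

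There are, however, two genuine gaps. First, you anticipate a difficulty with the connected-cubes hypothesis and propose to ``reduce to it by splitting off a totally disconnected profinite factor and treating the connected pieces separately.'' This is both unnecessary and not what the paper does. Theorem~\ref{th:invlim} carries no connectedness hypothesis at all: it produces an inverse system of Lie-fibered \emph{nilspaces}, not nilmanifolds. The connectedness hypothesis is needed only for Theorem~\ref{th:toral}, the algebraic representation of a nilspace as $\HK(G_\bullet)/\Gamma$. In the dynamical setting, the paper avoids it entirely by using Theorem~\ref{th:dyn-Lie2}: the minimal action of $H$ by translations gives transitivity across connected components, so $G=\langle \Aut_1^\circ(X_n), \a_n(H)\rangle$ acts transitively and yields a nilmanifold structure with \emph{no} connectedness assumption. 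Your proposed profinite splitting is an unverified fix to a problem that, on the paper's route, does not arise.

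Second, you write that ``since the $1$-cubes transport the $\Z$-action, each $(T,Y_i)$ is a nilsystem,'' which papers over the real technical issue: one must show that the $T$-action (or $H$-action) actually \emph{descends} through the fibrations $X/\RP^s \to X_n$ produced by the inverse limit theorem. That descent is nontrivial; it is Theorem~\ref{th:dyninvlim}, proved in Section~\ref{sc:H-descends} by decomposing fibrations into vertical and horizontal parts, using the straight-class machinery from Section~\ref{sec:cubeinvlim}, and requiring the fibers to have small enough diameter. Merely observing that $H$ sits inside $\Aut_1(X/\RP^s_T)$ does not yet give a well-defined action on the approximants $X_n$. You should at minimum flag that a compatibility theorem of this sort is needed and is a substantive part of the argument.
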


Host, Kra and Maass also proved that a minimal system $(T,X)$ is distal provided the relation
$\RP_T^s(X)$ is trivial.
Shao and Ye \cite{SY12}*{Theorem 3.5} proved that $\RP^s_T(X)$ is a closed equivalence relation for minimal
(but not necessarily distal)
$(T,X)$  and they combined this with \cite{HKM10} to deduce that Theorem \ref{th:HKM}
holds without the assumption on distality.

The definition of $\RP_T^s$, and those of nilsystems and pronilsystems, generalize straightforwardly to the context of a dynamical system $(H, X)$ where $H$ is any abelian group (that is, $H$ acts continuously on the space $X$), the previous discussion corresponding to the case $H = \Z$.

In \cite{GGY}, Gutman, Glasner and Ye further generalize the definition of the regionally proximal relation to the case of an action by an arbitrary, possibly non-abelian group.
For abelian group actions, in particular for  $\mathbb{Z}$-actions, the new definition coincides with the old one.

We will review this new definition and discuss it in detail in the next subsection, as well as stating a generalization of Theorem \ref{th:HKM} to these more general group actions, which is one of the
main goals of this paper.

\subsection{Regional proximality for non-abelian actions}\label{sc:non-Ab}

We introduce some more definitions.
Let $X$ be a compact metric space and let $H$ be a metrizable topological group acting
continuously on $X$.
We denote the action by $g.x$ for $g\in H$ and $x\in X$ and call the pair
$(H,X)$ a {\bf topological dynamical system} or simply a system.

The topology of $H$ does not play any significant role.
We assume that it is induced by the {\bf maximum displacement metric}
\[
\dist(h_1,h_2)=\max\{\dist(h_1x,h_2x):x\in X\}.
\]

We denote the set of maps $\{0,1\}^\ell\to X$ by $X^{\{0,1\}^\ell}$
and call its elements $\ell$-{\bf configurations}.
Given a configuration $c\in X^{\{0,1\}^\ell}$, we call the points $\{c(\o)\}_{\o\in\{0,1\}^\ell}$
the {\bf vertices} of $c$.
We call a configuration {\bf constant} if all its vertices are equal.

A set of the form $\{\o\in \{0,1\}^\ell:\o_i=\a\}$ for some $1\le i\le d$ and $\a\in\{0,1\}$ is
called a {\bf hyperface} of the discrete cube.
For a hyperface $F$ and an $h\in H$ we denote by $[h]_F$ the element of $H^{\{0,1\}^\ell}$
defined as $[h]_F(\o)=h$ if $\o\in F$ and $[h]_F(\o)=e$ otherwise.
Here and everywhere below $e$ denotes the identity element of the group.
We call the subgroup of $G^{\{0,1\}^\ell}$ generated by
\[
\{[h]_F : h\in H \text{ and $F$ is a hyperface of $\{0,1\}^\ell$}\}
\]
the Host--Kra cube group and denote it by $\HK^\ell(H)$.
These groups originate in \cite{HK05}*{Section 5} and coincide with the \textit{parallelepiped groups} of
\cite{HKM10}*{Definition 3.1} introduced for abelian actions. The terminology is due to
\cite{GT10}*{Definition E.3} where it is employed in the context of filtered
Lie groups.

The Host--Kra cube group acts naturally on the space of $\ell$-configurations on $X$, via
$\g.c(\o)=\g(\o).c(\o)$ for $\g\in \HK^\ell(H)$ and $c\in X^{\{0,1\}^\ell}$.
Following Host, Kra and Maass \cite{HKM10} we call the orbit closure of  constant configurations
the set of {\bf dynamical cubes}\footnote{
In fact, Host, Kra and Maass call these configurations \emph{parallelepipeds},
but we use the term \emph{cubes} in order to conform with \cite{CS12}
and for the sake of brevity.}, denoted
\begin{equation}\label{eq:def of Cn H}
C^\ell_H(X)=\overline{\{\g.x^{\{0,1\}^\ell}:\g\in \HK^\ell(H), x\in X\}} \ .
\end{equation}

If $x,y\in X$ are two points, we write $\llcorner^k(x;y)$ for the configuration
\[
  \o \mapsto \begin{cases} x &\colon \o \ne \vec1 \\ y &\colon \o = \vec1 \end{cases} \ .
\]

We return to the setting of $\Z$-systems as in the previous section, i.e.~we take $H=\{T^n\}_{n\in\Z}$ for a homeomorphism $T$ of $X$.
Host, Kra and Maass \cite{HKM10}*{Corollary 4.3} gave the following
characterization of the regional proximal relation of order $s$
if the system $(H,X)=(\{T^n\},X)$ is minimal and distal:
we have $(x,y)\in \RP^s_T(X)$ if and only if $\llcorner^{s+1}(x;y)\in C^{s+1}_H(X)$.
Shao and Ye \cite{SY12} proved that this holds  for
general abelian actions, even without the assumption that the system is distal.

Motivated by this, in \cite{GGY}, the following definition for the regional
proximal relation for general group actions is introduced.
\begin{dfn}
Let $(H,X)$ be a topological dynamical system.
We say that a pair of points, $x,y\in X$ is {\bf regionally proximal of order} $s$ and write
$(x,y)\in \RP_H^s(X)$,  if and only if  $\llcorner^{s+1}(x;y)\in C^{s+1}_H(X)$.\footnote{The regionally proximal relation of order $s$ is denoted by $\textbf{NRP}^{[s]}(X)$ in \cite{GGY}.}
\end{dfn}

It is shown in \cite{GGY} that, perhaps surprisingly, the newly introduced relation is an
equivalence relation for any minimal action (without any restriction on the group).
Moreover the proof of this more general fact is simpler than the one given in \cite{SY12}.

The nature of $\RP_H^s$ can vary significantly, and we now give two extreme examples.

For the first, note that if $H_{s+1}$ denotes the $(s+1)$-th element of the lower central series of $H$,
then $(x,hx)\in \RP_H^s(X)$ for any $h\in H_{s+1}$.
(For a proof of this fact, see Section \ref{sc:HK}).
Hence if $H$ is perfect, i.e.~$H=[H,H]$, and the action is minimal, then $\RP_H^s(X)=X^2$; in other words, every pair of points is regionally proximal to all orders.

For the second example, let $(H,G/\Ga)$ be a nilsystem in the following generalized sense: let $G$ be an $s$-step nilpotent
Lie group, let $\Ga$ be a discrete cocompact subgroup, and let $H$ act on $G/\Ga$ via a continuous group homomorphism $\phi:H\to G$, i.e.~$(h, x \Ga) \mapsto \phi(h) x \Ga$.  Then it turns out that $\RP_H^t(G/\Ga)$ is the trivial relation for all $t \ge s$ (we will see a proof of this in Section \ref{sc:canonical}).

We are now ready to state a significant generalization of Theorem \ref{th:HKM} to general $H$-actions.

\begin{thm}\label{th:dynamics}
Let $(H,X)$ be a minimal system.
Suppose further that $H$ has a dense subgroup generated by a compact set.
Then $\RP_H^s(X)$ is a closed $H$-invariant equivalence relation, and $(H,X/\RP_H^s(X))$
is the maximal pronilfactor of $(H,X)$ of degree at most $s$.
\end{thm}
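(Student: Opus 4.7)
The plan is to establish the three assertions in turn: that $\RP_H^s(X)$ is a closed $H$-invariant equivalence relation; that $(H, X/\RP_H^s(X))$ is pronil of degree at most $s$; and that every pronilfactor of $(H,X)$ of degree $\le s$ factors through it. The first assertion is essentially the content of \cite{GGY}, which applies for minimal actions by arbitrary groups: $H$-invariance is immediate from the $H$-equivariance of dynamical cubes, closedness from that of $C^{s+1}_H(X)$, and reflexivity and symmetry are clear. Transitivity follows from a gluing argument in which cubes of the form $\llcorner^{s+1}(x;y)$ and $\llcorner^{s+1}(y;z)$ in $C^{s+1}_H(X)$ are concatenated to produce a dynamical cube whose relevant corner is $\llcorner^{s+1}(x;z)$.

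The bulk of the work is the second assertion. I would write $Y := X/\RP_H^s(X)$ with quotient map $\pi\colon X\to Y$ and give $Y$ the cubespace structure $C^k(Y) := \pi^{\otimes 2^k}(C^k_H(X))$ for each $k \ge 0$. The formal cubespace axioms (closure under face maps, concatenation, symmetries) transfer directly from the corresponding properties of dynamical cubes on $X$ established in \cite{GMV1} and \cite{GMV2}. The essential step-$s$ uniqueness axiom, saying that an $(s+1)$-corner in $Y$ admits at most one completion to a cube in $C^{s+1}(Y)$, is exactly a reformulation of the triviality of $\RP_H^s(Y)$; this is precisely why the definition of regional proximality was crafted as it was. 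Given two competing completions of the same corner, one patches them together into a cube of the form $\llcorner^{s+1}(y_1;y_2) \in C^{s+1}(Y)$, lifts back to $X$, and reads off from the definition of $\RP_H^s(X)$ that $y_1 = y_2$ in $Y$.

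Once $(Y, C^\bullet(Y))$ is recognized as a step-$s$ nilspace, I would invoke the main structure theorem of this paper to obtain $Y = \invlim Y_i$ with each $Y_i$ a nilmanifold. The nontrivial hypothesis there is connectedness of $C^n(Y)$ for every $n$, and this is exactly where the assumption that $H$ contains a dense, compactly generated subgroup is used. By minimality $C^n_H(X)$ is the closure of a single $\HK^n(H)$-orbit of a diagonal configuration, and the compactly generated dense subgroup of $H$ generates (via the $[h]_F$) a connected subgroup of $\HK^n(H)$, so this orbit is connected; taking closures preserves connectedness, and $C^n(Y) = \pi^{\otimes 2^n}(C^n_H(X))$ is then the continuous image of a connected set, hence connected. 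A subsequent step, using minimality and $H$-equivariance, promotes each map $Y \to Y_i$ to a morphism of $H$-systems and identifies $Y_i$ with a genuine nilsystem of degree $\le s$, whence $(H,Y)$ is pronil of degree $\le s$.

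The universality assertion is short. Given a factor $\phi\colon (H,X)\to(H,Z)$ with $Z$ pronil of degree $\le s$, continuity and $H$-equivariance of $\phi$ ensure $\phi^{\otimes 2^{s+1}}(C^{s+1}_H(X)) \subseteq C^{s+1}_H(Z)$, so any $(x,y) \in \RP_H^s(X)$ is sent to $(\phi(x),\phi(y)) \in \RP_H^s(Z)$, which must be diagonal because $\RP_H^s$ is trivial on pronilsystems of degree $\le s$ (the fact flagged in Section \ref{sc:non-Ab} and proved in Section \ref{sc:canonical}); hence $\phi$ factors through $X/\RP_H^s(X)$. The main obstacle in this whole programme is the verification of the connectedness hypothesis for $Y$, together with the promotion of the abstract inverse limit of nilmanifolds to an inverse limit of $H$-nilsystems --- both of which ultimately rest on tracking how the compact-generation hypothesis on $H$ propagates through the Host--Kra cube group.
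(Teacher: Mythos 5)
Your outline of the first and third assertions matches the paper, but the central step—showing $Y := X/\RP_H^s(X)$ is pronil—goes astray in a way that matters.

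You claim that the compact-generation hypothesis on $H$ is used to prove that $C^n(Y)$ is connected, so that the strongly-connected structure theorem (Theorem \ref{th:alg-struc}) can be invoked. This is not correct, and the argument you give for connectedness is broken: a group having a \emph{compactly generated} dense subgroup does not make that subgroup, or the associated $\HK^n$-orbit, connected. The basic case $H = \Z$ is already a counterexample—$\Z$ is generated by the compact set $\{1\}$, but $\HK^n(\Z)$ is a countable discrete group and its orbits are totally disconnected. Moreover $C^n(Y)$ can genuinely fail to be connected: take $X = \Z/m\Z$ with $\Z$ acting by translation, where $Y = X$ is a (disconnected) nilmanifold. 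So Theorem \ref{th:alg-struc} is the wrong tool here; it is a statement about \emph{strongly connected} nilspaces and its hypothesis simply may not hold for $Y$.

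The paper instead invokes Theorem \ref{th:invlim}, which carries \emph{no} connectedness hypothesis and presents an arbitrary compact ergodic nilspace as an inverse limit of Lie-fibered nilspaces $X_n$. These $X_n$ are not yet nilmanifolds. The step you compress into one sentence—"promotes each map $Y \to Y_i$ to a morphism of $H$-systems and identifies $Y_i$ with a genuine nilsystem"—is actually the heart of the matter, and it is \emph{there}, not in a connectedness argument, that compact generation is used. Specifically, Theorem \ref{th:dyninvlim} shows that the $H$-action descends through the fibration $X_\infty \to X_n$ once the fibers are small; the proof fixes a compact generating set $K \subseteq H$ and uses equicontinuity of $K$ on $X$ to show that each $f \in K$ maps straight classes to straight classes of bounded diameter, then uses uniqueness of small straight classes (Proposition \ref{prp:section-unique}) to conclude that $f$ permutes the fibers. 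Once the $H$-action is on $X_n$, Theorem \ref{th:dyn-Lie2} (from \cite{GMV2}) shows directly that a minimal action on a compact ergodic \emph{Lie-fibered} nilspace is a nilsystem—here the transitivity of $H$ replaces connectedness, since $G = \langle \Aut_1^\circ(X_n), \a(H)\rangle$ acts transitively even when $X_n$ is disconnected. Your proposal neither recognises that connectedness can fail nor supplies the equicontinuity-plus-straight-class argument that actually drives the descent, so as written it does not prove the theorem.
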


The proof of Theorem \ref{th:HKM} (which is a special case of Theorem \ref{th:dynamics}) by Host, Kra and Maass relies
on an ergodic theoretic analogue obtained previously by Host and Kra \cite{HK05}.
Our proof, however, works entirely within the topological category.
We believe that this feature makes our approach of interest even in the case of $\Z$-actions.

We note that in our proof we use the result from \cite{GGY} that $\RP^s_H(X)$
is an equivalence relation for an arbitrary minimal topological dynamical system $(H,X)$.

\subsection{Cubespaces and nilspaces}\label{sc:cubes}

In the preceding discussion we introduced the notion of dynamical cubes.
In this section we discuss similar structures in a much more abstract setting,
following Host and Kra \cite{HK08} and Antol\'\i n Camarena and Szegedy \cite{CS12}.
Everything in this section is taken from \cite{CS12}, although our
terminology and notation differs from that paper.

We first formalize the notion of a cubespace.  Given the previous discussion, the following definitions should seem fairly reasonable; the reader may also consult \cite{GMV1}*{Sections 1--3} for further discussion.

\begin{dfn}
  A map $\f=(\f_1,\ldots,\f_k):\{0,1\}^\ell\to \{0,1\}^k$ is termed a {\bf morphism of discrete cubes }
  if each coordinate function $\f_j(\o_1,\ldots,\o_l)$ is equal to either $0$, $1$, $\o_i$ or $1-\o_i$
  for some $1\le i \le m$.
\end{dfn}

\begin{dfn}
  Let $X$ be a metric space and for each integer $\ell\ge 0$ let $C^\ell\subseteq X^{\{0,1\}^\ell}$
  be a closed set.  We say that $(X,\{C^\ell\}_{\ell\in\N})$ is a {\bf cubespace} if $C^0=X$ and 
  $c\circ\f\in C^\ell$ for any $c \in C^k$ and any morphism of discrete cubes
  $\f:\{0,1\}^\ell \to \{0,1\}^k$.

  We refer to this property as {\bf cube invariance}.
\end{dfn}

We call the elements of $C^\ell$ the $\ell$-{\bf cubes} of $X$.
Given $c \in C^\ell$, we call the elements $c(\o)$ for $\o\in \{0,1\}^\ell$ the {\bf vertices} of $c$.
To simplify our notation, we simply write $X$ to refer to the full cubespace structure $(X, \{C^\ell\}_{\ell\in\N})$, 
and we write $C^\ell(X)$ to refer unambiguously to the cubes $C^\ell$ associated to $X$.

The cube-invariance property encodes certain fairly natural operations that produce new cubes from old ones.  For instance:
\begin{itemize}
  \item cube-invariance for $\f(\o_1,\ldots, \o_{\ell-1})=(\o_1,\ldots, \o_{\ell-1},0)$ encodes the fact that a face of an $\ell$-cube of dimension $(\ell-1)$ is again an $(\ell-1)$-cube;
  \item similarly, cube-invariance for $\f(\o_1,\ldots, \o_{\ell})=(1-\o_1,\ldots, \o_{\ell})$ states that reflecting an $\ell$-cube in one of the coordinate axes yields another $\ell$-cube;
  \item using $f(\o_1, \dots, \o_\ell) = (\o_1, \dots, \o_{\ell-1})$ shows that ``duplicating'' an $(\ell-1)$-cube creates an $\ell$-cube; and
  \item applying $\f(\o_1,\o_2,\o_3)=(\o_2,\o_1,\o_3)$ shows that permuting the coordinates of a cube yields another cube.
\end{itemize}

As well as considering cubespaces, we will also need to discuss maps between them that respect the cubespace structure.  The natural definition is as follows.

\begin{dfn}
  Let $X$ and $Y$ be two cubespaces and let $\f:X\to Y$ be a continuous map.
  Then we say that $\f$ is a {\bf cubespace morphism}, if
  \[
  \{\f\circ c:c\in C^k(X)\}\subseteq C^k(Y)
  \]
  for all $k\in\N$.
\end{dfn}

Cubespaces also admit a natural notion of sub-objects.

\begin{dfn}
  If $X$ and $Y$ are two cubespaces,
  we say that $Y$ is a {\bf subcubespace} of $X$ if $Y\subseteq X$ and $C^\ell(Y)\subseteq C^\ell(X)$ for all $\ell$.
  If $Z\subseteq X$ is a closed set, the subcubespace of $X$ {\bf induced} by  $Z$ is defined by
  $C^\ell(Z)=C^\ell(X)\cap Z^{\{0,1\}^\ell}$ for all $\ell$.
\end{dfn}

We now introduce a further technical definition.
\begin{dfn}
  We say a cubespace $X$ is {\bf ergodic} if $C^1(X)=X^{\{0,1\}}$, i.e.~if any pair of points
  form a $1$-cube. More generally, we say that $X$ is $s$-{\bf ergodic} if $C^s(X)=X^{\{0,1\}^s}$.
\end{dfn}
Observe that $s$-ergodicity implies $t$-ergodicity for all $t\le s$, as a consequence of cube-invariance.

It turns out that non-ergodic cubespaces are fairly uninteresting, insofar as -- under reasonable extra hypotheses -- they decompose into a number of essentially non-interacting ergodic pieces; hence, we will almost always work with ergodic cubespaces as a further sanity condition.

We now turn to the notion of \emph{corner-completion}.  This is both fundamental and rather difficult to motivate from our previous discussion.  Perhaps the best one can say is that this property arises naturally in many of the examples of cubespaces arising in applications -- e.g.~in dynamics, the study of nilmanifolds, etc.~-- and is an essential assumption if one hopes to prove any kind of structure theory.  Ultimately, though, this definition is thoroughly non-obvious, and constitutes one of the key insights of \cite{HK08}.

\begin{dfn}
Let $X$ be a cubespace and let $\l:\{0,1\}^\ell \sm \{\vec1\}\to X$ be a map.
We call $\l$ an $\ell$-{\bf corner} if $\l|_{\o_i=0}$ is an $(\ell-1)$-cube for all $1\le i\le \ell$.
We say that the cubespace $X$ has {\bf $s$-completion}
if any $s$-corner can be completed to an $s$-cube;
in other words, if for any such $\l$ there exists a cube $c\in C^{s}(X)$ such that $c|_{\{0,1\}^{s}\sm \{\vec1\}}=\l$.

We say that $X$ is {\bf fibrant} if it has $s$-completion for all $s\ge0$.
\end{dfn}

It is also imperative to know when this completion process is unique.  The dimension at which this occurs determines the ``degree'' or ``step'' of the space.

\begin{dfn}
  We say that a cubespace $X$ has $s$-{\bf uniqueness},
  if $c_1|_{\{0,1\}^{s}\backslash \vec1}=c_2|_{\{0,1\}^{s}\backslash \vec1}$
  implies $c_1=c_2$ for any two $s$-cubes $c_1,c_2\in C^{s}(X)$.
\end{dfn}

\begin{dfn}
  We say that a cubespace $X$ is a {\bf nilspace} of degree $s$ if it is fibrant and $s\ge 0$ is the
  smallest integer such that $X$ has $(s+1)$-uniqueness.
\end{dfn}

In \cite{GMV1}, the following relative analogue of the corner-completion property is introduced, applying to a map between two cubespaces.

\begin{dfn}
Let $X$ and $Y$ be two cubespaces and let $\f:X\to Y$ be a continuous map.
We say that $\f$ is a {\bf fibration}
if it is a cubespace morphism, and if furthermore the following holds for every $\ell$.

Let $\l: X\to \{0,1\}^\ell \sm \{\vec 1\}$ be an $\ell$-corner and $c\in C^k(Y)$ a compatible cube, in the sense that
$\f\circ\l=c|_{\{0,1\}^\ell\sm\{\vec 1\}}$.
Then there is a completion $c'$ of $\l$ compatible with $c$; that is, there exists $c'\in C^\ell(X)$ such that $c'|_{\{0,1\}^\ell \sm \{\vec1\}} = \l$ and $\f\circ c'=c$.
\end{dfn}

\begin{dfn}
We say that a cubespace morphism $\f:X\to Y$ has $k$-{\bf uniqueness} for some $k\in\N$ if the following holds.
If $c,c'\in\C^k(X)$ are two cubes such that $\f(c)=\f(c')$ and $c(\omega)=c'(\omega)$ for all $\omega\neq\vec 1$,
then, in fact, $c=c'$.

If $\f$ is a fibration that has $k$-uniqueness for some $k\in\N$ and $k$ is the smallest such number, then we say that
$\f$ has {\bf degree} $(k-1)$.
\end{dfn}

It is easy to see that composition of fibrations is a fibration. We also recall
that fibrations have the following ``universal property''.

\begin{lem}[\cite{GMV1}*{Lemma 7.8}]\label{lem:universal1} 
Let $\f_{YX}:X\to Y$ and $\f_{ZX}:X\to Z$ be fibrations between compact cubespaces.
Suppose that for every $y\in Y$ there is $z\in Z$ such that  $\f_{YX}^{-1}(y)\subseteq\f_{ZX}^{-1}(z)$.
Then there is a unique fibration $\f_{ZY}: Y\to Z$ such that $\f_{ZX}=\f_{ZY}\circ\f_{YX}$.

Equivalently, the following holds.
Let $\f:X\to Y$ be a fibration and $\psi: Y\to Z$ be an arbitrary continuous map between two compact cubespaces.
If $\psi\circ\f$ is a fibration then so is $\psi$.
\end{lem}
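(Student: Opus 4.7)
The plan is to prove the second (``three-for-two'') formulation of the lemma directly, and then to deduce the first formulation from it: given the second, the first becomes essentially a set-theoretic and topological exercise.

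Suppose $\f : X \to Y$ is a fibration and $\psi : Y \to Z$ is continuous with $\psi \circ \f$ a fibration; I want to show $\psi$ is a fibration. For the cubespace-morphism condition, given $c \in C^\ell(Y)$ I lift $c$ along $\f$ to $\tilde c \in C^\ell(X)$ with $\f \circ \tilde c = c$, and then $\psi \circ c = (\psi \circ \f) \circ \tilde c \in C^\ell(Z)$ because $\psi \circ \f$ is a cube morphism. For corner completion, given an $\ell$-corner $\l : \{0,1\}^\ell \sm \{\vec 1\} \to Y$ and a compatible $c \in C^\ell(Z)$ with $\psi \circ \l = c|_{\{0,1\}^\ell \sm \{\vec 1\}}$, I first lift $\l$ along $\f$ to an $\ell$-corner $\tilde \l$ in $X$ with $\f \circ \tilde \l = \l$. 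This lifted corner is automatically compatible with $c$ viewed as a corner for the fibration $\psi \circ \f$, so the corner-completion property of $\psi \circ \f$ produces $\tilde c' \in C^\ell(X)$ extending $\tilde \l$ with $(\psi \circ \f) \circ \tilde c' = c$; the image $c' := \f \circ \tilde c'$ then lies in $C^\ell(Y)$ (as $\f$ is a cube morphism), restricts to $\l$ on $\{0,1\}^\ell \sm \{\vec 1\}$, and satisfies $\psi \circ c' = c$, as required.

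To deduce the first formulation from the second, the hypothesis that every $\f_{YX}$-fiber is contained in some $\f_{ZX}$-fiber --- together with nonemptiness of $\f_{YX}^{-1}(y)$, i.e.\ surjectivity of $\f_{YX}$, which comes from completing the empty $0$-corner --- defines a unique set map $\f_{ZY} : Y \to Z$ with $\f_{ZX} = \f_{ZY} \circ \f_{YX}$. Continuity of $\f_{ZY}$ holds because $\f_{YX}$, being a continuous surjection between compact Hausdorff spaces, is a topological quotient map, so $\f_{ZY}$ is continuous iff $\f_{ZY} \circ \f_{YX} = \f_{ZX}$ is. Applying the three-for-two property with $\f = \f_{YX}$ and $\psi = \f_{ZY}$ then yields that $\f_{ZY}$ is a fibration, and uniqueness of $\f_{ZY}$ as a set map is immediate from surjectivity of $\f_{YX}$.

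The main obstacle --- the part that requires real work as opposed to diagram chasing --- is justifying the two lifting operations used in the second paragraph: that a fibration between compact cubespaces surjects on $\ell$-cubes (cube lifting), and that every $\ell$-corner in $Y$ admits a lift to an $\ell$-corner in $X$ (corner lifting). Both are standard properties of fibrations, provable by a joint induction on $\ell$ from the defining corner-completion property of $\f$ (with $\ell = 0, 1$ reducing to surjectivity), and are in any case available from the earlier parts of \cite{GMV1}.
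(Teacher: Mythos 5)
Your proof is correct. The paper itself does not reproduce a proof of this lemma --- it cites \cite{GMV1}*{Lemma 7.8} --- so there is no in-paper argument to compare against; but your two-step structure (prove the ``two-out-of-three'' formulation, then derive the factorization statement from it using surjectivity of $\f_{YX}$ and the quotient-map property of continuous surjections between compact Hausdorff spaces) is the natural one and matches what \cite{GMV1} does. You also correctly identify, and are honest about, the one genuine input you are importing rather than re-proving: that a fibration between compact cubespaces lifts cubes and, more generally, lifts corners. This is where the real content lies, and it is indeed established in \cite{GMV1} by an induction that proceeds vertex-by-vertex in order of increasing weight $|\o|$, each step being a corner completion against the target cube; everything else in your argument is diagram chasing. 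One cosmetic point: the statement in the paper has a typo ($\f\circ\psi$ where $\psi\circ\f$ is meant, since $\f:X\to Y$ and $\psi:Y\to Z$), which you silently and correctly repair.
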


Observe that a cubespace $X$ is fibrant if and only if the map from $X$ to the one-point cubespace $\{\ast\}$ is a fibration.
If we set $Z = \{\ast\}$ in the above lemma, we see that the image of a fibrant
cubespace under a fibration is also fibrant.

An almost equivalent condition to fibrations appears in \cite{CS12}*{Section 2.8} under the name ``fiber-surjective morphism''.  There, a cubespace morphism $\f \colon X \to Y$ between nilspaces $X$ and $Y$ is called fiber-surjective if the image of any $\sim_k$ class in $X$ is a $\sim_k$ class in $Y$, for any $k \ge 0$.  (Here $\sim_k$ are the canonical equivalence relations, which we will introduce in Section \ref{sc:canonical}.)

If $X$ and $Y$ are nilspaces and $\f \colon X \to Y$ a cubespace morphism, it is not hard to see that $\f$ is a fibration if and only if it is fiber-surjective.  At some points in the project, we have reason to consider fibrations between cubespaces that are not nilspaces, and in these cases the definitions are inequivalent and the notion of a fibration appears to be the correct one to use.  Moreover, in general the authors have also found it a more natural and convenient starting point on which to build the associated theory.

It is proved in \cite{GMV1}*{Remark 7.9} that the image of a nilspace under a fibration is a nilspace. 

Given two $\ell$-configurations $c_{0},c_{1}:\{0,1\}^{\ell}\to X$,
the  {\bf concatenation} of $c_{0}$ and $c_{1}$ is
the $(l+1)$-configuration $[c_{0},c_{1}]:\{0,1\}^{\ell+1}\to X$  given
by $[c_{0},c_{1}](\o,0)=c_{0}(\o)$ and $[c_{0},c_{1}](\o,1)=c_{1}(\o)$
for all $\o\in\{0,1\}^{\ell}$.

\begin{dfn}
  We say that a cubespace $X$ has the {\bf gluing property} if the following holds for all $\ell\ge 0$:
  for all $c_1,c_2,c_3\in C^{\ell}(X)$, $[c_1,c_2],[c_2,c_3]\in C^{\ell+1}(X)$
  implies that $[c_1,c_3]\in C^{\ell+1}(X)$.
\end{dfn}
We recall from \cite{GMV1}*{Proposition 6.2} that fibrant cubespaces always have the gluing property. 
This property is useful for the following reason: if $Y$ is an induced subcubespace of $X$, and $X$ has the gluing property, then so does $Y$; however, if $X$ is fibrant this does not necessarily imply that $Y$ is fibrant.

We note that an ergodic nilspace of degree $0$ is necessarily the $1$ point space.
More interesting examples of cubespaces are the dynamical cubespaces $(X,\{C^k_H(X)\}_{k\in\N})$ introduced 
in Section \ref{sc:non-Ab}.
Indeed, we recall the following fact from \cite{GGY}*{Theorems 3.8 and 7.14}. 
\begin{thm}\label{th:dyn-nilspace}
Let $(H,X)$ be a minimal topological dynamical system.
Then $\RP_H^\ell(X)$ is a closed $H$-invariant equivalence relation,
and $(X/\RP_H^\ell,\{C_H^k\}_{k\in\N})$ is an ergodic nilspace of degree at most $\ell$, for each $\ell\in\N$.  
\end{thm}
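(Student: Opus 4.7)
The plan is to verify the three claims in order, with $(\ell+1)$-uniqueness in the quotient as the main obstacle. Closedness of $\RP_H^\ell(X)$ is immediate: the map $(x,y)\mapsto \llcorner^{\ell+1}(x;y)$ is continuous and $C^{\ell+1}_H(X)$ is closed by construction, so $\RP_H^\ell(X)$ is the preimage of a closed set. For $H$-invariance, the constant diagonal element $(h,\ldots,h)\in H^{\{0,1\}^{\ell+1}}$ lies in $\HK^{\ell+1}(H)$ since it factors as $[h]_{\{\o_1=0\}}\cdot[h]_{\{\o_1=1\}}$, and its action sends $\llcorner^{\ell+1}(x;y)$ to $\llcorner^{\ell+1}(hx;hy)$, which therefore remains a cube.

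The assertion that $\RP_H^\ell(X)$ is an equivalence relation is the first substantive step; as flagged in the surrounding discussion, I would take this as a black box from \cite{GGY}*{Theorem 3.6}. Reflexivity is clear since constant configurations lie in $C^{\ell+1}_H(X)$. For symmetry and transitivity the strategy is to exploit the glueing property of $(X,C^k_H(X))$ (which follows from its fibrancy via \cite{GMV1}*{Proposition 6.2}): two $\llcorner$-configurations sharing a vertex can be combined into a higher-dimensional cube by concatenation and glueing, and minimality is used to slide within orbits so that the resulting cubes appear in the closure defining $C^{\ell+1}_H(X)$.

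For the cubespace structure on $X/\RP_H^\ell$, declare the cubes of dimension $k$ to be the $\pi^{\otimes k}$-images of cubes in $X$, where $\pi$ is the quotient map. Cube-invariance transfers under $\pi$, and ergodicity at level $1$ is immediate from minimality: for any $x\in X$, the set $Hx\times Hx$ is a dense subset of $X\times X$ already contained in $C^1_H(X)$. For fibrancy of the quotient, given an $(\ell+1)$-corner downstairs I would lift vertex-by-vertex to a compatible corner in $X$, complete it using fibrancy of $(X,C^k_H(X))$, and project back down.

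The main obstacle is $(\ell+1)$-uniqueness in the quotient: given $c_1,c_2\in C^{\ell+1}_H(X)$ with $(c_1(\o),c_2(\o))\in \RP_H^\ell$ for every $\o\ne\vec1$, one must show $\llcorner^{\ell+1}(c_1(\vec1);c_2(\vec1))\in C^{\ell+1}_H(X)$. The plan is to produce an $(\ell+2)$-cube in $C^{\ell+2}_H(X)$ having this $\llcorner$-configuration as a face. Were $c_1$ and $c_2$ literally equal on the corner, the concatenation $[c_1,c_2]$ together with glueing would finish the argument. In general, one would use the $\llcorner^{\ell+1}$-witnesses provided at each corner vertex by the $\RP_H^\ell$ relation to iteratively modify $c_2$ into a cube $c_2'$ that matches $c_1$ pointwise on the corner while keeping $c_2'(\vec1)$ regionally proximal to $c_2(\vec1)$. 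This stepwise propagation of the equivalence from the corner to the missing vertex, combining glueing, fibrancy and minimality, is the technical heart of \cite{GGY}*{Theorem 7.13} and the step I expect to be the main difficulty.
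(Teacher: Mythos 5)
The paper does not prove this theorem at all: it is imported verbatim as a citation to \cite{GGY}*{Theorems 3.6 and 7.13}, which is exactly what you do for the two substantive claims (equivalence relation; nilspace quotient), so your approach is essentially the same as the paper's. Your verification of the easy items (closedness via continuity of $(x,y)\mapsto\llcorner^{\ell+1}(x;y)$, $H$-invariance via $[h]_{\{\o_1=0\}}[h]_{\{\o_1=1\}}\in\HK^{\ell+1}(H)$, $1$-ergodicity from density of $Hx_0\times Hx_0$) is correct and is a reasonable gloss on the citation.

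One small corrective on emphasis: you locate the ``technical heart'' in the $(\ell+1)$-uniqueness step and sketch an iterative modification of $c_2$ along the corner. In fact, once one knows that $(X,C^k_H(X))$ is fibrant, uniqueness in the quotient follows rather formally: Lemma~\ref{lem:alter-canonical} identifies $\RP_H^\ell$ with $\sim_\ell$, Lemma~\ref{lem:replacement} (universal replacement) lets you replace the corner of $c_2$ with that of $c_1$ while staying in $C^{\ell+1}_H(X)$, and then the defining property of $\sim_\ell$ immediately gives $c_1(\vec1)\sim_\ell c_2(\vec1)$. The genuinely hard content of \cite{GGY} (and of \cite{HKM10}, \cite{SY12} in the abelian case) is establishing fibrancy of the dynamical cubespace itself — i.e.\ that every corner in $C^k_H(X)$ can be completed — which you use repeatedly (for glueing, for lifting corners) but treat as an already-available fact. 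Also note that your proposed ``lift the corner vertex-by-vertex'' step in the fibrancy argument for the quotient is too naive as written: arbitrary vertex lifts need not form a corner upstairs, and one must lift coherently face-by-face (or again invoke Lemma~\ref{lem:replacement}). None of this changes the verdict that you and the paper are pointing at the same source for the same reasons.
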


Finally, we give another example of a nilspace, which will play a special role in the theory.
Let $A$ be a compact abelian group, with the group operation denoted additively.
We write $\mathcal{D}_s(A)$ for the cubespace defined by requiring that
$c\in C^\ell(\mathcal{D}_s(A))$ if and only if
\[
\sum_{\o\in\{0,1\}^{s+1}}(-1)^{|\o|}c(\f(\o))=0
\]
holds for any morphism of discrete cubes  $\f:\{0,1\}^{s+1}\to \{0,1\}^\ell$,
 where we write $|\o|=\sum_{1\le i\le s+1}\o_i$ for $\o\in\{0,1\}^{s+1}$.
We will consider this object again in the next section, as it turns out to be a
special case of a construction discussed there.
We will see that it follows from general results that $\mathcal{D}^s(A)$ is
a nilspace of degree $s$, and leave it to the reader to verify that it is also $s$-ergodic.

\subsection{Host--Kra nilspaces}\label{sc:HK}

In this section, we discuss a variant of the dynamical cubespace construction considered above.
A more detailed exposition, with examples, is available in \cite{GMV1}*{Section 2 and Appendix A}. 

Let $G$ be a  topological group.
We call a chain of closed subgroups
\[
G=G_0\supseteq G_1\supseteq G_2\supseteq\ldots \supseteq G_{s+1}=\{1\}
\]
a {\bf filtration of degree} $s$ if $[G_i,G_j]\subseteq G_{i+j}$ for all $i,j \ge 0$, adopting the convention that $G_i = \{1\}$ for all $i \ge s+1$.

Note that a filtration is always a central series, but a central series may not be
a filtration.
E.g. if $G$ is a nilpotent Lie group of degree $2$, then $G_0=G_1=G_2=G$, $G_3=[G,G]$,
$G_4=\{1\}$ is a central series, but it is not a filtration, because $[G_2,G_2]\not\subseteq G_4$.
On the other hand, we note that the lower central series is always a filtration (see \cite{MKS66}*{Theorem 5.3}).

We call the filtration {\bf proper} if $G_0=G_1$.
Note that if a group admits a proper filtration of degree $s$ then it must be nilpotent of nilpotency class at most $s$.
In this paper, we always assume that filtrations are proper even if we do not state this explicitly.

We call a group a {\bf filtered} group  if we want to emphasize that it is equipped with a particular
filtration.
We write $G_\bullet$ as a shorthand to denote a group $G$ equipped with a filtration $\{G_i\}$.

We now consider a generalization of the notion of Host--Kra cubegroups introduced in Section \ref{sc:non-Ab}.
A subset $F\subseteq\{0,1\}^{\ell}$ of the vertices of the discrete cube is called a {\bf face
of co-dimension} $d$ if there are indices $1\le i_1<\ldots<i_d\le \ell$ and $\a=(\a_1,\ldots,\a_d)\in\{0,1\}^d$
such that
\[
F=\{\o\in\{0,1\}^\ell:\o_{i_j}=\a_j\;\text{for all $1\le j\le d$}\}.
\]

Let $G_\bullet$ be a filtered topological group of degree $s$.
If $F\subseteq\{0,1\}^\ell$ and $g\in G$, we write $[g]_F$ for the element of $G^{\{0,1\}^\ell}$
given by $[g]_F(\o)=g$ if $\o\in F$ and $[g]_F(\o)=e$ otherwise.

We define the {\bf Host--Kra cubegroup} $\HK^\ell(G_\bullet)$ for each $\ell$ to be the subgroup of $G^{\{0,1\}^{\ell}}$
generated by the elements of the form
$[g]_F$, where $F\subseteq\{0,1\}^\ell$ is a face of codimension $i$ for some $1\le i\le \min(s+1,\ell)$, and $g\in G_i$.

We note that the definition of $\HK^\ell(G_{\bullet})$ in \cite{GMV1}*{Definition 2.2} differs in that 
only ``upper'' faces $F$ are used to construct generators.
However, this gives rise to the same group, as noted in \cite{GMV1}*{Remark 2.3}. 

If $F$ is a face of codimension $d$, then for any positive integers $d_1,d_2$ with $d_1+d_2=d$
we can find faces $F_1$ and $F_2$ of codimension $d_1$ and $d_2$, respectively, such that
$F_1\cap F_2=F$.
Then, the identity $[[g_1]_{F_1},[g_2]_{F_2}]=[[g_1,g_2]]_{F}$ holds (where, confusingly, some of the square brackets denote commutators and others do not).
Using these observations, it is easy to see that the Host--Kra cubegroup $\HK^\ell(G)$ defined in Section
\ref{sc:non-Ab} agrees with the above construction applied to the lower central series filtration.
For a different filtration, however,
the Host--Kra cubegroup may be larger.

We digress to justify a claim we made in Section \ref{sc:non-Ab}.
Let $g\in G_{s+1}$, the $(s+1)$-th element of the lower central series filtration.
Then $[g]_F\in \HK^{s+1}(G)$ for any face $F$ of co-dimension $(s+1)$ in $\{0,1\}^{s+1}$,
i.e.~for any single vertex of $\{0,1\}^{s+1}$.
Thus, $[g]_F.x^{\{0,1\}^{s+1}}\in C^{s+1}_G(X)$,
which implies that $(x,gx)\in \RP_G^{s+1}(X)$ as claimed in Section \ref{sc:non-Ab}.

Let $G_\bullet$ be a degree $s$ filtered Lie group.\footnote{Note that, by Cartan's closed subgroup theorem, the groups $G_i$ appearing in the filtration of a Lie group are automatically themselves Lie groups.}
We say that a discrete co-compact subgroup $\Gamma$ of $G$ is {\bf compatible}
with the filtration if $\Gamma\cap G_i$
is a (discrete) co-compact subgroup of $G_i$ for all $i$.

The group $\HK^\ell(G_\bullet)$ naturally acts on the space
$(G/\Gamma)^{\{0,1\}^\ell}$.
It turns out that
the stabilizers are discrete cocompact subgroups in $\HK^\ell(G_\bullet)$,
provided $\Gamma$ is compatible with the filtration, and hence the orbits of the action
are compact and therefore closed.
(For a proof of this fact, see \cite{GT10}*{Lemma E.10}, where the only fact that is used (implicitly) is that $\Gamma$ is compatible with the filtration.)

We define the {\bf Host--Kra nilspace} $\HK(G_\bullet)/\Gamma$ associated to $G_\bullet$ and $\Gamma$ as follows.
The base topological space is $X = G/\Gamma$, and the cubes are defined as
\[
  C^\ell(G/\Gamma) := \left\{\omega \mapsto g(\omega).x \colon g\in \HK^\ell(G_\bullet), x\in X \right\}.
\]

In \cite{GMV1} we defined Host--Kra nilspaces slightly differently:
we considered $(G,\{\HK^\ell(G_\bullet)\}_{\ell\in\N})$ as a cubespace, and defined the cubespace $G/\Gamma$ 
as the quotient of this under the map $G\to G/\Gamma$; so the cubes of $G/\Gamma$ are denoted as $\HK^\ell(G_\bullet)/\Gamma$.
However, this is completely equivalent to the above definition, and the discrepancy made deliberately for consistency
with the dynamical viewpoint adopted in this paper.

It is proven in \cite{GMV1}*{Proposition 2.6} 
that $\HK(G_\bullet)/\Gamma$ is an ergodic nilspace of degree $s$.

Recall the definition of $\mathcal{D}_s(A)$ from the previous section.
It turns out that $C^\ell(\mathcal{D}_s(A))=\HK^\ell(A_\bullet)$, where
$A$ is considered with the filtration $A_0=A_1=\ldots = A_s=A$ and $A_{s+1}=\{0\}$
of degree $s$.
This equivalence is proved in \cite{GMV1}*{Propostion 5.1}. 
When $A$ is a compact Lie group, this is a Host--Kra nilspace.

Host--Kra nilspaces are relatively easy to understand thanks to the well-developed
theory of nilmanifolds.
The main aim of our project following Antol\'\i n Camarena and Szegedy \cite{CS12}
 is that we want to approximate  general nilspaces by Host--Kra nilspaces.
We outline our program to achieve this goal in the next sections.

\subsection{Canonical factors}\label{sc:canonical}

The first stage of our program (following \cite{CS12}) is to realize an ergodic nilspace of degree $t$ as a tower of extensions
\[
X\to\pi_{t-1}(X)\to\pi_{t-2}(X)\to\ldots\to\pi_0(X)=\{\ast\},
\]
where the degree of the nilspace is reduced by one each time we move along the sequence.

In the setting of Host--Kra nilspaces, this corresponds to taking quotients by the normal subgroups
$G_s$ that form the filtration; i.e.~we expect that $\pi_s(G/\Gamma)$ should be $\HK((G_\bullet/G_{s+1}) / (\Gamma / (\Gamma \cap G_{s+1})))$.  The challenge is to simulate this construction in the setting of general nilspaces.

It is clear that we will need to construct $\pi_s(X)$ as quotients of $X$ in a suitable sense, and we first verify that this makes sense.

\begin{dfn}
  Let $X$ be a cubespace and let $\sim$ be a closed equivalence relation on $X$.
  Write $\pi:X\to X/\sim$ for the corresponding quotient map.

  Then we define a cubespace structure on $X/\sim$, the {\bf quotient cubespace}, by declaring a configuration $c\in (X/\sim)^{\{0,1\}^\ell}$
  to be a cube if and only if there is a cube $c'\in C^{\ell}(X)$ such that $\pi(c')=c$.
\end{dfn}
It is easy to verify that $X/\sim = \pi(X)$ as constructed is indeed a cubespace.

Resuming the above discussion: the key feature of $\pi_s(G/\Gamma)$ for a Host--Kra nilspace is that it has degree $s$, and is in some sense the largest quotient with this property.  In cubespace language, this states that $\pi_s(G/\Gamma)$ has $(s+1)$-uniqueness.

So, for $X$ a general cubespace,
our task is to find an equivalence relation $\sim_s$ on $X$ such that the quotient $X/\sim_s$ has $(s+1)$-uniqueness,
and it is the smallest relation with this property.  The correct definition is as follows.

\begin{dfn}
  Given a cubespace $X$ and $s \ge 0$, define a relation $\sim_s$ on $X$ as follows: $x\sim_i y$ if and only if there are two cubes $c_1,c_2\in C^{i+1}(X)$
  such that $c_1(\o)=c_2(\o)$ for $\o\neq\vec 1$, $c_1(\vec 1)=x$ and $c_2(\vec 1)=y$.

  We call $\sim_s$ it the $s$-th {\bf canonical equivalence relation} on $X$.
\end{dfn}
This may be compared with \cite{CS12}*{Definition 2.3}.

It is clear that if $X/\sim_s$ is to have $(s+1)$-uniqueness, then $\sim_s$ must contain at least these pairs of points.
What is much less obvious is that this definition does indeed give rise to a closed equivalence relation, and that the corresponding quotient is a nilspace whenever $X$ is.
However, all this is proved in \cite{GMV1}*{Proposition 6.3}, 
following \cite{CS12}*{Section 2.4} and \cite{HK08}*{Section 3.3} closely.

Note that $\sim_s$ is the trivial relation if and only if $X$ already has $(s+1)$-uniqueness.

The canonical equivalence relation $\sim_s$ has the following alternative definition, whose equivalence is
proven in \cite{GMV1}*{Lemma 6.6} 
(following \cite{CS12}*{Lemma 2.3} and \cite{HK08}*{Proposition 3}).
Recall from Section \ref{sc:non-Ab} that we denote by $\llcorner^s(x;y)$ the configuration $\o\mapsto x$
for $\o\neq\vec1$ and $\vec1\mapsto y$.

\begin{lem}\label{lem:alter-canonical}
  Let $X$ be a fibrant cubespace.
  Then $x\sim_s y$ if and only if $\llcorner^{s+1}(x;y)$ is a cube.
\end{lem}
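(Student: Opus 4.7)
The forward direction ($\llcorner^{s+1}(x;y)\in C^{s+1}(X)\Rightarrow x\sim_s y$) is immediate. Take $c_1$ to be the constant $(s+1)$-configuration with value $x$, which lies in $C^{s+1}(X)$ by cube invariance applied to the $0$-cube $x\in C^0(X)=X$ via the unique morphism $\{0,1\}^{s+1}\to\{0,1\}^0$, and set $c_2=\llcorner^{s+1}(x;y)$. These two cubes agree on $\{0,1\}^{s+1}\setminus\{\vec 1\}$ (both taking value $x$ there) and have $\vec 1$-values $x$ and $y$ respectively, witnessing $x\sim_s y$.

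For the reverse direction, given cubes $c_1,c_2\in C^{s+1}(X)$ agreeing off $\vec 1$ with values $x,y$, the plan is to build an $(s+2)$-dimensional corner from $c_1$ and $c_2$, complete it via fibrancy, and then recover $\llcorner^{s+1}(x;y)$ by cube invariance. The base case $s=0$ captures the main idea: given $1$-cubes $c_1=(a,x)$ and $c_2=(a,y)$, the $2$-corner $\lambda\colon\{0,1\}^2\setminus\{\vec 1_2\}\to X$ defined by $\lambda(0,0)=a$, $\lambda(0,1)=y$, $\lambda(1,0)=x$ has lower faces $(a,y)=c_2$ and $(a,x)=c_1$ in $C^1(X)$, so fibrancy provides a completion $\tilde\lambda\in C^2(X)$; composing with the antidiagonal morphism $\phi(\omega)=(1-\omega,\omega)$ yields $\tilde\lambda\circ\phi=(x,y)=\llcorner^1(x;y)\in C^1(X)$.

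For general $s$, I would proceed by induction on $s$. Restricting $c_1,c_2$ to the face $\omega_{s+1}=1$ gives $s$-cubes $e_1,e_2\in C^s(X)$ agreeing off $\vec 1_s$ with values $x,y$, so by the inductive hypothesis $\llcorner^s(x;y)\in C^s(X)$. To lift to dimension $s+1$, one constructs an $(s+2)$-corner $\lambda$ with $\lambda|_{\omega_{s+2}=0}=c_1$ and $\lambda|_{\omega_{s+1}=0}$ equal to $c_2$ viewed on the coordinates $(\omega_1,\ldots,\omega_s,\omega_{s+2})$; these prescriptions are consistent on the $s$-face overlap $\omega_{s+1}=\omega_{s+2}=0$ precisely because $c_1$ and $c_2$ coincide off $\vec 1_{s+1}$. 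One then fills in the remaining $2^s-1$ vertices $(\omega_1,\ldots,\omega_s,1,1)$ with $(\omega_1,\ldots,\omega_s)\ne\vec 1_s$ by recursive auxiliary applications of fibrancy, chosen so that all of the remaining lower $(s+1)$-faces $\omega_i=0$ ($i\le s$) are genuine $(s+1)$-cubes. Completing $\lambda$ to $\tilde\lambda\in C^{s+2}(X)$ and applying an appropriate cube-invariance morphism --- together with, if necessary, the glueing property, which fibrant cubespaces automatically satisfy by \cite{GMV1}*{Proposition 6.2} --- exhibits $\llcorner^{s+1}(x;y)$ as an $(s+1)$-cube.

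The main obstacle is the bookkeeping in the inductive step: one has to arrange the auxiliary fibrancy completions along the ``mixed'' lower faces so that the final piece extracted from $\tilde\lambda$ is exactly $\llcorner^{s+1}(x;y)$ and not some variant with modified values at interior vertices. This is the point where the inductive hypothesis $\llcorner^s(x;y)\in C^s(X)$ (applied to appropriate $s$-dimensional restrictions of the auxiliary corners) and the glueing property work in tandem to ensure the desired rigidity of the construction.
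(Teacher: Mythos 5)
The ``forward'' (easy) direction and the base case $s=0$ of the hard direction are both correct as written: the $2$-corner $\lambda(0,0)=a,\ \lambda(0,1)=y,\ \lambda(1,0)=x$ has lower faces $c_2$ and $c_1$, so it completes, and the antidiagonal morphism $\phi(\omega)=(1-\omega,\omega)$ really does extract $(x,y)$ because its image $\{(1,0),(0,1)\}$ consists exactly of the two controlled (non-$\vec{1}$) vertices.

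The inductive step, however, has a genuine gap that is not just ``bookkeeping''. Two problems arise for $s\ge 1$. First, the filling step: once $c_1$ and a reparametrized $c_2$ are placed on the two faces $\omega_{s+2}=0$ and $\omega_{s+1}=0$, the remaining $2^s-1$ vertices $(\omega_1,\dots,\omega_s,1,1)$ must be chosen so that \emph{all} $s$ of the faces $\omega_i=0$ ($i\le s$) are simultaneously $(s+1)$-cubes. Corner completion fills one missing vertex at a time and gives no control over which value it produces, so ``recursive auxiliary applications of fibrancy'' do not obviously satisfy $s$ independent cube constraints at once; this is an over-determined system, not a routine iteration. Second, and more fundamentally, the extraction step fails once the corner is completed. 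The natural generalization of your antidiagonal, $\phi(\omega_1,\dots,\omega_{s+1})=(\omega_1,\dots,\omega_s,1-\omega_{s+1},\omega_{s+1})$, sends $\vec{1}_{s+1}\mapsto(\vec{1}_s,0,1)$, which does give $c_2(\vec{1})=y$; but for $\omega\ne\vec{1}$ it lands on vertices whose values are $c_1(\omega_1,\dots,\omega_s,1)$ or $c_2(\omega_1,\dots,\omega_s,1)$, and these are not equal to $x$ in general (only the top vertex of $c_1$ is known to be $x$). In fact what this restriction yields is $[e_1,e_2]\in C^{s+1}(X)$ where $e_i = c_i|_{\omega_{s+1}=1}$, which is a genuinely useful fact but is \emph{not} $\llcorner^{s+1}(x;y)$ --- it merely reproduces the original hypothesis in a slightly different form. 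In the base case the image of $\phi$ has only two points and both are controlled; for $s\ge 1$ the image has $2^{s+1}$ points and $2^{s+1}-2$ of them are not. No morphism of discrete cubes $\{0,1\}^{s+1}\to\{0,1\}^{s+2}$ fixes this, since any morphism avoiding $\vec{1}_{s+2}$ must still hit many uncontrolled vertices. The appeal to ``glueing $+$ the inductive hypothesis'' is a gesture at the right kind of tool, but you would need to show, e.g., that $[\square^s(x),\llcorner^s(x;y)]$ is a cube, and this does not follow from $\llcorner^s(x;y)\in C^s(X)$ alone.

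For comparison: the paper delegates this lemma to \cite{GMV1}*{Lemma 6.6} (following \cite{HK08}*{Proposition 3} and \cite{CS12}*{Lemma 2.3}), and the cleanest route there is via the universal replacement property (Lemma \ref{lem:replacement} here, proved as \cite{GMV1}*{Proposition 6.3}). Once replacement is available, the hard direction is a one-liner: apply it to $c=\square^{s+1}(x)$ and $c'=\llcorner^{s+1}(x;y)$, which agree off $\vec{1}$ and satisfy $c(\vec{1})=x\sim_s y=c'(\vec{1})$, so $c'\in C^{s+1}(X)$. The real content is in proving replacement, and that argument is not a direct high-dimensional corner construction of the kind sketched here.
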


This alternative characterization of $\sim_s$ shows that
if $X$ is a fibrant dynamical cubespace induced by a group $G$ acting on $X$, then $\sim_s=\RP_G^s$.

Now we can explain why $\RP_G^s$ is trivial on a degree $s$ nilsystem, as we claimed
at the end of Section \ref{sc:RP} for $\mathbb{Z}$-systems and in Section \ref{sc:non-Ab} for general $G$-actions.
In this case, the dynamical cubes form a Host--Kra cubespace in the above sense, which is a
nilspace of degree at most $s$ by \cite{GMV1}*{Proposition 2.6}. 
Hence, $\sim_s$ is trivial (by the above remarks) and it follows that $\RP_G^s$ is, since they agree.

The canonical equivalence relation has the following {\bf universal replacement}
property proved in \cite{GMV1}*{Proposition 6.3} 
(see also \cite{CS12}*{Lemma 2.5} and \cite{HK08}*{Proposition 3}).
\begin{lem} \label{lem:replacement}
  Let $X$ be a fibrant cubespace.
  Let $c\in C^{\ell}(X)$ for some $\ell\le s+1$, and
  suppose that $c'\in X^{\{0,1\}^\ell}$ is a configuration such that $c(\o)\sim_s c'(\o)$
  for all $\o\in\{0,1\}^\ell$.
  Then $c'\in C^\ell(X)$.
\end{lem}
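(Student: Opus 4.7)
My plan is to reduce the lemma to a single-vertex replacement in dimension $s+1$ and then carry out a careful higher-dimensional construction.

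First I would reduce to replacing one vertex: iterating the single-vertex case through the $2^\ell$ vertices of $\{0,1\}^\ell$ handles arbitrary $c'$. By cube-invariance under coordinate reflections I may further assume the vertex being altered is $\vec 1$. Next, I would reduce to the case $\ell = s+1$. If $\ell < s+1$, apply the morphism $\pi\colon\{0,1\}^{s+1}\to\{0,1\}^\ell$ projecting onto the first $\ell$ coordinates; by cube-invariance $c\circ\pi\in C^{s+1}(X)$ and $(c\circ\pi)(\vec 1)=c(\vec 1)$. If we can modify the $\vec 1$-vertex of $c\circ\pi$ to $y$ and remain in $C^{s+1}(X)$, restricting to the face $\{\omega_{\ell+1}=\cdots=\omega_{s+1}=1\}$ recovers exactly $c'$, again by cube-invariance. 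So the task reduces to: given $c\in C^{s+1}(X)$ and $y$ with $y\sim_s c(\vec 1)$, show that the configuration $c'$ differing from $c$ only at $\vec 1$ (where it takes value $y$) lies in $C^{s+1}(X)$.

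For this core case, Lemma \ref{lem:alter-canonical} rewrites the hypothesis as $\llcorner^{s+1}(c(\vec 1);y)\in C^{s+1}(X)$. My strategy is to produce an $(s+2)$-cube $D$ with $D|_{\omega_{s+2}=0}=c$ and $D|_{\omega_{s+2}=1}=c'$, after which cube-invariance yields $c'\in C^{s+1}(X)$. First I would assemble an $(s+2)$-corner $\Lambda$ whose $\omega_{s+2}=0$ face is $c$ and whose codimension-one faces $\omega_i=0$ (for $i\le s+1$) are the duplications $[c|_{\omega_i=0},c|_{\omega_i=0}]$; these are legitimate $(s+1)$-cubes by cube-invariance, since the face $\omega_i=0$ of $\{0,1\}^{s+1}$ omits $\vec 1$ and therefore $c|_{\omega_i=0}=c'|_{\omega_i=0}$. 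Fibrancy then completes $\Lambda$ to some $(s+2)$-cube, but a priori the value at the apex $(\vec 1_{s+1},1)$ is only guaranteed to be some $z$ in the $\sim_s$-class of $c(\vec 1)$, not the prescribed $y$.

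The main obstacle is therefore to force the apex value to equal $y$, and this is exactly where the explicit witness $\llcorner^{s+1}(c(\vec 1);y)$ must enter. My plan is to construct two $(s+2)$-cubes--one concatenating $c$ with ``$c$ with top replaced by $z$'' and another concatenating this with $c'$, the latter built from $\llcorner^{s+1}(c(\vec 1);y)$ via further cube-invariance and corner completion--and then to invoke the glueing property of fibrant cubespaces (\cite{GMV1}*{Proposition 6.2}) to produce $[c,c']\in C^{s+2}(X)$, from which restriction to the top face recovers $c'\in C^{s+1}(X)$. The delicate part is verifying that these intermediate concatenations really are $(s+2)$-cubes, i.e.\ establishing a stacking principle for $(s+1)$-cubes that agree off $\vec 1$; this is the technical heart of the lemma, and in practice it is proved hand-in-hand with the fact that $\sim_s$ is an equivalence relation, as in \cite{GMV1}*{Proposition 6.3}.
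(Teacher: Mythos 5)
Your reduction steps are all correct: one may indeed replace a single vertex at a time, move the modified vertex to $\vec 1$ by a reflection, and pass from $\ell < s+1$ to $\ell = s+1$ by composing with the coordinate projection and later restricting to the face $\{\omega_{\ell+1}=\dots=\omega_{s+1}=1\}$. These are worth spelling out and you do them carefully.

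The core case, however, is based on a false claim. Your plan is to produce $[c,c'] \in C^{s+2}(X)$ by glueing $[c,c_z]$ with $[c_z,c']$, and then restrict to the top face. But $[c,c']$ need not be an $(s+2)$-cube, so this cannot succeed. Concretely, take $X = \mathcal{D}_{s+1}(A)$ for a nontrivial compact abelian group $A$: this is a fibrant ergodic cubespace, and $\sim_s$ is the full relation since $X$ is $(s+1)$-ergodic. Take $c$ to be the constant $(s+1)$-cube at $0$ and $c' = [a]_{\vec 1}.c$ for some $a\neq 0$. The hypotheses of the lemma hold trivially (and $c'$ is indeed a cube), yet
\[
\sum_{\omega\in\{0,1\}^{s+2}}(-1)^{|\omega|}\,[c,c'](\omega) = (-1)^{s+2}a \neq 0,
\]
so $[c,c'] \notin C^{s+2}(X)$. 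The same example refutes the ``stacking principle for $(s+1)$-cubes that agree off $\vec 1$'' that you identify as the technical heart: two such cubes need not stack to an $(s+2)$-cube. There is also a logical circularity built into the plan, since writing $[c_z,c']$ as an $(s+2)$-cube already presupposes that $c'$ is a cube (faces of cubes are cubes), so the glueing step does no work even when it applies.

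Note the paper does not supply a proof here; it cites \cite{GMV1}*{Proposition 6.3}. That argument must avoid the false intermediate claim: it cannot proceed by exhibiting an $(s+2)$-cube with faces $c$ and $c'$ over the same missing vertex, and instead exploits the witness cubes $d_1,d_2$ supplied by the definition of $\sim_s$ directly, proved jointly with the fact that $\sim_s$ is an equivalence relation. So the gap is not merely in the details you deferred; the strategy you propose to carry out cannot be made to work.
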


Finally, we summarize our notation for these constructions.
If $X$ is a fibrant cubespace,
we call $X/\sim_s$ the $s$-th {\bf canonical factor} of $X$.
The quotient map is denoted $\pi_s \colon X \to X/\sim_s$, and we also use the notation $\pi_s(X)$ to denote the quotient space.

\subsection{Structure groups}\label{sc:structure-groups}

We state the first structure theorem for nilspaces in this section, which
is proved in  \cite{GMV1}*{Theorem 5.4} 
(see also \cite{CS12}*{Theorem 1}, and also \cite{HK08}*{Section 5} for a closely related discussion).

\begin{thm}[Weak Structure Theorem]\label{th:weak-structure}
Let $X$ be a compact ergodic nilspace of degree $s$.
Then there is a compact abelian group $A = A_s(X)$ (notated additively) acting continuously and freely on $X$, such that the following hold.
\begin{enumerate}
\item The orbits of $A$ coincide with the fibres of $\pi_{s-1}$, the $(s-1)$-th canonical projection.
\item Let $c_1\in C^{\ell}(X)$ and $c_2:\{0,1\}^\ell\to X$ be such that $\pi_{s-1}(c_1)=\pi_{s-1}(c_2)$.
  Denote by $a:\{0,1\}^{\ell}\to A$ the unique configuration in $A$ such that $a(\o).c_1(\o)=c_2(\o)$ for all $\o \in \{0,1\}^\ell$.
Then $c_2\in C^\ell(X)$ if and only if $a\in C^{\ell}(\mathcal{D}_s(A))$.
\end{enumerate}
\end{thm}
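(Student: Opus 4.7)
The plan is to realize $A_s(X)$ concretely as the group of \emph{structure-preserving translations} of $X$, namely the cubespace automorphisms $\alpha\colon X\to X$ satisfying $\alpha(x)\sim_{s-1}x$ for every $x\in X$. Denote this set by $A$; with composition and the topology of uniform convergence it is a topological group acting on $X$ by cube-preserving homeomorphisms, and each orbit is contained in a single $\sim_{s-1}$-class.

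The central claim from which everything else flows is that for every $x,y\in X$ with $x\sim_{s-1}y$ there is a \emph{unique} $\alpha\in A$ with $\alpha(x)=y$. Uniqueness rests on $(s+1)$-uniqueness: if $\alpha_1(x)=\alpha_2(x)$, then for any $z\in X$ the two candidate $(s+1)$-cubes one builds to specify $\alpha_i(z)$ agree on $\{0,1\}^{s+1}\setminus\{\vec 1\}$, and so must coincide. For existence, given $z$, use fibrancy and ergodicity to produce an $s$-cube containing both $x$ and $z$, concatenate it (in a new $(s+1)$-th direction) with the cube $\llcorner^s(x;y)\in C^s(X)$ available by Lemma~\ref{lem:alter-canonical}, apply corner completion to obtain an $(s+1)$-cube, and read off $\alpha(z)$ as its remaining vertex. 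That this $\alpha$ is a homeomorphism, is a cubespace morphism, and satisfies $\alpha(x')\sim_{s-1}x'$ for every $x'$ will follow from several further applications of corner completion, Lemma~\ref{lem:replacement}, and $(s+1)$-uniqueness. From this existence--uniqueness pair, freeness of the $A$-action is immediate, the orbits coincide with the $\sim_{s-1}$-classes, and compactness of $A$ follows by identifying $A$, via the orbit map at a fixed base point, with a closed subset of $X$. Abelianness is then proved by constructing, for $\alpha_1,\alpha_2\in A$ and $x\in X$, an $(s+1)$-cube whose structure places $\alpha_1\alpha_2(x)$ and $\alpha_2\alpha_1(x)$ at the same vertex; $(s+1)$-uniqueness concludes.

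For the cube characterisation, the ``only if'' direction proceeds by pulling back along any morphism $\phi\colon\{0,1\}^{s+1}\to\{0,1\}^\ell$: the cubes $c_1\circ\phi$ and $c_2\circ\phi$ share a $\pi_{s-1}$-image, and the uniqueness established above forces the associated translation $a\circ\phi$ to satisfy the single defining relation of $\mathcal{D}_s(A)$, whence $a\in C^\ell(\mathcal{D}_s(A))$. Conversely, given $a\in C^\ell(\mathcal{D}_s(A))$ and $c_1\in C^\ell(X)$, modify $c_1$ one vertex at a time by applying the prescribed element of $A$, using Lemma~\ref{lem:replacement} to preserve cube-hood at each step; the $\mathcal{D}_s(A)$-condition ensures consistency with all relevant $(s+1)$-subcubes, and the procedure terminates with $c_2\in C^\ell(X)$.

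The main obstacle is the existence half of the central claim: $\alpha$ must be produced as a continuous map of the full space $X$ preserving the complete cubespace structure, not just as an assignment on a single fibre. Here the combination of fibrancy (providing corner completions), $(s+1)$-uniqueness (enforcing rigidity of those completions), and the universal replacement lemma (allowing vertex-by-vertex modification of cubes within $\sim_{s-1}$-classes) is precisely what makes the construction go through; once this step is carefully verified, the remainder of the argument is largely formal.
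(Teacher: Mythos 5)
The conceptual framework you adopt is close to the paper's in spirit, but the definition of $A$ you start from is too broad, and this breaks the central uniqueness claim on which everything else is made to rest.

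You define $A$ as the set of cubespace automorphisms $\alpha\colon X\to X$ with $\alpha(x)\sim_{s-1}x$ for every $x$. Consider $X=\mathcal{D}_s(\mathbb{T})$, which is a compact ergodic nilspace of degree $s$ with $\pi_{s-1}(X)$ a single point, so the condition $\alpha(x)\sim_{s-1}x$ is vacuous. The inversion $\alpha\colon x\mapsto -x$ is a cubespace automorphism of $\mathcal{D}_s(\mathbb{T})$ (the defining alternating-sum identity is visibly preserved under negation), so $\alpha\in A$ by your definition. But then both $\mathrm{id}$ and $\alpha$ fix the point $0$, so your claim that ``for every $x\sim_{s-1}y$ there is a \emph{unique} $\alpha\in A$ with $\alpha(x)=y$'' is false: uniqueness already fails at $x=y=0$. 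With uniqueness gone, freeness of the action fails ($\alpha$ has a fixed point without being the identity), the orbit map $\alpha\mapsto\alpha(x_0)$ is no longer injective, and your route to compactness of $A$ collapses. The proposed uniqueness argument --- that the ``two candidate $(s+1)$-cubes one builds to specify $\alpha_i(z)$'' must coincide --- implicitly assumes that every $\alpha\in A$ is of the form produced by your corner-completion construction; but nothing in your definition of $A$ forces a general $\alpha$ to arise that way, and the inversion example shows it need not.

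The paper sidesteps this by defining $A$ not as a class of abstract automorphisms but as the quotient $Y/\!\approx$, where $Y=\{(x,y):x\sim_{s-1}y\}$ and $(x,y)\approx(x',y')$ iff $[\llcorner^s(x;y),\llcorner^s(x';y')]\in C^{s+1}(X)$. The transformation attached to a class $[(x,y)]$ sends $z$ to the unique $z'$ with $(z,z')\approx(x,y)$; this \emph{is} your corner-completion construction, but by making it the definition the paper builds in exactly the ``affine/translation'' rigidity that your abstract class of automorphisms lacks. If you want to keep an automorphism-group picture, you would need to restrict a priori to translations in the sense of $\Aut_s(X)$ (i.e.\ $\alpha$ such that $[\alpha]_F.c$ is a cube for every cube $c$ and every face $F$ of codimension $s$), or equivalently take $A$ to consist precisely of the maps produced by your own construction, and then verify closure under composition and inversion directly. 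As written, the step from ``cubespace automorphism preserving $\sim_{s-1}$'' to ``determined by its value at one point via $(s+1)$-uniqueness'' is a genuine gap, not a detail to be filled in.
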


It is worth spelling out the meaning of item $2$ in a few special cases.
If $\ell=s+1$, then the condition $a\in C^{\ell}(\mathcal{D}_s(A))$ is equivalent to
\[
\sum_{\o\in\{0,1\}^{s+1}}(-1)^{|\o|}a_\o=0.
\]

If $l<s+1$, then any configuration $c:\{0,1\}^\ell\to X$ is a cube provided $\pi(c)\in C^\ell(\pi(X))$, because
$\mathcal{D}_s(A)$ is $s$-ergodic.  Note this is consistent with Lemma \ref{lem:replacement}.

If we consider cubes contained in a single fibre of $\pi_{s-1}$, we see that they admit a free and transitive action by $C^\ell(\mathcal{D}_s(A))$.  Equivalently, the subcubspaces of $X$ induced by the fibres of $\pi_{s-1}$ are all isomorphic copies of $\mathcal{D}_s(A)$.

Using the characterization of $\mathcal{D}_s(A)$ in terms of Host--Kra cubegroups, we obtain the
following.
If $c\in C^{\ell}(X)$ is a cube, and $F$ is a face of codimension at most $s$, then
$[a]_F.c$ is also a cube for all $a\in A$.
This holds in particular when $\ell=s+1$ and $F$ is an edge, i.e.~a face of dimension $1$.

The weak structure theory constitutes progress towards a structure theorem for nilspaces, for the following reasons.
\begin{itemize}
  \item By part (1), we have that $X$ is an $A$-principal bundle over $\pi_{s-1}(X)$.  Iterating this procedure on $\pi_{s-1}(X)$, we realize $X$ as a tower of extensions by compact abelian groups, terminating in $\pi_0(X)$, the $1$-point space, which constitutes fairly strong information about the structure of $X$.
  \item In order to recover the cubes of $X$ given knowledge of $\pi_{s-1}(X)$, it suffices to exhibit a single $(s+1)$-cube lying above each $(s+1)$-cube of $\pi_{s-1}(X)$, since part (2) then gives us all such cubes.  Again, we can iterate this on $\pi_{s-1}(X)$ to obtain a full description of the cubespace.
\end{itemize}
We call the group $A = A_s(X)$ the {\bf top structure group} of $X$. Also, we define $A_t(X) := A_t(\pi_t(X))$, the top structure group of the canonical factor $\pi_{t}(X)$ for $0 \le t \le s$, and call it the $t${\bf -th structure group} of $X$.

The proof of Theorem \ref{th:weak-structure} is given in the paper \cite{GMV1}, we only recall here how the group $A$ is constructed.
Recall that we denote by $\llcorner^\ell(x;y)$ the $\ell$-configuration all of whose vertices are $x$ except for the one labelled
by $\vec 1$, which is $y$.
Recall also the notation $[c_1,c_2]$ from Section \ref{sc:cubes},
which denotes the concatenation of the configurations  $c_1$ and $c_2$.

We consider the set
\[
Y=\{(x,y)\in X\times X:x\sim_{s-1} y\}
\]
and introduce a relation $\approx$ on $Y$, given by
setting $(x,y)\approx(x',y')$ if and only if $[\llcorner^{s}(x;y),\llcorner^{s}(x';y')]\in C^{s+1}(X)$.

It is shown in \cite{GMV1} that $\approx$ is a closed equivalence relation; hence we define
$A$ to be the quotient $Y/\approx$.
One can argue using $(s+1)$-uniqueness that, given $x \in X$, each equivalence class of $\approx$ has
an unique representative of the form $(x,y)$ for some $y$.
Hence each element of $A$ can be identified with the graph of a transformation on $X$, and we
use this identification to define simultaneously the group law on $A$ and its action on $X$.

\subsection{Lie-fibered nilspaces}

We say that a nilspace $X$ is {\bf Lie-fibered} if the structure groups $A_i(X)$ of $X$ defined in
the previous section are all Lie groups.
We recall the main result of the paper \cite{GMV2} below, which classifies Lie-fibered nilspaces under the additional technical assumption that  $C^k(X)$
is connected for each $k$.
We call nilspaces satisfying this latter property {\bf strongly connected}.

We say that a homeomorphism $f$ of $X$ is an $i$-{\bf translation} if
$[f]_F.c\in C^{\ell}(X)$ for each $c\in C^\ell(X)$
and any face $F\subseteq \{0,1\}^\ell$ of the discrete cube of codimension $i$.  It follows in particular that $f$ is a cubespace morphism $X \to X$.
It is clear that the set of $i$-translations, endowed with the maximum displacement
metric, forms a topological group, which we denote by $\Aut_i(X)$.
The notion of translations originate from the work of Host and Kra \cite{HK08}*{Definition 6}
and they play a prominent role in the program of
Antol\'\i n Camarena and Szegedy \cite{CS12}.

It is easy to verify from the definitions that the groups $\Aut_i(X)$ are nested, and form a (proper) filtration of the group $\Aut_1(X)$, by
\[
  \Aut_1(X) \supseteq \Aut_1(X) \supseteq \Aut_2(X) \supseteq \dots  \ .
\]
The reason for $\Aut_1(X)$ appearing twice in the above line is that $\Aut_1(X)$ is both the index $0$ and the index $1$ element of the filtration.
We denote this filtered group by $\Aut_\bullet(X)$.

Let $G_\bullet$ be a filtered Lie group with a compatible discrete cocompact subgroup $\Gamma$.
It is a direct consequence of the definitions that
the elements of $G_i$ are $i$-translations on the Host--Kra nilspace $\HK(G_\bullet)/\Gamma$.
Therefore, if it is possible to represent a nilspace $X$ as a Host--Kra nilspace, then it must be possible to
locate the filtered group $G_\bullet$ inside the filtered group $\Aut_\bullet(X)$.
The next result confirms that, in the case of Lie-fibered strongly connected nilspaces,
it is sufficient simply to take $G_i = \Aut_i^\circ(X)$, the connected component of the identity in $\Aut_i(X)$.
For a proof, see \cite{GMV2}*{Theorem 2.18} 
(and see also \cite{CS12}*{Theorem 7}).

\begin{thm}\label{th:toral}
Let $X$ be a compact ergodic Lie-fibered strongly connected nilspace of degree $s$.
Fix a point $x_0\in X$.
Then $G=\Aut^\circ_1(X)$ is a Lie group which admits a filtration
\[
G_\bullet:\,G=G_0=\Aut^\circ_1(X)\supseteq \Aut^\circ_2(X)\supseteq\ldots \supseteq \Aut^\circ_{s+1}(X)=\{1\}
\]
of degree $s$ and a discrete subgroup
$\Gamma=\Stab(x_0)\subseteq G$ compatible with the filtration, such that the map
\begin{align*}
  G/\Gamma &\to X \\
  f\cdot \Ga &\mapsto f(x_0)
\end{align*}
is an isomorphism of cubespaces between $\HK(G_\bullet)/\Gamma$
and $X$.
\end{thm}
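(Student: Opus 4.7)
The plan is to proceed by induction on the degree $s$. The base case $s=0$ is trivial since an ergodic nilspace of degree $0$ is a single point. For the inductive step, I would first apply the theorem to the canonical factor $\pi_{s-1}(X)$, which is a compact ergodic Lie-fibered strongly connected nilspace of degree $s-1$ (the property of being strongly connected passes to $\pi_{s-1}(X)$ because $\pi_{s-1}$ is a fibration and images of connected sets are connected). This yields $\pi_{s-1}(X) \cong \HK(G'_\bullet)/\Gamma'$ with $G'=\Aut_1^\circ(\pi_{s-1}(X))$ and $\Gamma'=\Stab(\pi_{s-1}(x_0))$. I would then invoke the Weak Structure Theorem (Theorem~\ref{th:weak-structure}) to obtain the top structure group $A = A_s(X)$: a compact abelian Lie group acting freely on $X$ with orbits equal to the $\pi_{s-1}$-fibres. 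Part (2) of the weak structure theorem implies $A$ acts by $s$-translations, and strong connectedness forces the fibres (hence $A$) to be connected, so $A\subseteq \Aut_s^\circ(X)$.

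The heart of the argument is to build $G=\Aut_1^\circ(X)$ as a Lie group fitting into a short exact sequence
\[
1 \to A \to \Aut_1^\circ(X) \to G' \to 1,
\]
and, more generally, to match $\Aut_i^\circ(X)$ with the preimage of $\Aut_i^\circ(\pi_{s-1}(X))$ for $i\le s$. To lift an element $g'\in G'$ to a $1$-translation of $X$, I would use the fact that given any choice $y_0$ of point in the fibre above $g'.\pi_{s-1}(x_0)$, the map $g'$ can be transported through $1$-cubes (which connect $x_0$ to arbitrary points of $X$ by $1$-ergodicity), with ambiguity controlled by the abelian $A$-action and part (2) of Theorem~\ref{th:weak-structure}. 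The set of such lifts is an $A$-torsor over $G'$, and one assembles a global Lie group structure on $G$ from local continuous sections. The filtration property $[\Aut_i^\circ(X),\Aut_j^\circ(X)]\subseteq \Aut_{i+j}^\circ(X)$ is then verified by a direct cube-invariance argument parallel to the Host--Kra cubegroup computation in Section~\ref{sc:HK}: a codimension-$(i+j)$ face is the intersection of a codimension-$i$ face $F_1$ and a codimension-$j$ face $F_2$, and the identity $[[f]_{F_1},[g]_{F_2}]=[[f,g]]_{F_1\cap F_2}$ applies.

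Transitivity of $G$ on $X$ follows by composing the (inductive) transitivity of $G'$ on $\pi_{s-1}(X)$ with the transitivity of $A$ on each fibre, both of which lift to $G$. Discreteness of $\Gamma=\Stab(x_0)$ is a consequence of the fact that the orbit map $G\to X$ is surjective between finite-dimensional Lie manifolds whose dimensions match by the extension; compatibility of $\Gamma$ with the filtration, i.e.~that $\Gamma\cap\Aut_i^\circ(X)$ is a discrete cocompact subgroup of $\Aut_i^\circ(X)$, follows by applying the same dimension count to the nested quotients in the tower $X\to \pi_{s-1}(X)\to\dots\to\pi_0(X)$. Finally, the map $f\Gamma\mapsto f(x_0)$ is a continuous bijection between compact spaces, hence a homeomorphism; that it is a cubespace isomorphism follows because cubes in $\HK(G_\bullet)/\Gamma$ are generated (under the cube-group action) by constant configurations, and these correspond exactly to cubes in $X$ by part (2) of the weak structure theorem applied inductively to each stage of the tower.

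The main obstacle is the lifting step in the second paragraph: producing enough $1$-translations of $X$ to cover $G'$. A candidate lift constructed pointwise from a single orbit can be shown to preserve $1$-cubes via the weak structure theorem, but being a $1$-translation requires preservation of $\ell$-cubes for \emph{all} $\ell$, which is a strictly stronger condition. Checking it amounts to a careful verification that the lift intertwines the free $A$-action on higher-dimensional cubes fibered over $\pi_{s-1}$-cubes, and this is precisely where the Lie-fibered hypothesis (giving smoothness) and strong connectedness (giving a connected global structure) are both essential.
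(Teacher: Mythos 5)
Your high-level strategy matches the one in the paper (more precisely, in \cite{GMV2}, where this theorem is actually proved): induction on degree, realize $\pi_{s-1}(X)$ as a nilmanifold, use the weak structure theorem to understand fibres of $\pi_{s-1}$, and exhibit $\Aut_1^\circ(X)$ as an extension of $\Aut_1^\circ(\pi_{s-1}(X))$ by (the connected component of) the top structure group $A$. However, you have not addressed the single hardest step of the proof, and in fact you explicitly defer it as ``the main obstacle'' without resolving it.

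The crucial issue is the surjectivity of $\pi_{s-1}^*:\Aut_1^\circ(X)\to\Aut_1^\circ(\pi_{s-1}(X))$. Your proposed mechanism --- ``transport through $1$-cubes'' --- cannot work: an ergodic nilspace has $C^1(X) = X^{\{0,1\}}$, so $1$-cubes carry no constraint at all and provide no data with which to transport anything. More importantly, even if you construct (as the actual proof does) a homeomorphism $g$ of $X$ commuting with the $A$-action and lying above a given $f\in\Aut_1^\circ(\pi_{s-1}(X))$, this $g$ is in general \emph{not} a translation, and no amount of ``careful verification'' will make it one. The genuine content of the proof is to \emph{correct} $g$ by a continuous function $\a:X\to A$ so that $x\mapsto\a(x).g(x)$ is a translation. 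The weak structure theorem converts this requirement into the cocycle functional equation $\partial^s\a(c)=D(c)$ for all $c\in C^s(X)$, where $D$ measures the deviation of $g$ from being a translation, and solving this equation is exactly Theorem~\ref{th:functional}, which is a substantial theorem in its own right. None of this appears in your argument.

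Two further points. First, Theorem~\ref{th:functional} only produces solutions when the cocycle is \emph{small}; this is why the proof restricts to lifting translations $f$ of small displacement, and then invokes connectedness of $\Aut_1^\circ(\pi_{s-1}(X))$ to generate the whole group from small elements. Your proposal to lift \emph{arbitrary} $g'\in G'$ and ``assemble a global Lie group structure from local continuous sections'' skips over exactly this issue. Second, your parenthetical assertion that strong connectedness forces $A$ to be connected is not at all obvious; as the paper remarks, the implication ``strongly connected $\Rightarrow$ structure groups are tori'' is in fact a \emph{consequence} of the structure theorem rather than an input to it, and the proof instead works with $A^\circ$ and transitivity on connected components of fibres. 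Using the connectedness of $A$ as a lemma risks circularity.
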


In \cite{CS12}*{Theorem 7} the same conclusion is shown to hold
under the assumption that the structure groups of $X$ are all connected, i.e.~are all tori
of various dimensions.
It is easy to see that this condition implies that $X$ is strongly connected.
The other implication -- that strong connectivity implies that the structure groups are tori --
also holds, but it is less obvious.
Indeed, the only proof of which we are aware makes use of the full force of our
structure theorem:
Theorem \ref{th:toral} implies that a strongly connected nilspace with Lie structure
groups is isomorphic to a Host--Kra nilspace of a connected nilpotent Lie group $G$ endowed with a filtration
$\{G_i\}$ of connected subgroups, and it follows from this that the structure groups $(G_i/G_{i+1}) / ((G_i \cap \Gamma) / (G_{i+1} \cap \Gamma))$ are tori.

There is a third possible formulation of Theorem \ref{th:toral}.
One can replace the condition that $X$ is Lie-fibered with suitable topological conditions, e.g.~requiring
that $X$ is locally connected and has finite Lebesgue covering dimension.  These conditions certainly hold whenever $X$ is a topological manifold, which is clearly a necessary condition for $X$ to be isomorphic to a nilmanifold.  See Theorem \ref{th:top-conditions} for further details.

In the setting of nilspaces constructed from a topological dynamical system, as in Section \ref{sc:non-Ab},
we have the following variant.
Recall that a nilsystem is a topological dynamical system $(H,X)$ such that there is a nilpotent
Lie group $G$ and a discrete cocompact subgroup
$\Gamma$ of $G$, such that $(H,X)$ is isomorphic to $(H,G/\Gamma)$, where the action of $H$
on $G/\Gamma$ is induced from a continuous homomorphism $\alpha \colon H\to G$.

\begin{thm}\label{th:Lie-dynamical}
  Let $(H,X)$ be a minimal topological dynamical system.
  Assume that $X$ is locally connected and has finite Lebesgue covering dimension, and also that $\RP_H^s(X)$, the regional proximal relation of order $s$, is trivial for some $s$.

  Then $(H,X)$ is a nilsystem.
\end{thm}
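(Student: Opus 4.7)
The plan is to first identify $(X, C^\ell_H(X))$ as a nilspace of degree at most $s$, then invoke the topological refinement of Theorem \ref{th:toral} to realise it as a Host--Kra nilmanifold $G_0 / \Gamma_0$, and finally enlarge $G_0$ by the image of $H$ in $\Aut_1(X)$ to obtain the ambient nilpotent Lie group through which the action factors.

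Since $\RP_H^s(X)$ is trivial, Theorem \ref{th:dyn-nilspace} tells us that $(X, C^\ell_H(X))$ is an ergodic nilspace of degree at most $s$. The topological hypotheses -- local connectedness and finite covering dimension -- put us in the setting of Theorem \ref{th:top-conditions}, the topological reformulation of Theorem \ref{th:toral}. This supplies a connected nilpotent Lie group $G_0 = \Aut_1^\circ(X)$ of step at most $s$, a compatible discrete cocompact subgroup $\Gamma_0 \subseteq G_0$, and a cubespace isomorphism $G_0 / \Gamma_0 \cong X$ sending the identity coset to a chosen basepoint $x_0$.

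The next step is to place $H$ inside $\Aut_1(X)$ by $1$-translations and to verify that this ambient group is nilpotent of step at most $s$. If $c = \gamma . x^{\{0,1\}^\ell} \in C^\ell_H(X)$ with $\gamma \in \HK^\ell(H)$ and $F$ is a hyperface of $\{0,1\}^\ell$, then $[h]_F$ is by definition a generator of $\HK^\ell(H)$, so $[h]_F . c = ([h]_F \gamma) . x^{\{0,1\}^\ell}$ is again a cube; this yields a continuous homomorphism $\phi \colon H \to \Aut_1(X)$ with respect to the maximum-displacement metric. The commutator identity $[[f]_{F_1}, [g]_{F_2}] = [[f,g]]_{F_1 \cap F_2}$ then gives the filtration property $[\Aut_i(X), \Aut_j(X)] \subseteq \Aut_{i+j}(X)$, and the filtration terminates at $\Aut_{s+1}(X) = \{\Id\}$: given $f \in \Aut_{s+1}(X)$ and $y \in X$, applying $[f]_{\{\vec 1\}}$ to the constant cube at $y$ produces $\llcorner^{s+1}(y; f\cdot y) \in C^{s+1}_H(X)$, so Lemma \ref{lem:alter-canonical} yields $y \sim_s f\cdot y$, and the $(s+1)$-uniqueness of $X$ forces $f = \Id$.

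To conclude, set $G := G_0 \cdot \phi(H)$ and $\Gamma := \Stab_G(x_0)$. Since $G_0$ is normal in $\Aut_1(X)$ as its identity component, $G$ is a subgroup of $\Aut_1(X)$. Granted -- as follows from the analysis of translations of Host--Kra nilspaces in \cite{GMV2} -- that $\Aut_1(X)$ is itself a Lie group in which $G_0$ is open, $G$ is clopen and hence a closed Lie subgroup that is nilpotent of step at most $s$. For each $h \in H$, choose $g_h \in G_0$ with $g_h \cdot x_0 = \phi(h) \cdot x_0$ (possible by transitivity of $G_0$ on $X$); then $g_h^{-1} \phi(h) \in \Gamma$, so $G = G_0 \cdot \Gamma$ and the orbit map $G / \Gamma \to X$ coincides with the homeomorphism $G_0 / \Gamma_0 \cong X$, proving that $\Gamma$ is discrete and cocompact in $G$. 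The continuous homomorphism $\phi \colon H \to G$ exhibits $(H, X)$ as a nilsystem. The principal technical obstacle, I expect, is this final step: combining $\phi(H)$ with the connected structure group $G_0$ into a single Lie group with discrete cocompact stabiliser. The crucial input is the vanishing $\Aut_{s+1}(X) = \{\Id\}$ obtained from $(s+1)$-uniqueness, which alone guarantees that $G$ is nilpotent of step at most $s$.
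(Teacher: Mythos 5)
Your proposal has a genuine gap at the very first substantive step: you apply Theorem \ref{th:top-conditions} to the dynamical nilspace $(X, C^\ell_H(X))$, but that theorem requires the extra hypothesis that $C^n(X)$ is connected for all $n$ (strong connectedness), which is \emph{not} among the hypotheses of Theorem \ref{th:Lie-dynamical} and does not follow from them. Theorem \ref{th:Lie-dynamical} assumes only minimality, local connectedness, finite covering dimension, and triviality of $\RP_H^s$; it is entirely consistent with $X$ being disconnected (e.g.\ a minimal action on a finite union of tori with $H$ permuting the components, or even a minimal action on a finite set), in which case $C^0(X) = X$ already fails to be connected. Even when $X$ is connected, the connectedness of the higher cube sets $C^n_H(X)$ is not guaranteed. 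So the invocation of Theorem \ref{th:top-conditions} -- and hence everything downstream, including the identification of $X$ with $G_0/\Gamma_0$ and the fact that $\Aut_1(X)$ is a Lie group with $G_0$ open -- is unjustified.

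The paper resolves exactly this difficulty by a different route. Instead of trying to realize $X$ directly as a single nilmanifold, it first obtains the inverse-limit representation $(H,X) = \invlim (H, X_n)$ with Lie-fibered $X_n$ via Theorem \ref{th:dyn-alg-struc}, then uses the topological hypotheses to force stabilization: local connectedness yields (Lemma \ref{lm:components}) that $X$ has finitely many clopen connected components, and finite covering dimension bounds the dimensions of the Lie groups appearing; together these imply $\f_{m,n}$ is eventually an isomorphism. The final conclusion is then drawn from Theorem \ref{th:dyn-Lie2}, which is the \emph{dynamical} analogue of Theorem \ref{th:toral} and which replaces the strong-connectedness hypothesis (used there to obtain transitivity of $\Aut_1^\circ(X)$) by the minimality of the $H$-action: transitivity of $G = \langle \Aut_1^\circ(X), \a(H)\rangle$ comes from $H$ acting transitively on the set of connected components. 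Your final step, enlarging $G_0$ by $\phi(H)$, is in the right spirit and is indeed the key idea inside Theorem \ref{th:dyn-Lie2}; the computation that $\Aut_{s+1}(X)=\{\Id\}$ via $(s+1)$-uniqueness is also correct. But without the stabilization argument supplying the Lie-fibered structure on $X$ itself, you cannot get off the ground.
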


The proof of this variant of Theorem \ref{th:toral} is discussed in
Section \ref{sc:proof-dyn-alg-struc} of the appendix. A
similar result is proved in \cite{GMV2}*{Corollary 2.20}, 
where as in Theorem \ref{th:toral} the topological conditions are
replaced by cubespace-theoretic conditions on the dynamical nilspace constructed from the action of $H$.

We recall the main ideas of the proofs of these  theorems from the paper \cite{GMV2}.
The most difficult part of the proof of Theorem \ref{th:toral}
 is in verifying that the group $\Aut_1^\circ(X)$ acts transitively
on $X$. This is proved by induction on the degree of $X$.
It can be seen easily that the canonical projection $\pi_{s-1}$ induces a homomorphism
$\pi_{s-1}^*:\Aut_1^\circ(X)\to\Aut_1^\circ(\pi_{s-1}(X))$.
Furthermore, it can be seen that the kernel of $\pi_{s-1}^*$ contains the connected
component of the top structure group $A$ of $X$,
which acts transitively on the connected components of the fibres of
$\pi_{s-1}$ as we discussed in the previous section.
Therefore, it remains to prove that $\pi_{s-1}^*$ is surjective, from which transitivity follows by these observations and inductive hypothesis on $\pi_{s-1}(X)$.

Fix a small parameter $\ve>0$ and let $f\in\Aut_1^\circ(\pi_{s-1}(X))$ be a translation such that $\dist(x,f(x))<\ve$
for all $x\in\pi_{s-1}(X)$.
We want to show that there is a translation $\wt f\in\Aut_1^\circ(X)$ such that $\pi_{s-1}^*(\wt f)=f$.

To this end, we first find a homeomorphism $g$ of $X$ -- not necessarily a translation or even a cubespace morphism --
such that $\pi_{s-1}(g(x))=f(\pi_{s-1}(x))$; and moreover such that $g$ commutes with the action of the top structure group $A_s(X)$, i.e.~$g(a.x)=a.g(x)$ for all $a\in A$ and $x\in X$.

In the next step, we attempt to correct $g$ so as to make it a genuine translation on $X$.  Specifically, we look for a map $\a:X\to A$ such that the transformation
\[
\wt f: x\mapsto \a(x).g(x)
\]
is a translation.

For $c\in C^{s}(X)$, we write $D(c)$ for the unique element of $A$ such that
$[c,[D(c)]_{\{\vec 0\}}.g\circ c]\in C^{s+1}(X)$.
The function $c\mapsto D(c)$ encodes the amount $g$ deviates from being a translation at $c$.
It can be seen from the weak structure theorem that the transformation
$\wt f$ defined above is a translation if and only if the functional equation
\begin{equation}\label{eq:funct}
\sum_{\o\in\{0,1\}^s} (-1)^{|\o|}\a(c(\o))=D(c)
\end{equation}
holds for all $c \in C^s(X)$.

The equation \eqref{eq:funct} appears in \cite{CS12} (although not in exactly this way) and plays a prominent role in the whole theory.
We recall some definitions and then a result about the solutions of \eqref{eq:funct} from \cite{GMV2}.

\begin{dfn}
Let $A$ be a compact abelian Lie group, let $X$ be a cubespace, and let $\ell \in \N$ be given.
A continuous function $\sigma:C^{\ell}(X)\to A$
is called an $\ell$-{\bf cocycle} (or just a cocyle) if
\[
  \rho([c_1,c_3])=\rho([c_1,c_2])+\rho([c_2,c_3])
\]
holds for any $c_{1},c_{2},c_3\in C^{\ell-1}(X)$ such that all three concatenations appearing in the equation are cubes.

We call this property \emph{additivity}.
\end{dfn}
We stress that our rather vague notation allows the concatenation operation $[-,-]$
on any coordinate $\{1,\ldots,\ell\}$, not just the first one; hence there are strictly speaking $\ell$ additivity conditions, one per coordinate.

We note the following two consequences of additivity:
\begin{itemize}
\item (degenerate cubes) if $c=[c_0,c_0]$ then $\rho(c)=0$;
\item (reflections) we have $\rho([c_0,c_1])=-\rho([c_1,c_0])$.
\end{itemize}

Again let $X$ be a cubespace, $A$ an abelian group and $\ell\in\N$,
and let $f:X\to A$ be a function.
Then we define the function $\partial^\ell f:C^\ell(X)\to A$ by
\[
\partial^\ell f(c)=\sum_{\o\in\{0,1\}^\ell} (-1)^{|\o|}f(c(\o)).
\]

The key step in the proof of Theorem \ref{th:toral} is the following result, which
guarantees the existence of solutions to \eqref{eq:funct} under certain hypotheses.
In fact, we state the theorem in a slightly more general form than is necessary for the purposes of
Theorem \ref{th:toral}, but we will need the full power of it in the proof of the results
that we state in the next section.

\begin{thm}
\label{th:functional}

  Let $A$ be a compact abelian Lie group and let $s \ge 0$, $\ell \ge 1$ be given.  Then there exists $\delta = \delta(s, \ell, A) > 0$ such that the following holds.

  Let $\f \colon X \to Y$ be any fibration of degree $s$ between compact ergodic cubespaces $X$ and $Y$ that obey the gluing axiom. Let $\rho$ be an  $\ell$-cocycle on $X$ with values in $A$, let $0 < \delta' \le \delta$ be given and suppose that $\dist(\rho(c), \rho(c')) \le \delta'$ whenever $\f(c) = \f(c')$.

Then there is a continuous map $f:X\to A$ and a cocycle $\wt\rho: C^\ell(Y)\to A$ such that
\begin{equation}\label{eq:functional}
\rho=\partial^\ell f+(\wt\rho\circ\f)
\end{equation}
  and $\dist(f(x), f(y)) \lesssim_{s,\ell} \delta'$ (that is, there exists a constant $c=c(s,\ell)>0$ such that $\dist(f(x), f(y))\leq c\delta'$) whenever $\f(x) = \f(y)$.

  Moreover, the function $f$ is unique in the following sense. Suppose $f,f'$ are two continuous solutions of \eqref{eq:functional}; in particular, $f$ and $f'$ are continuous functions such $\partial^\ell(f - f')$ is constant on fibers of $\f$. Suppose moreover that $\dist(f(x),f'(x))\le\d$
for all $x\in X$. Then $f-f'$ is constant on the fibres of $\f$.
\end{thm}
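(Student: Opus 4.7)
The plan is to prove Theorem~\ref{th:functional} by induction on $s$, factoring the fibration $\f$ through the tower of canonical factors (Section~\ref{sc:canonical}) and reducing to the case of a single principal-bundle extension, which we then handle directly. As a preliminary step, since $A$ is a compact abelian Lie group it splits as $A \cong T \times F$ with $T$ a torus and $F$ finite; choosing $\delta$ smaller than half the minimum separation of elements of $F$ forces the $F$-valued component of $\rho$ to be constant on each fiber of $\f$ (since on any fiber $\rho$ varies by at most $\delta' \le \delta$), so that component descends directly to $Y$ and may be absorbed into $\wt\rho$. This reduces us to the case $A=T$, where we may work additively in a local chart of the Lie algebra.

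For the inductive step I would factor $\f\colon X\to Y$ as $X \xrightarrow{\pi_{s-1}} X/{\sim_{s-1}} \xrightarrow{\f'} Y$, where the induced map $\f'$ exists by the universal property (Lemma~\ref{lem:universal1}) and is a fibration of degree $s-1$. By the Weak Structure Theorem (Theorem~\ref{th:weak-structure}), the map $\pi_{s-1}$ is a principal bundle for the top structure group $B := A_s(X)$, with the cubes of $X$ over a fixed cube of $X/{\sim_{s-1}}$ forming a torsor for $C^\ell(\mathcal{D}_s(B))$. The base case $s=0$ is essentially tautological (the fibration is then an isomorphism of ergodic cubespaces, so $f\equiv 0$ works). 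The strategy is first to use the principal-bundle structure $\pi_{s-1}$ to write $\rho = \partial^\ell f_1 + \rho_1 \circ \pi_{s-1}$ for some $f_1 \colon X \to A$ and cocycle $\rho_1$ on $X/{\sim_{s-1}}$, and then to apply the induction hypothesis to $\rho_1$ on the degree-$(s-1)$ fibration $\f'$ to obtain $\rho_1 = \partial^\ell f_2 + \wt\rho \circ \f'$; combining these gives the required decomposition.

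The heart of the argument is therefore the principal bundle case. Choosing a continuous local section $\sigma$ of $\pi_{s-1}$ over a trivializing open set and setting $\rho_1(d) := \rho(\sigma \circ d)$, the $B$-torsor structure of the fibers means any cube $c$ of $X$ over $d$ is obtained from $\sigma \circ d$ by a configuration $a \in C^\ell(\mathcal{D}_s(B))$ of $B$-offsets. The cocycle property of $\rho$ together with its compatibility with the $B$-action (which follows from cube-invariance applied to cubes combining $c$ and $\sigma \circ d$) expresses $\rho(c) - \rho_1(\pi_{s-1}(c))$ as a linear function of $a$, which in turn is the boundary $\partial^\ell f_1$ of the $B$-component $f_1$ measuring the offset of $x$ from $\sigma$. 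Global continuity is ensured by patching: on overlaps of local trivializations two candidate sections differ by a $B$-valued function on $Y$, so the corresponding $f_1$'s differ by a pullback from $Y$, and the ambiguity is absorbed into $\rho_1$. The bound $\dist(f_1(x), f_1(y)) \lesssim \delta'$ for $\pi_{s-1}(x) = \pi_{s-1}(y)$ is read off directly, because such differences correspond to differences $\rho(c) - \rho(c')$ for matching cubes, which are controlled by hypothesis.

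The main obstacle is the quantitative control: each inductive step incurs a constant factor depending on $s, \ell$ in the relationship between the fiberwise variation of $\rho_1$ and that of $\rho$, so $\delta$ must be chosen small enough (depending on $s, \ell, A$) that at every stage the construction takes place in a single Lie-algebra chart of $A$, and the smallness of the cocycle on fibers is preserved through the induction. Uniqueness is then a direct consequence of the same construction applied to $f - f'$: the hypothesis $\dist(f, f') \le \delta$ together with $\partial^\ell(f - f')$ being fiberwise constant forces $f - f'$ to be a pullback from $Y$, because the linear equations defining $f_1$ and $f_2$ have unique solutions up to such pullbacks within a sufficiently small neighborhood of the identity.
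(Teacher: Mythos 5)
The paper does not prove Theorem~\ref{th:functional} here: it is quoted from \cite{GMV2}*{Theorem 5.1 and Corollary 5.3}, so there is no in-text proof to compare against. I can nonetheless comment on gaps in your argument relative to the setup the paper describes.

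There is a genuine error in the factorization you propose for the inductive step. You claim $\f$ factors as $X \xrightarrow{\pi_{s-1}} X/{\sim_{s-1}} \xrightarrow{\f'} Y$. But $\f$ being a fibration does not make it constant on $\sim_{s-1}$-classes of $X$: from $x\sim_{s-1}x'$ one only gets $\f(x)\sim_{s-1}\f(x')$ in $Y$, and since $Y$ is not assumed to be a nilspace of degree $\le s-1$, the universal property in Lemma~\ref{lem:universal1} does not apply (the fibres of $\pi_{s-1}$ do not refine the fibres of $\f$). The correct decomposition, as signalled in the paper's discussion after Definition~\ref{dfn:horizontal} and in Proposition~\ref{prp:verhor}, is through the \emph{relative} canonical equivalence relation $\sim_{\f,s-1}$, giving $X \to X/{\sim_{\f,s-1}} \to Y$ with the first map vertical and the second horizontal of degree $s-1$. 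Relatedly, you invoke the Weak Structure Theorem~\ref{th:weak-structure}, but that is stated for ergodic \emph{nilspaces}, whereas in Theorem~\ref{th:functional} the spaces $X,Y$ are merely compact ergodic cubespaces with the glueing property (indeed the paper later applies it to closures of open subsets $\overline{U_1},\overline{U_2}$, which are not nilspaces). What one actually needs is the relative weak structure theory from \cite{GMV1} (the group $A_s(\f)$ and the torsor description of cubes over fixed base cubes), which the paper itself emphasizes is the key new ingredient beyond the absolute case in \cite{CS12}.

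More seriously, the technical core of the theorem is the vertical step: showing that $\rho - \rho_1\circ\pi_{\f,s-1}$ is a coboundary $\partial^\ell f_1$, i.e.\ that the cocycle obtained by restricting $\rho$ to a $\mathcal{D}_s(B)$-torsor fibre is a coboundary with small potential. You assert that ``the cocycle property of $\rho$ together with its compatibility with the $B$-action \ldots\ expresses $\rho(c)-\rho_1(\pi_{s-1}(c))$ as a linear function of $a$, which in turn is the boundary $\partial^\ell f_1$.'' That is precisely the hard claim one is trying to prove: a cocycle on $C^\ell(\mathcal{D}_s(B))$ need not be a coboundary, and it is only the smallness hypothesis, the Lie structure of $A$, and a serious argument (the analogue of \cite{CS12}*{Lemma 3.19}) that yield it. As written, this step is assumed rather than established, so the inductive reduction to the degree-$(s-1)$ horizontal part does not go through. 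The opening reduction to $A$ a torus via $A\cong T\times F$ and the finite part being fibrewise constant is fine, as is the base case $s=0$ (a degree-$0$ fibration between ergodic cubespaces is an isomorphism); but the central analytic estimate is missing.
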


This result in this form is proved in \cite{GMV2}*{Theorem 5.1 and Corollary 5.3}, 
but it is very closely modelled on \cite{CS12}*{Lemma 3.19}.
The main difference is that \cite{CS12} considers only the special case where $Y$ is the one point
space (which is all that is required in our proof of Theorem \ref{th:toral}).
The general case requires, among other things, the ``relative'' weak structure theory developed in \cite{GMV1}.

The proof of Theorem \ref{th:toral} uses the connectedness hypothesis  in two
crucial ways.
First, we can find continuous lifts only for translations of $\pi_{s-1}(X)$ with small
displacement.
Second, we can solve the functional equation \eqref{eq:funct} only for small cocycles.
In the proof of Theorem \ref{th:Lie-dynamical} we can get around this problem using that the acting group $H$
immerses into the group of translations and it acts transitively on the space of connected components.
We can then realize $X$ as a homogeneous space of $G=\langle \Aut_1^\circ(X), H\rangle$.
Observe that $G$  need not be connected
even if $X$ is.

The approach in \cite{CS12} to the proof of Theorem \ref{th:toral}
is both closely related to ours and in other ways somewhat different.
Both proofs use at their core the triviality of certain cocycles, in the sense of Theorem \ref{th:functional},
but the way these arise, and the method of constructing small translations, vary.

The approach in \cite{CS12} can be summarized as follows.
The authors develop a kind of cohomology theory for nilspaces, whereby an extension of a nilspace by an abelian group may be characterized
up to isomorphism by a measurable cocycle (up to ``coboundaries'').
In this picture, cocycles which are ``trivial'', or equal to coboundaries (i.e.~of the from $\rho = \partial^\ell g$) are shown to correspond to split or direct product extensions.
Armed with these tools, and given an element
$f\in\Aut_1^\circ(\pi_{s-1}(X))$ with small displacement, they construct an extension of a certain nilspace by a compact abelian group, such that the extension splits if and only if $f$ has a lift in $\Aut_1^\circ(X)$.  Hence, the problem is reduced to showing triviality of a measurable cocycle associated to this extension.

\subsection{Inverse limits}
\label{subsec:invlim}
We turn to the structure theory of (general) nilspaces.
It turns out that these are not all Lie-fibered nilspaces (as can be seen by considering $\mathcal{D}_s(A)$ where $A$ is a compact abelian group but not a Lie group); but they can be approximated
by Lie-fibered nilspaces in some sense, as the following result (identical to \cite{CS12}*{Theorem 4}) shows.

\begin{thm}[Inverse Limit Theorem]\label{th:invlim}
  Let $X$ be a compact ergodic nilspace.
  Then there is a sequence of compact ergodic Lie-fibered nilspaces $\{X_n\}$,
  and an inverse system of fibrations $\{\f_{m,n}:X_n\to X_m\}_{m<n}$
  such that $X=\invlim X_n$.
\end{thm}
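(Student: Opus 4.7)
I would proceed by induction on the degree $s$ of $X$. The case $s = 0$ is trivial since an ergodic nilspace of degree $0$ is a single point. For the inductive step, let $X$ be compact ergodic of degree $s$ with top structure group $A = A_s(X)$, and write $Y := \pi_{s-1}(X)$, which is compact, ergodic, of degree $s-1$. By the inductive hypothesis, $Y = \invlim Y_m$ is an inverse limit of compact ergodic Lie-fibered nilspaces of degree $s-1$ under fibrations $q_m \colon Y \to Y_m$. On the other hand, since $A$ is a compact abelian group, by Peter--Weyl / Pontryagin duality we may write $A = \invlim A_n$ where $A_n = A/B_n$ with $B_n$ a nested decreasing sequence of closed subgroups satisfying $\bigcap_n B_n = \{0\}$, and each $A_n$ a compact abelian Lie group.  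Quotienting $X$ by the fibrewise action of $B_n$ (provided by the Weak Structure Theorem \ref{th:weak-structure}) yields a nilspace $X/B_n$ of degree $s$ over $Y$ whose top structure group is the Lie group $A_n$; by Theorem \ref{th:weak-structure} this extension is classified by a continuous $(s+1)$-cocycle $\sigma_n \colon C^{s+1}(Y) \to A_n$.

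\textbf{Descending cocycles via Theorem \ref{th:functional}.} The next step is, for each $n$ fixed, to descend $\sigma_n$ along some $q_{m(n)}$. Since $\sigma_n$ is continuous on the compact cubespace $C^{s+1}(Y)$, and since the fibres of $q_m$ shrink to points as $m \to \infty$ (this is the content of $Y = \invlim Y_m$), for $m = m(n)$ sufficiently large the oscillation of $\sigma_n$ along fibres of $q_{m(n)}$ drops below the threshold $\delta(s, s+1, A_n)$ guaranteed by Theorem \ref{th:functional}. The theorem then produces a continuous $f_n \colon Y \to A_n$ and a cocycle $\tilde\sigma_n \colon C^{s+1}(Y_{m(n)}) \to A_n$ with $\sigma_n = \partial^{s+1} f_n + \tilde\sigma_n \circ q_{m(n)}$. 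I would define $X_n$ to be the $A_n$-extension of $Y_{m(n)}$ classified by $\tilde\sigma_n$ via Theorem \ref{th:weak-structure}; since $A_n$ is a compact abelian Lie group and $Y_{m(n)}$ is Lie-fibered, $X_n$ is a compact ergodic Lie-fibered nilspace of degree $s$. The cocycle identity above implies that $X/B_n$ is isomorphic to the pullback of $X_n \to Y_{m(n)}$ along $q_{m(n)}$, which produces the desired fibration $X \to X/B_n \to X_n$.

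\textbf{Assembling the inverse system, and the main obstacle.} After choosing $m(n)$ strictly increasing, I would construct the transition fibrations $X_{n+1} \to X_n$ by combining the base-level maps $Y_{m(n+1)} \to Y_{m(n)}$ with the group maps $A_{n+1} \to A_n$, provided the cocycles $\tilde\sigma_n$ can be made genuinely compatible. This compatibility is where the real difficulty lies: Theorem \ref{th:functional} pins down $f_n$ only up to functions constant on the fibres of $q_{m(n)}$, so $X_n$ is a priori only determined up to isomorphism, and assembling coherent transition maps that are cube-preserving fibrations (rather than abstract isomorphisms chosen independently at each level) requires delicate bookkeeping. My plan for this is to use the uniqueness clause of Theorem \ref{th:functional} to recursively adjust $f_{n+1}$ within its allowed freedom so that, after projecting along $A_{n+1} \to A_n$ and restricting along $q_{m(n+1)} \to q_{m(n)}$, it matches $f_n$; this is set up to converge because each adjustment can be made arbitrarily small. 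Finally, to see $X = \invlim X_n$, it suffices to check that the maps $X \to X_n$ separate points: two points with different images in $Y$ are separated by some $Y_m$ and hence by $X_n$ for $n$ with $m(n) \ge m$; and two points in the same $Y$-fibre differing by some $a \in A \setminus \{0\}$ are separated by $X/B_n$, hence by $X_n$, for any $n$ with $a \notin B_n$, which exists because $\bigcap_n B_n = \{0\}$.
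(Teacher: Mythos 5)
Your proposal follows the same high-level inductive strategy as the paper (quotient the top structure group to an inverse limit of Lie groups, apply the induction hypothesis to the degree-$(s-1)$ factor, use the cocycle-triviality result, Theorem \ref{th:functional}, to fill in the diagonal), but it contains a genuine gap at the step where it invokes a \emph{global} continuous cocycle classifying the extension $X/B_n \to Y$. You write that ``by Theorem \ref{th:weak-structure} this extension is classified by a continuous $(s+1)$-cocycle $\sigma_n \colon C^{s+1}(Y) \to A_n$.'' The Weak Structure Theorem gives no such thing. Producing a continuous function $\sigma_n$ on $C^{s+1}(Y)$ of the form $\sigma_n(c) = D(\s(c))$ requires a \emph{continuous global section} $\s \colon Y \to X/B_n$ of the principal $A_n$-bundle $X/B_n \to Y$, and such a section does not exist in general. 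Already the simplest nontrivial example --- the Heisenberg nilmanifold, whose degree-$1$ factor is $\mathbb{T}^2$ and whose bundle over it is the circle bundle with Euler class $1$ --- has no global section. Without $\sigma_n$ you cannot apply Theorem \ref{th:functional}, and the whole construction of $X_n$ does not get off the ground.

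This is precisely the obstruction the paper's proof is designed to circumvent. Rather than seeking a global cocycle, the paper applies Gleason's theorem to obtain only \emph{local} continuous sections $\s \colon U \to X$, defines the discrepancy cocycle $\rho(c) = D(\s(c))$ only on the induced subcubespace $C^{s+1}(\overline{U})$ for a carefully chosen open set $U$ containing a given cube, applies Theorem \ref{th:functional} to $\psi|_{\overline{U}}$ (which is a fibration between cubespaces that are merely fibrant-ish with the glueing property, not nilspaces --- hence the generality of Theorem \ref{th:functional} is essential), and then shows via compactness and a uniqueness argument (Proposition \ref{prp:section-unique}) that the resulting local ``straight classes'' patch together into a well-defined closed equivalence relation on $X_\infty^{(n)}$. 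The target nilspace $X_m^{(n)}$ is constructed as a \emph{quotient} of $X_\infty^{(n)}$ by this relation, not as an abstract extension of $Y_{m(n)}$ built from a cocycle; this side-steps the need to prove that continuous or measurable cocycles actually classify extensions (the latter being the ``intricate argument'' of Antol\'\i n Camarena--Szegedy, which develops genuine nilspace cohomology with measurable sections). A secondary point: even granting a cocycle $\tilde\sigma_n$ on $Y_{m(n)}$, your appeal to ``Theorem \ref{th:weak-structure}'' to \emph{construct} $X_n$ as the extension it classifies reads the theorem backwards; that theorem describes the structure of a given nilspace but provides no existence result for extensions. The ``bookkeeping'' concern you flag at the end is real, but the paper avoids it entirely because its quotient construction produces a canonical $X_m^{(n)}$ (straight classes are unique of small diameter), so there is no ambiguity to reconcile between levels.
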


The proof of this result is given in Section \ref{sec:cubeinvlim}; we now outline the main ideas.

The first step is to note that compact abelian groups are inverse limits of
compact abelian Lie groups, and to apply this fact to the structure groups of $X$.
This is enough to deduce the degree $1$ case; we prove
the theorem for higher degree nilspaces by an inductive argument.

We may identify the top structure group of $X$ with an inverse limit of Lie groups,
and thereby write $X=\invlim X_\infty^{(m)}$
where each $X_\infty^{(m)}$ is a quotient of $X$ under the action of a subgroup
of the top structure group, and the top structure group of $X_\infty^{(m)}$ is a Lie group.
The degree $(s-1)$ factors are unaffected, i.e.~$\pi_{s-1}(X)=\pi_{s-1}(X_\infty^{(m)})$.
Furthermore, we may construct the sequence so that we eventually quotient by the whole top structure group, i.e.~$X_\infty^{(0)}=\pi_{s-1}(X)$.

Next, we apply the induction hypothesis to $X_\infty^{(0)}$ to write
$X_\infty^{(0)}=\invlim X_n^{(0)}$, where $X_n^{(0)}$ is Lie-fibered.

So far, we have some degree $(s-1)$ Lie-fibered nilspaces $X_n^{(0)}$ approximating $\pi_{s-1}(X)$, and some spaces $X_\infty^{(m)}$ approximating $X$ whose top structure groups are Lie, but whose degree $(s-1)$ quotients are still huge.

Our remaining task is to fill in the diagonal: we want to build a space $X_n^{(m)}$ for enough pairs $n$ and $m$, whose degree $(s-1)$ factor is the Lie-fibered space $X_n^{(0)}$ and whose top structure group is the same as that of $X_\infty^{(m)}$.

More precisely, we want to construct a nilspace $X_n^{(m)}$ for each $m$ and all sufficiently large $n \ge n_0(m)$ (depending on $m$),
such that $\pi_{s-1}(X_n^{(m)})=X_n^{(0)}$ and the fibration
$X_\infty^{(0)}\to X_n^{(0)}$ can be lifted to a fibration $X_\infty^{(m)}\to X_n^{(m)}$.
Once this is done, we can write $X_\infty^{(\infty)}$ as the inverse limit of a sequence  of the form $\{X_{n_m}^{(m)}\}_{m\in\N}$ for some sequence $n_m \rightarrow \infty$.

In other words, we wish to construct the following commuting diagram of fibrations, and then take an inverse limit up the diagonal.

\begin{tikzpicture}
\matrix(m) [matrix of math nodes, row sep=3em, column sep=3em]
{
 X_\infty^{(\infty)}=X & & & \\
 X_\infty^{(m+1)} &  X_{n_{m+1}}^{(m+1)} & & \\
 X_\infty^{(m)} &  X_{n_{m+1}}^{(m)} &  X_{n_{m}}^{(m)} & \\
 X_\infty^{(0)}=\pi_{s-1}(X) &  X_{n_{m+1}}^{(0)}
&  X_{n_{m}}^{(0)} &  X_0^{(0)}=\{\bullet\}\\};
\path[-stealth]
(m-1-1) edge  (m-2-1)edge [dashed] (m-2-2)
(m-2-1) edge  (m-3-1)edge  (m-2-2)
(m-3-1) edge  (m-4-1)edge  (m-3-2)
(m-4-1) edge  (m-4-2)
(m-2-2)  edge  (m-3-2)edge [dashed] (m-3-3)
(m-3-2) edge  (m-4-2) edge  (m-3-3)
(m-4-2) edge  (m-4-3)
(m-3-3) edge  (m-4-3)edge [dashed] (m-4-4)
(m-4-3) edge  (m-4-4)
;
\end{tikzpicture}

The main difficulty of the approach lies in the construction of this nilspace $X_n^{(m)}$.  To reiterate this isolated problem: we are given a fibration $X_\infty^{(0)} \to X_n^{(0)}$, and also that the nilspace $X_\infty^{(0)}$ is the quotient of $X_\infty^{(m)}$ under the free action of a compact abelian group $A = A_s(X_\infty^{(m)})$. We wish to construct $X_n^{(m)}$ and a fibration $X_\infty^{(m)} \to X_n^{(m)}$, such that $X_n^{(0)}$ is the
quotient of $X_n^{(m)}$ under the free action of the same group $A$, and such that the following diagram commutes:

\begin{center}
\begin{tikzpicture}
\matrix(m) [matrix of math nodes, row sep=3em, column sep=3em]
{
 X_\infty^{(m)} &  X_{n}^{(m)} & & \\
 X_\infty^{(0)} &  X_{n}^{(0)} & & \\
};
\path[-stealth]
(m-1-1) edge (m-1-2) edge node [left] {$\pi_{s-1}$} (m-2-1)
(m-1-2) edge node [right] {$\pi_{s-1}$} (m-2-2)	
(m-2-1) edge  (m-2-2)	
;
\end{tikzpicture}
\end{center}

It turns out that this is not a natural or categorical construction, and indeed it is not possible in general to construct $X_n^{(m)}$ with these properties.  To do so, we will need some topological input; in particular, we will have to make use of the fact that $A$ is Lie and that $X_n^{(0)}$ is ``sufficiently close'' to $X_\infty^{(0)}$ if $n$ is large enough, in the sense that the fibers of the map $X_\infty^{(0)} \to X_n^{(0)}$ have small diameter.

The (only reasonable) way to construct the nilspace $X_n^{(m)}$  is as a quotient of $X_\infty^{(m)}$ by a
closed equivalence relation, which we denote by $\sim_n^m$.
For the quotient to have the required properties, we need the following hold for every equivalence class $D\subseteq X_\infty^{(m)}$ of $\sim_n^m$:
\begin{itemize}
  \item the image $\pi_{s-1}(D) \subseteq X_\infty^{(0)}$ is equal to the inverse image of a single point under the fibration $X_\infty^{(0)}\to X_n^{(0)}$; and
  \item the restriction
    \[
      \pi_{s-1}|_D \colon D \to \pi_{s-1}(D)
    \]
    of the canonical projection $\pi_{s-1}$ to the subcubespace induced by $D$, is a cubespace isomorphism.
\end{itemize}
We call a set $D$ satisfying these two properties a {\bf straight class}.

So, to construct $\sim_n^m$, we need to show that if $n$ is sufficiently large depending on $m$, then
given any point $x \in X_\infty^{(m)}$ we can find a canonical straight class containing $x$.

The proof of this fact proceeds as follows.  First, we need to invoke with Gleason's theorem on the existence of local sections
for the bundle $X_\infty^{(m)}\to X_\infty^{(0)}$.
I.e.~this states that any point $x\in X_\infty^{(0)}$ has a neighborhood $U$ that admits a continuous
map $\s:U\to X_\infty^{(m)}$ such that $\pi_{s-1}\circ \s=\Id_U$.

Next, we want to adjust this section $\s$ so that it sends fibers of $X_\infty^{(0)} \to X_n^{(0)}$ to straight classes in $X_\infty^{(m)}$.

In other words, we need to choose a suitable map
$f:U\to A$ (where as above $A = A_s(X_\infty^{(m)})$ is the top structure group of $X_\infty^{(m)}$) and set $\s'(x)=f(x).\s(x)$ for $x\in U$.
The condition on $f$ that asserts that $\s'$ maps inverse images of points in $X_n^{(0)}$ into
straight classes, is very similar to \eqref{eq:funct}, and so we are able to find
such an $f$ using Theorem \ref{th:functional}.

The approach of \cite{CS12} to proving the inverse limit theorem is different.
As we mentioned previously, Antol\'\i n Camarena and Szegedy develop a cohomology theory for nilspaces.
By an intricate argument measurable cocycles are shown to correspond to extensions by compact groups.
This principle is then shown to be valid in a relative setting.
Indeed, the nilspace  $X_{n}^{(m)}$ is constructed from a measurable
cocycle arising from a section for $X_{\infty}^{(m)}\to X_{\infty}^{(0)}$
which is compatible with $X_{n}^{(0)}$.

\subsection{Equivariance of fibrations for translation groups}

We have seen in the previous section that strongly connected Lie-fibered nilspaces
can be endowed with the structure of a Host--Kra nilspace.
It is natural to ask whether this structure is respected by the maps in the inverse system
realizing a strongly connected nilspace as the inverse limit of Host--Kra nilspaces.
The answer turns out to be positive, as confirmed by the following theorem that
will be obtained in Section \ref{sc:functor} as a byproduct of the proof of Theorem \ref{th:invlim}.
The results stated in this section are new.

\begin{thm}\label{th:functoriality}
Let $\f: X\to Y$ be a fibration between two compact ergodic Lie-fibered nilspaces.
Then $\f$ induces a surjective continuous homomorphism $\f_*:\Aut_i^\circ(X)\to\Aut_i^\circ(Y)$
such that
\[
\f_*f.\f(x)=\f(f.x)
\]
for all $x\in X$ and $f\in\Aut_i^\circ(X)$.
\end{thm}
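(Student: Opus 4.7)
The plan is to prove Theorem \ref{th:functoriality} by induction on the degree $s$ of $X$ (noting that $\deg Y \le \deg X$ for a fibration between ergodic nilspaces, since $(s+1)$-uniqueness passes through fibrations). The base case $s=0$ is trivial. For the inductive step, observe that $\f$ preserves $\sim_{s-1}$ and so descends via Lemma \ref{lem:universal1} to a fibration $\bar\f\colon\pi_{s-1}(X)\to\pi_{s-1}(Y)$ between Lie-fibered nilspaces of strictly smaller degree, to which the inductive hypothesis applies. Given $f\in\Aut_i^\circ(X)$, the natural homomorphism $\pi_{s-1}^*\colon\Aut_i^\circ(X)\to\Aut_i^\circ(\pi_{s-1}(X))$ (noted in Section \ref{sc:structure-groups}, and checked straightforwardly for general $i$ by verifying that an $i$-translation descends to the canonical quotient) produces $\bar f$, and the inductive hypothesis then supplies $\bar g:=\bar\f_*(\bar f)\in\Aut_i^\circ(\pi_{s-1}(Y))$ with $\bar g\circ\bar\f=\bar\f\circ\bar f$.

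The main step is to lift $\bar g$ to some $g=\f_*(f)\in\Aut_i^\circ(Y)$ satisfying $g\circ\f=\f\circ f$. Rather than constructing $g$ directly, I would invoke Lemma \ref{lem:universal1}: since $f$ is a cubespace isomorphism, $\f\circ f\colon X\to Y$ is a fibration, and it factors (uniquely) as $g\circ\f$ for some fibration $g\colon Y\to Y$ precisely when $\f\circ f$ is constant on every $\f$-fiber. One checks directly from the fibration property of $\f$ that any such $g$ is then an $i$-translation, and applying the same argument to $f^{-1}$ shows that it is a homeomorphism; continuity of the construction in $f$ places it in the identity component. The heart of the proof is therefore the fiber-preservation claim that $\f(f(x_1))=\f(f(x_2))$ whenever $\f(x_1)=\f(x_2)$, which I would establish first for $f$ in a neighborhood of the identity in $\Aut_i^\circ(X)$. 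Projecting the desired equation via $\pi_{s-1}^Y$ and using the inductive equivariance for $\bar g,\bar f,\bar\f$ shows that both sides have the same image in $\pi_{s-1}(Y)$, so they differ by the action of a unique element $b(x_1,x_2)$ of the top structure group $A_s(Y)$. For $f$ close to the identity, $b$ is a small continuous function satisfying a cocycle-type condition inherited from the cube structures of $X$ and $Y$, and I would trivialize it by applying Theorem \ref{th:functional}; this parallels both the small-translation construction in the proof of Theorem \ref{th:toral} in \cite{GMV2} and the construction of straight classes in the proof of Theorem \ref{th:invlim}, where one solves a functional equation of type \eqref{eq:funct} for a correction map valued in $A_s(Y)$.

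Once $\f_*$ is defined on a neighborhood of the identity, it extends to all of $\Aut_i^\circ(X)$ by a standard topological-group argument: the set of $f$ for which $\f\circ f$ factors through $\f$ is visibly a subgroup of $\Aut_i^\circ(X)$, contains an open neighborhood of the identity, and is therefore open; but an open subgroup of a connected topological group is the whole group. The homomorphism property of $\f_*$ then follows from the uniqueness clause of Lemma \ref{lem:universal1}, continuity is inherited from the continuous dependence of the solution of Theorem \ref{th:functional} on its data, and surjectivity is obtained by running the same lifting procedure in reverse: given $g\in\Aut_i^\circ(Y)$, use surjectivity of $\bar\f_*$ inductively, and then lift through $\pi_{s-1}^X$ using the same functional-equation machinery. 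The main obstacle in this plan is the cocycle-vanishing step, namely correctly setting up $b$ as a cocycle with sufficiently small values on the fibers of an appropriate auxiliary fibration and extracting the exact triviality needed from Theorem \ref{th:functional}; this is precisely the content of the ``byproduct'' statement in the paper, reflecting that the same relative cocycle-triviality apparatus that produces the inverse-limit representations of Theorem \ref{th:invlim} also governs how translations transfer along fibrations between Lie-fibered nilspaces.
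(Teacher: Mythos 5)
Your high-level shape is right---induction on degree, pass to the shadow on $\pi_{s-1}$, establish the claim first for small translations and extend using that $\Aut_i^\circ$ is generated by a neighborhood of the identity, and defer the hard work to the functional-equation machinery---but there is a genuine gap at the fiber-preservation step, which you correctly identify as the crux. You want to show $b(x_1,x_2)=0$, but Theorem~\ref{th:functional} does not show that small cocycles vanish: it produces a decomposition $\rho=\partial^\ell f+\tilde\rho\circ\f$. To convert this into actual vanishing you need a further structural argument, and it is not visible from your outline how that would run. It is also unclear how $b$, a function on pairs $(x_1,x_2)$ with $\f(x_1)=\f(x_2)$, becomes a cocycle on $C^\ell$ of some cubespace in the sense required by the theorem; one would need to specify the cubespace, the cocycle, and the auxiliary fibration rather carefully.

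The paper resolves exactly this difficulty by inserting a decomposition you do not use: Proposition~\ref{prp:verhor} factors $\f$ as a \emph{vertical} fibration followed by a \emph{horizontal} one, and the two cases are handled by entirely different arguments. For vertical fibrations, fiber preservation holds for \emph{all} translations, with no smallness assumption, by a short corner-completion argument (Lemma~\ref{lem:vertical-respects-fibers}): since $\f$ is relatively $s$-ergodic, $\llcorner^s(x;x')$ is a cube, and then $(s+1)$-uniqueness pins down $f(x')$. For horizontal fibrations, one observes that fibers of $\f$ are straight $\psi$-classes (Lemma~\ref{lem:horizontal-fib}), that translations carry straight classes to straight classes, and then invokes the \emph{uniqueness} of small straight classes (Proposition~\ref{prp:section-unique}) to conclude that $f(\f^{-1}(y))$ is again a fiber of $\f$. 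This is where Theorem~\ref{th:functional} enters---already packaged inside Propositions~\ref{prp:section-exist} and~\ref{prp:section-unique}---so your intuition that the same apparatus governs Theorem~\ref{th:invlim} and this statement is correct, but the vanishing you need is extracted by routing through the straight-class machinery rather than by a direct cocycle computation. Finally, the ``pulling back'' surjectivity half is handled differently too: rather than lifting via the functional equation, the paper uses the transitivity of $\Aut_i^\circ(X)$ on connected components of $\sim_{i-1}$-classes together with discreteness of point stabilizers to produce a lift by prescribing the image of a single point (Proposition~\ref{prp:lifting}).
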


Combining this with Theorems \ref{th:toral} and \ref{th:invlim}, we deduce the following.

\begin{thm}  \label{th:alg-struc}
Let $X$ be a compact ergodic strongly connected nilspace of degree $s$.
Then:
\begin{itemize}
    \item there exists a sequence of connected Lie groups $G^{(n)}$
equipped with filtrations $G^{(n)}_\bullet$ of degree at most $s$ (with $G^{(n)}_i$ also connected for each $i$);
    \item for each $n$ there is a discrete co-compact subgroup $\Gamma^{(n)}$ of $G^{(n)}$ compatible with the filtration; and
    \item for each $\infty>n\ge m$, there are surjective group homomorphisms
      $\psi_{m,n} : G^{(n)} \to G^{(m)}$;
\end{itemize}
such that, letting $X_n:=G^{(n)}/\Gamma^{(n)}$ be the Host--Kra nilspace
associated to $G^{(n)}_\bullet$ and $\Gamma$, the following hold:
\begin{itemize}
  \item for each $n \ge m$, we have $\psi_{m,n}\left(G^{(n)}_i\right) = G^{(m)}_i$ for each $i \ge 0$;
  \item again for each $n \ge m$, we have that $\psi_{m,n}\left(\Gamma^{(n)}\right) \subseteq \Gamma^{(m)}$;
  \item the map
    \begin{align*}
      \f_{m,n} \colon X_n &\to X_m \\
      g\cdot\Gamma^{(n)} &\mapsto \psi_{m,n}(g)\cdot\Gamma^{(m)}
    \end{align*}
    induced by $\psi_{m,n}$, is a fibration; and
  \item $X$ is homeomorphic and isomorphic as a nilspace to the inverse limit $\varprojlim X_n$
given by the inverse system $\f_{m,n}$.
  \end{itemize}
\end{thm}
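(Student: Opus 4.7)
The approach is to combine Theorems~\ref{th:invlim}, \ref{th:toral}, and~\ref{th:functoriality}; the real content sits in those three results, and what remains is a diagram chase together with a careful choice of base points.

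First, apply Theorem~\ref{th:invlim} to realize $X$ as an inverse limit $X=\invlim X_n$ of compact ergodic Lie-fibered nilspaces, with connecting fibrations $\f_{m,n}\colon X_n\to X_m$ and associated projections $\f_{\infty,n}\colon X\to X_n$. I claim that each $X_n$ is itself strongly connected. Indeed, every cube $c\in C^k(X_n)$ lifts to a cube of $X$: using the fibration property of the $\f_{m,n}$ one can inductively construct a compatible family $\{c_m\in C^k(X_m)\}_m$ with $c_n=c$, which defines a cube of the inverse limit $X$. Hence $C^k(X_n)$ is the continuous image of the connected set $C^k(X)$ and is therefore connected. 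Each $X_n$ thus satisfies the hypotheses of Theorem~\ref{th:toral} and has degree at most $s$.

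Second, fix a base point $x_0\in X$ and set $x_0^{(n)}:=\f_{\infty,n}(x_0)$, so that $\f_{m,n}(x_0^{(n)})=x_0^{(m)}$ for all $n\ge m$. Apply Theorem~\ref{th:toral} to each $X_n$ at the base point $x_0^{(n)}$ to obtain a connected filtered Lie group $G^{(n)}_\bullet$ with $G^{(n)}_i=\Aut_i^\circ(X_n)$ (each $G^{(n)}_i$ connected by construction), a compatible discrete cocompact subgroup $\Gamma^{(n)}=\Stab(x_0^{(n)})$, and a cubespace isomorphism $f\Gamma^{(n)}\mapsto f(x_0^{(n)})$ between $\HK(G^{(n)}_\bullet)/\Gamma^{(n)}$ and $X_n$.

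Third, apply Theorem~\ref{th:functoriality} to each fibration $\f_{m,n}\colon X_n\to X_m$ and each $i\ge 1$ to obtain a continuous surjective homomorphism $\psi_{m,n}\colon G^{(n)}\to G^{(m)}$ whose restriction to $G^{(n)}_i$ surjects onto $G^{(m)}_i$, and which satisfies $\psi_{m,n}(f)\cdot\f_{m,n}(x)=\f_{m,n}(f\cdot x)$. The first property immediately gives $\psi_{m,n}(G^{(n)}_i)=G^{(m)}_i$. For $\gamma\in\Gamma^{(n)}$ the equivariance identity yields
\[
\psi_{m,n}(\gamma)\cdot x_0^{(m)}=\psi_{m,n}(\gamma)\cdot\f_{m,n}(x_0^{(n)})=\f_{m,n}(\gamma\cdot x_0^{(n)})=\f_{m,n}(x_0^{(n)})=x_0^{(m)},
\]
so $\psi_{m,n}(\Gamma^{(n)})\subseteq\Gamma^{(m)}$. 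The same equivariance identity shows that the quotient map $g\Gamma^{(n)}\mapsto\psi_{m,n}(g)\Gamma^{(m)}$ coincides, under the identifications of Step~2, with $\f_{m,n}$; in particular it is a fibration. The cocycle identity $\psi_{l,m}\circ\psi_{m,n}=\psi_{l,n}$ follows from naturality of the $(\cdot)_*$ construction, and the identity $X=\invlim X_n$ from Step~1 supplies the concluding bullet.

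The main obstacle, in fact essentially the only step with any content, is to verify strong connectedness of each $X_n$ in the inverse system, ensuring that Theorems~\ref{th:toral} and~\ref{th:functoriality} apply uniformly across the system with connected filtration pieces. Once this is in hand, both theorems operate as black boxes, and every remaining assertion reduces to chasing the equivariance relation of Theorem~\ref{th:functoriality} through the diagram, with the compatible base points $x_0^{(n)}=\f_{\infty,n}(x_0)$ doing all the book-keeping.
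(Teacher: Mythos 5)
Your proof is correct and takes essentially the same route as the paper, which simply asserts that Theorem~\ref{th:alg-struc} follows by combining Theorems~\ref{th:toral}, \ref{th:invlim} and~\ref{th:functoriality}. You have fleshed out the deduction fully — in particular the observation that each $X_n$ inherits strong connectedness from $X$ (since the $X_n$ are quotient cubespaces of $X$, so $C^k(X_n)$ is the continuous image of the connected set $C^k(X)$), and the use of compatible base points $x_0^{(n)} = \f_{\infty,n}(x_0)$ to make the stabilizers $\Gamma^{(n)}$ behave correctly under $\psi_{m,n}$.
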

This result should be thought of as a strengthening of the statement that $X$ is an inverse limit of nilmanifolds: indeed, it states precisely this, but giving much more information concerning the nature of the connecting maps.

As a consequence of these results, it is possible to endow a compact ergodic strongly connected nilspace $X$
with an action of the inverse limit of the groups $\Aut_1^\circ(X_n)$. One might hope to use this information to represent $X$ itself as a homogeneous space of this inverse limit group.
Unfortunately, this fails, because the action need not be transitive in general.
Moreover, there are examples due to Rudolph \cite{R95}, in which $X$ cannot be represented as a homogeneous space
of any nilpotent group.
Nevertheless, it is possible to make some direct statements about the nilspace structure of $X$ on the strength of these results.
We believe there are interesting issues here; however, we will not pursue them presently, though we may return to them in future work.

Let $(H,X)$ be a minimal topological dynamical system as in the setting of Theorem \ref{th:dynamics}.
As we discussed in Section \ref{sc:cubes}, we know that $X/\RP_H^k(X)$ is an ergodic
nilspace of degree $k$.
Using Theorem \ref{th:invlim}, we can write $X/\RP_H^k(X)$ as the inverse limit
of Lie-fibered nilspaces $X_n$. Notice that $H\subseteq\Aut_1(X/\RP_H^k(X))$, by the definition of dynamical cubes.
To conclude that
$X/\RP_H^k(X)$ is a pronilsystem, as claimed in Theorem \ref{th:dynamics}, we only need to show that the action of $H$ descends to $X_n$ for $n$ sufficiently large. This is proved in Section \ref{sc:dyninverse}
using the same circle of ideas as is discussed above.

Moreover, we will obtain the following slightly stronger form of Theorem \ref{th:dynamics}.

\begin{thm}  \label{th:dyn-alg-struc}
Let $(H,X)$ be a minimal topological dynamical system.
Suppose that  $\RP_H^s(X)$ is trivial for some $s$, and that $H$
has a dense subgroup generated by a compact set.

Then:
  \begin{itemize}
\item there exists a sequence of nilpotent Lie groups $G^{(n)}$ of degree at most $s$;
\item for each $n$, there is a continuous homomorphism $\a_n:H\to G^{(n)}$;
\item for each $n$, there is a discrete co-compact subgroup $\Gamma^{(n)}\subseteq G^{(n)}$; and
\item for each $n>m$, there is a continuous homomorphism $\psi_{m,n}:G^{(n)}\to G^{(m)}$;
\end{itemize}
such that
\begin{itemize}
\item $\psi_{m,n}(\Gamma_n)\subseteq\Gamma_m$;
\item $\a_m=\psi_{m,n}\circ \a_n$; and
\item the system $(H,X)$ is isomorphic, as a topological dynamical system, to the inverse limit of
$(H,G^{(n)}/\Gamma^{(n)})$ along the inverse system of maps induced by $\psi_{m,n}$, where $H$
acts on $G^{(n)}/\Gamma^{(n)}$ via $\a_n$.
\end{itemize}
\end{thm}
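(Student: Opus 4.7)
The plan is to synthesize the nilspace structure theorems recalled in the excerpt (Theorems \ref{th:dyn-nilspace}, \ref{th:invlim}, \ref{th:Lie-dynamical}, and \ref{th:functoriality}) with the dynamical hypotheses. First, I would observe that by Theorem \ref{th:dyn-nilspace}, triviality of $\RP_H^s(X)$ means that $(X, C_H^\bullet)$ is already an ergodic nilspace of degree at most $s$, and the $H$-action is by construction a subgroup of $\Aut_1(X)$. The ultimate goal is to exhibit $X$ as an inverse limit of Lie-fibered pieces in a way that is simultaneously $H$-equivariant and respected by the algebraic representation of the pieces as nilmanifolds.

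Second, I would apply (and adapt) Theorem \ref{th:invlim} to write $X = \invlim X_n$ as an inverse limit of compact ergodic Lie-fibered nilspaces with fibrations $\f_{m,n} \colon X_n \to X_m$. Since the proof of Theorem \ref{th:invlim} constructs each $X_n$ as a quotient of $X$ by an explicit closed equivalence relation $\sim_n$ built via Gleason sections and solutions of the functional equation \eqref{eq:funct}, the dynamical content I need is that for $n$ sufficiently large, $\sim_n$ can be chosen $H$-invariant (so the action descends to $X_n$). This is where the hypothesis that $H$ has a dense subgroup generated by a compact set enters: compact generation yields uniform continuity, and density lets us extend an action defined on the dense subgroup. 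To prove $H$-invariance I would apply Theorem \ref{th:functional} in its parametric form, noting that a compact generating set for $H$ moves the cocycle data by a uniformly small amount, so the uniqueness clause of Theorem \ref{th:functional} forces the correcting map (and hence the equivalence relation) to be $H$-equivariant.

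Third, once the $X_n$ carry a continuous $H$-action and $(H, X) = \invlim (H, X_n)$ as topological dynamical systems, I would apply Theorem \ref{th:Lie-dynamical} to each factor $(H, X_n)$. This provides connected nilpotent Lie groups $G^{(n)}$ of degree at most $s$, discrete cocompact subgroups $\Gamma^{(n)} \le G^{(n)}$, and continuous homomorphisms $\a_n \colon H \to G^{(n)}$ such that $(H, X_n) \cong (H, G^{(n)}/\Gamma^{(n)})$. Here one can realize $G^{(n)}$ concretely as $\langle \Aut_1^\circ(X_n), \a_n(H)\rangle$, as indicated in the proof sketch of Theorem \ref{th:Lie-dynamical}.

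Finally, to produce the homomorphisms $\psi_{m,n} \colon G^{(n)} \to G^{(m)}$ with the required compatibilities, I would invoke Theorem \ref{th:functoriality} on the fibrations $\f_{m,n} \colon X_n \to X_m$: this yields continuous surjective homomorphisms $\Aut_1^\circ(X_n) \to \Aut_1^\circ(X_m)$, which combined with the $H$-equivariance of $\f_{m,n}$ (i.e.\ $\f_{m,n}\circ \a_n(h) = \a_m(h)\circ \f_{m,n}$) extends to the groups $G^{(n)} = \langle \Aut_1^\circ(X_n),\a_n(H)\rangle$. Compatibility with the lattices ($\psi_{m,n}(\Gamma^{(n)}) \subseteq \Gamma^{(m)}$) follows from $\f_{m,n}(e\Gamma^{(n)}) = e\Gamma^{(m)}$ together with $\Gamma^{(n)} = \Stab(e\Gamma^{(n)})$, and $\a_m = \psi_{m,n} \circ \a_n$ is immediate from the construction. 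The main obstacle is the second step: arranging the inverse system $(X_n, \f_{m,n})$ in an $H$-equivariant fashion, since the constructions underlying Theorem \ref{th:invlim} are not manifestly equivariant and require a careful use of the uniqueness part of Theorem \ref{th:functional} together with the topological assumption on $H$ to produce canonical, and hence $H$-invariant, choices.
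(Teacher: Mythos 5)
Your high-level scaffolding matches the paper's proof almost exactly: realize $X$ as a degree-$s$ ergodic nilspace via Theorem~\ref{th:dyn-nilspace}, write $X=\invlim X_n$ via Theorem~\ref{th:invlim}, descend the $H$-action, apply Theorem~\ref{th:Lie-dynamical} (the paper uses the variant Theorem~\ref{th:dyn-Lie2}) to realize each $(H,X_n)$ as a nilsystem with $G^{(n)}=\langle\Aut_1^\circ(X_n),\a_n(H)\rangle$, and build the $\psi_{m,n}$ from Theorem~\ref{th:functoriality} extended by $\psi_{m,n}(\a_n(h))=\a_m(h)$.

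The genuine gap is in your second step, the descent of the $H$-action, which you acknowledge as the main obstacle but then dispose of with a mechanism that is both vague and not quite the right one. You propose applying ``Theorem~\ref{th:functional} in its parametric form'' and using its uniqueness clause to show ``the correcting map (and hence the equivalence relation) [is] $H$-equivariant.'' This cannot work as stated: the Gleason sections $\s$ and the correcting maps $f:U\to A$ from Proposition~\ref{prp:section-exist} are purely local objects defined on small open sets, and there is no reason they should be $H$-equivariant, nor does the paper attempt to make them so. What the paper actually proves, as Theorem~\ref{th:dyninvlim}, is that any fibration $\f:X\to Y$ whose fibers are uniformly small automatically intertwines the $H$-action. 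The proof proceeds by induction on degree, decomposes $\f$ into a vertical and a horizontal fibration (Proposition~\ref{prp:verhor}) since the vertical case holds unconditionally (Proposition~\ref{prp:translation-vertical}), and for the horizontal case uses three facts: (i) the fibers $\f^{-1}(y)$ are straight $\bar\f$-classes (Lemma~\ref{lem:horizontal-fib}), (ii) a translation $f\in\Aut_1(X)$ is a cubespace automorphism and therefore carries straight classes to straight classes, and (iii) by Proposition~\ref{prp:section-unique}, a small straight class through a given point is \emph{unique}. The compact generation hypothesis is used precisely in Lemma~\ref{lem:compactset} to get a uniform smallness constant $\d$ across the compact generating set $K$, after which one passes to the dense subgroup generated by $K$, and then to all of $H$ by a continuity argument. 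Your intuition that ``uniqueness $\Rightarrow$ canonicity $\Rightarrow$ equivariance'' is in the right spirit, but the canonical objects are the straight classes, not the sections, and the uniqueness you need is Proposition~\ref{prp:section-unique} (which is proved \emph{via} Theorem~\ref{th:functional} applied to a difference of sections), not a direct ``parametric'' invocation of Theorem~\ref{th:functional}. Without this identification and the vertical/horizontal decomposition, your step two does not go through.
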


Finally, we note that Theorem \ref{th:dyn-alg-struc} is also valid in the following slightly more
general setting.
The condition that $\RP_H^s(X)$ is trivial could be replaced by the assumption that $X$
is a nilspace of degree at most $s$ and $H$ acts on $X$ via a continuous homomorphism
$H\to \Aut_1(X)$.
This variant allows the cubespace structure on $X$ to have more cubes than the dynamical cubespace $(X,\{C^k_H(X)\}_{k\in\N})$, 
provided it is still  assumed to be a nilspace of degree $s$.

\subsection{Acknowledgments}

First and foremost we owe gratitude to Bernard Host who introduced us to the subject and to Omar Antol\'\i n Camarena and Bal\'azs Szegedy
whose groundbreaking work \cite{CS12} was a constant inspiration
for us.

We would like to thank Emmanuel Breuillard, J\'er\^ome Buzzi,
Yves de Cornulier, Sylvain Crovisier, Eli Glasner, Ben Green, Bernard Host, Micha\pol{} Rams, Bal\'azs Szegedy, Anatoly Vershik and Benjamin
Weiss for helpful discussions. We are grateful to Pablo Candela, Bryna Kra and Bingbing Liang for a careful reading of preliminary versions.

We are thankful to Pablo
Candela, Diego Gonz\'alez-S\'anchez and Bal\'azs Szegedy for pointing out a gap in an earlier draft. This has been corrected in the current version.
See Section \ref{sc:dyninverse} for a discussion.

We are also grateful to the referee for a careful reading of the paper and for useful suggestions, which greatly improved the presentation of this paper.

\section{Inverse limits in the category of cubespaces}\label{sec:cubeinvlim}

The purpose of this section is to prove Theorem \ref{th:invlim}, the inverse limit theorem.  We recommend the reader review the outline of the proof contained in Section \ref{subsec:invlim} before proceeding with the details.

\subsection{Preliminaries and definitions}

For each metrizable topological space we fix a metric that we always denote by $\dist(\cdot,\cdot)$.
The choices of these metrics may be arbitrary.

The following fact is the starting point of the proof.

\begin{lem}
\label{lem:groupinvlim}
A compact abelian group is the inverse limit of compact abelian Lie groups.
\end{lem}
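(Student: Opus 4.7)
The plan is to invoke Pontryagin duality in order to reduce the claim to the (essentially trivial) fact that every discrete abelian group is the directed union of its finitely generated subgroups. Let $A$ be a compact abelian group and let $\widehat{A}$ denote its Pontryagin dual, a discrete abelian group. I would write $\widehat{A} = \bigcup_\alpha H_\alpha$, where $\alpha$ ranges over the directed poset (under inclusion) of finitely generated subgroups of $\widehat{A}$.

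For each $\alpha$, set
\[
N_\alpha := H_\alpha^\perp = \{a \in A : \chi(a) = 1 \text{ for all } \chi \in H_\alpha\},
\]
a closed subgroup of $A$. By the standard perfect pairing between closed subgroups of $A$ and quotients of $\widehat{A}$, the compact quotient $A_\alpha := A/N_\alpha$ has Pontryagin dual canonically isomorphic to $H_\alpha$. Since $H_\alpha$ is a finitely generated abelian group, the classification theorem expresses it as a finite direct sum of copies of $\Z$ and of finite cyclic groups; dualizing, $A_\alpha$ is a finite direct product of circles and finite cyclic groups, hence in particular a compact abelian Lie group.

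Whenever $H_\alpha \subseteq H_\beta$ one has $N_\beta \subseteq N_\alpha$, yielding a canonical surjective continuous homomorphism $\f_{\alpha,\beta} : A_\beta \to A_\alpha$, and these assemble into an inverse system together with compatible projections $A \to A_\alpha$. To conclude that $A = \varprojlim_\alpha A_\alpha$ it suffices to verify that these projections separate points: given $x \ne y$ in $A$, Pontryagin duality (equivalently, the Peter--Weyl theorem specialized to the abelian case) provides some $\chi \in \widehat{A}$ with $\chi(x) \ne \chi(y)$, and any $\alpha$ for which $\chi \in H_\alpha$ then has the property that $\chi$ factors through $A_\alpha$, whence the images of $x$ and $y$ in $A_\alpha$ differ.

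I do not foresee any substantial obstacle; the argument amounts to unpacking Pontryagin duality together with the structure theorem for finitely generated abelian groups. An alternative route that sidesteps invoking Pontryagin duality as a black box is to enumerate the continuous characters $\{\chi_i\}_{i\in I}$ of $A$, set $N_F := \bigcap_{i \in F} \ker \chi_i$ for finite $F \subseteq I$, observe that the induced map $A/N_F \hookrightarrow \mathbb{T}^F$ realizes $A/N_F$ as a closed subgroup of a torus (hence a compact abelian Lie group), and conclude via the fact that $\bigcap_F N_F = \{e\}$ by point-separation of characters.
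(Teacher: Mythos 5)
Your argument is correct and is precisely the approach the paper has in mind: the paper's proof consists of the one-line observation that the statement "follows easily from the fact that a compact abelian group is a Lie group if and only if its dual group is finitely generated," and your proposal simply spells out the Pontryagin-duality details (directed union of finitely generated subgroups, annihilators, structure theorem, separation of points) that the paper leaves to the reader. No meaningful difference in approach.
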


This allows us to deduce that the structure groups of a nilspace are inverse limits of Lie groups.
In particular, the degree $1$ case of the theorem could be easily deduced from this alone.

\begin{proof}[Proof of Lemma \ref{lem:groupinvlim}]
This is straightforward given Pontryagin duality.  Indeed, it follows easily from the fact that a compact abelian group is a Lie group if and only if its dual group is finitely generated.
See also \cite{Sep07}*{Theorem 5.2(a)}.
\end{proof}

Let $X_\infty^{(\infty)}$ be a compact ergodic nilspace of degree $s$.
(The reason for the double index will become clear in due course, or may be deduced from Section \ref{subsec:invlim}.)
We prove that $X_\infty^{(\infty)}$ is an inverse limit of Lie-fibered nilspaces --
the statement of Theorem \ref{th:invlim} -- by induction on $s$.
If $s=0$, then $X_\infty^{(\infty)}$ is the one point space and the theorem is trivial.

We fix $s\ge 1$, and assume that the theorem holds for nilspaces of degree $(s-1)$.
We denote by $B_\infty=\pi(X_\infty^{(\infty)})$ the $(s-1)$-th canonical factor of $X_\infty^{(\infty)}$.
Here, and throughout the remainder of this section, we abbreviate $\pi_{s-1}$ to $\pi$, as
we will not need to use any of the other canonical projections.
By the induction hypothesis, we may write $B_\infty=\invlim B_m$
for a sequence $\{B_m\}_{m\in\N}$ of Lie-fibered nilspaces, along fibrations $\psi_m \colon B \to B_m$.

We denote the top structure group of $X_\infty^{(\infty)}$ by $A^{(\infty)}$.
Applying Lemma \ref{lem:groupinvlim}, we write $A^{(\infty)}=\invlim A^{(n)}$, where
$\{A^{(n)}\}_{n\in \N}$ is a sequence of Lie groups.
Let $K_n$ denote the kernel of the surjective homomorphism $A^{(\infty)}\to A^{(n)}$.
We define
$X_\infty^{(n)}$ to be the quotient of $X_\infty^{(\infty)}$ under the action of $K_n$
and write $\a^{(n)}:X_\infty^{(\infty)}\to X_\infty^{(n)}$ for the quotient map.

It is not hard to show that $\a^{(n)}$
is a fibration, from which it follows that $X_\infty^{(n)}$ is a nilspace (by \cite{GMV1}*{Remark 7.9}). 

Indeed, let $\l:  \{0,1\}^\ell\sm\{\vec 1\}\to X_\infty^{(\infty)}$ be an $\ell$-corner and $c\in C^\ell(X_\infty^{(n)})$
a cube which is compatible with $\l$ in the sense that
$\a^{(n)}\circ\l=c|_{\{0,1\}^\ell\sm\{\vec 1\}}$.
Let $\wt c\in C^{\ell}(X_\infty^{(\infty)})$ be an arbitrary cube such that $\a^{(n)}(\wt c) = c$ (which exists by definition of the quotient cubespace).
Let $f:\{0,1\}^{\ell}\backslash\{\vec 1\}\to K_{n}$ be the unique configuration such that $f.\wt c|_{\{0,1\}^\ell\sm\{\vec 1\}}=\l$.
It follows from the weak structure theorem that $f$ is a corner in $\mathcal{D}_s(K_n)$,
hence it can be completed to a cube $f'$.
Thus $f'.\wt c$ is a completion of $\l$ that is compatible with $c$.

We will shortly state a technical proposition, which claims the existence of several nilspaces and
maps in between them.
We will see that Theorem \ref{th:invlim} follows from it directly.
In order to formulate the statement more easily, we first introduce some terminology.
Note that these definitions depend on the value of $s$ that has been fixed above.

Let $\f:X\to Y$ be a fibration between two compact ergodic nilspaces of degree $s$.
We claim that there is a unique fibration $\psi \colon \pi(X) \to \pi(Y)$ such that the diagram
\begin{center}
\begin{tikzpicture}
\matrix(m) [matrix of math nodes, row sep=3em, column sep=3em]
{
 X &  Y & & \\
  \pi(X) &  \pi(Y) & & \\
};
\path[-stealth]
(m-1-1) edge node [above] {$\f$} (m-1-2) edge node [left] {$\pi$} (m-2-1)
(m-1-2) edge node [right] {$\pi$} (m-2-2)	
(m-2-1) edge node [above] {$\psi$} (m-2-2)	
;
\end{tikzpicture}
\end{center}
commutes.
Indeed, by the universal property of the canonical factor (see \cite{GMV1}*{Remark 6.7}), 
the composite $\pi \circ \f \colon X \to \pi(Y)$ must factor uniquely through $\pi(X)$, which gives us $\psi$; and by the universal property of fibrations (see  Lemma \ref{lem:universal1}), since $\pi$ and $\psi \circ \pi$ are fibrations, so is $\psi$.
\begin{dfn}
  With this set-up, we call $\psi$ the {\bf shadow} of $\f$.
\end{dfn}

We recall from \cite{GMV1}*{Definition 7.10} 
the notion of $k$-uniqueness for a fibration $\f \colon X \to Y$: this states that if $c_1, c_2 \in C^k(X)$ are such that $\f(c_1) = \f(c_2)$ and $c_1(\o) = c_2(\o)$ for all $\o \ne \vec{1}$, then in fact $c_1 = c_2$.

We introduce the following terminology for the special case that $X$ and $Y$ are nilspaces of degree $s$ and $k=s$.
Recall again that we write $\llcorner^s(x;y)$ to denote the configuration given by $\vec1 \mapsto y$ and $\o \mapsto x$ for all $\o \ne \vec1$.

\begin{dfn}
  \label{dfn:horizontal}
We say that a fibration $\f \colon X \to Y$ between two nilspaces of degree $s$ is {\bf horizontal} if one of the following two equivalent conditions holds:
  \begin{enumerate}
    \item we have $\f(x_1)\neq\f(x_2)$ for any two points $x_1,x_2\in X$ with $\pi(x_1)=\pi(x_2)$ and $x_1\neq x_2$;
    \item for any $x \in X$ with $\f(x) = y$, we have that $\f$ restricts to a bijection $\pi^{-1}(x) \to \pi^{-1}(y)$.
  \end{enumerate}
\end{dfn}

\begin{rmk}\label{rmk:horizontal}
Using the notation and terminology introduced in \cite{GMV1}*{Section 7}, the notion of horizontal fibrations
can also be characterized by one of the following equivalent conditions:
  \begin{enumerate}
    \item[(3)] $\f$ has (relative) $s$-uniqueness;
    \item[(4)] the canonical equivalence relation $\sim_{\f,s-1}$ is trivial (see \cite{GMV1}*{Section 7.2} for a definition); 
    \item[(5)] for all $x_1, x_2 \in X$ such that $\f(x_1) = \f(x_2)$, the configuration $\llcorner^{s}(x_1;x_2)$ is a cube only if $x_1 = x_2$.
\end{enumerate}

We will use the equivalence of (3)--(5) and the definition only later in Section \ref{sc:functor}.
In particular, the proof of Theorem \ref{th:invlim} does not require this.
Hence the reader may safely choose to ignore this remark for the moment.

The equivalence of (3), (4) and (5) is dealt with in \cite{GMV1}*{Section 7.2}. 
Logically, (1) is the same as saying that if $\pi(x_1)=\pi(x_2)$ and $\f(x_1)=\f(x_2)$ then $x_1=x_2$; and by Lemma \ref{lem:alter-canonical}, this is the same as (5).
\end{rmk}

We now state the promised technical proposition.

\begin{prp}\label{prp:invlimlift}
  Let $X_\infty^{(n)}$, $\alpha^{(n)}$, $B_m$, $\psi_m$, etc.~be as above.

  Then there is an increasing sequence $\{M_n\}$ of positive integers such that the following holds.
  For all $n\in\N$ and $m\ge M_n$, there is a compact ergodic nilspace $X^{(n)}_m$ of degree $s$
  such that its $(s-1)$-th canonical factor is $\pi(X^{(n)}_m)= B_m$, and its top structure group is $A_n$.
  Furthermore, there is a horizontal fibration $\f_{m}^{(n)}:X_\infty^{(n)}\to X_m^{(n)}$,
  whose shadow is $\psi_m$.

  In addition, we have the following property.
  If $m_1\le m_2$ and $n_1\le n_2$ are such that $X_{m_1}^{(n_1)}$,  $X_{m_2}^{(n_2)}$ are both
  defined, then the fibres of
  \[
  \f_{m_2}^{(n_2)}\circ\a^{(n_2)}:X_\infty^{(\infty)}\to X_{m_2}^{(n_2)}
  \]
  partition the fibres of
  \[
  \f_{m_1}^{(n_1)}\circ\a^{(n_1)}:X_\infty^{(\infty)}\to X_{m_1}^{(n_1)};
  \]
  more precisely,
  for each $x_1\in X_{m_1}^{(n_1)}$ there is $x_2\in X_{m_2}^{(n_2)}$
  such that $(\f_{m_2}^{(n_2)}\circ\a^{(n_2)})^{-1}(x_2)\subseteq (\f_{m_1}^{(n_1)}\circ\a^{(n_1)})^{-1}(x_1)$.
\end{prp}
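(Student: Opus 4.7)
The plan is to construct each $X_m^{(n)}$ as a quotient $X_\infty^{(n)}/\sim_m^{(n)}$ by a closed equivalence relation whose classes are \emph{straight classes}: closed subsets $D\subseteq X_\infty^{(n)}$ for which $\pi(D)$ is an entire $\psi_m$-fibre in $B_\infty$ and the restriction $\pi|_D:D\to\pi(D)$ is an isomorphism of induced subcubespaces. Once a canonical straight class through every point has been produced, the remaining conclusions follow essentially formally: horizontality of $\varphi_m^{(n)}$ holds because distinct points of a $\pi$-fibre of $X_\infty^{(n)}$ lie in distinct straight classes, the shadow of $\varphi_m^{(n)}$ equals $\psi_m$ by construction, and routine verifications using the weak structure theorem (Theorem~\ref{th:weak-structure}) identify the top structure group of $X_m^{(n)}$ with $A^{(n)}$ and $\pi(X_m^{(n)})$ with $B_m$.

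The heart of the argument is constructing the straight classes, which I would do by combining Gleason's theorem with Theorem~\ref{th:functional}. Since $A^{(n)}$ is a Lie group, the map $\pi:X_\infty^{(n)}\to B_\infty$ is a principal $A^{(n)}$-bundle and hence admits a continuous local section $\sigma:U\to X_\infty^{(n)}$ near any prescribed point $x_0\in X_\infty^{(n)}$. For a $\psi_m$-fibre $F\subseteq U$, the image $\sigma(F)$ will generally not be a straight class, but the weak structure theorem encodes its defect as an $A^{(n)}$-valued cocycle $\rho$ in the sense of Theorem~\ref{th:functional}. Replacing $\sigma$ by $\sigma'(x)=f(x).\sigma(x)$ for a continuous $f:U\to A^{(n)}$ alters $\rho$ by $\partial^s f$, and the condition that $\sigma'(F)$ be a straight class for every choice of $F$ reduces exactly to the functional equation of Theorem~\ref{th:functional} relative to the fibration $\psi_m:B_\infty\to B_m$.

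To verify the smallness hypothesis of Theorem~\ref{th:functional} -- that $\rho$ varies by at most some $\delta=\delta(s,\ell,A^{(n)})$ along each $\psi_m$-fibre -- one uses that $\rho$ is continuous on a compact space, that $A^{(n)}$ is Lie so that $\delta$ is a genuine positive number, and that $B_\infty=\invlim B_m$ forces $\psi_m$-fibres to have uniformly small diameter once $m$ is large. Thus one may choose $M_n$ so that the hypothesis holds for all $m\ge M_n$. Theorem~\ref{th:functional} then supplies the desired $f$, and its uniqueness clause ensures that the resulting straight class through $x_0$ is canonical, independent of the choice of $\sigma$. These local constructions patch into a single closed equivalence relation $\sim_m^{(n)}$ on $X_\infty^{(n)}$.

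For the compatibility across pairs $(m_1,n_1)\le(m_2,n_2)$, I would choose the sequence $M_n$ inductively and large enough at each step so that straight classes at level $(m_2,n_2)$ in $X_\infty^{(n_2)}$ project, under the natural map $X_\infty^{(n_2)}\to X_\infty^{(n_1)}$ induced by $K_{n_1}\supseteq K_{n_2}$, into unions of straight classes at level $(m_1,n_1)$ in $X_\infty^{(n_1)}$. I expect the main obstacle to be precisely this coherence step: reducing the two compatibility statements (increasing $m$ and increasing $n$) to the uniqueness clause of Theorem~\ref{th:functional} and verifying, through a careful cocycle computation, that the canonical straight classes respect both kinds of refinement simultaneously. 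Once the family of equivalence relations is shown to be well-defined and coherent, the identification of each quotient as a compact ergodic nilspace of degree $s$ with the stated shadow and structure group should proceed by direct verification against the definitions of Section~\ref{sc:cubes} and the weak structure theorem.
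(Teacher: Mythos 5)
Your proposal follows essentially the same architecture as the paper's proof: construct straight classes by starting from a Gleason section, measuring its defect as a discrepancy cocycle, and correcting via a solution of the functional equation from Theorem~\ref{th:functional}; then quotient by the resulting equivalence relation; then derive the compatibility claim from the uniqueness of small straight classes. So the overall route is right. There is, however, one imprecision worth flagging because the paper explicitly warns against it.

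You write that ``the condition that $\sigma'(F)$ be a straight class for every choice of $F$ reduces exactly to the functional equation of Theorem~\ref{th:functional}.'' That equivalence is false in one direction: the functional equation is \emph{strictly stronger}. A solution $f$ of the functional equation yields a \emph{straight section}, meaning $D(\s'(c_1))=D(\s'(c_2))$ for \emph{all} pairs of cubes $c_1,c_2\subseteq U_1$ with $\psi(c_1)=\psi(c_2)$ --- not just for the degenerate cubes lying in a single $\psi$-fibre. Producing straight classes alone only controls the cubes of $X_\infty^{(n)}$ that collapse to a point in $B_m$; it says nothing about how the equivalence relation interacts with general $(s{+}1)$-cubes, and hence is not enough to verify that the quotient map $\f_m^{(n)}$ is a fibration (equivalently, that the quotient cubespace has $(s{+}1)$-uniqueness). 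The paper's proof of that step (Lemma~\ref{lem:quotient by straight class}) uses precisely the stronger straight-section condition: given two cubes agreeing on all but one vertex and representing the same configuration downstairs, one pushes them through a straight section and uses $D(\s(\pi(c_1)))=D(\s(\pi(c_2)))$ to conclude. Your ``routine verifications'' phrase glosses over exactly the place where this extra strength is consumed. Since you do apply Theorem~\ref{th:functional}, which supplies the stronger property anyway, the proof can be completed --- but you should be explicit that you are using the full straight-section conclusion, not merely that fibres map to straight classes.

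On the compatibility step: you suggest choosing $M_n$ ``inductively'' so that straight classes at level $(m_2,n_2)$ project into unions of classes at level $(m_1,n_1)$, and you anticipate a ``careful cocycle computation.'' This is overcautious. Once existence and uniqueness of small straight classes are available, the compatibility is a direct consequence of the uniqueness statement, with no further tuning of $M_n$ and no additional cocycle argument: one checks that $\a^{(n_1)}$ maps the fibre $(\f_{m_2}^{(n_2)}\circ\a^{(n_2)})^{-1}(x_2)$ onto a straight $\psi_{m_2}$-class in $X_\infty^{(n_1)}$ (using $K_{n_2}\subseteq K_{n_1}$ and the fact that $\a^{(n_2)}$ is a fibration), observes that the appropriate intersection of $\a^{(n_1)}$ of the $(m_1,n_1)$-fibre with the relevant $\pi$-preimage is also a straight $\psi_{m_2}$-class, notes they share a point, and concludes equality by uniqueness. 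So this part is simpler than you expect.
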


Note that the last claim about the fibres of $\f_m^{(n)}\circ\a^{(n)}$ is used to construct fibrations from
 $X_{m_2}^{(n_2)}$ to  $X_{m_1}^{(n_1)}$ with the help of the universal property of fibrations.  Specifically, it asserts the existence of a map of sets $X_{m_2}^{(n_2)} \to X_{m_1}^{(n_1)}$ that makes everything commute; and by these universal properties this will turn out to be continuous and a fibration.

Assuming this proposition, whose proof we will return to below, we now complete the proof of Theorem \ref{th:invlim}.  The deduction is more or less a case of formalizing the previous remark, and we advise against taking the formal details too seriously.

Since $X_{m}^{(n)}$ are Lie fibred nilspaces, it is enough to prove the following.

\begin{prp}\label{pr:inverse-sys}
Let $X_m^{(n)}$, $\alpha^{(n)}$, $B_m$, $\psi_m$, $M_n$, etc.~be as above.
Let $\{\wt M_n\}$ be an increasing sequence such that $\wt M_n\ge M_n$ for all $n$ and
$\wt M_n\to \infty$.

Writing $X_n=X_{\wt M_n}^{(n)}$ and $X_\infty=X_\infty^{(\infty)}$, there is an
inverse system of fibrations $\{\f_{n,l}:X_l\to X_n\}_{\infty\ge l\ge n}$
such that the shadow of $\f_{n,\infty}$ is $\psi_n$ and $X_\infty=\varprojlim X_n$.
\end{prp}

The reader might be wondering what the purpose of the sequence $\{\wt M_n\}$ is, and why we cannot simply take $\wt M_n=M_n$.
The reason is that we will need this extra flexibility in our proof of the dynamical results in Section \ref{sc:dyninverse}.

\begin{proof}
We set $\f_{n,\infty}=\f_{\wt M_n}^{(n)}\circ\a_n$ and construct the fibrations $\f_{n,l}$ for $l<\infty$
by applying Lemma \ref{lem:universal1} (the universal property of fibrations)
with $X=X_\infty=X^{(\infty)}_\infty$,
$Y=X_l=X_{\wt M_l}^{(l)}$ and $Z=X_n=X_{\wt M_n}^{(n)}$.
We verify that $\f_{n,l}\circ\f_{l,o}=\f_{n,o}$.
If $o=\infty$, this already follows from Lemma \ref{lem:universal1}.
If $o<\infty$ then we can write
\[
(\f_{n,l}\circ\f_{l,o})\circ\f_{o,\infty}=\f_{n,l}\circ(\f_{l,o}\circ\f_{o,\infty})=
\f_{n,l}\circ\f_{l,\infty}=\f_{n,\infty}.
\]
Hence $\f_{n,l}\circ\f_{l,o}=\f_{n,o}$
by the uniqueness part of Lemma \ref{lem:universal1} applied with $X=X_\infty$,
$Y=X_o$ and $Z=X_n$.

Finally, now that we have an inverse system, we need to verify that it separates points of $X_\infty$.  Let $x,y\in X_\infty=X_\infty^{(\infty)}$ be distinct points; we need to show that $\f_n(x)\neq\f_n(y)$
if $n$ is sufficiently large.

If $\pi(x)\neq\pi(y)$, then $\psi_{\wt M_n}(\pi(x))\neq\psi_{\wt M_n}(\pi(y))$ if $n$ is sufficiently large.
Since the shadow of $\f_{\wt M_n}^{(n)}$ is $\psi_{\wt M_n}$, we have
\[
\f_{n,\infty}(x)=\f_{\wt M_n}^{(n)}(\a^{(n)}(x))\neq\f_{\wt M_n}^{(n)}(\a^{(n)}(y))=\f_{n,\infty}(y).
\]

If $\pi(x)=\pi(y)$, then there is a unique $a\in A^{(\infty)}$ such that $a.x=y$.
If $n$ is sufficiently large then $a\neq K_n$, hence $\a_n(x)\neq\a_n(y)$.
The claim follows again since $\f_{\wt M_n}^{(n)}$ is horizontal hence injective on fibres of $\pi$.
\end{proof}

\subsection{Straight classes and sections}\label{sec:straight}

We now turn to the proof of Proposition \ref{prp:invlimlift}.

As discussed in Section \ref{subsec:invlim}, we will construct the nilspace $ X_{m}^{(n)}$ as a quotient of $X_{\infty}^{(n)}$ (and indeed, there is essentially no other choice).
To do this, we need to specify a procedure for identifying the fibers $\pi^{-1}(x), \pi^{-1}(y)$ in $X_{\infty}^{(n)}$
for every pair of points $x,y \in B_\infty$ such that $\psi_{m}(x) = \psi_m(y)$.  See Figure \ref{fig:straight} for a pictorial representation.

Moreover, this identification needs to respect the cubespace structure.

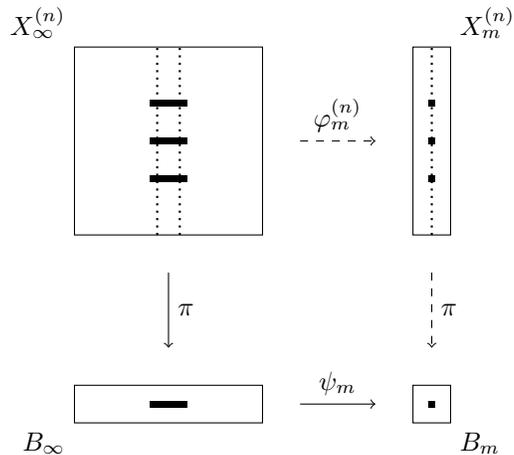
\begin{figure}
\begin{center}
\begin{tikzpicture}[scale=0.5]
  \draw (0,10) node [above left] {$X_\infty^{(n)}$} -- (5,10) -- (5,5) -- (0, 5) -- cycle;
  \draw (0,0) node [below left] {$B_\infty$} -- (5,0) -- (5,1) -- (0,1) -- cycle;
  \draw (10,0) node [below right] {$B_m$} -- (9,0) -- (9,1) -- (10,1) -- cycle;
  \draw (10,10) node [above right] {$X_m^{(n)}$} -- (9,10) -- (9,5) -- (10,5) -- cycle;
  \draw[->] (2.5,4) -- (2.5, 2);
  \node [right] at (2.5,3) {$\pi$};
  \draw[->,dashed] (9.5,4) -- (9.5, 2);
  \node [right] at (9.5,3) {$\pi$};
  \draw[->] (6,0.5) -- (8,0.5);
  \node [above] at (7,0.5) {$\psi_m$};
  \draw[->,dashed] (6,7.5) -- (8,7.5);
  \node [above] at (7,7.5) {$\f_m^{(n)}$};

  \fill (9.42,0.42) rectangle (9.58,0.58);
  \fill (9.42,6.42) rectangle (9.58,6.58);
  \fill (9.42,7.42) rectangle (9.58,7.58);
  \fill (9.42,8.42) rectangle (9.58,8.58);
  \fill (2,0.42) rectangle (3,0.58);
  \fill (2,6.42) rectangle (3,6.58);
  \fill (2,7.42) rectangle (3,7.58);
  \fill (2,8.42) rectangle (3,8.58);

  \draw[thick,dotted] (2.2,5) -- (2.2,10);
  \draw[thick,dotted] (2.8,5) -- (2.8,10);
  \draw[thick,dotted] (9.5,5) -- (9.5,10);

\end{tikzpicture}
\end{center}
  \caption{A schematic of the process of constructing $X_m^{(n)}$.  The map $\f_m^{(n)}$ is supposed to be a horizontal fibration; i.e.~map vertical fibers (left dotted lines) bijectively to vertical fibers (right dotted line).  The three horizontal black lines in $X_\infty^{(n)}$ are three of the desired equivalence classes of the quotient.  In particular, we need to construct an identification between the two left-hand fibers.}
  \label{fig:straight}
\end{figure}

In particular, suppose that $\f_{m}^{(n)}:X_\infty^{(n)}\to X_{m}^{(n)}$ is indeed
a horizontal fibration whose shadow is $\psi_m$.
Then we can make the following observation.
Let $c\in C^{s+1}(B_\infty)$ be a cube all of whose vertices are mapped by $\psi_{m}$ to the same point $b\in B_m$.
Let $x\in X_m^{(n)}$ be a point above $b$, that is $b=\pi(x)$.
Denote by $c_x: \{0,1\}^{s+1}\to X_\infty^{(n)}$ the unique configuration such that $\pi(c_x)=c$ and each
vertex of $\f_{m}^{(n)}(c_x)$ is $x$.
In other words, for the vertices of $c_x$ we pick the unique points above the vertices of $c$ that
are mapped to $x$ by $\f^{(n)}_{m}$.

We claim that $c_x$ is a cube in $X_\infty^{(n)}$.
Indeed, it follows from the weak structure theorem (Theorem \ref{th:weak-structure})
that there is a cube $c'\in C^{s+1}(X_\infty^{(n)})$ with $\pi(c')=c$ and with all
but possibly one of its vertices matching the corresponding vertices of $c_x$; say $c'(\vec 1)$ is the exception.
If we remove the exceptional vertex from $c'$, we obtain a corner all of whose vertices are mapped to $x$
by $\f_{m}^{(n)}$.
Since $\f_{m}^{(n)}$ is a fibration,
this corner can be completed to a cube $c''$ such that $\f_{m}^{(n)}(c'')(\vec 1)=x$.
On the other hand, $\pi(c'')(\vec 1)=c(\vec 1)=\pi(c_x)(\vec 1)$, since $B_m$ is of degree $(s-1)$.
So, we must have that $c''=c_x$.

Motivated by the above observation, we make the following definition.
\begin{dfn}
  Let $X$ be a compact ergodic nilspace, and let $\psi:B_1:=\pi(X)\to B_2$ be a fibration
  onto a nilspace $B_2$.
  We call a set $D\subseteq X$ a {\bf straight $\psi$-class}, if there is a point $b'\in B_2$ such that $\pi(D)=\psi^{-1}(b')$, $D$
  contains exactly one point on the fiber $\pi^{-1}(b)$ for every $b\in\psi^{-1}(b')$,
  and if a configuration $c:\{0,1\}^{s+1}\to D$ is a cube if and only if $\pi(c)$ is a cube.
\end{dfn}
Note that the ``only if'' part of the above condition holds for any set $D$, since $\pi$ is a cubespace morphism.  Also, the reader may wish to verify that this definition is a restatement of the one in Section \ref{subsec:invlim}.

We can summarize the above discussion in the following lemma.

\begin{lem}\label{lem:horizontal-fib}
  If $\f:X\to Y$ is a horizontal fibration between two compact ergodic nilspaces, then the
  inverse images of points under $\f$ are straight $\psi$-classes, where $\psi$ is the shadow of $\f$.
\end{lem}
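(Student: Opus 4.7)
The plan is to verify the two conditions defining a straight $\psi$-class for $D := \f^{-1}(y)$, with the point $b' := \pi(y) \in B_2$ playing the role of base point. The fiber condition is essentially immediate from horizontality: condition (2) of Definition \ref{dfn:horizontal} asserts that $\f$ restricts to a bijection between $\pi^{-1}(b) \subseteq X$ and $\pi^{-1}(\psi(b)) \subseteq Y$, so for any $b \in \psi^{-1}(b')$ the point $y \in \pi^{-1}(b')$ has a unique preimage in $\pi^{-1}(b)$, which is the unique element of $D$ on that fiber.

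For the cube condition the ``only if'' direction follows from $\pi$ being a cubespace morphism, so the real content is the converse: given $c : \{0,1\}^{s+1} \to D$ with $\pi \circ c \in C^{s+1}(B_1)$, show $c \in C^{s+1}(X)$. I will reproduce the calculation sketched in the paragraph immediately preceding the lemma. First, since $\pi$ is a fibration I can lift $\pi \circ c$ to some $c_0 \in C^{s+1}(X)$. By part (2) of the weak structure theorem the cubes above $\pi \circ c$ form a torsor for the action of $C^{s+1}(\mathcal{D}_s(A))$, and since $\mathcal{D}_s(A)$ is $s$-ergodic, every prescription of values on $\{0,1\}^{s+1}\setminus\{\vec 1\}$ can be realised. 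Hence, after translating $c_0$ by a suitable $a$ (with $a_\o$ for $\o \ne \vec 1$ forced by the condition $a_\o.c_0(\o) = c(\o)$, and $a_{\vec 1}$ determined by the single remaining linear relation in $C^{s+1}(\mathcal{D}_s(A))$), I obtain a cube $c' \in C^{s+1}(X)$ with $\pi(c') = \pi(c)$ and $c'(\o) = c(\o)$ for all $\o \ne \vec 1$.

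Next, the restriction $c'|_{\{0,1\}^{s+1}\setminus\{\vec 1\}}$ is a corner in $X$ whose $\f$-image is the constant-$y$ corner, and so it is compatible with the constant cube $y^{\{0,1\}^{s+1}} \in C^{s+1}(Y)$ (itself a cube by cube-invariance). Applying the fibration property of $\f$ yields a completion $c'' \in C^{s+1}(X)$ with $\f \circ c'' \equiv y$ and $c'' = c'$ off $\vec 1$. Now $\pi(c')$ and $\pi(c'')$ are two $(s+1)$-cubes in $B_1$ agreeing everywhere except perhaps at $\vec 1$; since $B_1$ is a nilspace of degree $s-1$ it has $s$-uniqueness, and restricting to the $s$-face $\{\o : \o_1 = 1\}$ promotes this to $(s+1)$-uniqueness, giving $\pi(c'') = \pi(c')$. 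Thus $c''(\vec 1)$ and $c(\vec 1)$ lie in the same $\pi$-fiber and both map to $y$ under $\f$; horizontality then forces $c''(\vec 1) = c(\vec 1)$, whence $c = c''$ is a cube.

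There is no genuine obstacle here: the argument is precisely the one already outlined before the statement, and the only points requiring attention are the use of $(s+1)$-uniqueness in $B_1$ (reduced to $s$-uniqueness by restricting to the appropriate face) and the careful setup of the constant cube $y^{\{0,1\}^{s+1}}$ as the piece of compatible data required by the fibration property of $\f$.
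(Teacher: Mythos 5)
Your proof is correct and matches the paper's own argument, which is the discussion in the paragraph immediately preceding the lemma (the paper explicitly introduces the lemma as a "summary" of that discussion). You reproduce the same chain: lift $\pi\circ c$, adjust by the top structure group using the weak structure theorem, complete the resulting corner compatibly with the constant cube via the fibration property of $\f$, then use $(s+1)$-uniqueness in $B_1$ together with horizontality to pin down the final vertex.
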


A key fact, which we will formalize in Propositions \ref{prp:section-exist} and \ref{prp:section-unique} below, is the following.

\begin{fact}
  If $m$ is sufficiently large (in terms of $n$), then each point $x\in X^{(n)}_\infty$ is contained
in a  straight $\psi_m$-class.  Moreover, this class is unique if we also insist that its diameter be small.
\end{fact}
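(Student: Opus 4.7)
The plan is to prove existence by combining Gleason's theorem (to obtain a continuous local section of $\pi \colon X^{(n)}_\infty \to B_\infty$) with Theorem \ref{th:functional} (to correct that section so that it takes cubes lying in a single $\psi_m$-fiber to cubes in $X^{(n)}_\infty$). Uniqueness will follow from the uniqueness clause of the same theorem. In detail: fix $x_0 \in X^{(n)}_\infty$ and set $b_0 = \pi(x_0)$. The weak structure theorem (Theorem \ref{th:weak-structure}) exhibits $\pi$ as a principal $A^{(n)}$-bundle over $B_\infty$, and since $A^{(n)}$ is a compact Lie group, Gleason's theorem supplies a neighborhood $U$ of $b_0$ and a continuous section $\sigma \colon U \to X^{(n)}_\infty$ with $\sigma(b_0) = x_0$.

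The section $\sigma$ generally fails to be a cubespace morphism; I would quantify this failure by a cocycle $\rho \colon C^{s+1}(B_\infty) \to A^{(n)}$. Concretely, for $c \in C^{s+1}(B_\infty)$ with vertices in $U$, pick any cube $\tilde c \in C^{s+1}(X^{(n)}_\infty)$ lying over $c$ (such a cube exists because $\pi$ is a fibration) and write $\sigma \circ c = a . \tilde c$ for the unique $a \colon \{0,1\}^{s+1} \to A^{(n)}$ provided by the weak structure theorem; set $\rho(c) = \partial^{s+1} a$. One checks that $\rho(c)$ is independent of the choice of $\tilde c$ and depends continuously on $c$; by covering $B_\infty$ with Gleason charts and using the additive structure of $A^{(n)}$, the local construction extends to a global continuous cocycle. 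Because $B_\infty = \varprojlim B_m$, the $\psi_m$-fibers have diameter tending to $0$ as $m \to \infty$, so by uniform continuity of $\rho$ one has $\dist(\rho(c),\rho(c')) \le \delta'$ whenever $\psi_m \circ c = \psi_m \circ c'$, with $\delta' \to 0$. For $m$ large enough (depending on $n$ through $A^{(n)}$) to trigger Theorem \ref{th:functional} with $\ell = s+1$ and $\f = \psi_m$, we obtain $f \colon B_\infty \to A^{(n)}$ and a cocycle $\tilde\rho$ on $B_m$ with $\rho = \partial^{s+1} f + \tilde\rho \circ \psi_m$.

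For any $c \in C^{s+1}(F)$ where $F = \psi_m^{-1}(\psi_m(b_0))$, the configuration $\psi_m \circ c$ is a constant $(s{+}1)$-configuration and hence degenerate, so $\tilde\rho(\psi_m \circ c) = 0$ by the additivity of $\tilde\rho$. Thus $\rho(c) = \partial^{s+1}(f \circ c)$ on $F$. Adjusting $f$ by the constant $f(b_0)$ and setting $\sigma'(b) = (-f(b)).\sigma(b)$ for $b \in F$, a computation using the weak structure theorem shows that $\sigma' \circ c \in C^{s+1}(X^{(n)}_\infty)$ for every $c \in C^{s+1}(F)$, while $\sigma'(b_0) = x_0$. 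The set $D := \sigma'(F)$ is therefore a straight $\psi_m$-class through $x_0$, using the reformulation of straight classes in terms of sections given in Section \ref{sec:straight}.

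For uniqueness, suppose $D_1, D_2$ are two straight $\psi_m$-classes through $x_0$, each of diameter at most a small $\ve > 0$. They arise as the images of sections $\sigma'_1, \sigma'_2 \colon F \to X^{(n)}_\infty$ of $\pi$, both sending $b_0$ to $x_0$ and both cube-preserving. Writing $\sigma'_i = (-f_i).\sigma$ (after extending the $f_i$ globally as above), small diameter translates into $\dist(f_1(b), f_2(b)) \lesssim \ve$ for $b \in F$. Both $f_i$ satisfy the functional equation $\rho = \partial^{s+1} f_i + \tilde\rho_i \circ \psi_m$, so the uniqueness clause of Theorem \ref{th:functional} (applied once $\ve$ is smaller than the threshold $\delta$ there) forces $f_1 - f_2$ to be constant on fibers of $\psi_m$; in particular constant on $F$. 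Since $f_1(b_0) = f_2(b_0)$, we conclude $f_1 = f_2$ on $F$, whence $D_1 = D_2$. The main obstacle in this plan is the first technical step of promoting the local section $\sigma$ from Gleason to a globally defined continuous cocycle $\rho$ to which Theorem \ref{th:functional} can be applied; this requires a careful patching argument using the principal bundle structure and the additive nature of $A^{(n)}$, and is where most of the real work goes.
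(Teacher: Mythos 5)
The broad strategy you propose (Gleason local section, encode the failure of the section to respect cubes by a discrepancy cocycle, invoke Theorem \ref{th:functional}, then use its uniqueness clause) is exactly the one the paper pursues. But the step you flag as "where most of the real work goes" --- promoting the local discrepancy data from one Gleason chart to a globally defined continuous cocycle on all of $C^{s+1}(B_\infty)$ --- is a genuine gap, and it is a gap the paper never needs to fill. Your $\rho(c)$ is well-defined independently of the auxiliary lift $\tilde c$, but it is not independent of the Gleason section $\sigma$: changing $\sigma$ to $g.\sigma$ shifts $\rho$ by the coboundary $\partial^{s+1}(g\circ -)$. The bundle $\pi \colon X_\infty^{(n)} \to B_\infty$ need not be trivial, so there is no global $\sigma$, and the overlapping local $\rho$'s only agree up to coboundaries. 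No patching argument "using the additive structure of $A^{(n)}$" is offered, and it is unclear one can be written down without importing real machinery (a measurable cocycle and cohomological formalism, which is the approach of \cite{CS12} that the paper is explicitly trying to avoid). In particular your subsequent step --- applying Theorem \ref{th:functional} with $X = B_\infty$, $Y = B_m$, $\f = \psi_m$ --- quietly presupposes that this global $\rho$ exists.

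The paper sidesteps all of this by staying local. For each cube $c_0 \in C^{s+1}(B_1)$, it takes a Gleason chart $U_{-1} \ni c_0$, defines $\rho(c) = D(\s(c))$ only on $C^{s+1}(\overline{U_0})$ for a slightly smaller $\overline{U_0}$, verifies the cocycle identity there using the glueing property, and applies Theorem \ref{th:functional} to the restricted fibration $\psi \colon \overline{U_1} \to \overline{U_2}$ between \emph{induced subcubespaces}. These subcubespaces are compact, ergodic, and have glueing, which is all the theorem requires (and is precisely why the theorem was stated in that generality). This yields a straight section on $U_1$, hence a straight class through each point of $\pi^{-1}(U_1)$; a compactness argument (choose finitely many charts covering $C^{s+1}(B_1)$ and take the minimum $\delta$) promotes the statement to a uniform $\delta$. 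Similarly, for uniqueness, the paper does not extend anything: given two straight $\psi$-classes $D_1, D_2$ over the same fiber $B_1' := \pi(D_1)$, it takes $\sigma_i := (\pi|_{D_i})^{-1}$, defines $f$ directly by $\sigma_2 = f.\sigma_1$ on $B_1'$, and applies the uniqueness clause of Theorem \ref{th:functional} to the fibration $B_1' \to \{pt\}$. Your version, which writes $\sigma'_i = (-f_i).\sigma$ against the Gleason section and then "extends the $f_i$ globally," again invokes the unavailable global extension. The fix in both halves is the same: work with the cocycle and the functional theorem on the induced subcubespace over a single chart (or a single fiber), and let compactness do the globalization at the level of the conclusion, not at the level of the cocycle.
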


This allows us to define $X^{(n)}_m$ as a quotient of $X_\infty^{(n)}$ by identifying straight classes; i.e., the equivalence classes of the relation $\sim_n^m$ defining $X_m^{(n)}$ are precisely the straight classes given by this fact.

We are not yet done: the condition of Lemma \ref{lem:horizontal-fib} is necessary but not sufficient. We have further work to do to show that this equivalence relation is closed and respects the cubespace structure in the appropriate way to give a horizontal fibration.  However, by Lemma \ref{lem:horizontal-fib} and the above fact, we do know that this quotient is essentially the \emph{only} possible candidate, which is reassuring.

Indeed, the main problem is that the definition of straight classes only carries information about those cubes of $X_\infty^{(n)}$ that
collapse down to a single point in $B_m$ under the map $\psi_m\circ\pi$.  It does not directly tell us anything about general cubes.

Fortunately, our proof of the existence of straight classes in fact yields
further information along these lines.
In order to formalize this, we now make some further definitions.

Let $X$ be a compact ergodic  nilspace and denote by $A$ its top structure group.
For a vertex $\o\in \{0,1\}^{s+1}$ of the discrete cube,
and an element $a\in A$, we write $[a]_\o:\{0,1\}^{s+1}\to A$
for the map which assigns $a$ to $\o$ and the identity element $0$
to all other vertices.
Let $c: \{0,1\}^{s+1}\to X$ be a configuration such that $\pi(c)$ is a cube.
By the weak structure theorem, there is a unique element $a\in A$
such that $[a]_{\vec 0}.c$ is a cube in $X$.
We call this element $a$ the {\bf discrepancy} of $c$ and denote it by $D(c):=a$.

 Let $X$ be a compact ergodic nilspace and let $\psi:B_1:=\pi(X)\to B_2$ be a fibration.
Let $U'\subseteq B_2$ be an open set and put $U=\psi^{-1}(U')$.
We say that a continuous map $\s:U\to X$ is a {\bf straight} $\psi$-{\bf section}
if $\pi\circ\sigma=\Id_U$, and the following holds.
If $c_1,c_2:\{0,1\}^{s+1}\to U$ are two $(s+1)$-cubes of $B_1$ such that $\psi(c_1)=\psi(c_2)$,
then $D(\s(c_1))=D(\s(c_2))$.

We remark that the straightness of a section $\s$ implies that it maps the fibres of $\psi$
onto straight classes.
Indeed, let $c\in C^{s+1}(B_1)$ be a cube contained in a single fibre of $\psi$
and let $c_2$ be a constant cube in the same fibre.
Then $D(\s(c))=D(\s(c_2))=0$, since $\s\circ c_2$ is also a constant cube.
This proves that $\s(c)$ is a cube.

We also note that discrepancy is a continuous function.
Indeed, let $\{c_n\}\subseteq X^{\{0,1\}^{s+1}}$ be a sequence of configurations
converging to a limit $c\in X^{\{0,1\}^{s+1}}$, such that $\pi(c_n)$ is a cube for all $n$ (and hence so is $\pi(c)$).
We show that $\lim D(c_n)=D(c)$.
We assume as we may that $\lim D(c_n)=a$ exists.
Since $C^{s+1}(X)$ is closed, we see that
\[
\lim [D(c_n)]_{\vec 0}.c_n=[a]_{\vec 0}.c
\]
is a cube.
Hence $D(c)=a$ by definition.

We now state two technical propositions, extending the fact about existence and uniqueness of straight classes stated above.

\begin{prp}\label{prp:section-exist}
Let $X$ be an ergodic compact nilspace of degree $s$ and suppose that its top structure group is
a Lie group.  Let $\ve>0$ be given.
Then there is a number $\d>0$ such that the following holds.
Let $\psi: B_1:=\pi(X)\to B_2$ be a fibration onto a nilspace $B_2$
such that $\diam(\psi^{-1}(b_2))<\d$ for all $b_2\in B_2$.

Then for every $c\in C^{s+1}(B_1)$, there is an open set $U_2\subseteq B_2$ such that
$U_1:=\psi^{-1}(U_2)$ contains the vertices of $c$, and such that there exists a straight $\psi$-section $\s: U_1\to X$
such that $\diam(\s(\psi^{-1}(b)))\le\ve$ for all $b\in U_2$.
In particular, each point in $X$ is contained in a straight $\psi$-class
of diameter at most $\ve$.
\end{prp}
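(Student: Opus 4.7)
The strategy, as foreshadowed in Section~\ref{subsec:invlim}, is to first produce a continuous local section $\tau\colon V_1\to X$ of the principal $A$-bundle $\pi\colon X\to B_1$ on a neighborhood $V_1$ of the vertices of $c$, and then twist $\tau$ by an $A$-valued function $f$ to obtain a straight section $\sigma := f\cdot\tau$; the existence of $f$ will be guaranteed by Theorem~\ref{th:functional}.

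\emph{Local section.}  Since the top structure group $A$ is a compact abelian Lie group acting freely on $X$ with orbits equal to the fibres of $\pi$ (by the weak structure theorem), the projection $\pi$ is a principal $A$-bundle, and by Gleason's theorem on local cross sections it admits a continuous section on a neighborhood of any prescribed point of $B_1$.  Choosing such a neighborhood around each vertex of $c$ and taking their union (patching by the $A$-action where they overlap), we obtain an open neighborhood $V_1\subseteq B_1$ of the vertices of $c$ and a continuous section $\tau\colon V_1\to X$.  Shrinking $V_1$ if necessary, we may further assume that $\tau$ has oscillation at most $\ve/2$.

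\emph{Reduction to a functional equation.}  Define $\rho\colon C^{s+1}(V_1)\to A$ by $\rho(c'):=D(\tau\circ c')$.  Using the identity $D(a\cdot\tilde c)=D(\tilde c)-\partial^{s+1}a$, valid for any configuration $\tilde c$ projecting to a cube and any $a\in A^{\{0,1\}^{s+1}}$ (a direct consequence of the weak structure theorem), together with the glueing axiom, one verifies that $\rho$ is a continuous cocycle.  Now fix $\delta>0$ small enough that Theorem~\ref{th:functional} applies with a parameter $\delta'$ we may take arbitrarily small.  Given any $\psi$ whose fibres have diameter $\le\delta$, we may choose an open $U_2\subseteq B_2$ containing $\psi$ of each vertex of $c$ with $U_1:=\psi^{-1}(U_2)\subseteq V_1$ and, by continuity of $\tau$ and $D$, arrange that $\dist(\rho(c_1'),\rho(c_2'))\le\delta'$ whenever $\psi(c_1')=\psi(c_2')$.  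Applying Theorem~\ref{th:functional} to the restricted fibration $\psi|_{U_1}\colon U_1\to U_2$ yields a continuous $f\colon U_1\to A$ and a cocycle $\tilde\rho$ on $C^{s+1}(U_2)$ with $\rho=\partial^{s+1}f+\tilde\rho\circ\psi|_{U_1}$ and $\dist(f(x),f(y))\lesssim_s\delta'$ whenever $\psi(x)=\psi(y)$.

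\emph{Conclusion and main obstacle.}  Setting $\sigma(x):=f(x)\cdot\tau(x)$, we have $\pi\circ\sigma=\Id$, and applying the identity $D(a\cdot\tilde c)=D(\tilde c)-\partial^{s+1}a$ once more gives $D(\sigma\circ c')=\rho(c')-\partial^{s+1}f(c')=\tilde\rho(\psi(c'))$, which depends only on $\psi(c')$; hence $\sigma$ is straight.  The diameter bound $\diam(\sigma(\psi^{-1}(b)))\le\ve$ then follows from the small oscillation of $\tau$ together with $\dist(f(x_1),f(x_2))\lesssim_s\delta'$ for $x_1,x_2\in\psi^{-1}(b)$ and the continuity of the $A$-action on $X$.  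The principal technical difficulty I anticipate is the application of Theorem~\ref{th:functional} to the \emph{restricted} fibration $\psi|_{U_1}\colon U_1\to U_2$: the subcubespaces induced on open subsets need not be ergodic, and a restriction of a fibration is not automatically a fibration, so one must either localize the proof of Theorem~\ref{th:functional} or reduce the problem to its global form by an extension or glueing argument.  Care is also required in checking the cocycle identity for $\rho$ in every coordinate and in verifying that $\delta$ can be chosen uniformly in $\psi$, depending only on $X$ and $\ve$.
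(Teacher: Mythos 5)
Your proposal follows essentially the same path as the paper's proof: Gleason's theorem gives a local section $\tau$ near the vertices of $c$, the discrepancy function $\rho(c')=D(\tau\circ c')$ is checked to be a cocycle via Lemma~\ref{lem:discrepancy} and the glueing axiom, and Theorem~\ref{th:functional} produces a correcting function $f$ so that $f\cdot\tau$ is straight, with the diameter bound from the $\lesssim\d'$ estimate and uniform continuity. The one point you flag as the ``principal technical difficulty'' is indeed the crux, and the paper handles it in exactly the way you anticipate, by passing to the global form on compact induced subcubespaces: the set $U_2$ is chosen so that $U_1=\psi^{-1}(U_2)$ is saturated and contained in the domain of $\tau$, and Theorem~\ref{th:functional} is applied to $\psi\colon\overline{U_1}\to\overline{U_2}$ with the induced cubespace structures. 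Two small clarifications: ergodicity of the induced subcubespace is automatic (any pair of points of $\overline{U_1}$ is already a $1$-cube in $B_1$, hence in the induced structure), so that particular worry is unfounded; and since Theorem~\ref{th:functional} requires compact cubespaces you should work on $\overline{U_1}\to\overline{U_2}$ rather than the open sets, as the paper does. The fact that the restricted map remains a fibration relies on $U_1$ being a full $\psi$-preimage, which is precisely why $U_2$ is constructed first and $U_1$ defined as its preimage.
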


\begin{prp}\label{prp:section-unique}
Let $X$ be an ergodic compact nilspace of degree $s$ and suppose that $A$, the top structure group of $X$,
is a Lie group.
Then there is a number $\d>0$ such that the following holds.
Let $\psi: B_1:=\pi(X)\to B_2$ be a fibration onto a nilspace $B_2$.
Let $D_1,D_2\subseteq X$
be two straight $\psi$-classes with $\pi(D_1)=\pi(D_2)$ and
$\diam((D_1\cup D_2)\cap \pi^{-1}(b))\le\d$
for all $b\in\pi(D_1)$.

Then $D_1=a.D_2$ for some $a\in A$.
In particular, $D_1$ and $D_2$ are either equal or disjoint.
\end{prp}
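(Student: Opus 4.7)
The plan is to encode the difference between $D_1$ and $D_2$ as a continuous map $\beta\colon \pi(D_1)\to A$, show it satisfies a cocycle identity coming from straightness, and then conclude that $\beta$ is forced to be constant by combining its smallness with the uniqueness clause of Theorem \ref{th:functional}; this will yield $D_2 = a.D_1$ for a suitable $a \in A$.

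Writing $\pi(D_1) = \pi(D_2) = \psi^{-1}(b')$ for the distinguished point $b' \in B_2$, I would first use the definition of a straight class to produce, for each $b \in \psi^{-1}(b')$, the unique intersection points $d_i(b) := D_i \cap \pi^{-1}(b)$ ($i=1,2$). Since $A$ acts simply transitively on each fibre of $\pi$ by Theorem \ref{th:weak-structure}(1), there is a unique $\beta(b) \in A$ with $d_2(b) = \beta(b).d_1(b)$. Standard arguments show $b \mapsto d_i(b)$ is continuous, hence so is $\beta$. The diameter hypothesis translates into $\beta$ taking values in an $O(\delta)$-neighbourhood of $0 \in A$, where the constant depends only on the action of $A$ on $X$.

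Next, I would derive a cocycle identity for $\beta$. For any $c \in C^{s+1}(B_1)$ all of whose vertices lie in $\pi(D_1)$, the straightness of each $D_i$ gives that $c_i := d_i \circ c$ lies in $C^{s+1}(X)$. Both $c_1$ and $c_2$ project to the same $c$ under $\pi$, and differ by the $A$-valued configuration $\beta \circ c$; hence by the second part of Theorem \ref{th:weak-structure}, $\beta \circ c \in C^{s+1}(\mathcal{D}_s(A))$, i.e.,
\[
  \sum_{\omega \in \{0,1\}^{s+1}} (-1)^{|\omega|} \beta(c(\omega)) = 0.
\]

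To conclude that $\beta$ is constant, I would interpret $d_1, d_2$ as giving rise to two close continuous solutions $f_1, f_2$ of a common functional equation of the form appearing in Theorem \ref{th:functional}, applied to the fibration $\psi \circ \pi \colon X \to B_2$ at level $\ell = s+1$, with $f_2 - f_1$ corresponding pointwise to $\beta \circ \pi$ (after localising around $\pi^{-1}(\pi(D_1))$ and, if necessary, extending the sections via Proposition \ref{prp:section-exist}). Choosing $\delta$ below the threshold supplied by that theorem puts us in the regime where the uniqueness clause applies, forcing $f_1 - f_2$, and hence $\beta$, to be constant along fibres of $\psi \circ \pi$; since $\pi(D_1)$ already lies in a single fibre of $\psi$, this makes $\beta$ globally constant, and setting $a := \beta(b)$ for any $b \in \pi(D_1)$ gives $D_2 = a.D_1$, whence $D_1$ and $D_2$ are either equal or disjoint.

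The main obstacle I foresee is this last step: matching the hypotheses of Theorem \ref{th:functional} cleanly to our setting, since $\beta$ is a priori defined only on the fibre $\pi(D_1)$ rather than globally, and the fibration and cocycle data need to be arranged so the ``almost fibre-invariant cocycle'' framework genuinely applies. I expect this to require a careful patching argument and an essential use of the ``no small subgroups'' property of the compact abelian Lie group $A$, which is the topological input that ultimately rules out nontrivial small cocycles and underlies the rigidity encoded by Theorem \ref{th:functional}.
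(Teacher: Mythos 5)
Your first three steps---encoding the fibrewise difference as a continuous map $\beta\colon\pi(D_1)\to A$, translating the diameter hypothesis into smallness of $\beta$, and deriving $\partial^{s+1}\beta=0$ from straightness plus the weak structure theorem---are exactly what the paper does (the paper calls your $\beta$ ``$f$'' and your $d_i$ ``$\s_i$''). The place where you diverge is the final step, and you correctly sense that it is shaky as you have framed it; in fact the obstacle you flag is not merely a technicality to be patched, but a sign that you are aiming the cocycle machinery at the wrong fibration.

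The paper's device, which removes the difficulty entirely, is to observe that $B_1' := \pi(D_1) = \psi^{-1}(b')$ is itself a compact ergodic nilspace with the gluing property (it is a fibre of the fibration $\psi$ onto the nilspace $B_2$), and to apply Theorem~\ref{th:functional} \emph{directly to the cubespace $B_1'$}, fibred over the one-point nilspace $B_2'=\{b'\}$, with the zero cocycle. Since $\partial^{s+1}\beta = 0$ and $\dist(\beta(b),0)\le\d$ for all $b\in B_1'$, the uniqueness clause of Theorem~\ref{th:functional} (with $f'=0$) forces $\beta$ to be constant on fibres of $B_1'\to\{b'\}$, i.e.\ globally constant on $B_1'$, and you are done. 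There is no need to extend $\beta$ beyond the single $\psi$-fibre $\pi(D_1)$, no need to bring in $\psi\circ\pi\colon X\to B_2$, and no need for the patching via Proposition~\ref{prp:section-exist} that you anticipated: the function $\beta$ already lives on a perfectly good compact cubespace to which the functional-equation theorem applies as stated. Your suggestion of producing two global solutions $f_1,f_2$ on $X$ also leaves a genuine gap, since it is not specified what equation on $C^{s+1}(X)$ they would both solve nor how the sections $d_i$, defined only over one $\psi$-fibre, would give rise to such global $A$-valued functions. Your closing remark about the ``no small subgroups'' property of $A$ as the topological engine behind the rigidity is correct and is precisely what Theorem~\ref{th:functional} packages.
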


The rest of the section is devoted
to the proofs of Propositions \ref{prp:section-exist} and \ref{prp:section-unique}.
The fact that (local) sections $s: U\to  X$ exist even without any further properties
is already non-trivial, and was established by Gleason in his work on Hilbert's 5th problem.

\begin{thm}[\cite{G50}*{Theorem 3.3}]\label{thm:Gleason}
Suppose that a compact Lie group $A$ acts freely and continuously on a
completely regular topological space $X$.
Denote by $\pi:X\to X/A$ the quotient map under the action of $A$.
Then every point $x\in X/A$ has a neighborhood $U$ such that
there is a local section $\s:U\to X$; that is, a continuous map
satisfying $\pi\circ \s=\Id_U$.
\end{thm}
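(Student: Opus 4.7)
The plan is to produce a local \emph{slice} at the chosen representative $x_0 \in \pi^{-1}(x)$: a subset $S \subseteq X$ containing $x_0$ such that every $A$-orbit meeting a suitable neighborhood of $A \cdot x_0$ meets $S$ in exactly one point, and such that the action map $A \times S \to X$, $(a,s) \mapsto a \cdot s$, is a homeomorphism onto an open $A$-invariant neighborhood of the orbit. Given such a slice, the section $\s \colon U \to X$ on $U := \pi(A \cdot S)$ is defined by sending each orbit to its unique representative in $S$, and is continuous because the inverse of the action map is continuous.

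The first step is to linearize $A$. By the Peter--Weyl theorem, embed $A$ as a closed subgroup of some $O(n)$. This furnishes $A$ with a bi-invariant Riemannian metric, a Lie algebra $\mathfrak{a}$, and an exponential map $\exp \colon \mathfrak{a} \to A$ restricting to a diffeomorphism on a neighborhood of $0$. In particular there is a well-defined continuous ``logarithm'' $\log \colon V_e \to \mathfrak{a}$ on some neighborhood $V_e$ of $e \in A$.

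The second step is to construct an equivariant ``displacement'' function near the orbit. Because the action is free and $A$ is compact, the orbit map $a \mapsto a \cdot x_0$ is a homeomorphism $A \to A \cdot x_0$, so the coordinates supplied by $\log$ transfer to the orbit. Using complete regularity of $X$, extend these coordinate functions to continuous real-valued functions on a neighborhood $W \subseteq X$ of the orbit; then average them against the normalized Haar measure on $A$, in the spirit of the formula $F(y) := \int_A \log(a) \cdot (\mathrm{extension evaluated at } a^{-1} \cdot y) \, da$, to produce a continuous map $F \colon W \to \mathfrak{a}$. The averaging is arranged so that $F$ has the equivariance property $F(a \cdot y) = \log(a) + F(y)$ for $a$ near $e$ and $y$ near the orbit. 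Define the slice $S := F^{-1}(0) \cap V$ for a sufficiently small neighborhood $V$ of $x_0$; freeness of the action together with continuity and equivariance of $F$ then force each nearby orbit to cross $S$ in exactly one point, and $F$ itself provides the inverse to $A \times S \to A \cdot S$, establishing the homeomorphism.

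The main obstacle is constructing the equivariant displacement $F$ when $X$ is merely completely regular rather than a manifold, so that standard tubular neighborhood techniques from differential topology are unavailable. This is where both hypotheses are essentially used: complete regularity supplies enough continuous real-valued functions to extend coordinates from the orbit to $X$, while the Lie structure of $A$ provides both the exponential chart that makes ``coordinates near $e$'' meaningful and the Haar measure that enforces equivariance after averaging. Once $F$ is in hand, the verification that $S$ is a slice, and hence that $\s$ is continuous, is essentially formal, using compactness of $A$ to patch the local estimates.
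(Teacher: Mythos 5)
The paper does not prove this theorem at all: it is cited verbatim from Gleason \cite{G50}*{Theorem 3.3}, with a pointer to Tao's notes \cite{T14} for a self-contained treatment, and with the remark that only the \emph{abelian} case is actually used in the paper and is significantly simpler. So there is no paper proof to compare against; what I can assess is whether your sketch is sound on its own terms.

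There is a genuine gap at the central equivariance claim. You define $F(y)$ by integrating the Lie-algebra-valued function $\log(a)$ against a translated bump, and you assert $F(a\cdot y)=\log(a)+F(y)$ for $a$ near $e$. This identity requires $\log$ to be additive on a neighborhood of $e$, i.e.\ $\log(b a')=\log(b)+\log(a')$, which fails whenever $A$ is non-abelian (Baker--Campbell--Hausdorff introduces commutator correction terms). Consequently the change of variables $a\mapsto ba'$ in your integral does not produce a translation by $\log(b)$; $F$ is not equivariant, and $F^{-1}(0)$ need not be a slice. This is precisely why the general compact Lie group case is harder: one instead averages a \emph{matrix-valued} quantity using a faithful orthogonal representation $\rho\colon A\hookrightarrow O(n)$ (so that $\Psi(by)=\rho(b)\Psi(y)$ is exactly equivariant, as $\rho$ is a homomorphism) and then takes a slice of the resulting map into $M_n(\R)$, or one follows Gleason's original, more delicate construction of a group-valued equivariant map. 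Your invocation of Peter--Weyl hints at this, but you never use the embedding; you revert to $\log\colon V_e\to\mathfrak{a}$, which only works in the abelian setting. Even there, the formula as written needs a normalization $F(y)=\bigl(\int_A\log(a)\,\psi(a^{-1}y)\,da\bigr)\big/\bigl(\int_A\psi(a^{-1}y)\,da\bigr)$ and some care about the support of $\psi$ so that all relevant logarithms are defined. With those repairs, your outline is essentially the standard proof of the abelian case (which, as the paper notes, is all that is needed for its purposes), but as a proof of the theorem as stated for general compact Lie $A$, it does not go through.
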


Although this is a very deep result, we note that the proof in the abelian case
is significantly simpler, which is all that we are using.
See the notes of Tao \cite{T14} for a self-contained treatment.

To prove Proposition \ref{prp:section-exist}, we start with a local section
$\s$ that we obtain from Gleason's theorem.
Then we ``straighten'' it using the action of the structure group $A$;
that is, we choose a suitable continuous function $f:U\to A$
and verify that $x\mapsto f(x).\s(x)$ is a straight $\psi$-section.

To this end, we work out what condition on $f$ implies the straightness of the section $f.s$,
and record it in the following lemma.

We recall the following definition.
Let $f:X\to A$ be a continuous function from a cubespace to an abelian group.
We call the function $\partial^{s+1} f: C^{s+1}(X)\to A$ defined by
\[
\partial^{s+1} f (c):=\sum_{\o\in\{0,1\}^{s+1}}(-1)^{|\o|}f(c(\o))
\]
the $(s+1)$-th {\bf derivative} of $f$.

\begin{lem}\label{lem:straightness}
Let $X$ be an ergodic compact nilspace of degree $s$, and let $\psi:B_1:=\pi(X)\to B_2$ be a fibration
onto a nilspace $B_2$.
Let $U_2\subseteq B_2$ be an open set and put $U_1=\psi^{-1}(U_2)$.
Denote by $A$ the top structure group of $X$.
Let $\s: U_1\to X$ be a section and $f: U_1\to A$ be a continuous map.

Then the section $x\mapsto f(x).\s(x)$ is straight if and only if
\[
\partial^{s+1} f(c_1)-D(\s(c_1))=\partial^{s+1} f(c_2)-D(\s(c_2))
\]
for any two $c_1,c_2\in C^{s+1}(B_1)$ such that $c_1,c_2\subseteq U_1$ and $\psi(c_1)=\psi(c_2)$.
\end{lem}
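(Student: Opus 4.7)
The plan is to unfold the definitions on both sides and use the weak structure theorem (Theorem~\ref{th:weak-structure}) to translate everything into statements about the top structure group $A$.

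First I would set $\s'(x):=f(x).\s(x)$ and observe that for any configuration $c:\{0,1\}^{s+1}\to U_1$, we have $\s'\circ c = (f\circ c).(\s\circ c)$, where $(f\circ c)$ is the obvious configuration in $A$. Since $\s$ is a section, $\pi\circ\s'\circ c = \pi\circ\s\circ c = c$ (viewing $c$ as a map into $B_1$), so $\pi(\s'(c)) = \pi(\s(c)) = c$; in particular, whenever $c\in C^{s+1}(B_1)$, both $\s(c)$ and $\s'(c)$ are configurations in $X$ whose image under $\pi$ is a cube, so the discrepancies $D(\s(c))$ and $D(\s'(c))$ are well defined.

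The core step is to show
\begin{equation}\label{eq:key-discrepancy}
D(\s'(c)) = D(\s(c)) - \partial^{s+1}f(c)
\end{equation}
for every $c\in C^{s+1}(B_1)$ with $c\subseteq U_1$. By definition of $D(\s(c))$, the configuration $[D(\s(c))]_{\vec 0}.\s(c)$ is a cube in $X$. Now $[b]_{\vec 0}.\s'(c)$ equals $\bigl((f\circ c)+[b]_{\vec 0}\bigr).\s(c)$, and it shares the same $\pi$-projection as $[D(\s(c))]_{\vec 0}.\s(c)$. Therefore, by part (2) of the weak structure theorem, $[b]_{\vec 0}.\s'(c)\in C^{s+1}(X)$ if and only if the difference
\[
(f\circ c)+[b]_{\vec 0}-[D(\s(c))]_{\vec 0}
\]
lies in $C^{s+1}(\mathcal{D}_s(A))$. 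Since this is an $(s+1)$-cube in $\mathcal{D}_s(A)$ precisely when its alternating sum over $\{0,1\}^{s+1}$ vanishes, and since $|\vec 0|=0$, the condition reduces to $\partial^{s+1}f(c)+b-D(\s(c))=0$, giving $b=D(\s(c))-\partial^{s+1}f(c)$ as the unique such element. This proves \eqref{eq:key-discrepancy}.

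Finally, unraveling the definition of straightness for $\s'$: the section $\s'$ is straight exactly when $D(\s'(c_1))=D(\s'(c_2))$ for all $c_1,c_2\in C^{s+1}(B_1)$ with $c_1,c_2\subseteq U_1$ and $\psi(c_1)=\psi(c_2)$. Substituting \eqref{eq:key-discrepancy}, this becomes
\[
D(\s(c_1))-\partial^{s+1}f(c_1) = D(\s(c_2))-\partial^{s+1}f(c_2),
\]
which is exactly the claimed condition (after multiplying by $-1$). No genuine obstacle is anticipated — the only subtlety is the careful application of part (2) of the weak structure theorem to two configurations sharing the same $\pi$-projection, which is essentially bookkeeping in the abelian group $A$.
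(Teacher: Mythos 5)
Your proof is correct and takes essentially the same route as the paper: the paper isolates your equation \eqref{eq:key-discrepancy} as a standalone Lemma~\ref{lem:discrepancy} (stated for an arbitrary configuration $f:\{0,1\}^{s+1}\to A$ rather than one of the form $f\circ c$, though this extra generality is not needed here) and calls Lemma~\ref{lem:straightness} an immediate corollary, exactly as you conclude. Your derivation of \eqref{eq:key-discrepancy} via the uniqueness in part (2) of the weak structure theorem is also the same computation as the paper's, merely phrased as solving for $b$ rather than directly exhibiting the cube $[D(c)-\partial^{s+1}f]_{\vec 0}.(f.c)$; and you correctly read the paper's ``$\partial^{k+1}$'' in the statement as the evident typo for ``$\partial^{s+1}$''.
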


The above lemma is an immediate corollary of the following.

\begin{lem}\label{lem:discrepancy}
Let $X$ be an ergodic compact nilspace of degree $s$ and denote by $A$ its top structure group.
Consider the configurations $c:\{0,1\}^{s+1}\to X$ and $f:\{0,1\}^{s+1}\to A$.
Then $D(f.c)=D(c)-\partial^{s+1} f$.
\end{lem}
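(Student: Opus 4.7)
The plan is to deduce this directly from the weak structure theorem (Theorem \ref{th:weak-structure}), specifically from the characterization of $(s+1)$-cubes within a single fibre of $\pi$ via the condition $\sum_{\o}(-1)^{|\o|}a_\o = 0$ explicitly noted just after its statement.

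First I would observe that the lemma is only meaningful when $\pi(c)$ is an $(s+1)$-cube of $\pi(X)$, which is when the discrepancy is defined. Since $A$ acts within the fibres of $\pi$, we have $\pi(f.c) = \pi(c)$, so $D(f.c)$ is well-defined precisely when $D(c)$ is. Write $a_0 = D(c)$, so that, by definition, $c_1 := [a_0]_{\vec 0}.c$ lies in $C^{s+1}(X)$. Let $b = D(f.c)$, and set $c_2 := [b]_{\vec 0}.(f.c) \in C^{s+1}(X)$. Since $\pi(c_1) = \pi(c_2)$, the weak structure theorem applies: the unique configuration $\alpha \colon \{0,1\}^{s+1} \to A$ with $\alpha.c_1 = c_2$ must lie in $C^{s+1}(\mathcal{D}_s(A))$.

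Next I would compute $\alpha$ explicitly. At any vertex $\o$,
\[
\alpha(\o) = \bigl(b\,[\o = \vec 0] + f(\o)\bigr) - a_0\,[\o = \vec 0] = (b - a_0)\,[\o = \vec 0] + f(\o).
\]
By the characterization of cubes in $\mathcal{D}_s(A)$ at level $s+1$, membership of $\alpha$ in $C^{s+1}(\mathcal{D}_s(A))$ is equivalent to $\sum_{\o}(-1)^{|\o|}\alpha(\o) = 0$. Plugging in, and using $(-1)^{|\vec 0|} = 1$, this becomes
\[
(b - a_0) + \partial^{s+1}f = 0,
\]
so that $b = a_0 - \partial^{s+1}f$, i.e., $D(f.c) = D(c) - \partial^{s+1}f$, as required.

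There is no substantive obstacle here: the argument is essentially a one-line bookkeeping exercise on top of the weak structure theorem, the only care being to track signs and to use the $s$-ergodicity of $\mathcal{D}_s(A)$ implicitly through the fact that the sole constraint on $\alpha \in C^{s+1}(\mathcal{D}_s(A))$ at level exactly $s+1$ is the single alternating-sum relation.
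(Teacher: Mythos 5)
Your proof is correct and follows essentially the same route as the paper's: both reduce the claim to part (2) of the weak structure theorem and the alternating-sum criterion for membership in $C^{s+1}(\mathcal{D}_s(A))$. The paper phrases it forward (exhibiting $(f-[\partial^{s+1}f]_{\vec 0}).([D(c)]_{\vec 0}.c)=[D(c)-\partial^{s+1}f]_{\vec 0}.(f.c)$ as a cube and reading off the discrepancy by uniqueness), whereas you compare the two cubes $[D(c)]_{\vec 0}.c$ and $[D(f.c)]_{\vec 0}.(f.c)$ and solve for the difference, but the content is identical.
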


\begin{proof}
By definition $[D(c)]_{\vec 0}.c$ is a cube.
By the weak structure theorem and the identity
\[
\partial^{s+1}(f-[\partial^{s+1}f]_{\vec 0})(c)=0,
\]
the configuration
\[
(f-[\partial^{s+1}f]_{\vec 0}+[D(c)]_{\vec 0}).c=[D(c)-\partial^{s+1} f]_{\vec 0}.(f.c)
\]
is also a cube.
This proves the claim.
\end{proof}

At this point, we have reduced the proof of Proposition \ref{prp:section-exist} to solving a cocycle equation in the sense of Theorem \ref{th:functional}, and therefore the remaining technical core of the proof will be an appeal to that theorem.

However, we caution that we do really need the full technical power of Theorem \ref{th:functional} to make this work, and in particular will apply it to some slightly odd spaces $X$ and $Y$ that are not in general nilspaces.

The remaining work is therefore devoted to setting up these spaces in detail, and to a lot of technical epsilon management.

\begin{proof}[Proof of Proposition \ref{prp:section-exist}]
We will prove only the existence of the straight section, as the claim about straight classes
follows immediately from this.

In fact, we prove the following formally weaker version.
We show that for every cube $c_0\in C^{s+1}(B_1)$,
there is an open set $U_2\subseteq B_2$ such that
$U_1:=\psi^{-1}(U_2)$ contains the vertices of $c_0$,
and there exists a $\d=\d(U_2) > 0$ depending on $U_2$, $X$, $\ve$ such that the
claim of the proposition holds for $U_2$ with this $\d$.
By compactness, this implies the proposition: we may take the minimum of $\d(U_2)$ over a finite collection of open sets whose preimages under $\psi$ cover $C^{s+1}(B_1)$.

Fix a cube $c_0\in C^{s+1}(B_1)$.
By Gleason's theorem, there is an open set $U_{-1}\subseteq B_1$ containing the vertices of $c_0$ and a
continuous section $\s:U_{-1}\to X$.
Indeed, if this were to fail, then let $V\subseteq B_1$ be an open set that admits a continuous section
and that contains the maximal possible number of vertices of $c_0$.
Let $b$ be a vertex of $c_0$ not covered by $V$.
We can assume without loss of generality that $b\notin \overline V$.
Then we can find an open neighborhood $V'$ of $b$ disjoint from $V$ that admits a continuous
section using Gleason's lemma.
Then $V\cup V'$ violates the assumption we made on $V$.

We choose a smaller open set  $U_{0}$ containing $c_0$ such that $\overline U_{0}\subseteq U_{-1}$.
We also fix a number $\tau$ to be set later, depending only on $\ve$ and the number denoted by $\d$
in Theorem \ref{th:functional} applied with the top structure group of $X$ in the role of $A$, and with $\ell = (s+1)$.
We consider $\overline U_{0}$ with the induced cubespace structure, i.e.~
we denote by $C^{s+1}(\overline U_{0})$ the set of $(s+1)$-cubes all of whose vertices lie in $\overline U_{0}$.
We define the function $\rho: C^{s+1}(\overline U_{0})\to A$ by $\rho(c)=D(\s(c))$,
where $D$ is the discrepancy function defined above.

Since $\s$ and $D$ are continuous, $\rho$ is also continuous.
By uniform continuity, there is a number $\d_0>0$ such that $\dist(\rho(c_1),\rho(c_2))<\tau$
whenever $\dist(c_1,c_2)=\max_\omega\dist(c_1(\omega),c_2(\omega))<\d_0$,
and $\dist(\s(x_1),\s(x_2))\le\tau/C$ whenever $\dist(x_1,x_2)\le4\d_0$.
Here $C$ is the implicit constant from Theorem \ref{th:functional}.

We now set $\d$ to be small enough such that $\d\le\d_0$ and
such that the ball of radius $\d$ around each vertex
of $c_0$ is contained inside $U_{0}$.
We construct open sets $U_1\subseteq B_1$ and $U_2\subseteq B_2$ such that $U_1=\psi^{-1}(U_2)$
and $c_0\subseteq U_1\subseteq U_{0}$.
To this end, let $Z$ be the complement of the union of the $\d$-balls around the vertices of $c_0$.
Then $\psi(Z)$ is a compact set, which does not contain any vertex of $\psi(c_0)$, since
$\diam(\psi^{-1}(\psi(c_0(\o))))< \d$ for all $\o\in\{0,1\}^{k+1}$.
We let $U_2$ be the complement of $\psi(Z)$ in $B_2$.
Then $U_1=\psi^{-1}(U_2)$ contains $c_0$ and it is disjoint from $Z$
hence it is contained in the $\d$ neighborhood of the vertices of $c_0$.
This implies $U_1\subseteq U_{0}$.
Moreover, $\diam(\s(\psi^{-1}(b)))\le\tau/C$ for all $b\in \overline{U}_2$ by the assumptions on $\psi$, $\d_0$ and $\d$.

We check that $\rho$ restricted to $C^{s+1}(\overline{U_1})$ is a cocycle.
Let $c_1,c_2,c_3\in C^{s}(U_1)$ be two cubes such that $[c_1,c_2]$, $[c_1,c_3]$ and $[c_2,c_3]$
are all cubes.
Then we know that $[\rho([c_i,c_j])]_{\vec0}.\s([c_i,c_j])$ are cubes in $X$ for $i,j=1,2,3$.
It follows then that
\begin{equation}\label{eq:cube}
([\rho([c_1,c_2])+\rho([c_2,c_3])]_{\vec0}+[\rho([c_2,c_3])]_{(0,\ldots,0,1)}).\s([c_1,c_2])
\end{equation}
is also a cube.
Indeed, this configuration is obtained from $[\rho([c_1,c_2])]_{\vec0}.\s([c_1,c_2])$ by acting
on two adjacent vertices with the same element of the top structure group.

Using that nilspaces have the gluing property (\cite{GMV1}*{Proposition 6.2}) 
for the cube \eqref{eq:cube} and $[\rho([c_2,c_3])]_{\vec0}.\s([c_2,c_3])$, we obtain
that
\[
[\rho([c_1,c_2])+\rho([c_2,c_3])]_{\vec0}.\s([c_1,c_3])
\]
is also a cube.
By the definition of discrepancy this
proves $\rho([c_1,c_3])=\rho([c_1,c_2])+\rho([c_2,c_3])$.

We check that  Theorem \ref{th:functional} applies to the fibration $\psi:\overline U_1\to \overline U_2$.
Both $\overline U_1$ and $\overline U_2$ are considered with the induced cubespace structure
and they are compact, ergodic and have the gluing property.
Moreover  as $B_1$ is an ergodic nilspace of degree $(s-1)$, $\psi$ is a fibration of degree $(s-1)$.
For two cubes $c_1, c_2\in C^{s+1}(\overline{U_1})$ with $\psi(c_1)=\psi(c_2)$, we have $\dist(c_1,c_2)<\d\le\d_0$,
hence $\dist(\rho(c_1),\rho(c_2))<\tau$ that is assumed to be smaller than the number denoted by $\d$ in
Theorem \ref{th:functional}.

We obtain a function $f:U_1\to A$ from Theorem \ref{th:functional} such that
$\rho=\partial^{s+1}f+\wt\rho\circ\psi$ and $\dist(f(x), f(y))\leq C\cdot\tau/C=\tau$
whenever $\psi(x) = \psi(y)$.
This implies that for any two cubes $c_1,c_2\subseteq U_1$ with $\psi(c_1)=\psi(c_2)$ we have
\[
\partial^{s+1}f(c_1)-D(\s(c_1))=-\wt\rho(\psi(c_1))=-\wt\rho(\psi(c_2))=\partial^{s+1}f(c_2)-D(\s(c_2)).
\]
Thus $b \mapsto f(b).\s(b)$ is a straight section on $U_1$  by Lemma \ref{lem:straightness}.

We set $\tau$ to be small enough so that $\tau\le \ve/2$ and such that for any $x\in X$ and $a_1,a_2\in A$
with $\dist(a_1,a_2)<\tau$ we have $\dist(a_1.x,a_2.x)<\ve/4$. As $\diam(\s(\psi^{-1}(b)))<\tau<\ve/2 $,
then $\diam((f.\s)(\psi^{-1}(b)))\le\ve/2+2\ve/4=\ve$ for all $b\in B_2$, as required.
\end{proof}

\begin{proof}[Proof of Proposition \ref{prp:section-unique}]
Write $B_1'=\pi(D_1)=\pi(D_2)$.
Since $D_1$ and $D_2$ are $\psi$-classes, $B_1'$ is the $\psi$ inverse image of
a point $x\in B_2$, hence $B_1'$ is a compact ergodic nilspace and has the gluing property.
We write $B_2'$ for the nilspace whose only point is $x$.

Denote by $\s_i:B_1'\to D_i$ the inverses of $\pi$ restricted to $D_i$ for $i=1,2$.
Let $f:B_1'\to A$ be the function such that $\s_2(x)=f(x).\s_1(x)$ for all $x\in B_1'$.
Fix $c\in C^{s+1}(B_1')$.
Since $D_1$ and $D_2$ are straight classes, $\s_1(c)$ and $\s_2(c)$ are both cubes.
This and the weak structure theorem shows that $\partial^{s+1}f(c)=0$.

On the other hand, we have $\dist(f(x),0)\le\d$ by assumption.
By Theorem \ref{th:functional} applied to $\psi:B_1'\to B_2'$ and the cocycle $0$, $f$ must be constant.
This proves the proposition.
\end{proof}

\subsection{Quotienting by straight classes}\label{sec:quotient by straight class}

Let $X$ be a compact ergodic nilspace of degree $s$ and suppose that $A$, its top
structure group, is a Lie group.
Let $\psi:B_1=\pi(X)\to B_2$ be a fibration onto a nilspace.
If the fibers of $\psi$ have sufficiently small diameter depending on $X$, then
by Propositions \ref{prp:section-exist} and \ref{prp:section-unique},
we know that each point of $X$ is contained in a unique straight
$\psi$-class of small diameter.
We define the equivalence relation $\approx$ on $X$
by requiring that each equivalence class is this unique straight $\psi$-class.

Or purpose in this section is to prove the following proposition.
\begin{prp}\label{prp:quotient by straight class}
Let $X$ be an ergodic compact nilspace of degree $s$ and suppose that $A$, its top
structure group is a Lie group.
There is a number $\d>0$ depending only on $X$ such that the following holds.
Let $\psi:B_1=\pi(X)\to B_2$ be a fibration onto a nilspace.
Suppose that  $\diam(\psi^{-1}(b_2))\le\d$ for all $b_2\in B_2$.

Then the equivalence relation $\approx$  defined above
is  closed.
The quotient cubespace $Y:=X/\approx$ is a nilspace,
whose $(s-1)$-th canonical factor is $B_2$.
The quotient map $\f:X\to Y$
is a horizontal fibration with shadow $\psi$.
\end{prp}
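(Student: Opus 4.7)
I choose $\d>0$ small enough for both Propositions \ref{prp:section-exist} and \ref{prp:section-unique} to apply, with a common small threshold $\ve_0$. Then every point of $X$ lies in a unique straight $\psi$-class of diameter at most $\ve_0$, and these are precisely the equivalence classes of $\approx$. To show $\approx$ is closed, take a convergent sequence $(x_n,y_n)\to(x,y)$ in $X\times X$ with $x_n\approx y_n$. Continuity of $\psi\circ\pi$ yields $\psi(\pi(x))=\psi(\pi(y))$. Using Proposition \ref{prp:section-exist}, I obtain a straight section $\s:U_1\to X$ on some $U_1=\psi^{-1}(U_2)$ containing both $\pi(x)$ and $\pi(y)$, with $\s(\psi^{-1}(b))$ of diameter at most $\ve_0$ for all $b$. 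For large $n$, $\pi(x_n),\pi(y_n)\in U_1$, and the uniqueness statement (Proposition \ref{prp:section-unique}) forces $x_n=\s(\pi(x_n))$ and $y_n=\s(\pi(y_n))$; continuity of $\s$ passes to the limit, giving $x=\s(\pi(x))$ and $y=\s(\pi(y))$, so $x\approx y$.

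\textbf{The fibration property.} The core task is showing $\f:X\to Y$ is a fibration. Given a corner $\l:\{0,1\}^\ell\sm\{\vec 1\}\to X$ and a compatible cube $c\in C^\ell(Y)$, I first lift $c$ to some $\tilde c\in C^\ell(X)$ with $\f(\tilde c)=c$, so that $\tilde c(\o)\approx \l(\o)$ for $\o\neq\vec 1$. Next, I apply the fibration property of $\psi:B_1\to B_2$ to the corner $\pi\circ\l$ in $B_1$, together with the compatible cube $\psi(\pi(\tilde c))$ in $B_2$, producing a cube $c_1\in C^\ell(B_1)$ extending $\pi\circ\l$ with $\psi(c_1)=\psi(\pi(\tilde c))$. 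Then the canonical projection $\pi:X\to B_1$, which is itself a fibration, lifts $\l$ to a cube $c'\in C^\ell(X)$ with $\pi(c')=c_1$. What remains is to arrange $\f(c'(\vec 1))=c(\vec 1)$. For $\ell\le s$, an $A$-action correction at the top vertex suffices: $\mathcal{D}_s(A)$ is $s$-ergodic, so by Theorem \ref{th:weak-structure}(2) the modified configuration is still a cube, and the correction moves $\f(c'(\vec 1))$ to $c(\vec 1)$ within its $\bar\psi$-fiber. For $\ell=s+1$, the desired equality follows from the $(s+1)$-uniqueness of $X$ combined with the uniqueness properties of $\psi$ (whose image has degree at most $s-1$) and $\pi$; this is the most delicate piece. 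For $\ell>s+1$, the standard reduction to $(s+1)$-dimensional subfaces applies.

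\textbf{Remaining conclusions.} With $\f$ a fibration and $X$ a nilspace, $Y$ is a nilspace of degree at most $s$ by \cite{GMV1}*{Remark 7.9}. The natural map $\bar\psi:Y\to B_2$ given by $\bar\psi([x])=\psi(\pi(x))$ is well-defined (straight classes lie in single $\psi\circ\pi$-fibres), continuous, surjective, and a cubespace morphism. Since $B_2$ has degree at most $s-1$, the universal property of canonical factors (\cite{GMV1}*{Remark 6.8}) shows $\bar\psi$ factors through $\pi_{s-1}(Y)$; an inspection via straight classes and the free action of $A$ on them identifies $\pi_{s-1}(Y)=B_2$. Horizontality of $\f$ is immediate: if $\pi(x_1)=\pi(x_2)$ and $\f(x_1)=\f(x_2)$, then $x_1$ and $x_2$ lie in a common straight class, which contains a single point on each $\pi$-fibre, forcing $x_1=x_2$. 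The shadow of $\f$ equals $\psi$ by the commutativity of the diagram and the uniqueness of shadows.

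\textbf{Main obstacle.} The critical difficulty is verifying the fibration property of $\f$ in dimension $\ell=s+1$. Here, the $A$-action correction of the top vertex that works in lower dimensions is forbidden by the weak structure theorem, so one must instead exploit the rigidity provided by $(s+1)$-uniqueness in $X$, together with the uniqueness information encoded in the straight sections, to force the required identity automatically rather than by adjustment.
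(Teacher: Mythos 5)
Your proposal follows the same overall strategy as the paper, but two of the four pieces don't hold up as written.

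\textbf{Closedness.} Your claim that Proposition \ref{prp:section-unique} ``forces $x_n=\s(\pi(x_n))$'' is not correct. A straight section $\s$ selects one straight class per $\psi$-fibre, but nothing guarantees it selects the class containing $x_n$; what you actually get from uniqueness is that the $\approx$-class of $x_n$ equals $a_n.\s(\psi^{-1}(\psi(\pi(x_n))))$ for some $a_n\in A$. Since $x_n\approx y_n$, the same $a_n$ works for $y_n$, so $x_n=a_n.\s(\pi(x_n))$ and $y_n=a_n.\s(\pi(y_n))$. You then need to pass to a convergent subsequence $a_n\to a$ (using compactness of $A$) before taking limits. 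As written, your argument elides the $a_n$ entirely, and the conclusion ``$x=\s(\pi(x))$'' is false in general.

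\textbf{The $\ell=s+1$ case.} This is the serious gap. You correctly identify that the $A$-correction trick is unavailable here and that ``$(s+1)$-uniqueness together with the uniqueness information encoded in the straight sections'' must force the identity, but you never actually carry this out, and that is precisely the technical core of the whole proposition. What is needed is a quantitative argument via the discrepancy function $D$. The paper isolates this as a standalone lemma: given $c_1,c_2\in C^{s+1}(X)$ with $c_1(\o)\approx c_2(\o)$ for $\o\neq\vec1$, first observe (using that $B_2$ has degree $\le s-1$) that $\psi(\pi(c_1))=\psi(\pi(c_2))$ and hence $[a]_{\vec1}.c_1\approx c_2$ for some $a\in A$ that must be shown to vanish. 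Then one takes a straight section $\s$ on a neighbourhood of $\pi(c_1)$ (which also covers $\pi(c_2)$), writes $c_1=f.\s(\pi(c_1))$ for a configuration $f:\{0,1\}^{s+1}\to A$, and deduces $c_2=([a]_{\vec1}+f).\s(\pi(c_2))$. Applying Lemma \ref{lem:discrepancy} and using $D(c_1)=D(c_2)=0$ gives
\[
0=D(\s(\pi(c_1)))-\partial^{s+1}f,\qquad 0=D(\s(\pi(c_2)))-\partial^{s+1}f+(-1)^s a,
\]
and straightness of $\s$ supplies $D(\s(\pi(c_1)))=D(\s(\pi(c_2)))$, forcing $a=0$. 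Without some version of this computation, your proof of the fibration property does not go through, and $(s+1)$-uniqueness of the quotient is left unproven. The other parts (the $\ell\le s$ construction, the reduction for $\ell>s+1$, horizontality, and the identification of $\pi_{s-1}(Y)$ with $B_2$) are essentially the same as the paper, though your $\ell\le s$ construction is more roundabout than necessary: once you know $\pi(c_1)=c_0$ for the $\psi$-lift $c_0$, any $\pi$-preimage of $c_0$ in dimension $\le s$ is automatically a cube, so you can directly pick the lift whose $\vec1$-vertex lies in the desired straight class.
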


The next lemma will be used to establish that the quotient map $\f$ is a fibration.
(The lemma is equivalent to $Y$ having $(s+1)$-uniqueness.)

\begin{lem}\label{lem:quotient by straight class}
In the setting of Proposition \ref{prp:quotient by straight class}, let $c_1,c_2\in C^{\ell}(X)$
for some $\ell\ge s+1$ and suppose that $c_1(\o)\approx c_2(\o)$ for all vertices
$\o\in\{0,1\}^{\ell}\backslash \{\vec 1\}$.
Then we have $c_1\approx c_2$, that is, we also have $c_1(\vec 1)\approx c_2(\vec 1)$.
\end{lem}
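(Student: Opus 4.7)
The plan is to reduce to the case $\ell = s+1$ and then reformulate the relation $\approx$ in terms of an $A$-valued configuration (where $A$ denotes the top structure group of $X$), whereupon the cube conditions on $c_1,c_2$ yield the claim via a short discrepancy calculation. For the reduction, I restrict $c_1, c_2$ to the $(s+1)$-dimensional face $F = \{\omega \in \{0,1\}^\ell : \omega_{s+2} = \cdots = \omega_\ell = 1\}$; each restriction is an $(s+1)$-cube by cube invariance, the vertex $\vec 1$ of $F$ corresponds to $\vec 1_\ell$ in the ambient cube, and every other vertex of $F$ is distinct from $\vec 1_\ell$, so the hypothesis is preserved. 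With $\ell = s+1$ fixed, note that since $\psi$ is a fibration and $B_1 = \pi(X)$ is a nilspace of degree at most $s-1$, so is $B_2$, which therefore has $(s+1)$-uniqueness. Because $c_1(\omega) \approx c_2(\omega)$ forces $\psi(\pi(c_1(\omega))) = \psi(\pi(c_2(\omega)))$ for $\omega \neq \vec 1$ (straight classes lie above a single point of $B_2$), uniqueness in $B_2$ gives the equality of cubes $\psi\circ\pi(c_1) = \psi\circ\pi(c_2)$.

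Next, I apply Proposition \ref{prp:section-exist} to obtain a straight $\psi$-section $\sigma : U_1 \to X$ on an open set $U_1 = \psi^{-1}(U_2)$ containing every vertex of $\pi(c_1)$, and hence, since $U_1$ is $\psi$-saturated, every vertex of $\pi(c_2)$; I choose the parameter $\ve$ of that proposition much smaller than the diameter threshold defining $\approx$. Using freeness of the $A$-action on $\pi$-fibres, define $a_i : \{0,1\}^{s+1} \to A$ by $c_i(\omega) = a_i(\omega).\sigma(\pi(c_i(\omega)))$. For any constant $a \in A$ the translated section $a.\sigma$ is again straight, since Lemma \ref{lem:discrepancy} shows it has the same discrepancy function as $\sigma$; hence its restriction to each $\psi$-fibre is a straight class. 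In particular the image of $a_i(\omega).\sigma$ over $\psi(\pi(c_i(\omega)))$ is a straight class containing $c_i(\omega)$, and provided $\ve$ is small enough (using uniform continuity of the $A$-action on the compact $X$) this class has small fibre diameter, so by Proposition \ref{prp:section-unique} it coincides with the canonical straight class defining $\approx$ at $c_i(\omega)$. Hence, for $\omega \ne \vec 1$, the equality $c_1(\omega) \approx c_2(\omega)$ combined with freeness of the $A$-action on straight classes forces $a_1(\omega) = a_2(\omega)$.

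To finish, set $\tilde c_i = \sigma \circ \pi(c_i)$; the straightness of $\sigma$ and the equality $\psi\circ\pi(c_1) = \psi\circ\pi(c_2)$ yield $D(\tilde c_1) = D(\tilde c_2)$. Since $c_i \in C^{s+1}(X)$ we have $D(c_i) = 0$, and Lemma \ref{lem:discrepancy} applied to $c_i = a_i.\tilde c_i$ gives $\partial^{s+1} a_i = D(\tilde c_i)$. Subtracting, $\partial^{s+1}(a_1 - a_2) = 0$, and since $a_1 - a_2$ vanishes off $\vec 1$, the single surviving term in this alternating sum forces $a_1(\vec 1) = a_2(\vec 1)$. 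Reversing the translation step then yields $c_1(\vec 1) \approx c_2(\vec 1)$, as required.

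The main obstacle I anticipate is the middle step: identifying the straight class produced by the translated section $a_i(\omega).\sigma$ with the canonical straight class witnessing $\approx$ at $c_i(\omega)$. This hinges on choosing $\ve$ in Proposition \ref{prp:section-exist} small enough relative to the uniform continuity modulus of the $A$-action, so as to stay inside the regime where the uniqueness statement of Proposition \ref{prp:section-unique} applies; the remaining discrepancy calculus is then routine.
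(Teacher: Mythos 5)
Your proof is correct and essentially follows the same route as the paper: reduce to $\ell=s+1$, use $(s+1)$-uniqueness in $B_2$ to get $\psi\circ\pi(c_1)=\psi\circ\pi(c_2)$, pull in a straight section $\sigma$ from Proposition \ref{prp:section-exist}, encode $c_1,c_2$ as $A$-translates of $\sigma\circ\pi(c_i)$, and finish with the discrepancy identity from Lemma \ref{lem:discrepancy}. The only organizational difference is that the paper parametrizes the unknown directly as $[a]_{\vec1}+f$ (so that the equality of $a_1$ and $a_2$ away from $\vec1$ is built in) whereas you introduce $a_1,a_2$ separately and verify $a_1(\o)=a_2(\o)$ for $\o\neq\vec1$ by comparing the small straight classes through Proposition \ref{prp:section-unique}; you are slightly more explicit than the paper about that identification step, which is the point you correctly flag as the crux.
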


\begin{proof}
It is enough to prove the lemma for $\ell=s+1$, for if $\ell$ is larger we can use the case $\ell=s+1$
for suitable faces of $c_1$ and $c_2$.
Note that $\psi(\pi(c_1))$ and $\psi(\pi(c_2))$  are cubes in $B_2$,
which is a  nilspace of degree $(s-1)$  (as fibrations do not increase the degree),
and we have $\psi(\pi(c_1(\o)))=\psi(\pi(c_2(\o)))$ for $\o\in\{0,1\}^{s+1}\sm\{\vec 1\}$.
Then $\psi(\pi(c_1(\vec1)))=\psi(\pi( c_2(\vec1)))$ also.
Hence there is an element $a\in A$
such that $a.c_1(\vec1)\approx c_2(\vec1)$ and hence
$[a]_{\vec1}.c_1\approx c_2$.
We need to show that $a=0$.

We apply Proposition \ref{prp:section-exist} for the cube $\pi(c_1)$.
Let $U_1\subseteq B_1$ and $U_2\subseteq B_2$ be open sets such that
$\pi(c_1)\subseteq U_1=\psi^{-1}(U_2)$ and let $\s:U_1\to X$ be a
straight section.
Since $\psi(\pi(c_1))=\psi(\pi(c_2))$, this implies that $\pi(c_2)\subseteq U_1$.
Let $f:\{0,1\}^{s+1}\to A$ be such that $f.\s(\pi(c_1))=c_1$.
For each $\o\in\{0,1\}^{s+1}$ the set
\[
([a]_{\vec1}+f)(\o).\s(\psi^{-1}(\psi(\pi(c_2(\o)))))
\]
is a straight class that contains $[a]_{\vec 1}.c_1(\o)$ hence $c_2(\o)$ also.
Thus
$([a]_{\vec1}+f).\s(\pi(c_2))=c_2$.

By Lemma \ref{lem:discrepancy}, we have
\[
0=D(c_1)=D(\s(\pi(c_1)))-\partial^{s+1} f
\]
and
\[
0=D(c_2)=D(\s(\pi(c_2)))-\partial^{s+1} f + (-1)^{s}a.
\]
By the definition of straightness, we have $D(\s(\pi(c_1)))=D(\s(\pi(c_2)))$, hence $a=0$
proving the claim.
\end{proof}

\begin{proof}[Proof of Proposition \ref{prp:quotient by straight class}]
We first show that $\approx$ is closed.
Let $\lim x_i=x$ and $\lim y_i=y$ be  two convergent sequences of points in $X$
such that $x_i\approx y_i$ for all $i$.
By the continuity of $\pi$ and $\psi$, we have
$\psi(\pi(x))=\psi(\pi(y))$.
By Proposition \ref{prp:section-exist}, if $\d$ is sufficiently small, then there is an open set $U\subseteq B_1$
containing both $\pi(x)$ and $\pi(y)$ and a straight section $\s:U\to X$.

We assume without loss of generality that $\pi(x_i),\pi(y_i)\in U$ for all $i$.
Using Proposition \ref{prp:section-unique} we conclude that there are $a_i\in A$ such that $x_i=a_i.\s(\pi(x_i))$
and $y_i=a_i.\s(\pi(y_i))$.
We can assume without loss of generality that $a_i$ converges to an element $a\in A$.
Then by the continuity of $\s$ and the action of $A$, we have $x=a.\s(\pi(x))$ and $y=a.\s(\pi(y))$.
Then $x$ and $y$ are in the same straight class, since $\s(\pi(x))$ and $\s(\pi(y))$ are.
This shows that $x\approx y$ proving closedness.

We denote the projection map $X\to Y$ by $\f$.
We denote by $\pi':Y\to B_2$ the unique map that satisfies $\psi\circ\pi=\pi'\circ \f$,
and we show that it is a cubespace morphism.
Indeed, if $c$ is a cube in $Y$, then there is a cube $c'$ such that $\f(c')=c$,
hence $\pi'(c)=\psi(\pi(c'))$ is a cube, as well.

We show that $\f$ is a fibration.
It is a cubespace morphism by the definition of the cubespace structure on
the quotient cubespace $Y$, so it is left to prove the relative completion
property.
Let $\l$ be an $\ell$-corner in $X$ and let $c$ be a completion
of $\f(\l)$.
We show that there is a completion $c_1$ of $\l$ such that $\f(c_1)=c$.

We first consider the case $\ell\ge s+1$.
In this case, $\l$ has a unique completion $c_1$.
Let $c_2\in C^{\ell}(X)$ be such that $c=\f(c_2)$.
Then $c_1(\o)\approx c_2(\o)$ for all $\o\in\{0,1\}^\ell\backslash \{\vec 1\}$ hence
$c_1\approx c_2$ by Lemma \ref{lem:quotient by straight class}.
Thus $\f(c_1)=\f(c_2)=c$ as required.

Second, we assume that $\ell\le s$.
We use that $\psi$ is a fibration and find a cube $c_0\in C^\ell(B_1)$
that is a completion of $\pi(\l)$ such that $\psi(c_0)=\pi'(c)$.
(Here we used that $\pi'$ is a morphism, a fact that we proved above.)
We set $c_1$ to be the unique configuration such that $\pi(c_1)=c_0$ and $\f(c_1)=c$.
Since the dimension of $c_0$ is at most $s$, any $\pi$ lift of it is a cube, so $c_1$  is a cube in particular.

The fact that $\f$ is a fibration, implies by \cite{GMV1}*{Remark 7.9} 
that $Y$ is a nilspace
and by the universal property that $\pi'$ is a fibration.
We leave it to the reader to verify that $\pi'$ is the $s$-th canonical projection,
$\f$ is horizontal and $\psi$ is its shadow.
\end{proof}

\subsection{Proof of Proposition \ref{prp:invlimlift}}

In this paper, we adopt the convention that we denote by $\pi$ the canonical projection of any nilspace of degree $s$
to its $s-1$'th canonical factor without designating the domain of the map in our notation.
This should not normally cause confusion; however, in this proof we will use the canonical projection of many nilspaces,
and for this reason we temporarily deviate from our usual convention, and write
\[
\pi_m^{(n)}:X_m^{(n)}\to X_m^{(0)}=B_m
\]
for the canonical projection of $X_m^{(n)}$.
This should not be confused with the notation in Section \ref{sc:intro}, where the subscript of $\pi$ designates the degree of the
canonical factor that we project onto.

We take $M_n$ sufficiently large so that $\diam(\psi_m^{-1}(x))<\d$ for all $m\ge M_n$
and $x\in B_m$, where $\d$ is sufficiently small such that
Proposition \ref{prp:quotient by straight class} and  Proposition \ref{prp:section-unique} can
both be applied to $X_\infty^{(n)}$.
Then the existence of the nilspace $X_m^{(n)}$ and the fibration $\f_m^{(n)}$ follows
from  Proposition \ref{prp:quotient by straight class} and it remains to verify
the claim about the inverse images of points under the maps $\f_m^{(n)}\circ \a^{(n)}$.

We take indices $n_2\ge n_1$ and $m_2\ge m_1\ge M_{n_1}$ such that $m_2\ge M_{n_2}$.
Let $x\in X^{(n_2)}_{m_2}$ be a point.
Let $\wt x\in (\f_{m_2}^{(n_2)}\circ\a^{(n_2)})^{-1}(x)\in X_{\infty}^{(\infty)}$ be an arbitrary point and take
$y=\f_{m_1}^{(n_1)}(\a^{(n_1)}(\wt x))$.
We set out to prove that
\[
Z_2:=(\f_{m_2}^{(n_2)}\circ\a^{(n_2)})^{-1}(x)\subseteq Z_1:=(\f_{m_1}^{(n_1)}\circ\a^{(n_1)})^{-1}(y).
\]

We first show that $\a^{(n_1)}(Z_2)\subseteq X^{(n_1)}_\infty$ is a straight $\psi_{m_2}$-class.
Let $b\in D:=\psi_{m_2}^{-1}(\pi_{m_2}^{(n_2)}(x))$ be an arbitrary point.
We show that $\a^{(n_1)}(Z_2)$ contains a unique point in the fibre of $\pi_\infty^{(n_1)}$ above $b$.
Since $(\f_{m_2}^{(n_2)})^{-1}(x)$ is a straight $\psi_{m_2}$-class, it follows that
the points of $Z_2$ in the fibre of $\pi_\infty^{(\infty)}$ above $b$ is a single $K_{n_2}$ orbit.
This projects to a single point under $\a^{(n_1)}$ as $K_{n_2}\subseteq K_{n_1}$.

Let $c:\{0,1\}^{s+1}\to  D$ be a cube.
We show that there is a cube $\wt c:\{0,1\}^{s+1}\to\a^{(n_1)}(Z_2)$ with
$\pi_{\infty}^{(n_1)}(\wt c)=c$.
Since $(\f_{m_2}^{(n_2)})^{-1}(x)$ is a straight $\psi_{m_2}$-class, we can find a
cube $c_1:\{0,1\}^{s+1}\to(\f_{m_2}^{(n_2)})^{-1}(x)$ with
$\pi_\infty^{(n_2)}(c_1)=c$.
Since $\a^{(n_2)}$ is a fibration, there is a cube $c_2:\{0,1\}^{s+1}\to Z_2$
with $\a^{(n_2)}(c_2)=c_1$ and hence $\pi_\infty^{(\infty)}(c_2)=c$.
Thus $\wt c:=\a^{(n_1)}(c_2)$ satisfies the requirements.
This shows that $\a^{(n_1)}(Z_2)$ is indeed a straight $\psi_{m_2}$-class.

We note that $\a^{(n_1)}(Z_1)$ is a straight $\psi_{m_1}$-class.
Then
\[
\a^{(n_1)}(Z_1)\cap(\pi_{\infty}^{(n_1)})^{-1}(D)
\]
 is a straight $\psi_{m_2}$-class and it contains the point $\a^{(n_1)}(\wt x)$, which
is also contained in  $\a^{(n_1)}(Z_2)$.
By Proposition \ref{prp:section-unique} we have hence
\[
\a^{(n_1)}(Z_2)\subseteq \a^{(n_1)}(Z_1)
\]
and then $Z_2\subseteq Z_1$.
This completes the proof of  Proposition \ref{prp:invlimlift}.

\section{Equivariance under translations}\label{sc:functor}

We recall that we denote by $\Aut_i(X)$ the group of $i$-translations of a nilspace $X$.
We endow it with the maximum displacement metric
\[
\dist(f,g)=\max_{x\in X}\{\dist(f(x),g(x))\}.
\]
We denote by $\Aut_i^\ve(X)$ the $\ve$-neighbourhood of the identity in this metric.
If $X$ is a Lie-fibered nilspace then $\Aut_i(X)$ is a Lie group (see \cite{GMV2}*{Theorem 2.18}), hence 
\[
\Aut_i^{\circ}(X)=\langle \Aut_i^\ve(X)\rangle
\]
if $\ve$ is sufficiently small.
The purpose of this section is the proof of Theorem \ref{th:functoriality}, which is
an immediate consequence of the following.

\begin{prp}\label{prp:small-translations}
Let $\f:X\to Y$ be a fibration between two compact ergodic Lie-fibered nilspaces
and let $\ve>0$ be given.
Then there is a $\d>0$ depending only on $X$, $Y$, $\f$ and $\ve$
such that the following holds.
For every $f\in \Aut_i^\d(X)$ there is an $f'\in \Aut_i^\ve(Y)$, and respectively for every
$f'\in \Aut_i^\d(Y)$ there is an $f\in \Aut_i^\ve(X)$, such that
$f'\circ \f=\f\circ f$.
\end{prp}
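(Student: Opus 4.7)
The plan is to argue by induction on the degree $s$ of $X$ (since $\f$ is a fibration, $Y$ automatically has degree at most $s$). The base case $s = 0$ is trivial, so assume $s \ge 1$. Write $X' := \pi_{s-1}(X)$, $Y' := \pi_{s-1}(Y)$, and let $\psi \colon X' \to Y'$ denote the shadow of $\f$; this is a fibration between two Lie-fibered nilspaces of degree at most $s - 1$, to which the inductive hypothesis applies.

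For the pulling-up assertion, I would closely mimic the construction of lifts used in the proof of Theorem \ref{th:toral}, relativized to the arbitrary fibration $\f$ in place of the canonical projection. Given $f' \in \Aut_i^\d(Y)$ with $\d$ sufficiently small, first push $f'$ through $\pi_{s-1}$: being a cubespace morphism it descends to a translation $f_0' \in \Aut_i(Y')$ of comparable displacement. Applying the inductive hypothesis to $\psi$ produces a translation $f_0 \in \Aut_i(X')$ with $\psi \circ f_0 = f_0' \circ \psi$ and small displacement. I would then seek $f \in \Aut_i^\ve(X)$ simultaneously lifting $f_0$ through $\pi_{s-1}$ and satisfying $\f \circ f = f' \circ \f$. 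Using Gleason's theorem (Theorem \ref{thm:Gleason}) to construct local sections of the principal $A_s(X)$-bundle $\pi_{s-1}\colon X \to X'$ and of the analogous bundle for $Y$, I would combine these with $f_0$ and $f'$ to produce a provisional continuous map $g \colon X \to X$ that satisfies both required commutativity relations, commutes with the $A_s(X)$-action, and is close to $\Id$, but is not generally an $i$-translation. Following Section \ref{sc:structure-groups}, the defect of $g$ is measured by the discrepancy function $c \mapsto D(g \circ c)$ on $C^s(X)$; by Lemma \ref{lem:discrepancy}, correcting $g$ by $\a \colon X \to A_s(X)$ to obtain a translation $f := \a.g$ reduces to solving $\partial^s \a(c) = D(g \circ c)$ for all $c \in C^s(X)$. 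The smallness of $\d$ ensures that this cocycle is small and approximately constant on fibers of $\f$, so Theorem \ref{th:functional} in its relative form, applied to $\f$ with $A = A_s(X)$ and $\ell = s$, supplies the required $\a$.

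For the pushing-down assertion, I would dualize. Given $f \in \Aut_i^\d(X)$, descend through $\pi_{s-1}$ to $f_0 := \pi_{s-1}(f) \in \Aut_i(X')$; the inductive hypothesis in its pushing-down form, applied to $\psi$, then produces $f_0' \in \Aut_i(Y')$ with $\psi \circ f_0 = f_0' \circ \psi$. The crux is to verify that $f$ maps fibers of $\f$ to fibers of $\f$: once this is known, $f'(\f(x)) := \f(f(x))$ gives a well-defined continuous map $f' \colon Y \to Y$ (since $\f$ is an open quotient), $f'$ inherits the $i$-translation property from $f$ by the fibration axiom for $\f$, and its displacement is controlled by that of $f$. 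To prove fiber preservation, observe that when $\f(x_1) = \f(x_2)$, both $\f(f(x_1))$ and $\f(f(x_2))$ lie above the common point $f_0'(\psi(\pi_{s-1}(x_1))) \in Y'$, so they differ by a unique element of $A_s(Y)$. This assignment defines a continuous, small $A_s(Y)$-valued function on the fiber product $\{(x_1,x_2) \in X \times X : \f(x_1) = \f(x_2)\}$; exploiting that $f$ is an $i$-translation one shows it satisfies a cocycle identity, and the uniqueness clause of Theorem \ref{th:functional} applied to the trivial cocycle over $\f$ then forces it to vanish identically.

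The main obstacle in both directions is the same: invoking Theorem \ref{th:functional} over the nontrivial fibration $\f$. In contrast to the analogous step in the proof of Theorem \ref{th:toral}, where the target is a single point, here the target is a nontrivial Lie-fibered nilspace $Y$, so the full strength of the relative formulation of Theorem \ref{th:functional} is required. A secondary but purely technical difficulty is the $\ve$-$\d$ bookkeeping: one must ensure that the displacement bound $\dist(f, \Id) \le \ve$ (respectively $\dist(f', \Id) \le \ve$) survives each application of Gleason's theorem, Theorem \ref{th:functional}, and the inductive hypothesis.
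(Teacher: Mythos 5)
Your proposal takes a genuinely different and, in one direction, incomplete route compared to the paper's proof.

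The paper's proof rests on a structural decomposition that you do not use: by Proposition \ref{prp:verhor}, any fibration $\f\colon X\to Y$ between degree-$s$ nilspaces factors as a \emph{vertical} fibration (one whose shadow is an isomorphism, equivalently whose fibers lie inside fibers of $\pi_{s-1}$) followed by a \emph{horizontal} one (one that is injective on fibers of $\pi_{s-1}$). The pushing-forward assertion is then handled separately in the two cases. For a vertical fibration, fiber preservation is proved directly and \emph{without any smallness hypothesis}: if $\f(x)=\f(x')$ then $\llcorner^s(x;x')$ is a cube, and comparing the $(s+1)$-cube $[\llcorner^s(x;x'), \llcorner^s(f(x);f(x'))]$ against a completion coming from the fibration property of $\f$ over a degenerate cube of $Y$, $(s+1)$-uniqueness forces $\f(f(x))=\f(f(x'))$ (Lemma \ref{lem:vertical-respects-fibers} and Proposition \ref{prp:translation-vertical}). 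For a horizontal fibration, the fibers of $\f$ are \emph{straight classes} in the sense of Section \ref{sec:straight}; translations send straight classes to straight classes; and the uniqueness of small straight classes (Proposition \ref{prp:section-unique}) then shows that a small $f$ must send $\f^{-1}(y)$ onto $\f^{-1}(\f(f(x_0)))$. Only in this horizontal step is smallness of $f$ used, and only there does Theorem \ref{th:functional} enter (buried inside Proposition \ref{prp:section-unique}).

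The gap in your argument is in the pushing-down direction. You define, for pairs $x_1,x_2$ with $\f(x_1)=\f(x_2)$, an element of $A_s(Y)$ measuring the discrepancy between $\f(f(x_1))$ and $\f(f(x_2))$, and claim that the uniqueness clause of Theorem \ref{th:functional}, ``applied to the trivial cocycle over $\f$'', forces this function to vanish. But that uniqueness clause concerns a pair of functions on a \emph{single} cubespace whose $\ell$-th derivative agrees up to something constant on fibers, and it concludes that their difference is constant on fibers of $\f$ --- it cannot directly conclude that a function on the fiber product of $\f$ with itself vanishes. To extract such a conclusion one must first package the fiber information as in the straight-class framework (or something equivalent): exhibit the two classes $f(\f^{-1}(y))$ and $\f^{-1}(\f(f(x_0)))$ as sections over a common base, compare them via a small $A$-valued function with vanishing $(s+1)$-derivative, and only then invoke the uniqueness assertion. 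That is exactly what Proposition \ref{prp:section-unique} packages. Moreover, your argument as written treats all fibrations uniformly, whereas for vertical $\f$ the claim is true with no smallness hypothesis at all and the cocycle uniqueness machinery is not the right tool --- you need the direct structural argument instead. Without the vertical/horizontal split the two phenomena are conflated and the argument does not close.

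The pulling-back direction is a different but viable route. The paper reuses the transitivity-with-small-translations result from \cite{GMV2} (quoted before Proposition \ref{prp:lifting}): choose $x_1\in\f^{-1}(y_1)$, $x_2\in\f^{-1}(f'(y_1))$ close together, produce a small $f$ with $f(x_1)=x_2$ by that result, push it down (using the already-proven pushing-down half), and conclude $f$ lifts $f'$ by uniqueness of small translations fixing where they send a point. Your plan to re-run the Gleason-section-plus-cocycle construction of Theorem \ref{th:toral} relative to $\f$ would likely work, but it reproves a substantial amount of machinery that the paper gets for free from \cite{GMV2}, and it still requires careful patching of local sections, which you gloss over.
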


Note there are really two distinct statements here: a ``pushing forward'' result and a ``pulling back'' one.  The proofs of these will be handled separately and have different flavours.  The ``pulling back'' part is really an existence fact, and will be implied fairly easily by results concerning the existence of translations from \cite{GMV2}.  The ``pushing forward'' part is about proving properties of small translations -- namely, that they are compatible with the fibration in some sense -- and will require a new argument.

In both cases, the proof of this proposition is by induction on $s$, the degree of $X$.
For $s=0$ the claim is trivial, so
we fix $s\ge1$ and
assume that the proposition holds for nilspaces of degree $(s-1)$.

Recall that in Definition \ref{dfn:horizontal} we coined the notion of a \emph{horizontal fibration}.  In Section \ref{sec:verhor} we introduce the complementary notion of \emph{vertical} fibrations, and show that
an arbitrary fibration may be decomposed as a composition
of a vertical and a horizontal one.

The ``pushing forward'' part of Proposition \ref{prp:small-translations} is reasonably straightforward for vertical fibrations, and in fact holds without any smallness assumption.  We handle this in Section \ref{sec:vertical}.  We thereby reduce to the case where $\f$ is horizontal.

We consider this case in Section \ref{sec:horizontal}.  The crucial step is to show that small translations map sets of the form $\f^{-1}(y)$
onto each other.
The key observations are to note that such sets are straight classes, and that, in general, translations map
straight classes onto straight classes.
Hence, the results of the previous section can be exploited to give what we want in the case of small translations.

Finally, we prove the ``pulling back'' result in Section \ref{sec:lifting}.

\subsection{A decomposition of fibrations}\label{sec:verhor}

We recall from Section \ref{sec:straight} that the shadow of a fibration $\f:X\to Y$ between
compact ergodic nilspaces of degree $s$ is the unique fibration $\psi:\pi(X)\to\pi(Y)$ that satisfies $\pi\circ\f=\psi\circ\pi$.
We continue to use our convention that $\pi$ abbreviates $\pi_{s-1}$ as we do not use the
other canonical projections.

We also recall that a fibration $\f$ is called \emph{horizontal} if it has relative $s$-uniqueness; or equivalently, if it is injective on fibres of $\pi$ (see Definition \ref{dfn:horizontal} and the remarks that follow).

Finally, we recall from \cite{GMV1}*{Definition 7.18} 
that a fibration $\f \colon X \to Y$ is called \emph{relatively $k$-ergodic} if whenever $c \colon \{0,1\}^k \to X$ is a configuration such that $\f \circ c \in C^k(Y)$ then $c \in C^k(X)$; i.e.~if all $k$-configurations in $X$ are cubes provided they map to cubes of $Y$.

The complementary notion to a horizontal fibration is as follows.
\begin{dfn}
  We say that a fibration $\f:X\to Y$ between ergodic compact nilspaces of degree $s$ is {\bf vertical} if any of the following equivalent conditions holds:
  \begin{enumerate}
    \item given $x_1, x_2 \in X$ such that $\pi(\f(x_1)) = \pi(\f(x_2))$, we must have $\pi(x_1) = \pi(x_2)$;
    \item the shadow of $\f$ is an isomorphism $\pi(X) \xrightarrow{\sim} \pi(Y)$;
    \item $\f$ is relatively $s$-ergodic;
    \item we have that $x_1 \sim_{\f,s-1} x_2$ for any $x_1, x_2 \in X$ such that $\f(x_1) = \f(x_2)$ (again, see \cite{GMV1}*{Section 7.2} for a definition); 
    \item for any $x_1,x_2 \in X$ such that $\f(x_1) = \f(x_2)$, the configuration $\llcorner^s(x_1;x_2)$ is a cube.
  \end{enumerate}
\end{dfn}
The equivalence of (3), (4) and (5) is covered in \cite{GMV1}*{Section 7.2}. 
Now, (1) states precisely that the shadow of $\f$ is injective, and any fibration is an isomorphism if and only if it is injective; so (1) and (2) say the same.  Clearly, (1) implies (5); and (3) implies (1), since (by Lemma \ref{lem:alter-canonical}) $\pi(\f(x_1)) = \pi(\f(x_2))$ if and only if $\llcorner^s(\f(x_1); \f(x_2))$ is a cube, which holds if and only if $\llcorner^s(x_1; x_2)$ is (assuming (3)), which implies $\pi(x_1) = \pi(x_2)$.

Examples of such fibrations are quotient maps by subgroups of the top structure group.
In fact, it turns out that these are the only examples.

The main result of this section is the following decomposition result.

\begin{prp}\label{prp:verhor}
Let $\f:X\to Y$ be a fibration between two compact ergodic nilspaces of degree $s$.
Then there is a compact ergodic nilspace $Z$, a vertical fibration $\f_v: X\to Z$
and a horizontal fibration $\f_h:Z\to Y$ such that $\f=\f_h\circ\f_v$.
\end{prp}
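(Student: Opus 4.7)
The strategy is to realize $Z$ as a quotient of $X$ by a subgroup of its top structure group. Let $A := A_s(X)$ and $A' := A_s(Y)$. The fibration $\f$ will induce a continuous surjective group homomorphism $\f_* \colon A \to A'$ satisfying the equivariance $\f(a.x) = \f_*(a).\f(x)$; then setting $K := \ker(\f_*)$ and $Z := X/K$ with the quotient cubespace structure will yield an intermediate nilspace through which $\f$ factors in the desired way.

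The existence of $\f_*$ is purely functorial. Using the construction of $A$ recalled at the end of Section~\ref{sc:structure-groups} as the quotient of $\{(x,y) \in X^2 : x \sim_{s-1} y\}$ by the relation $\approx$, the fact that $\f$ is a cubespace morphism sending $\sim_{s-1}$-related pairs to $\sim_{s-1}$-related pairs and preserving $(s+1)$-cubes implies that $\f$ descends to a continuous group homomorphism $\f_* \colon A \to A'$. The equivariance $\f(a.x) = \f_*(a).\f(x)$ is then immediate from the identification of elements of $A$ with graphs of transformations, and this same relation shows that $\f_*$ is surjective since $\f$ is.

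I would next verify that $\f_v \colon X \to Z$ is a fibration by copying the corner-completion argument given in the excerpt for $\a^{(n)} \colon X_\infty^{(\infty)} \to X_\infty^{(n)}$: given an $\ell$-corner $\l$ in $X$ compatible with a cube $c \in C^\ell(Z)$, lift $c$ arbitrarily to $\tilde c \in C^\ell(X)$, write $\l = f . \tilde c|_{\{0,1\}^\ell \sm \{\vec 1\}}$ for a unique $K$-valued corner $f$, invoke the weak structure theorem to see that $f$ is a corner in $\mathcal{D}_s(K)$, complete it to $f' \in C^\ell(\mathcal{D}_s(K))$, and take $f'.\tilde c$. Hence $Z$ is an ergodic nilspace by \cite{GMV1}*{Remark 7.9}. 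Since $K$ acts within $\pi$-fibres, the projection $\pi$ descends to a continuous map $\pi_Z \colon Z \to \pi(X)$; a short check using the $\llcorner^s$-characterization of $\sim_{s-1}$ (Lemma~\ref{lem:alter-canonical}) together with the $s$-uniqueness of $\pi(X)$ identifies $\pi_Z$ with the $(s-1)$-th canonical projection of $Z$. Thus $\pi(Z) = \pi(X)$ canonically and the shadow of $\f_v$ is the identity, so $\f_v$ is vertical.

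Finally, since $\f_*$ vanishes on $K$, the map $\f$ is constant on $K$-orbits and descends uniquely to a continuous map $\f_h \colon Z \to Y$ with $\f_h \circ \f_v = \f$. Lemma~\ref{lem:universal1} then promotes $\f_h$ to a fibration. For horizontality I would verify condition~(1) of Definition~\ref{dfn:horizontal}: given $z_1 \ne z_2$ in $Z$ with $\pi_Z(z_1) = \pi_Z(z_2)$, choose lifts $x_i \in X$ of $z_i$; then $\pi(x_1) = \pi(x_2)$ forces $x_2 = a.x_1$ for a unique $a \in A$ with $a \notin K$ (else $z_1 = z_2$), and equivariance gives $\f_h(z_2) = \f_*(a).\f_h(z_1) \ne \f_h(z_1)$. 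The main technical step throughout is the corner-completion check for $\f_v$, but this follows the template already laid out for $\a^{(n)}$ and should present no real difficulty.
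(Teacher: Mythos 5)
The proposal is correct, but it takes a genuinely different route from the paper. The paper's proof simply invokes \cite{GMV1}*{Proposition 7.12}, which supplies the decomposition $X \xrightarrow{\pi_{\f,s-1}} X/\!\sim_{\f,s-1} \to Y$ directly, $\sim_{\f,s-1}$ being the canonical equivalence relation attached to the fibration $\f$; verticality of the first map and horizontality of the second then follow essentially by unwinding definitions. You instead construct the intermediate nilspace $Z$ concretely as the quotient by a closed subgroup $K = \ker(\f_*)$ of the top structure group, which obliges you to build the induced homomorphism $\f_* \colon A_s(X) \to A_s(Y)$ by hand. This is exactly the point the authors raise -- but explicitly decline to use -- in the remark concluding Section~\ref{sec:verhor}: there they observe that $A_s(\f)$ identifies with a closed subgroup of $A_s(X)$ and hence any vertical fibration is a quotient by such a subgroup, adding ``we will not explicitly need such a result.'' Your approach makes the vertical half of the decomposition structurally transparent (a group quotient) and leans only on material already recalled in this paper; the cost is that several facts the paper imports wholesale -- well-definedness, continuity, and surjectivity of $\f_*$; the equivariance $\f(a.x)=\f_*(a).\f(x)$; closedness of the $K$-orbit relation so that $Z$ is a good quotient -- must be verified directly, and the write-up does skate over a few of them. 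One such point worth flagging: surjectivity of $\f_*$ does not follow ``since $\f$ is'' surjective; you need the fibration property of $\f$, namely that $\f$ maps $\pi$-fibers onto $\pi$-fibers, which one gets by completing the constant $s$-corner at $x$ against the compatible cube $\llcorner^s(\f(x);y')$ in $Y$. With those details filled in, the two constructions produce the same $Z$ up to canonical isomorphism, since the $K$-orbit relation coincides with $\sim_{\f,s-1}$ by Lemma~\ref{lem:alter-canonical}.
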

\begin{proof}
  This is immediate from \cite{GMV1}*{Proposition 7.12}. 
This states that there is a decomposition
  \[
    \f \colon X \xrightarrow{\pi_{\f,s-1}} X / \sim_{\f,s-1} \xrightarrow{g} Y
  \]
  where $\sim_{\f,s-1}$ is the canonical equivalence relation attached to the fibration $\f$, and that both of these maps are fibrations.  It is immediate from the definition of $\sim_{\f,s-1}$ that the quotient map is relatively $s$-ergodic; and it follows from \cite{GMV1}*{Remark 7.9} 
that $Z = X / \sim_{\f,s-1}$ is a nilspace (and also trivially compact and ergodic).
By  \cite{GMV1}*{Proposition 7.12}, 
$g$ is a fibration of degree at most $s-1$, so it has $s$-uniqueness, and hence it is horizontal.
(See also Remark \ref{rmk:horizontal}).
\end{proof}

We make a final remark before proceeding.  We note that the relation $\sim_{\f,s-1}$ is finer than $\sim_{s-1}$ on $X$ (which is immediate from the definitions), and hence $\pi$ factors as
\[
  \pi \colon X \xrightarrow{\pi_{\f,s-1}} \pi_{\f,s-1}(X) \to \pi(X).
\]
Also, the relative structure theorem \cite{GMV1}*{Theorem 7.19} 
states that $X$ admits a free continuous action by a compact abelian group $A_s(\f)$ whose orbits are the fibers of $\pi_{\f,s-1}$.

It is not quite immediate from this, but is nonetheless true and not hard to argue, that when $X$ is a nilspace of degree $s$ this group $A_s(\f)$ may be identified with a closed subgroup of the top structure group $A_s$.  Hence, any vertical fibration is the quotient of $X$ by a subgroup of the top structure group.  However, we will not explicitly need such a result.

\subsection{The case of vertical fibrations}\label{sec:vertical}

In this section, we prove the following.

\begin{prp}\label{prp:translation-vertical}
Let $\f:X\to Y$ be a vertical fibration between two compact ergodic nilspaces.
Then there is a continuous homomorphism $\psi: \Aut_i(X)\to\Aut_i(Y)$
such that $\psi(f)(\f(x))=\f(f(x))$
for all $x\in X$ and $f\in \Aut_i(X)$.
\end{prp}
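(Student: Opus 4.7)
My plan is to define $\psi(f) \colon Y \to Y$ by the forced formula $\psi(f)(\f(x)) = \f(f(x))$, which requires showing this is well-defined (independent of the choice of lift $x$); the remaining verifications that $\psi(f)$ is an $i$-translation, that $\psi$ is a group homomorphism, and that $\psi$ is continuous are routine once this is done.

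The main obstacle is well-definedness. Suppose $\f(x_1) = \f(x_2)$. Since $\f$ is vertical, the first condition in its definition gives $\pi(x_1) = \pi(x_2)$, so by the weak structure theorem there exists a unique $a \in A := A_s(X)$ in the top structure group with $x_2 = a.x_1$. It therefore suffices to prove that $f$ commutes with the action of each $a \in A$, since then $f(x_2) = a.f(x_1)$ and $\f(f(x_2)) = \f(a.f(x_1)) = \f(f(x_1))$.

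To prove the commutation, I use the filtration of $\Aut_1(X)$. The action $x \mapsto a.x$ of $a \in A$ lies in $\Aut_s(X)$: by the second clause of the weak structure theorem, $[a]_F.c \in C^\ell(X)$ whenever $c \in C^\ell(X)$ and $F$ has codimension at most $s$. Since $\{\Aut_j(X)\}$ forms a filtration, the commutator $[f,a]$ lies in $\Aut_{i+s}(X) \subseteq \Aut_{s+1}(X)$ (using $i \ge 1$). I claim $\Aut_{s+1}(X) = \{\Id\}$: given $g \in \Aut_{s+1}(X)$ and $x \in X$, apply the translation axiom with $\ell = s+1$, the constant cube at $x$ (a cube by cube invariance), and the singleton face $F = \{\vec 1\}$ of codimension $s+1$; the result is $\llcorner^{s+1}(x; g(x))$, which is therefore a cube. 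Lemma \ref{lem:alter-canonical} then yields $x \sim_s g(x)$, which is trivial on a degree-$s$ nilspace, so $g(x) = x$. Thus $[f, a] = \Id$, giving the commutation; this triviality of $\Aut_{s+1}(X)$ is really the one non-routine input in the whole argument.

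Once well-definedness is established, the remaining verifications are straightforward. Continuity of $\psi(f)$: since $X$ is compact and $\f$ is a continuous surjection, $\f$ is a quotient map, and the identity $\psi(f) \circ \f = \f \circ f$ forces $\psi(f)$ to be continuous. Translation property: given $c' \in C^\ell(Y)$ and a face $F$ of codimension $i$, lift $c'$ to a cube $c \in C^\ell(X)$ using that $\f$ is a fibration (hence cube-surjective); then $[f]_F.c \in C^\ell(X)$ since $f$ is an $i$-translation, and applying $\f$ gives $[\psi(f)]_F.c' = \f([f]_F.c) \in C^\ell(Y)$. The homomorphism property is immediate from the defining formula, and continuity of $\psi$ in the maximum displacement metric follows from the uniform continuity of $\f$ on the compact space $X$, since $\dist(\psi(f_n)(y), \psi(f)(y)) = \dist(\f(f_n(x)), \f(f(x)))$ can be controlled uniformly in $y$ by the modulus of continuity of $\f$ applied to $\dist(f_n, f)$.
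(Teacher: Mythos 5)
Your approach is genuinely different from the paper's. The paper proves well-definedness (Lemma \ref{lem:vertical-respects-fibers}) by a direct corner-completion argument: given $\f(x)=\f(x')$, one forms the $(s+1)$-cube $[\llcorner^s(x;x'),\,\llcorner^s(f(x);f(x'))]$ (using verticality to get $\llcorner^s(x;x')\in C^s(X)$, then that $f$ is a $1$-translation), compares it against the compatible cube $[\square^s(\f(x)),\square^s(\f(f(x)))]$ in $Y$, completes the corner through $\f$, and invokes $(s+1)$-uniqueness of $X$. You instead first establish that $f$ commutes with the top structure group $A$ via the filtration of $\Aut_\bullet(X)$ and the fact that $\Aut_{s+1}(X)=\{\Id\}$. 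That commutation argument is correct and rather elegant; but the way you use it leaves a genuine gap.

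The gap is in the chain ``$f(x_2)=a.f(x_1)$ and $\f(f(x_2))=\f(a.f(x_1))=\f(f(x_1))$.'' You know $\f(a.x_1)=\f(x_1)$, i.e.~$a$ preserves the $\f$-fibre \emph{through $x_1$}. You then need $\f(a.z)=\f(z)$ for the different point $z=f(x_1)$, which lies in an entirely different fibre. The commutation $fa=af$ cannot supply this: it only rephrases the desired conclusion as $\f(f(x_2))=\f(f(x_1))$, which is what you were trying to prove. What you implicitly need is that the set $\{a\in A: \f(a.x)=\f(x)\}$ is independent of $x$ --- equivalently, that the fibres of a vertical fibration are orbits of a single closed subgroup of $A$. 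This is true, but it is exactly the fact the paper flags at the end of Section \ref{sec:verhor} as ``not quite immediate'' and chooses not to invoke, relying instead on the corner-completion argument. One can close your gap by, say, comparing the two $(s+1)$-cubes $[\square^s(x_1),\square^s(z)]$ and $[a]_{F}.[\square^s(x_1),\square^s(z)]$ (with $F$ the codimension-$s$ face through $\vec 1$) under $\f$ and applying $(s+1)$-uniqueness of $Y$ --- but once you do that, the commutation with $A$ becomes superfluous and you have essentially reconstructed the paper's proof. So as written, the central step of your proof does not go through; you need an extra lemma about the fibre structure of vertical fibrations, or you should replace the commutation step by the direct uniqueness argument.

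The remaining verifications (continuity of $\psi(f)$ via the quotient map, the translation property via cube-lifting through the fibration, the homomorphism property, and continuity of $\psi$) are all correct, as is the preliminary observation $\Aut_{s+1}(X)=\{\Id\}$.
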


We begin with a simple lemma which gives a condition for a
translation to descend to a factor through a fibration.

\begin{lem}\label{lem:universal2}
Let $\f:X\to Y$ be a fibration between two compact nilspaces
and let $f\in \Aut_i(X)$ be a translation.
Suppose that for every $y_1\in Y$ there is a $y_2\in Y$ such that
$f(\f^{-1}(y_1))=\f^{-1}(y_2)$.
Then there is a unique translation $f'\in \Aut_i(Y)$ such that $f'\circ\f=\f\circ f$.
\end{lem}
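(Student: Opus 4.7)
The plan is to define $f'$ explicitly from the hypothesis and then check that it has the claimed properties in turn. For each $y_1 \in Y$, the hypothesis supplies a point $y_2$ with $f(\f^{-1}(y_1)) = \f^{-1}(y_2)$; since distinct points of $Y$ have disjoint fibers under $\f$, and $\f$ is surjective (being a fibration into a nilspace), this $y_2$ is unique, so I can define $f'(y_1) := y_2$. By construction this yields the key identity $f'(y_1) = \f(f(x))$ for any $x \in \f^{-1}(y_1)$, which is exactly the intertwining relation $f' \circ \f = \f \circ f$.

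For continuity of $f'$, I would use that $\f$ is a continuous surjection between compact Hausdorff spaces and therefore a quotient map: a function $f' \colon Y \to Y$ is continuous if and only if $f' \circ \f$ is continuous, and here $f' \circ \f = \f \circ f$ is manifestly so. For the fact that $f'$ is a homeomorphism one can either repeat the argument with $f^{-1} \in \Aut_i(X)$ in place of $f$ (the hypothesis is symmetric in this sense, since $f^{-1}(\f^{-1}(y_2)) = \f^{-1}(y_1)$) to obtain a two-sided inverse of $f'$, or simply invoke the fact that the continuous self-map $f'$ of the compact space $Y$ has the obvious inverse produced in this way.

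To confirm $f' \in \Aut_i(Y)$, take any $c \in C^\ell(Y)$ and any face $F \subseteq \{0,1\}^\ell$ of codimension $i$. Using that $\f$ is a fibration I lift $c$ to some $c' \in C^\ell(X)$ with $\f \circ c' = c$. Since $f$ is an $i$-translation, $[f]_F . c' \in C^\ell(X)$; applying $\f$ vertex by vertex, using $\f \circ f = f' \circ \f$ on vertices $\o \in F$ and $\f \circ c' = c$ on vertices $\o \notin F$, yields
\[
  \f \circ ([f]_F . c') = [f']_F . c,
\]
and the left-hand side is a cube because $\f$ is a cubespace morphism. Uniqueness of $f'$ is immediate from surjectivity of $\f$: any other $f''$ with $f'' \circ \f = \f \circ f$ must agree with $f'$ pointwise on the image of $\f$, which is all of $Y$.

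The argument is essentially formal; the only real content is recognising that the hypothesis is precisely what is needed for $f'$ to be a well-defined map, after which the topological and cubespace-theoretic properties fall out with minimal effort. No novel ideas are required.
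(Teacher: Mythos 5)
Your proof is correct and reaches the same conclusion by essentially the same mechanism; the only difference is in how $f'$ is produced. The paper simply applies Lemma \ref{lem:universal1} to the pair of fibrations $\f$ and $\f\circ f$, which in one stroke yields a unique continuous $f'$ (in fact a fibration) with $f'\circ\f=\f\circ f$, and then finishes with exactly your cube-lifting computation $[f']_F.c=\f([f]_F.\wt c)$ to see $f'\in\Aut_i(Y)$. You instead build $f'$ pointwise from the hypothesis and verify well-definedness, continuity (via the quotient-map property of the continuous surjection $\f$ between compact Hausdorff spaces), bijectivity (via the symmetric argument with $f^{-1}$), and uniqueness (via surjectivity of $\f$) by hand. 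This is a perfectly sound unpacking of what Lemma \ref{lem:universal1} does in this instance; the paper's route is shorter because it reuses the universal-property machinery, and it yields the extra (unneeded) information that $f'$ is a fibration, whereas your route is more elementary and self-contained. One small point worth making explicit: your appeal to $f^{-1}$ requires only that $f^{-1}$ is a homeomorphism satisfying the analogous fiber-mapping hypothesis, which follows by applying $f^{-1}$ to $f(\f^{-1}(y_1))=\f^{-1}(y_2)$; you do not actually need the fact (though it is true) that $\Aut_i(X)$ is a group.
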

\begin{proof}
We apply Lemma \ref{lem:universal1} for the fibrations $\f: X\to Y$
and $\f\circ f:X\to Y$ and deduce that a unique fibration $f':Y\to Y$ exists
satisfying $f'\circ\f=\f\circ f$.

We show that $f'\in \Aut_i(Y)$.
To this end, we fix a cube $c\in C^{\ell}(Y)$ and let $\wt c$ be a $\f$-preimage of $c$ in $C^{\ell}(X)$.
Let $F\subseteq\{0,1\}^\ell$ be a face of codimension $i$.
Then $[f']_F.c=\f([f]_F.\wt c)$ is a cube, showing that $f'$ is indeed a translation.
\end{proof}

It is possible to verify the condition of Lemma \ref{lem:universal2} directly in the case of vertical fibrations.  Many approaches are possible here; ours is fairly direct, using minimal structure theory.

Recall that we write $\llcorner^k(x;y)$ to denote the $k$-configuration given by $\vec1 \mapsto y$ and $\o \mapsto x$ for all $\o \ne \vec1$.  We also introduce the notation $\square^k(x)$ to denote the constant $k$-cube $\omega \mapsto x$.

\begin{lem}\label{lem:vertical-respects-fibers}
  Suppose $\f \colon X \to Y$ is a vertical fibration between compact ergodic nilspaces of degree $s$, and suppose $f \in \Aut_1(X)$ is an $1$-translation.

  Let $x,x' \in X$ be such that $\f(x) = \f(x')$.  Then $\f(f(x)) = \f(f(x'))$.
\end{lem}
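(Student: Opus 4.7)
(I take \emph{horizontal} in the statement to be a typo for \emph{vertical}, consistent with the label name, the section heading, and the fact that the proof below genuinely uses verticality.) The plan is to produce two $(s+1)$-cubes in $Y$ that agree on every vertex of $\{0,1\}^{s+1}$ except $\vec{1}$, and then invoke $(s+1)$-uniqueness in $Y$ to force them to be equal and so conclude $\f(f(x)) = \f(f(x'))$.

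First, verticality of $\f$ (condition (5) of the definition) upgrades $\f(x) = \f(x')$ to the statement $\llcorner^s(x;x') \in C^s(X)$. Duplicating this $s$-cube along a fresh coordinate, by cube invariance, yields $[\llcorner^s(x;x'),\llcorner^s(x;x')] \in C^{s+1}(X)$. Since $f \in \Aut_1(X)$, applying $f$ on the top face $\{\omega_{s+1}=1\}$ (which has codimension $1$) produces $[\llcorner^s(x;x'),\llcorner^s(f(x);f(x'))] \in C^{s+1}(X)$. Pushing this through the cubespace morphism $\f$, and using $\f(x) = \f(x')$ to collapse the bottom face, I obtain
\[
c_1 := [\square^s(\f(x)),\,\llcorner^s(\f(f(x));\f(f(x')))] \in C^{s+1}(Y).
\]

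Second, applying the same top-face translation trick to the constant cube $\square^{s+1}(x) \in C^{s+1}(X)$ yields $[\square^s(x),\square^s(f(x))] \in C^{s+1}(X)$, whose image under $\f$ is
\[
c_2 := [\square^s(\f(x)),\,\square^s(\f(f(x)))] \in C^{s+1}(Y).
\]
Vertex-by-vertex, $c_1$ and $c_2$ agree on $\{0,1\}^{s+1}\setminus\{\vec{1}\}$; at $\vec{1}$ they take the values $\f(f(x'))$ and $\f(f(x))$ respectively.

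Finally, since $\f$ is a fibration out of a nilspace of degree $s$, the image $Y$ is itself a nilspace of degree at most $s$ (by the image-of-a-nilspace result cited in the introduction), and hence satisfies $(s+1)$-uniqueness. Therefore $c_1 = c_2$, which gives $\f(f(x')) = \f(f(x))$. The only real conceptual step is the use of verticality at the very start to manufacture the $s$-cube $\llcorner^s(x;x')$; everything else is routine bookkeeping with the cube axioms and the $1$-translation property of $f$, so I do not anticipate a serious obstacle.
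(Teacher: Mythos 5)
Your proof is correct, and you rightly read ``horizontal'' as a typo for ``vertical''; the paper's own proof has the matching slip, citing ``relative $s$-uniqueness'' where it needs the relatively-$s$-ergodic (vertical) condition to get $\llcorner^s(x;x')\in C^s(X)$. After the common opening --- verticality makes $\llcorner^s(x;x')$ a cube, then the $1$-translation property of $f$ yields the $(s+1)$-cube $c = [\llcorner^s(x;x'),\,\llcorner^s(f(x);f(x'))]$ in $X$ --- the two arguments diverge. The paper stays in $X$: it takes the corner $c|_{\{0,1\}^{s+1}\sm\{\vec1\}}$, observes it is $\f$-compatible with $[\square^s(\f(x)),\square^s(\f(f(x)))]\in C^{s+1}(Y)$, invokes the \emph{relative corner-completion} property of the fibration to produce a completion $c'$ with $\f(c'(\vec1))=\f(f(x))$, and then uses $(s+1)$-uniqueness \emph{in $X$} to conclude $c'=c$. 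You instead push both $c$ and the degenerate cube forward to $Y$ and apply $(s+1)$-uniqueness \emph{in $Y$} directly. That is a genuine, if small, simplification: your argument uses $\f$ only as a cubespace morphism and never touches the corner-completion axiom. One minor redundancy in your write-up: you appeal to the image-of-a-nilspace result to argue $Y$ is a nilspace, but $Y$ is already assumed to be a nilspace of degree $s$ in the statement, so $(s+1)$-uniqueness in $Y$ is available outright.
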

\begin{proof}
  By relative $s$-ergodicity of $\f$, we have that $\llcorner^s(x;x')$ is an $s$-cube, and hence $c = [\llcorner^s(x;x'), \llcorner^s(f(x), f(x'))]$ is an $(s+1)$-cube, since $f$ is a $1$-translation.

  Now let $\tilde{c} = [\square^s(\f(x)), \square^s(\f(f(x)))]$.  This is a cube of $Y$ (by ergodicity); and moreover, $c|_{\{0,1\}^{s+1} \sm \{\vec1\}}$ and $\tilde{c}$ form a compatible $(s+1)$-corner for $\f$.  Since $\f$ is a fibration, we may complete $c|_{\{0,1\}^{s+1} \sm \{\vec1\}}$ to a cube $c'$ such that $\f(c'(\vec1)) = \f(f(x))$.

  But $X$ has $(s+1)$-uniqueness, and hence $f(x') = c(\vec1) = c'(\vec1)$, which gives the result.
\end{proof}

\begin{proof}[Proof of Proposition \ref{prp:translation-vertical}]
  Combining Lemma \ref{lem:universal2} and Lemma \ref{lem:vertical-respects-fibers}, we have shown that for all $f \in \Aut_i(X)$ there exists an unique $\psi(f) \in \Aut_i(Y)$ such that $\psi(f) \circ \f = \f \circ f$.  It is routine to verify that $\psi$ must define a continuous group homomorphism.
\end{proof}

\subsection{Horizontal fibrations}\label{sec:horizontal}

In this section, we prove the following.

\begin{prp}\label{prp:small-translations-horizontal}
Let $\f:X\to Y$ be a horizontal fibration between two compact ergodic Lie-fibered nilspaces
of degree $s$,
and let $\ve>0$ be given.
Then there is a $\d>0$ depending only on $X$, $Y$, $\f$ and $\ve$
such that the following holds.
For every $f\in \Aut_i^\d(X)$ there is $f'\in \Aut_i^\ve(Y)$ such that
$f'\circ \f=\f\circ f$.
\end{prp}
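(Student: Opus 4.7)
The proof is by induction on $s = \deg X$; the base case $s = 0$ is trivial, so we assume Proposition \ref{prp:small-translations} holds for Lie-fibered nilspaces of degree at most $s - 1$. The overall strategy is to construct the pushforward translation $f'$ by showing that $f$ sends every $\f$-fiber to another $\f$-fiber; once this is established, Lemma \ref{lem:universal2} automatically supplies $f' \in \Aut_i(Y)$ with $f' \circ \f = \f \circ f$.

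I would begin by noting that any $f \in \Aut_i(X)$ preserves the canonical equivalence relation $\sim_{s-1}$ (since it is a cubespace morphism), and so descends to a translation $\bar f \in \Aut_i(\pi(X))$ whose displacement is controlled by that of $f$ via the uniform continuity of $\pi$. Applying the inductive hypothesis to the shadow $\psi \colon \pi(X) \to \pi(Y)$, which is a fibration between nilspaces of degree $s - 1$, for $\d$ small we obtain a small translation $\bar f' \in \Aut_i(\pi(Y))$ with $\bar f' \circ \psi = \psi \circ \bar f$. By Lemma \ref{lem:horizontal-fib} each fiber $\f^{-1}(y)$ is a straight $\psi$-class; using that $\Aut_1(X)$ commutes with the top structure group $A$ (since $[\Aut_1(X), A] \subseteq \Aut_{s+1}(X) = \{\Id\}$ because $A \subseteq \Aut_s(X)$ and the filtration property of $\Aut_\bullet(X)$) together with Lemma \ref{lem:discrepancy}, one verifies that translations preserve straight-classhood, so $f(\f^{-1}(y))$ is a straight $\psi$-class with $\pi$-image $\psi^{-1}(\bar f'(\pi(y)))$.

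Fix any $x \in \f^{-1}(y)$ and set $y' := \f(f(x))$. Then $D_1 := f(\f^{-1}(y))$ and $D_2 := \f^{-1}(y')$ are two straight $\psi$-classes over the same base $\psi^{-1}(\bar f'(\pi(y)))$, sharing the point $f(x)$. Applying Proposition \ref{prp:section-unique} gives $D_1 = a \cdot D_2$ for some $a \in A$, and the shared point together with the freeness of the $A$-action forces $a = 0$, so $D_1 = D_2$. This means $f'(y) := \f(f(x))$ is independent of the choice of $x \in \f^{-1}(y)$; continuity of $f'$ follows from a local section argument for $\f$, Lemma \ref{lem:universal2} produces $f' \in \Aut_i(Y)$, and $\dist(f', \Id_Y) \le \ve$ follows from the uniform continuity of $\f$ applied to $\dist(f, \Id_X) \le \d$.

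The main obstacle is verifying the closeness hypothesis of Proposition \ref{prp:section-unique}, which demands that the unique point of $D_1$ over each $b \in \psi^{-1}(\bar f'(\pi(y)))$ be uniformly $\d$-close to that of $D_2$. Parametrize $D_1$ by the section $b \mapsto f(\s_0(\bar f^{-1}(b)))$, where $\s_0$ is a straight section whose image is $\f^{-1}(y)$, and $D_2$ by the straight section $\s_2$ whose image is $\f^{-1}(y')$; using translation-invariance of the discrepancy (again via the commutativity of $f$ with $A$) one checks that the first section is itself straight, so that their $A$-valued difference $a \colon \psi^{-1}(\bar f'(\pi(y))) \to A$ satisfies $\partial^{s+1}(a \circ c) = 0$ for every cube $c$ in that fiber, and vanishes at $\pi(f(x))$. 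The uniqueness statement of Theorem \ref{th:functional}, applied to the trivial fibration from this fiber to a point with the zero cocycle, then forces $a \equiv 0$ provided $a$ is globally bounded by the threshold of that theorem. The delicate point is obtaining this global bound from the purely local smallness assumption $\dist(f, \Id_X) \le \d$; here one combines the continuity of the inductive construction of $\bar f'$ with compactness of $Y$ to extract a single $\d = \d(X,Y,\f,\ve)$ that simultaneously makes all such difference functions $a$ (one per $y \in Y$) uniformly small.
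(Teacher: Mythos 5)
Your overall strategy matches the paper's: descend to the shadow via the inductive hypothesis, recognize $\f$-fibers as straight $\psi$-classes, show $f$ maps such classes to such classes, apply Proposition \ref{prp:section-unique} to conclude $f(\f^{-1}(y))$ is a fiber of $\f$, and invoke Lemma \ref{lem:universal2}. Two of your intermediate steps differ from the paper without being wrong: (a) you establish that $f$ preserves straight-classhood via the commutativity of $\Aut_1(X)$ with the top structure group $A$, while the paper deduces it more directly from the fact that $f$ and $f^{-1}$ are cubespace morphisms and that the shadow $g$ preserves $\psi$-fibers; (b) you detour into re-deriving the rigidity of Proposition \ref{prp:section-unique} from the uniqueness clause of Theorem \ref{th:functional}, which is unnecessary since the paper applies Proposition \ref{prp:section-unique} as a black box.

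The genuine gap is in your final paragraph. You correctly identify that the diameter hypothesis of Proposition \ref{prp:section-unique} (that the two straight classes $D_1$, $D_2$ lie within distance $\d_1$ of one another fiberwise) is the crux of the matter, but the justification you offer---``combine the continuity of the inductive construction of $\bar f'$ with compactness of $Y$ to extract a single $\d$''---is not an argument. In particular, invoking the uniqueness part of Theorem \ref{th:functional} to force $a\equiv 0$ already presupposes a uniform bound $\dist(f,f')\le\d$ on the difference function, which is exactly what has not been established. The paper's argument at this point is concrete and goes through the structure group on $Y$: one fixes $\kappa$ so that $\d$ small implies $\dist(\f(f(x)),\f(x))<\kappa$ for all $x$, deduces $\dist(\f(x_1),z)<2\kappa$ for $x_1\in D_1$ where $z$ is the candidate image point, observes that the unique $a\in A$ with $a.x_1=x_2\in D_2$ also satisfies $a.\f(x_1)=\f(x_2)=z$, and then exploits that $A$ is a Lie group acting freely and continuously on the compact space $Y$ to conclude that $\dist(a.y',y')<2\kappa$ for some $y'\in Y$ forces $\dist(a,0)$ small, hence $\dist(x_1,x_2)<\d_1$. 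Without this chain of estimates---or something equivalent converting a small displacement of the $A$-translate into a small $A$-element---the application of Proposition \ref{prp:section-unique} is not justified.
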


Recall our induction hypothesis,
that Proposition \ref{prp:small-translations} holds for nilspaces of degree at most $(s-1)$.
We fix  a horizontal fibration $\f: X\to Y$ between two compact ergodic Lie-fibered nilspaces of degree $s$
and a parameter $\ve_0>0$.
Denote by $\psi:\pi(X)\to\pi(Y)$ the shadow of $\f$.
We choose a sufficiently small number $\d_0$ such that
 Proposition \ref{prp:small-translations} holds for
$\psi,\pi(X),\pi(Y),\ve_0,\d_0$.

Proposition \ref{prp:small-translations-horizontal} is an immediate consequence of
Lemma \ref{lem:universal2} and the following lemma:

\begin{lem}
If $\d$ is sufficiently small, then
for any point $y\in Y$ there is a point $z\in Y$ such that we have $f(\f^{-1}(y))=\f^{-1}(z)$.
\end{lem}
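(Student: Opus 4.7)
The plan is to fix $x_0 \in \f^{-1}(y)$, set $z := \f(f(x_0))$, and show $f(\f^{-1}(y)) = \f^{-1}(z)$.

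First I would invoke the induction hypothesis. The shadow $\psi \colon \pi(X) \to \pi(Y)$ is a fibration between compact ergodic Lie-fibered nilspaces of degree $s-1$. By Proposition \ref{prp:translation-vertical} applied to the vertical fibration $\pi \colon X \to \pi(X)$, the translation $f$ descends to a translation $\bar f$ of $\pi(X)$, whose displacement is controlled by that of $f$. The inductive hypothesis (Proposition \ref{prp:small-translations} in degree $s-1$) applied to $\psi$ then produces a translation $\bar f'$ of $\pi(Y)$ with $\bar f' \circ \psi = \psi \circ \bar f$; in particular, $\bar f$ sends each fiber of $\psi$ bijectively onto another fiber of $\psi$.

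Set $D_1 := f(\f^{-1}(y))$ and $D_2 := \f^{-1}(z)$. A direct check shows that $\pi(z) = \bar f'(\pi(y))$, so both $D_1$ and $D_2$ have $\pi$-image equal to $\psi^{-1}(\pi(z))$, and both contain $f(x_0)$. Next I would verify that $D_1$ is a straight $\psi$-class. That $D_2$ is straight is Lemma \ref{lem:horizontal-fib}, and $D_1$ is the image under $f$ of the straight class $\f^{-1}(y)$. Since $f$ is a cubespace automorphism (as are all $1$-translations and their inverses), and $\bar f$ is a cubespace automorphism of $\pi(X)$ that permutes $\psi$-fibers, the defining properties of a straight class transfer under $f$.

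The conclusion will then follow from Proposition \ref{prp:section-unique}: if the diameters $\diam((D_1 \cup D_2) \cap \pi^{-1}(b))$ are uniformly smaller than the constant in that proposition, then $D_1 = a \cdot D_2$ for some $a$ in the top structure group $A$ of $X$; since $A$ acts freely and $f(x_0) \in D_1 \cap D_2$, this forces $a$ to be the identity, hence $D_1 = D_2$. The main obstacle is establishing this uniform smallness. To this end, I would use that the horizontality of $\f$ makes the map $(\f, \pi) \colon X \to Y \times_{\pi(Y)} \pi(X)$ a continuous bijection from a compact space to a Hausdorff one, hence a homeomorphism; its inverse $\sigma$ is therefore uniformly continuous. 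The unique points of $D_2$ and $D_1$ in the fiber $\pi^{-1}(b)$ are $\sigma(z, b)$ and $f(\sigma(y, \bar f^{-1}(b)))$ respectively, and a triangle inequality bounds their distance by the displacement of $f$ plus the modulus of continuity of $\sigma$ evaluated at $\dist(z,y)$ and $\dist(b, \bar f^{-1}(b))$; all of these vanish uniformly as $\d \to 0$, which completes the plan.
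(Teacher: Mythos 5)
Your proof is correct and follows the same overall structure as the paper's: you identify the same two candidate straight classes $D_1 = f(\f^{-1}(y))$ and $D_2 = \f^{-1}(z)$ with $z = \f(f(x_0))$, show both are straight $\psi$-classes containing $f(x_0)$, and conclude via Proposition \ref{prp:section-unique} together with freeness of the $A$-action. The only substantive difference lies in the diameter estimate needed for Proposition \ref{prp:section-unique}. The paper picks $x_1 \in D_1$, $x_2 \in D_2$ in a common $\pi$-fiber, writes $x_2 = a.x_1$, then uses (implicitly) that $\f$ is $A$-equivariant on $\pi$-fibers so that $a.\f(x_1) = \f(x_2) = z$; from $\dist(\f(x_1), z) < 2\kappa$ and a compactness argument for the $A$-action on $Y$, it deduces $a$ is small, hence so is $\dist(x_1, x_2)$. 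You instead observe that horizontality makes $(\f,\pi) \colon X \to Y \times_{\pi(Y)} \pi(X)$ a continuous bijection from a compact space to a Hausdorff one, hence a homeomorphism, and bound the fiberwise distance between $D_1$ and $D_2$ by the displacement of $f$ plus the modulus of continuity of $\sigma := (\f,\pi)^{-1}$ applied to $\dist(z,y)$ and $\dist(b,\bar f^{-1}(b))$. Both estimates work; yours has the advantage of being self-contained, as it avoids quoting the (true but unstated in the paper) fact that the top structure group action is compatible with $\f$ across $\pi$-fibers. One cosmetic remark: you invoke Proposition \ref{prp:translation-vertical} for $\pi \colon X \to \pi(X)$ to produce the shadow $\bar f$, which works (viewing $\pi(X)$ as a degree-$s$ nilspace trivially), but is a detour; the descent of a translation to the canonical factor $\pi(X)$ is a standing fact the paper simply calls the ``shadow'' of $f$.
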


\begin{proof}
Let $f\in \Aut_i^\d(X)$.
Write $g\in \Aut_i(\pi(X))$ for the shadow of $f$.
If $\d$ is sufficiently small, then $ g\in \Aut_i^{\d_0}(\pi(X))$.
Hence there is a translation $g'\in \Aut_i^{\ve_0}(\pi(Y))$
such that $g'(\psi(x))=\psi(g(x))$
for all $x\in \pi(X)$.
Thus for every $y_1\in \pi(Y)$ there is $z_1\in \pi(Y)$ such that
$g(\psi^{-1}( y_1))=\psi^{-1}(z_1)$.

Recall from Lemma \ref{lem:horizontal-fib} that the inverse images of points under $\f$ are
straight $\psi$-classes in $X$.
We use that $f$ is a cubespace automorphism and the conclusion of the previous paragraph
to deduce that $D_1:=f(\f^{-1}(y))$ is also a straight $\psi$-class.

Now fix a point $x_0\in\f^{-1}(y)$, let $z=\f(f(x_0))$ and let $D_2=\f^{-1}(z)$.
Then $D_1$ and $D_2$ are both straight $\psi$-classes, and $f(x_0)\in D_1\cap D_2$.
Recall that Proposition \ref{prp:section-unique} gives conditions under which two straight classes are always either identical or disjoint; if these conditions hold, we have that $D_1 = D_2$ and hence $f(\f^{-1}(y)) = \f^{-1}(z)$ as required.

So, its suffices to check the hypotheses of Proposition \ref{prp:section-unique}.
We fix a parameter $\kappa>0$ to be specified later.
Since $\f$ is continuous, it follows that for any $x\in X$ we have $\dist(\f(f(x)),\f(x))\le\kappa$
provided $\d$ is sufficiently small.
Hence for all $x\in \f^{-1}(y)$, we have $\dist(\f(f(x)),y)\le \kappa$.
In particular, $\dist(z,y)\le \kappa$, hence $\dist(\f(f(x)),z)\le 2\kappa$ for all $x\in\f^{-1}(y)$.

Let now $x_1\in D_1$, $x_2\in D_2$ with $\pi(x_1)=\pi(x_2)$.
Denote by $\d_1$ the number $\d$ from Proposition \ref{prp:section-unique}.
We aim to show that $\dist(x_1,x_2)<\d_1$ if $\d$ is sufficiently small, which completes the proof.
To that end, we show that if $\kappa$ is sufficiently small, then for any $u,v\in X$ with $\pi(u)=\pi(v)$,
we have that $\dist(\f(u),\f(v))<\kappa$ implies $\dist(u,v)<\d_1$.
Indeed, suppose for contradiction that $(u_n)_{n\in\N}$ and $(v_n)_{n\in\N}$ are two sequences with
$\pi(u_n)=\pi(v_n)$ for all $n$, and $\dist(\f(u_n),\f(v_n))\to 0$ as $n\to\infty$, yet $\dist(u_n,v_n)\ge\d_1$
for all $n$.
We may assume without loss of generality that $u_n$ and $v_n$ are both convergent, and we write $u$
and $v$ for their respective limits.
By continuity of $\pi$ and $\f$, we have $\pi(u)=\pi(v)$ and $\f(u)=\f(v)$, hence $u=v$, because $\f$ is
a horizontal fibration.
However, we also have $\dist(u,v)\ge \d_1$, which is a contradiction.
\end{proof}

\subsection{Pulling back translations}\label{sec:lifting}

All that now remains is the ``pulling back'' component of Proposition \ref{prp:small-translations}, which we now recall.

\begin{prp}\label{prp:lifting}
Let $\f:X\to Y$ be a fibration between two compact ergodic Lie-fibered nilspaces
and let $\ve>0$ be number.
Then there is a $\d>0$ depending only on $X$, $Y$, $\f$ and $\ve$
such that the following holds.
For every $f'\in \Aut^\d_i(Y)$ there is $f\in \Aut^\ve_i(X)$ such that
$f'\circ \f=\f\circ f$.
\end{prp}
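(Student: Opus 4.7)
I will proceed by induction on $s$, the degree of $X$ (which also bounds the degree of $Y$, since a fibration cannot increase the degree). The base case $s=0$ is trivial, as both spaces are one-point. For the inductive step, my strategy is to combine the decomposition of fibrations given by Proposition \ref{prp:verhor} with the translation-construction machinery from \cite{GMV2} that is sketched in Section \ref{sc:structure-groups} of this paper. Since the proposition is closed under composition of fibrations (a lift of a lift is a lift), it suffices to treat the horizontal and vertical cases separately; thus, after decomposing $\f = \f_h \circ \f_v$ with $\f_v \colon X \to Z$ vertical and $\f_h \colon Z \to Y$ horizontal, I handle each factor independently.

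\textbf{Reduction to lifting shadows.} In either case, the first step is the same: given a small $f'\in\Aut_i^\d(Y)$, denote by $g' \in \Aut_i(\pi(Y))$ its shadow (its image under $\pi_*$, which is continuous so $g'$ is small with $f'$). The shadow of the fibration we are lifting through is a fibration between Lie-fibered nilspaces of degree $s-1$, so by the inductive hypothesis the translation $g'$ pulls back (if $\d$ was chosen small enough) to a translation $g$ on the corresponding shadow in degree $s-1$. The remaining task, in each case, is to lift $g$ to a translation of the total space in a way compatible with $f'$.

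\textbf{Horizontal case.} When $\f = \f_h$ is horizontal, Definition \ref{dfn:horizontal} says that $\f_h$ restricts to a \emph{bijection} on each fiber of $\pi$. I therefore define a candidate lift $f \colon Z \to Z$ pointwise: for each $z \in Z$, let $f(z)$ be the unique point in $\pi^{-1}(g(\pi(z)))$ such that $\f_h(f(z)) = f'(\f_h(z))$, which exists and is unique because $\f_h$ bijects $\pi^{-1}(g(\pi(z)))$ onto $\pi^{-1}(g'(\pi(\f_h(z))))$ by horizontality, and $f'(\f_h(z))$ lies in the latter fiber. Continuity of $f$ follows from the fact that the bijection varies continuously (the inverse of the continuous bijection on compact fibers is continuous). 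That $f$ is a genuine $i$-translation (i.e.~$[f]_F.c$ is a cube for each $c\in C^\ell(Z)$ and $F$ of codimension $i$) follows by lifting cubes of $Z$ in two ways — first through $\f_h$ using that $f'$ is an $i$-translation of $Y$, then through $\pi$ using that $g$ is an $i$-translation of $\pi(Z)$ — and invoking $(s+1)$-uniqueness together with the weak structure theorem to see that the two lifts coincide. Smallness of $f$ in the displacement metric follows from the uniform continuity of the ``unique lift'' map together with the smallness of $f'$.

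\textbf{Vertical case.} When $\f = \f_v$ is vertical, its shadow $\psi_v$ is an isomorphism, so $g$ and $g'$ may be identified; the top structure group of $Z$ is a quotient of $A_s(X)$ by a closed subgroup $A'$ (this is the contents of the remark after Proposition \ref{prp:verhor}). Here I apply the construction from Section \ref{sc:structure-groups} (that is, the proof of Theorem \ref{th:toral} from \cite{GMV2}): use Gleason's theorem (Theorem \ref{thm:Gleason}) for the free $A_s(X)$-action on $X$ to build a continuous $A_s(X)$-equivariant map $h \colon X \to X$ over $g$, and then correct $h$ by a map $\a \colon X \to A_s(X)$ solving the cocycle equation \eqref{eq:funct} via Theorem \ref{th:functional}, obtaining a translation $\wt f$ of $X$ lifting $g$. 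The extra compatibility condition $\f_v \circ \wt f = f' \circ \f_v$ is then arranged by choosing both the Gleason section and the corrective cocycle relative to the fibration $\f_v$ itself, applying Theorem \ref{th:functional} in its genuinely relative form (with the fibration being $\f_v$, not the projection to a point), so that the cocycle we solve lives naturally in the coset kernel $A'$ and the resulting correction preserves the $\f_v$-fibers. The main obstacle of the whole argument lies precisely here: verifying that the discrepancy cocycle produced by $h$ satisfies the closeness hypothesis of Theorem \ref{th:functional} and takes values in the appropriate subgroup. This is where the smallness quantifier $\d$ must be selected, and it is dictated by the constant $\d(s,\ell,A)$ appearing in Theorem \ref{th:functional} together with the modulus of continuity of the Gleason section.
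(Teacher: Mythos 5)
The student takes a genuinely different route from the paper, and the vertical case has a real gap.

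\textbf{What the paper actually does.} The paper's proof of Proposition~\ref{prp:lifting} is short and does not involve the vertical/horizontal decomposition at all. It combines three ingredients: (a) the already-proven ``pushing forward'' direction of Proposition~\ref{prp:small-translations}; (b) the result quoted from \cite{GMV2}*{Lemmas 3.10 and 3.4} that on a Lie-fibered nilspace, given two nearby $\sim_{i-1}$-equivalent points $x_1,x_2$, there exists a small $i$-translation moving $x_1$ to $x_2$; and (c) discreteness of $\Stab_{y_1}(\Aut_1(Y))$. Given $f_0\in\Aut_i^\d(Y)$, fix $y_1$, pick nearby $x_1\in\f^{-1}(y_1)$ and $x_2\in\f^{-1}(f_0(y_1))$, invoke (b) to construct a small $f\in\Aut_i(X)$ with $f(x_1)=x_2$, push $f$ forward via (a) to $f'\in\Aut_i^\ve(Y)$, observe $f'(y_1)=f_0(y_1)$, and conclude $f'=f_0$ by (c). The paper deliberately treats the $\cite{GMV2}$ existence theorem as a black box rather than re-running the Gleason/cocycle construction.

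\textbf{What you do, and what works.} You decompose $\f=\f_h\circ\f_v$, induct on degree, and attempt to construct the lift directly. The horizontal case is plausible in outline: the pointwise definition of $f$ is consistent, and the proof that $f$ is an $i$-translation rests on the claim that a configuration $c$ in $Z$ is a cube as soon as both $\pi(c)$ and $\f_h(c)$ are cubes. That claim is true, but its justification requires establishing that a horizontal fibration intertwines the (free, transitive-on-fibers) actions of the top structure groups $A_s(Z)$ and $A_s(Y)$; you gesture at this via ``$(s+1)$-uniqueness together with the weak structure theorem'' but don't pin it down. This is fillable but not filled.

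\textbf{Where the gap is.} The vertical case is where the argument breaks down. You want to apply the Gleason-plus-cocycle machinery from Section~\ref{sc:structure-groups} to lift $f'$ through $\f_v$, and you assert that the discrepancy cocycle ``lives naturally in the coset kernel $A'$'' so that the correction $\a$ can be taken $A'$-valued and hence preserves $\f_v$-fibers. Two problems. First, you give no argument that $D(c)\in A'$; this can be checked for $i=1$ (apply $\f_v$ to the defining $(s+1)$-cube of the discrepancy and use that $f'$ is a $1$-translation of $Z$), but you don't do it. Second, and more seriously, the entire discrepancy/cocycle-correction apparatus described in Section~\ref{sc:structure-groups} and Theorem~\ref{th:functional} is set up for $1$-translations: the relevant discrepancy involves concatenating a cube with a codimension-$1$ translate, and the functional equation \eqref{eq:funct} is the one appropriate to $\Aut_1$. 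Proposition~\ref{prp:lifting} is stated for arbitrary $i$, and for $i>1$ the cocycle equation characterizing $i$-translations has a different shape; your proposal writes as though the $i=1$ machinery transfers verbatim, which it does not without a genuinely different set-up. This is a real gap, and it is precisely the gap that the paper's approach avoids by quoting the uniform transitivity result from \cite{GMV2} (which already handles all $i$) together with discreteness of stabilizers.
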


We recall the following result from \cite{GMV2}*{Lemmas 3.10 and 3.4} 
that will be used in the proof.

\begin{thm}
Let $X$ be a compact ergodic Lie-fibered nilspace.
Then $\Aut_i^\circ(X)$ acts transitively on each connected component of the equivalence classes
of the $(i-1)$-th canonical equivalence relation $\sim_{i-1}$.

In fact, the following strengthening is true. For every $\ve>0$ there is $\d>0$ such that
the following holds:
for every two points $x_1,x_2\in X$ satisfying $x_1\sim_{i-1} x_2$ and $\dist(x_1,x_2)<\d$,
there is a translation $f\in \Aut_i^\ve(X)$ such that $f(x_1)=x_2$.

Also, the stabilizer $\Stab_x(\Aut_1(X))$ is discrete for any point $x\in X$.
\end{thm}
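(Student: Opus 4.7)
The plan is to prove all three assertions together by induction on the degree $s$ of $X$, following the same circle of ideas used in Section \ref{sc:structure-groups} to prove Theorem \ref{th:toral}. The base $s=0$ is trivial. For the inductive step I first dispose of easy ranges of $i$. If $i \geq s+1$, then by $(s+1)$-uniqueness and Lemma \ref{lem:alter-canonical} the relation $\sim_{i-1}$ is trivial. If $i = s$, the $\sim_{s-1}$-classes are exactly the fibres of $\pi := \pi_{s-1}$, and the top structure group $A = A_s(X)$, which is a compact abelian Lie group by the Lie-fibered hypothesis, acts freely and continuously on each fibre via elements of $\Aut_s(X) \subseteq \Aut_i(X)$. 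Whenever $x_1,x_2$ lie in a common fibre and are close, the unique $a \in A$ with $a.x_1 = x_2$ is close to the identity of $A$, producing the required small $s$-translation and, together with connectedness of $A^\circ$, the transitivity of $\Aut_s^\circ(X)$ on connected components of fibres.

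The substantive case is $1 \leq i < s$. Here the inductive hypothesis applies to $\pi(X)$, which is Lie-fibered of degree $s-1$. Given $x_1 \sim_{i-1} x_2$ close in $X$, the images $\pi(x_1) \sim_{i-1} \pi(x_2)$ are close in $\pi(X)$, so by induction there is a small translation $g \in \Aut_i^{\ve_1}(\pi(X))$ with $g(\pi(x_1)) = \pi(x_2)$. It remains to lift $g$ to a small $i$-translation $\wt f \in \Aut_i^\ve(X)$ sending $x_1$ to $x_2$, and this is carried out in two steps that directly parallel the strategy outlined in Section \ref{sc:structure-groups}. First, I construct a continuous homeomorphism $h : X \to X$ commuting with the $A$-action and satisfying $\pi \circ h = g \circ \pi$: since $X \to \pi(X)$ is a principal $A$-bundle, Gleason's theorem (Theorem \ref{thm:Gleason}) furnishes local continuous sections, and the smallness of $g$ together with compactness of $\pi(X)$ allows these to be patched into a global $A$-equivariant $h$, the patching relying crucially on $A$ being an abelian Lie group so that transition ambiguities can be cancelled out. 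Second, I seek a continuous correction $\alpha : X \to A$ making $\wt f(y) := \alpha(y).h(y)$ an $i$-translation. By Lemma \ref{lem:discrepancy} and the weak structure theorem, this requirement translates, for each face $F$ of codimension $i$ and each suitable cube $c$, into a cocycle-type equation relating a signed sum of $\alpha$-values on the vertices of $c$ lying in $F$ to a discrepancy $\rho$ measuring the failure of $h$ to be a translation on $F$; the construction of $h$ guarantees that $\rho$ descends through $\pi$ up to small error, so Theorem \ref{th:functional} applied to $\pi$ yields such an $\alpha$. The residual freedom in $\alpha$, combined with a small $A$-adjustment (the $i=s$ case), is then used to pin down $\wt f(x_1) = x_2$ exactly, and choosing $\delta, \ve_1$ sufficiently small ensures $\wt f \in \Aut_i^\ve(X)$.

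From the quantitative local statement, transitivity of $\Aut_i^\circ(X)$ on connected components of $\sim_{i-1}$-classes is immediate: orbits are open in the class by local reachability, hence clopen in each connected component, hence equal to it. For the discreteness of $\Stab_x(\Aut_1(X))$, I argue by the same inductive scheme. If $f \in \Aut_1^\ve(X)$ fixes $x$, its shadow on $\pi(X)$ fixes $\pi(x)$ and lies in a correspondingly small neighbourhood of the identity, hence is the identity by the inductive discreteness. Thus $f$ preserves every $\pi$-fibre, so $f(y) = \alpha(y).y$ for some continuous $\alpha : X \to A$ with $\alpha(x) = 0$. The translation property forces $\alpha$ to satisfy a homogeneous version of the cocycle equation above, whose small continuous solutions are constant on $\pi$-fibres by the uniqueness clause of Theorem \ref{th:functional}; the normalisation $\alpha(x) = 0$ then forces $\alpha \equiv 0$, so $f = \Id$.

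The main obstacle I anticipate is the construction of the equivariant lift $h$: globalising Gleason's local sections into an $A$-equivariant homeomorphism of $X$ covering $g$ essentially requires the pulled-back principal $A$-bundle $g^*X \to \pi(X)$ to be isomorphic to $X \to \pi(X)$ when $g$ is close to the identity, a fact that hinges on both the smallness of $g$ and the Lie structure of $A$. Once $h$ is in hand, the remainder is a direct application of Theorem \ref{th:functional}, which is the analytic engine on which the entire inductive scheme rests.
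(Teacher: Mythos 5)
First, a point of reference: this paper does not prove the statement at all --- it is quoted from the companion paper (GMV2, Lemmas 3.4 and 3.10) --- so the only thing to compare against here is the strategy the paper sketches for the GMV2 results in Section \ref{sc:structure-groups}. Your architecture (induction on the degree, the easy cases $i\ge s$ via the top structure group, an $A$-equivariant lift $h$ of a small translation of $\pi(X)$ obtained from Gleason sections/bundle pullback, a discrepancy cocycle measuring the failure of $h$ to be a translation, correction by $\alpha\colon X\to A$ via Theorem \ref{th:functional}, and the clopen-orbit argument for transitivity on components) is exactly that strategy, so the approach is the right one.

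The genuine gap is in how you invoke Theorem \ref{th:functional}: you apply it to the fibration $\pi\colon X\to\pi(X)$ and use only conclusions ``modulo $\pi$-fibres'', and this is insufficient at both places you use it. For the lifting step, being an $i$-translation is the \emph{exact} identity \eqref{eq:funct} (resp.\ its codimension-$i$ analogue) on all relevant cubes, whereas Theorem \ref{th:functional} with base $\pi(X)$ only yields $\rho=\partial\alpha+\wt\rho\circ\pi$ for some cocycle $\wt\rho$ on $\pi(X)$; the leftover term $\wt\rho\circ\pi$ is an obstruction which the ``residual freedom in $\alpha$'' and a final $A$-adjustment cannot cancel, and $\wt\rho$ need not be small (only the fibrewise oscillation of $\alpha$ is controlled), so it cannot be removed by a further induction either. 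In the stabiliser argument the same slip becomes an outright non sequitur: from ``$\alpha$ is constant on $\pi$-fibres'' and ``$\alpha(x)=0$'' you conclude $\alpha\equiv0$, which only gives vanishing on the fibre of $x$. For instance, on $X=\mathcal{D}_1(\mathbb{T})\times\mathcal{D}_2(\mathbb{T})$ the map $(b,z)\mapsto(b,z+\chi(b))$, $\chi$ a nontrivial character, is a genuine $1$-translation fixing every point with $\chi(b)=0$; the corresponding $\alpha=\chi\circ\pi$ satisfies $\partial^{2}\alpha=0$, is constant on $\pi$-fibres and vanishes at such points, yet is nonzero --- it is excluded only because it is not uniformly small. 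The correct invocation, which the paper itself flags (for Theorem \ref{th:toral} only the case where the base is the one-point space is needed), is the absolute one: apply Theorem \ref{th:functional} to the trivial fibration $X\to\{\ast\}$. A cocycle on the one-point cubespace vanishes (all of its cubes are degenerate), so the conclusion is the exact identity $\rho=\partial\alpha$ with $\alpha$ of uniformly small oscillation; the hypothesis you must verify is that the discrepancy cocycle is \emph{globally} small, and this comes from the small displacement of $h$ together with $(s+1)$-uniqueness (the unique completion of a corner depends uniformly continuously on the corner by compactness), not from any descent through $\pi$. Likewise the uniqueness clause applied to $X\to\{\ast\}$ gives that a uniformly small $\alpha$ with $\partial\alpha=0$ is globally constant, which repairs the discreteness proof. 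With these corrections the rest of your outline is sound (the bundle-pullback construction of $h$ is legitimate since Lie-fibered nilspaces are compact manifolds), modulo the routine but nontrivial bookkeeping needed to define the discrepancy for general $i$ and to check the translation condition in all cube dimensions.
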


Essentially this states that, under some topological assumptions, $i$-translations are uniquely characterized by where they send a single point, and we have almost total freedom to choose that point.  So, if $f'$ sends $y$ to $y'$, we can choose $f$ by insisting it maps $x$ to $x'$ for some $x \in \f^{-1}(y)$, $x' \in \f^{-1}(y')$ that we choose; and if we are sufficiently careful, this $f$ will have the required properties.

We now turn to the details.

\begin{proof}[Proof of Proposition \ref{prp:lifting}]
Fix a point $y_1\in Y$.
Let $\ve>0$ be sufficiently small such that $\Stab_{y_1}(\Aut_i^{2\ve}(Y))=\{e\}$.
Let $\d_1>0$ be sufficiently small so that for each $f\in \Aut_i^{\d_1}(X)$ there
is $f'\in \Aut_i^{\ve}(Y)$ such that $f'\circ \f=\f\circ f$.
Let $\d_2$ be sufficiently small so that for every two points
$x_1,x_2\in X$ satisfying $\dist(x_1,x_2)<\d_2$ and $x_1\sim_i x_2$,
there is a translation $f\in \Aut_i^{\d_1}(X)$ such that $f(x_1)=x_2$.
Let $\d_3$ be sufficiently small
so that for the above fixed $y_1\in Y$ and any  point $y_2\in Y$
satisfying $\dist(y_1,y_2)\le \d_3$,
there are $x_1\in\f^{-1}(y_1)$ and  $x_2\in\f^{-1}(y_2)$ so that
$\dist(x_1,x_2)\le\d_2$.

We show that for every $f_0\in \Aut_i^{\d_3}(Y)$, there is $f\in \Aut_i^{\d_1}(X)$
such that $f_0\circ \f=\f\circ f$.
Write $y_2=f_0(y_1)$.
Then $\dist(y_1,y_2)\le \d_3$.
Let $x_1,x_2\in X$ be such that $\f(x_i)=y_i$ for $i=1,2$,  and $\dist(x_1,x_2)\le \d_2$.
Let $f\in \Aut_i^{\d_1}(X)$ be such that $f(x_1)=x_2$.
Let $f'\in \Aut_i^{\ve}(Y)$ be such that $\f\circ f=f'\circ \f$.
Then $f'(y_1)=\f(f(x_1))=y_2=f_0(y_1)$.
Hence $f_0^{-1}f'\in \Stab_{y_1}(\Aut_i^{2\ve}(Y))$ and thus $f_0=f'$.
\end{proof}

\section{Inverse limits in the dynamical category}\label{sc:dyninverse}

The purpose of this section is to prove Theorems \ref{th:dynamics} and \ref{th:dyn-alg-struc}.

Let $(H,X)$ be a minimal topological dynamical system such that $H$ has a dense subgroup generated by a compact
set $K$.
By Theorem \ref{th:dyn-nilspace},
$\RP_H^s(X)$ is a closed $H$-invariant equivalence relation.
For Theorem \ref{th:dynamics}, it is left to show that $(H, X/\RP_H^s(X))$ is the largest
pronilfactor of $(H,X)$ of degree at most $s$.

The main outstanding issue is whether the action of a group of
translations on a nilspace whose top structure group is Lie, descends through a horizontal fibration when the translations are not necessarily small perturbations of the identity.
For vertical fibrations, Proposition \ref{prp:translation-vertical} can be applied, but we will
need a substitute for Proposition \ref{prp:small-translations-horizontal}, which is the following.

\begin{prp}\label{pr:dyninvlim}
Let $X$ be a compact ergodic nilspace of degree $s$ such that its top structure group $A_s(X)$ is Lie.
Let $H<\Aut_1(X)$ be a group of translations,
which has a dense subgroup generated by a compact set.
Then there is a number $\d>0$ depending on $X$ and $H$ such that the following holds.

Let $\f:X\to Y$ be a horizontal fibration to a nilspace $Y$  such that
$\diam(\f^{-1}(y))<\d$ for all $y\in Y$.
Suppose that there is a continuous homomorphism $\bar \psi: \bar H\to \Aut_1(\pi(Y))$
such that $\bar \psi(f)\circ\bar\f=\bar\f\circ f$ for all $f\in \bar H$, where
$\bar H$ is the image of $H$ in $\Aut_1(\pi(X))$ and
$\bar \f:\pi(X)\to\pi(Y)$ is the shadow of $\f$.
Then there is a continuous homomorphism $\psi:H\to \Aut_1(Y)$ such that $\psi(f)\circ\f=\f\circ f$ for all
$f\in H$.
\end{prp}

In an earlier draft, we stated the above result in a stronger form without assuming that the
top structure group of X is Lie, the translation is horizontal and that $H$ factors through the shadow of $\f$.
It was pointed out to us by Pablo Candela, Diego Gonz\'alez-S\'anchez and Bal\'azs Szegedy that this result
does not hold in that generality.
They provide a counterexample in \cite{CGS}*{Example 2.2}.
However, the above stated weaker form is sufficient for our purposes.

We prove Proposition
\ref{pr:dyninvlim} in Section \ref{sc:H-descends}.
Then we prove the part
of Theorem \ref{th:dynamics} that $X/\RP_H^s$ is a pronilsystem
in Section \ref{sc:pronil}.
Then we prove that it is the largest such factor in Section \ref{sc:largest}.
Finally, we prove the remaining parts of Theorem \ref{th:dyn-alg-struc} in Section \ref{sc:dyn-alg-struc}.

\subsection{Proof of Proposition \ref{pr:dyninvlim}}\label{sc:H-descends}

We fix a compact set $K\subseteq H$ that generates a dense subgroup of $H$.
We first show that these translations factor through $\f$.

\begin{lem}\label{lem:compactset}
With the above notation and assumptions in Proposition \ref{pr:dyninvlim},
for every $f_1\in K$ there is a unique $f_2\in \Aut_1(Y)$ such that
$f_2\circ\f=\f\circ f_1$, provided $\d$ is sufficiently small depending on $K$.
\end{lem}

\begin{proof}
In light of Lemma \ref{lem:universal2}, it is enough to show that for any $f\in K$ and $y_1\in Y$
there is $y_2\in Y$ such that $f(\f^{-1}(y_1))=\f^{-1}(y_2)$.

To prove this, we recall that $\f^{-1}(y)$ is a straight
$\bar\f$-class for all $y\in Y$ (see Lemma \ref{lem:horizontal-fib}).
By Proposition  \ref{prp:section-unique}, we know that
there is a number $\d_1>0$ such that each point $x\in X$ is contained in
at most one straight $\bar\f$-class of diameter at most $\d_1$.

We assume as we may that $\d$ is so small that $\diam(f(D))<\d_1$
for any set $D\subseteq X$ with $\diam(D)\le\d$ and $f\in K$.
This is possible, because $K$ is compact and the action is continuous.
We also assume that $\d<\d_1$.
We fix a point $y_1\in Y$, then pick an arbitrary element $x_2\in f(\f^{-1}(y_1))$ and let $y_2=\f(x_2)$.
We observe that $\f^{-1}(y_2)$ and $f(\f^{-1}(y_1))$ are both straight $\bar \f$-classes of diameter at most
$\d_1$.
Moreover, they both contain the point $x_2$, hence they must be equal by Proposition  \ref{prp:section-unique},
as we noted above.
This completes the proof.
\end{proof}

Now we complete the proof of the proposition.
For any translation $f_1\in \Aut_1(X)$, there is at most one translation $f_2\in \Aut_1(Y)$ such that
\be\label{eq:factorsthrough}
f_2\circ\f=\f\circ f_1.
\ee
For $f_1\in K$ the existence of $f_2$ satisfying \eqref{eq:factorsthrough} follows from
Lemma \ref{lem:compactset}.
If $f_1^{(1)},f_1^{(2)}\in \Aut_1(X)$ and $f_2^{(1)},f_2^{(2)}\in\Aut_1(Y)$ satisfy
the analogue of \eqref{eq:factorsthrough}, then $f_1^{(1)}\circ f_1^{(2)}$ and $f_2^{(1)}\circ f_2^{(2)}$
satisfy it, as well.
In addition, if the analogue of \eqref{eq:factorsthrough} holds for
$\{f_1^{(i)}\}_{i\in \N}\subseteq \Aut_1(X)$ and $\{f_2^{(i)}\}_{i\in \N}\subseteq\Aut_1(Y)$ for all $i\in\N$
and $f_1^{(i)}$ uniformly converges to a translation $f_1$, then $f_2^{(i)}$ also converges to
a translation and we have \eqref{eq:factorsthrough}.

It follows from the above observations that a map $\psi: H\to \Aut_1(Y)$ exists such that
$\psi(f)\circ\f=\f\circ f$ for all $f\in H$.
We leave it to the reader to verify that this is also a continuous group  homomorphism.

\subsection{Proof that $X/\RP_H^s$ is a pronilfactor}\label{sc:pronil}

The goal of this section is to prove the claim in Theorem \ref{th:dynamics}
that $X/\RP_H^s$ is a pronilfactor of degree at most $s$.

We recall from Theorem \ref{th:dyn-nilspace} that $X/\RP_H^s(X)$ equipped with its dynamical cubes $C_H^k(X/\RP_H^s)$, is an ergodic nilspace of degree
at most $s$.

By Proposition \ref{pr:inverse-sys}, there is a sequence of Lie-fibred nilspaces $\{X_n\}$  together
with an inverse system of fibrations $\{\f_{m,n}:X_n\to X_m\}$
such that $X/\RP_H^s(X)=X_\infty=\invlim X_n$.

We will show that it is possible to choose
$\{X_n\}$ and $\{\f_{m,n}:X_n\to X_m\}$ in such a way that
the action of $H$ descends to $X_n$ through the fibration $\f_{n,\infty}$.
From this it follows that $(H,X_\infty)$ is indeed pronil
thanks to the following result from \cite{GMV2}*{Corollary 2.20}, 
which is a variant of Theorem \ref{th:Lie-dynamical}.

\begin{thm}\label{th:dyn-Lie2}
Let $(H,X)$ be a minimal topological dynamical system, where $X$ is a compact ergodic Lie-fibred
nilspace of degree $s$ and $H$ acts on $X$ through a continuous group homomorphism $\a:H\to\Aut_1(X)$.

Then $(H,X)$ is a nilsystem.
More specifically, the group $G=\langle \Aut_1^\circ(X), \a(H)\rangle$ is a nilpotent Lie group that acts
transitively on $X$.
Hence $(H,X)$ is isomorphic to $(H,G/\Gamma)$, where $\Gamma$ is the stabilizer in $G$ of an arbitrary point
and $H$ acts through the homomorphism $\a$.
\end{thm}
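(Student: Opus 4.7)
The strategy is to reduce to the strongly connected case of Theorem~\ref{th:toral} by handling the disconnected part of $X$ through the minimal $H$-action. The key structural facts are that $\Aut_1^\circ(X)$ acts transitively on each connected component of $X$ (essentially the result of \cite{GMV2} recalled in Section~\ref{sec:lifting}), while $\a(H)$ permutes these components transitively by minimality of $(H,X)$.

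I would first establish that the $\Aut_1^\circ(X)$-orbits on $X$ are precisely the connected components, and that there are only finitely many of them. The quantitative strengthening recalled in Section~\ref{sec:lifting} asserts that, given small $\ve>0$, any two $\sim_0$-related points at distance less than some $\d$ lie in a common $\Aut_1^\ve(X)$-orbit. Since $X$ is ergodic, $\sim_0$ is the universal relation, so each $\Aut_1^\circ(X)$-orbit contains a neighborhood of every one of its points; thus orbits are open. As orbits partition $X$, each is also closed, hence clopen, and compactness then yields only finitely many, $X_1,\ldots,X_k$. These are exactly the connected components of $X$, since $\Aut_1^\circ(X)$ is connected.

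Next, $\a(H)\subseteq\Aut_1(X)$ acts by homeomorphisms and so permutes $\{X_1,\ldots,X_k\}$; by minimality the $H$-orbit of any point is dense, so this permutation action on the finite set must be transitive. Combined with the transitivity of $\Aut_1^\circ(X)$ on each $X_i$, this shows that $G=\langle\Aut_1^\circ(X),\a(H)\rangle$ acts transitively on $X$. Because $\Aut_1^\circ(X)$ is the identity component of the Lie group $\Aut_1(X)$, it is normal (even characteristic) in $\Aut_1(X)$, so $G=\Aut_1^\circ(X)\cdot\a(H)$ is a union of cosets of $\Aut_1^\circ(X)$; as this subgroup is open in $\Aut_1(X)$, $G$ is also open there and hence inherits a Lie group structure of the same dimension. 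Nilpotency of class at most $s$ follows from the filtration $\Aut_1(X)\supseteq\Aut_2(X)\supseteq\cdots\supseteq\Aut_{s+1}(X)=\{e\}$, which restricts to a filtration of $G$.

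Finally, fix $x_0\in X$ and set $\Gamma=\Stab_G(x_0)$. The orbit map $g\mapsto g(x_0)$ is a continuous $H$-equivariant surjection $G\to X$ that descends to a continuous $H$-equivariant bijection $G/\Gamma\to X$; both sides being compact Hausdorff, this is automatically a homeomorphism, providing the required isomorphism of topological dynamical systems. The stabilizer $\Gamma$ is discrete because the stabilizer in $\Aut_1(X)$ of any point of a Lie-fibered $X$ is discrete (as cited), and it is co-compact since $G/\Gamma\cong X$ is compact. The main technical point I expect is Step~1: cleanly extracting openness of $\Aut_1^\circ(X)$-orbits from the quantitative transitivity statement of \cite{GMV2}; everything downstream is essentially formal combination of this input with standard Lie-theoretic manipulations.
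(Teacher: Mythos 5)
Your proof is correct and follows exactly the approach the paper sketches (and delegates to \cite{GMV2}*{Corollary 2.20}): establish that $\Aut_1^\circ(X)$ acts transitively on each connected component, note that $\a(H)$ acts transitively on the (finite) set of components by minimality, combine to obtain transitivity of $G=\langle\Aut_1^\circ(X),\a(H)\rangle$, and then realize $X$ as the homogeneous space $G/\Gamma$. The deduction that the $\Aut_1^\circ(X)$-orbits are open (hence clopen, hence the connected components) from the quantitative small-translation statement, the normality of the identity component making $G=\Aut_1^\circ(X)\cdot\a(H)$ an open subgroup of $\Aut_1(X)$, and the use of the translation-group filtration to get nilpotency of class at most $s$, all match the intended argument.

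One small imprecision: at the end you assert the continuous bijection $G/\Gamma\to X$ is a homeomorphism because "both sides are compact Hausdorff," but compactness of $G/\Gamma$ is not a priori given -- it is a consequence of the homeomorphism, not a hypothesis for it. The correct justification is the standard fact that a transitive continuous action of a second countable (hence $\sigma$-compact) locally compact group on a locally compact Hausdorff space induces a homeomorphism $G/\Stab(x_0)\cong X$; since $G$ is an open subgroup of the Lie group $\Aut_1(X)$ it is indeed second countable, and this yields both the homeomorphism and, a posteriori, the co-compactness of $\Gamma$. This is a cosmetic fix, not a structural gap.

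Also worth stating explicitly (you use it implicitly): $\Aut_{s+1}(X)=\{\mathrm{id}\}$ because for $f\in\Aut_{s+1}(X)$ and any $x\in X$, applying the definition to a constant $(s+1)$-cube at $x$ with $F=\{\vec 1\}$ and invoking $(s+1)$-uniqueness forces $f(x)=x$. This is what makes the restricted filtration of $G$ terminate at degree $s$.
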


The above formulation differs slightly from that of  \cite{GMV2}*{Corollary 2.20}, where the larger group $\Aut_1(X)$ is taken in the role of $G$.
However, the (very short) proof of \cite{GMV2}*{Corollary 2.20} only uses the action of $\a(H)$ and $\Aut_1^\circ(X)$ to
show transitivity.

We prove the claim that $H$ descends to $X_n$
by induction on the degree $s$ of the nilspace $X_\infty$.
The $s=1$ case being trivial, we assume that $s>1$ and that the claim holds for $s-1$.

We write $B_\infty=\pi(X_\infty)$ and note that the action of $H$ descends to $B_\infty$
by virtue of Proposition \ref{prp:translation-vertical}.
By abuse of notation, we identify $H$ with its image in $\Aut_1(B_\infty)$
and apply the induction hypothesis to the action of $H$ on $B_\infty$.
Therefore, we have a sequence of nilspaces $\{B_n\}$ and an inverse system
$\{\psi_{m,n}:B_n\to B_m\}$ such that the action of $H$ descends to $B_n$
trough $\psi_{n,\infty}$ for each $n$.

We use the above nilspaces $B_n$ and fibrations $\{\psi_{m,n}:B_n\to B_m\}$
as an input in the construction in Section \ref{sec:cubeinvlim} and use a sufficiently
fast increasing sequence $\wt M_n$ (to be chosen later) in Proposition \ref{pr:inverse-sys}
to construct the sequence of Lie-fibred nilspaces $\{X_n\}$ together
with the inverse system of fibrations $\{\f_{m,n}:X_n\to X_m\}$ we alluded to above.

By Proposition \ref{prp:verhor}, we have a decomposition $\f_{n,\infty}=\f_h\circ\f_v$,
where $\f_v: X\to Z$ is a vertical and $\f_h:Z\to X_n$ is a horizontal fibration and $Z$
is a compact ergodic nilspace.
In fact, this decomposition already appears in the construction of $\f_{n,\infty}$ in Proposition
\ref{pr:inverse-sys}.
Using the notation of that proposition, we just have $Z=X_\infty^{(n)}$, $\f_v=\a_n$ and
$\f_h=\f^{(n)}_{\wt M_n}$.
In particular, we see that $Z$ is Lie fibred and we can make the fibers of $\f_h$ as small as
we wish by making $\wt M_n$ large enough.

We apply Proposition \ref{prp:translation-vertical} to $\f_v$ and Proposition \ref{pr:dyninvlim}
to $\f_h$.
We find that there is a continuous homomorphism $\psi:H\to\Aut_1(X_n)$ such that
$\psi(f)\circ \f_n=\f_n\circ f$ for all $f\in H$.
Therefore, $(H,X_\infty)$ is the inverse limit of the topological dynamical systems $(H,X_n)$
via the inverse system $\{\f_{m,n}\}$.
By Theorem \ref{th:dyn-Lie2}, $(H,X_n)$ is a nilsystem of degree at most $s$ for all $n$,
so this completes the proof
that $X/\RP_H^s$ is a pronilfactor of degree at most $s$

\subsection{}\label{sc:largest}

Let $Y=X/{\sim}$ be a pronilfactor of degree $s$.
We show that $\RP_H^{s}(X)\subseteq\sim$.
We denote by $\f:X\to Y$ the quotient map.
It follows directly from the definition that $\f(c)\in C_H^{s+1}(Y)$
for all $c\in C_H^{s+1}(X)$ and hence $(\f(x),\f(y))\in \RP_H^s(Y)$ for
any pair of points $(x,y)\in\RP_H^s(X)$.
Thus the claim follows from the following lemma.

\begin{lem}
Let $(H,Y)$ be a pronil system of degree $s$.
Then $\RP_H^{s}(Y)$ is trivial.
\end{lem}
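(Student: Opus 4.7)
The plan is to prove the lemma by reducing to the case of a single nilsystem, using the inverse limit structure of $Y$ and the fact that dynamical cubes behave functorially under $H$-equivariant continuous maps.

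First, I would unwind the definitions. We have $Y = \varprojlim Y_n$, where each $(H, Y_n)$ is a nilsystem of degree $s$, with $H$-equivariant continuous connecting maps $\f_{n,\infty} \colon Y \to Y_n$ that separate points of $Y$. By definition, $(y,y') \in \RP_H^s(Y)$ means $\llcorner^{s+1}(y;y') \in C_H^{s+1}(Y)$, and our task is to show this forces $y = y'$.

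The key observation is that if $\f \colon (H,Y) \to (H,Y')$ is any morphism of topological dynamical systems, then $\f(C_H^{s+1}(Y)) \subseteq C_H^{s+1}(Y')$. Indeed, because $\f$ is $H$-equivariant, applying $\f$ vertex-by-vertex to a configuration of the form $\g.y^{\{0,1\}^{s+1}}$ with $\g \in \HK^{s+1}(H)$ yields $\g.\f(y)^{\{0,1\}^{s+1}}$, which is a generator of the dynamical cubes on $Y'$; continuity of $\f$ then gives the claim after taking closures in \eqref{eq:def of Cn H}. In particular, applying this to $\f_{n,\infty}$, we deduce that $\f_{n,\infty}\times\f_{n,\infty}$ maps $\RP_H^s(Y)$ into $\RP_H^s(Y_n)$.

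Next, I would invoke the nilsystem case. We have already recalled at the end of Section \ref{sc:canonical} that for a nilsystem $(H, G/\Gamma)$ of degree $s$ the dynamical cubespace coincides with the Host--Kra nilspace of $(G_\bullet, \Gamma)$ for the lower central series filtration, and hence is a nilspace of degree at most $s$ (by \cite{GMV1}*{Proposition 2.6}); so $\sim_s$ is trivial on $Y_n$, and via Lemma \ref{lem:alter-canonical} this gives $\RP_H^s(Y_n) = \sim_s = \{(x,x) : x \in Y_n\}$. Therefore, for each $n$, the image of $(y,y')$ under $\f_{n,\infty}$ lies on the diagonal, i.e.\ $\f_{n,\infty}(y) = \f_{n,\infty}(y')$.

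Finally, since $\{\f_{n,\infty}\}$ is an inverse system separating points of $Y$, this forces $y = y'$, completing the proof. The only slightly subtle step is the functoriality of dynamical cubes under $H$-equivariant continuous maps, but this is essentially immediate from the definition \eqref{eq:def of Cn H}; no technical obstacles are anticipated.
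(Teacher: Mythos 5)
Your proof is correct and takes essentially the same route as the paper's: reduce to the case of a single nilsystem, where the dynamical cubespace is a degree-$\leq s$ Host--Kra nilspace and hence $\RP_H^s = \sim_s$ is trivial. The only difference is that you fill in the reduction step (via functoriality of dynamical cubes under $H$-equivariant maps and point-separation of the inverse system), which the paper leaves implicit behind the phrase ``it is enough to show.''
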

\begin{proof}
It is enough to show that $\RP_H^{s}(Y)$ is trivial for a nilsystem $(H,Y)$
of degree $s$.
In this case, the dynamical cubes form a Host--Kra nilspace, which is a nilspace of degree at most $s$, as shown in \cite{GMV1}*{Proposition 2.6}. 
Hence $\RP_H^{s}(Y)=\sim_s$  is indeed trivial.
\end{proof}

\subsection{Proof of Theorem \ref{th:dyn-alg-struc}}\label{sc:dyn-alg-struc}

Let $(H,X)$ be a minimal system such that $\RP_H^s(X)$ is trivial and suppose that $H$ has
a dense subgroup generated by a compact set.

This means that we are in the setting of the proof of Theorem \ref{th:dynamics}.
We have, therefore, a sequence of Lie-fibred nilspaces $\{X_n\}$  together
with an inverse system of fibrations $\{\f_{m,n}:X_n\to X_m\}$ such that $X=X_\infty=\invlim X_n$.
We have already proved that the action of $H$ factors through the fibrations $\f_{n,\infty}$, i.e.~we
have continuous homomorphisms $\a_n:H\to \Aut_1(X_n)$ such that $\a_m=\f_{m,n}\circ\a_n$ for $m<n$.
Moreover, by Theorem \ref{th:dyn-Lie2}, the group $G_n=\langle \Aut_1^\circ(X_n),\a_n(H)\rangle$
act transitively on $X_n$.

We saw in the proof of Theorem \ref{th:alg-struc} that there is a continuous surjective homomorphism
$\psi_{m,n}:\Aut_{1}^\circ(X_n)\to\Aut_1^\circ(X_m)$ such that
\[
\psi_{m,n}(g).\f_{m,n}(x)=\f_{m,n}(g.x)
\]
for all $g\in\Aut_1^\circ(X_n)$ and $x\in X_n$.
We can extend $\psi_{m,n}$ to a homomorphism $G_n\to G_m$ by taking
$\psi_{m,n}(\a_n(h))=\a_m(h)$.
We leave it to the reader to verify that this extension satisfies the properties claimed in Theorem \ref{th:dyn-alg-struc}.

Finally, we note that the proof is valid in the slightly more general setting, when $X$ is an arbitrary
compact ergodic nilspace and $H$ acts on $X$ via a continuous homomorphism $H\to \Aut_1(X)$.
(The action has to be continuous and minimal, and $H$ has to contain a compactly generated dense subgroup.)
That is to say, the proof does not require that the cubespace structure on $X$ is defined using the dynamical
construction; it may be larger.

\appendix\section{}

The purpose of this appendix is to prove the following result stated already in \cite{GMV2}*{Theorem 1.1}.

\begin{thm}\label{th:top-conditions}
Let $X$ be a compact, ergodic nilspace of degree $s$.
Suppose $X$ is locally connected and has finite Lebesgue covering
dimension, and that $C^n(X)$ is connected for all $n$.

Then $X$ is isomorphic to a nilmanifold $G/\Gamma$.
That is, there exists a filtered connected Lie group $G_\bullet$,
a discrete co-compact subgroup $\Gamma\subseteq G$,
and a homeomorphism $\f:X\leftrightarrow G/\Gamma$ that
identifies the cubes $C^k(X)$ with the Host--Kra cubes
$\HK^k(G_\bullet)/\Gamma$.
\end{thm}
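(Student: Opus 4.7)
The plan is to reduce to Theorem \ref{th:toral} by showing that $X$---which is strongly connected by hypothesis---is also Lie-fibered; that is, every structure group $A_i(X)$ is a Lie group. I would argue by induction on the degree $s$, with the base case $s=0$ trivial since an ergodic nilspace of degree $0$ is a single point.

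For the inductive step, I would focus on proving that the top structure group $A:=A_s(X)$ is a Lie group; the lower $A_i(X)$ will then follow by passing to $\pi_{s-1}(X)=X/A$, which inherits the hypotheses from $X$. Indeed, Gleason's theorem (Theorem \ref{thm:Gleason}) provides local sections for the principal $A$-bundle $X\to X/A$: each point of $\pi_{s-1}(X)$ has a neighbourhood $V$ admitting a continuous $\s\colon V\to X$ with $\pi_{s-1}\circ\s=\Id_V$, yielding a homeomorphism $\pi_{s-1}^{-1}(V)\cong V\times A$ compatible with the $A$-action. This shows $\pi_{s-1}(X)$ embeds locally as a slice in $X$, hence inherits local connectedness and $\dim\pi_{s-1}(X)\le\dim X<\infty$; strong connectivity descends because $C^k(\pi_{s-1}(X))=\pi_{s-1}(C^k(X))$ is the continuous image of a connected set.

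To show that $A$ is Lie, the key observation is that $A$ itself is locally connected. The local trivialisation $\pi_{s-1}^{-1}(V)\cong V\times A$ identifies an open subset of $X$ with a product, and since $X$ is locally connected so is this product; local connectedness of a finite product implies local connectedness of each factor, so $A$ is locally connected. A locally connected compact topological group has open identity component, so $A/A^\circ$ is finite, while $A^\circ$ is a compact connected abelian locally connected group and hence a torus (a standard consequence of Pontryagin duality). Consequently $A$ is a compact abelian Lie group. Applying the induction hypothesis to $\pi_{s-1}(X)$ we conclude that $X$ is Lie-fibered; Theorem \ref{th:toral} then yields an isomorphism of cubespaces $X\cong\HK(G_\bullet)/\Gamma$, and since every structure group of $X$ is a torus, the remarks following Theorem \ref{th:toral} allow $G$ (and each $G_i$) to be taken connected.

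The main obstacle I anticipate is the classical assertion that a compact connected abelian locally connected group is a torus. This is standard but not entirely trivial via Pontryagin duality (a non-stabilising inverse system of tori produces characters of arbitrarily small order, which obstructs local connectedness). An alternative route that sidesteps this step is to invoke the inverse limit theorem (Theorem \ref{th:invlim}) to realise $X=\invlim X_n$ with each $X_n$ Lie-fibered, to observe that strong connectivity descends to each $X_n$ so that by Theorem \ref{th:toral} each $A_s(X_n)$ is a torus, and then to show that the local connectedness argument above forces the inverse system $A_s(X)=\invlim A_s(X_n)$ to stabilise, giving the same conclusion.
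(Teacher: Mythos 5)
Your approach is genuinely different from the paper's: you aim to show directly that $X$ is Lie-fibered (i.e.\ that each structure group $A_i(X)$ is a compact Lie group) and then invoke Theorem~\ref{th:toral}, whereas the paper first invokes Theorem~\ref{th:alg-struc} to realise $X$ as an inverse limit of nilmanifolds $G^{(n)}/\Gamma^{(n)}$ and then uses the topological hypotheses to show that the universal covers $\wt G^{(n)}$ have bounded dimension (from finite Lebesgue covering dimension) and that the lattices $\wt\Gamma^{(n)}$ stabilise (from local connectedness). Your route, if correct, would be shorter. However, I see two gaps.

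First, and most seriously, you invoke Gleason's theorem (Theorem~\ref{thm:Gleason}) to produce the local trivialisation $\pi_{s-1}^{-1}(V)\cong V\times A$, but that theorem as stated in the paper requires the acting group $A$ to be a compact \emph{Lie} group, and that is precisely what you are trying to prove. Gleason's theorem does not extend to free actions of arbitrary compact groups without further hypotheses, so as written the argument is circular. The paper only ever applies Gleason's theorem (in the proof of Proposition~\ref{prp:section-exist}) after it has already arranged for the relevant structure group to be Lie; you would need either a non-Lie substitute for Gleason's theorem or a different mechanism to transfer local connectedness from $X$ down to $A$, and it is not clear that one exists.

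Second, the assertion ``a compact connected abelian locally connected group is a torus'' is false: the infinite product $\prod_{n\in\N} S^1$ is compact, connected, abelian and locally connected (an arbitrary product of connected locally connected spaces is locally connected, since basic open sets can be chosen as products of connected open sets which equal the full factor in all but finitely many coordinates), yet it is not a Lie group. The correct statement needs finite-dimensionality: a compact connected abelian locally connected group of finite dimension is a torus. Happily this is available in your setting, since the free $A$-orbit $A.x_0\subseteq X$ is a homeomorphic copy of $A$, so $\dim A\le\dim X<\infty$ by monotonicity of covering dimension for separable metric spaces---but you do not actually invoke this, and your heuristic in the penultimate paragraph (``characters of arbitrarily small order'') does not apply, since the dual of a connected group is torsion-free. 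This second gap is easy to fill; the first is the real obstruction, and the paper's inverse-limit route is designed precisely to avoid having to apply Gleason's theorem before the group is known to be Lie.

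Your sketched ``alternative route'' at the end is essentially a compressed version of the paper's proof, but as you describe it it still relies on ``the local connectedness argument above'' for $A$, so it inherits the same circularity unless that step is replaced by the dimension-plus-local-connectedness stabilisation argument that the paper carries out for the groups $\wt G^{(n)}$ and lattices $\wt\Gamma^{(n)}$.
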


The structure theorems of this paper apply to the nilspace $X$.
In particular, by virtue of Theorem \ref{th:alg-struc} we can represent $X$ as an inverse limit of
Host--Kra nilmanifolds.
We only need to show that the sequence of approximating nilmanifolds stabilizes under the
topological hypotheses of Theorem \ref{th:top-conditions}.

We recall the notation from Theorem \ref{th:alg-struc}.
We have that $X$ is the inverse limit of Host--Kra nilmanifolds $X_n$ via an inverse system of
fibrations $\f_{m,n}:X_n\to X_m$.

For each $n$, the group $G^{(n)}=\Aut_1^{\circ}(X_n)$ is a connected nilpotent Lie group equipped with
the filtration $G^{(n)}_\bullet=\Aut_\bullet^{\circ}(X_n)$ of degree at most $s$.
Each $G^{(n)}$ contains a discrete co-compact subgroup $\Gamma^{(n)}$ that is compatible with the filtartion.
The cubespace structure on $X_n=G^{(n)}/\Gamma^{(n)}$ is the Host--Kra structure
arising from the filtration $G^{(n)}_\bullet$ and the discrete co-compact subgroup $\Gamma^{(n)}$ as defined in
Section \ref{sc:HK}.

For each $n\ge m$ there is a  (smooth) surjective homomorphism $\f'_{m,n}:G^{(n)}\to G^{(m)}$
such that $\f'_{m,n}(\Gamma^{(n)})\subseteq \Gamma^{(m)}$ and
$\f_{m,n}(g\cdot\Gamma^{(n)})=\f'_{m,n}(g)\cdot\Gamma^{(m)}$.

We aim to prove that the map $\f_{m,n}:X_n\to X_m$ is trivial (i.e. bijective) if $m$ and $n$ are large enough.
This proves that $X$ is isomorphic to $X_n$ for $n$ large, thereby proving Theorem \ref{th:top-conditions}.

We begin by recalling some facts about the universal covers $\wt G^{(n)}$ of the Lie groups $G^{(n)}$
in Section \ref{sc:simply-connected}.
Then we show  in Section \ref{sc:dimension} that the assumption that $X$
has finite Lebesgue covering dimension implies that the dimension
of $\wt G^{(n)}$ must be bounded.
This  implies that the sequence of Lie groups $\wt G^{(n)}$ stabilizes.
Finally, we show in Section \ref{sc:loc-connected} that the sequence $\wt \Gamma^{(n)}$ (the preimage
of $\Gamma^{(n)}$ in $\wt G^{(n)}$) also stabilizes.

\subsection{The universal cover of \texorpdfstring{$G^{(n)}$}{G(n)}}\label{sc:simply-connected}

We refer to \cite{Warner} as a general reference on Lie groups and differential geometry.
We recall from \cite{Warner}*{Theorem 3.25} that the universal cover $\wt G^{(n)}$ of $G^{(n)}$
can be endowed with a unique Lie group structure such that the covering map $\tau:\wt G^{(n)}\to G^{(n)}$
is a homomorphism.

We write $\wt \Gamma^{(n)}=\tau^{-1}(\Gamma^{(n)})$, which is a discrete co-compact subgroup and
note that the map $g\cdot\wt\Gamma^{(n)}\mapsto \tau(g)\cdot\Gamma^{(n)}$ identifies the quotients
$\wt G^{(n)}/\wt \Gamma^{(n)}$ and $G^{(n)}/\Gamma^{(n)}$.

One could prove a version of Theorem \ref{th:alg-struc} with $\wt G^{(n)}$ in place of $G^{(n)}$, but we only need the
following standard fact.

\begin{lem}\label{lm:simply-connected}
For each $n\ge m$ there is a surjective homomorphism $\wt\f_{m,n}:\wt G^{(n)}\to\wt G^{(m)}$ such that
$\tau\circ\wt\f_{m,n}=\f_{m,n}'\circ\tau$.
\end{lem}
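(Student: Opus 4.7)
The plan is to deduce this from two standard facts of Lie theory: that homomorphisms from simply connected Lie groups are determined by their differentials, and that every Lie algebra homomorphism integrates uniquely out of a simply connected Lie group (see e.g.\ \cite{Warner}*{Theorem 3.27}).

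First I would pass to Lie algebras. The covering map $\tau \colon \wt G^{(n)} \to G^{(n)}$ is a local isomorphism, so the Lie algebras $\wt\fg^{(n)}$ and $\fg^{(n)}$ are canonically identified via $d\tau$. The surjective Lie group homomorphism $\f'_{m,n} \colon G^{(n)} \to G^{(m)}$ induces a Lie algebra homomorphism $d\f'_{m,n} \colon \fg^{(n)} \to \fg^{(m)}$, which we may regard as a map $\wt\fg^{(n)} \to \wt\fg^{(m)}$ via these identifications. Since $\wt G^{(n)}$ is simply connected, this Lie algebra homomorphism integrates to a unique Lie group homomorphism $\wt\f_{m,n} \colon \wt G^{(n)} \to \wt G^{(m)}$ with differential equal to $d\f'_{m,n}$.

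Next I would verify the commutativity relation $\tau \circ \wt\f_{m,n} = \f'_{m,n} \circ \tau$. Both sides are Lie group homomorphisms $\wt G^{(n)} \to G^{(m)}$, and computing differentials at the identity shows that both induce the map $d\f'_{m,n}$ under the canonical identification of Lie algebras. Since $\wt G^{(n)}$ is connected, two Lie group homomorphisms from it that agree on Lie algebras must agree, which yields the desired commutativity.

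Finally, for surjectivity: the image $\wt\f_{m,n}(\wt G^{(n)})$ is a connected subgroup of $\wt G^{(m)}$, and by the commutative diagram its image under $\tau$ is $\f'_{m,n}(G^{(n)}) = G^{(m)}$ (using surjectivity of $\f'_{m,n}$). Since $d\wt\f_{m,n} = d\f'_{m,n}$ is surjective, the image contains a neighborhood of the identity in $\wt G^{(m)}$, hence is open; as $\wt G^{(m)}$ is connected, any open subgroup is the whole group. There is no serious obstacle here --- the statement is essentially bookkeeping from standard Lie theory, and the only point requiring attention is keeping the two roles of $\tau$ (on $\wt G^{(n)}$ and on $\wt G^{(m)}$) notationally distinct.
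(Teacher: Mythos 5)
Your argument is essentially identical to the paper's: both integrate $d\f'_{m,n}$ (viewed as a map on Lie algebras via the canonical identification $d\tau = \mathrm{Id}$) to a homomorphism out of the simply connected $\wt G^{(n)}$ using \cite{Warner}*{Theorem 3.27}, verify the commutativity relation by comparing differentials and appealing to connectedness (the paper cites \cite{Warner}*{Theorem 3.16}), and deduce surjectivity from the surjectivity of the differential plus connectedness of $\wt G^{(m)}$. The only cosmetic difference is your parenthetical remark about $\tau$-images in the surjectivity step, which is unnecessary since the open-subgroup argument you then give (and the paper gives) already suffices.
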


We note that $\wt\f_{m,n}(\wt\Gamma^{(n)})\subseteq\wt\Gamma^{(m)}$, hence $\wt\f_{m,n}$
induces a map $\wt G^{(n)}/\wt\Gamma^{(n)}\to\wt G^{(m)}/\wt\Gamma^{(m)}$.
It is easy to see that this map coincides with $\f_{m,n}$.

We include a proof for the reader's convenience.

\begin{proof}
We denote by $\fg^{(n)}$ the Lie algebra of $G^{(n)}$.
This can be identified by the Lie algebra of $\wt G^{(n)}$ so that
$d\tau:\fg^{(n)}\to\fg^{(n)}=\Id$.

By \cite{Warner}*{Theorem 3.14}, the differential $d\f_{m,n}':\fg^{(n)}\to\fg^{(m)}$ of the homomorphism
$\f_{m,n}'$ is a Lie algebra homomorphism.
By \cite{Warner}*{Theorem 3.27}, there is a Lie group homomorphism $\wt \f_{m,n}:\wt G^{(n)}\to \wt G^{(m)}$
such that $d\wt\f_{m,n}=d\f_{m,n}'$.

We observe that
\[
d(\f_{m,n}'\circ \tau)=d\f_{m,n}'\circ \Id =\Id\circ d\wt\f_{m,n}= d(\tau\circ\wt\f_{m,n}).
\]
By \cite{Warner}*{Theorem 3.16} this implies that $\tau\circ\wt\f_{m,n}=\f_{m,n}'\circ\tau$.

Since $\f_{m,n}'$ is surjective, $d\f_{m,n}$ is also surjective, which implies that
$\wt\f_{m,n}$ is open.
In addition,  $\wt G^{(m)}$ is connected, hence $\wt\f_{m,n}$ is surjective.
\end{proof}

\subsection{\texorpdfstring{$G^{(n)}$}{G(n)} stabilizes}\label{sc:dimension}

The purpose of this section is to show the following.

\begin{prp}\label{pr:bdd-dim}
For each $n$, $\dim\wt G^{(n)}$ is at most the Lebesgue
covering dimension of $X$.
\end{prp}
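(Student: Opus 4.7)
The plan is to prove $\dim X \ge \dim X_n = \dim \wt G^{(n)}$ by exhibiting a nonempty open subset of the nilmanifold $X_n$ as a topological subspace of $X$, and then invoking monotonicity of Lebesgue covering dimension on subspaces of the compact metric space $X$. Equivalently, it suffices to produce a continuous local section $\sigma \colon U \to X$ of $\f_{n,\infty} \colon X \to X_n$ over some nonempty open $U \subseteq X_n$: for then $\f_{n,\infty}$ restricted to $\sigma(U)$ is a continuous inverse to $\sigma$, so $\sigma(U) \subseteq X$ is homeomorphic to $U$, which in turn is an open subset of the $\dim \wt G^{(n)}$-dimensional manifold $X_n$ and hence has Lebesgue covering dimension equal to $\dim \wt G^{(n)}$.

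Since $X = \invlim_{m \ge n} X_m$, such a $\sigma$ is determined by a compatible family of continuous maps $\sigma_m \colon U \to X_m$ satisfying $\f_{n,m} \circ \sigma_m = \Id_U$ and $\f_{m,m'} \circ \sigma_{m'} = \sigma_m$ for all $m' \ge m \ge n$. I construct these by passing to the universal covers. By Lemma \ref{lm:simply-connected}, each $\wt\psi_{m,n} \colon \wt G^{(m)} \to \wt G^{(n)}$ is a surjective homomorphism between connected simply connected nilpotent Lie groups, so its kernel $K_{m,n}$ is a closed Lie subgroup and $\wt\psi_{m,n}$ is a principal $K_{m,n}$-bundle. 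Because $\wt G^{(n)}$ is diffeomorphic to $\R^{\dim \wt G^{(n)}}$ under the exponential map, it is contractible; since every principal bundle over a contractible paracompact base is trivial, $\wt\psi_{m,n}$ admits a smooth global section.

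To obtain a compatible system, I build sections $s_m \colon \wt G^{(n)} \to \wt G^{(m)}$ with $\wt\psi_{m,m'} \circ s_{m'} = s_m$ by induction on $m$, beginning with $s_n = \Id$. Given $s_m$, the lifts of $s_m$ through $\wt\psi_{m,m+1}$ correspond bijectively to sections of the pullback bundle $s_m^* \wt G^{(m+1)}$, which is again a principal $K_{m,m+1}$-bundle over the contractible space $\wt G^{(n)}$; it is therefore trivial and admits a continuous (in fact smooth) section, yielding $s_{m+1}$. To descend to $X$, note that the covering projections $\tau_m \colon \wt G^{(m)} \to X_m$ are local homeomorphisms, as the $\wt\Gamma^{(m)}$ are discrete. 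Fix a point of $\wt G^{(n)}$ and pass to a neighborhood $V$ on which $\tau_n|_V$ is injective; setting $U = \tau_n(V)$ and $\iota = (\tau_n|_V)^{-1}$, the assignment $\sigma_m := \tau_m \circ s_m \circ \iota$ produces the required family, with compatibility immediate from the identity $\f_{m,m'} \circ \tau_{m'} = \tau_m \circ \wt\psi_{m,m'}$ noted after Lemma \ref{lm:simply-connected}.

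The only genuinely technical step is the inductive lifting of sections through the tower of Lie group quotients, and contractibility of $\wt G^{(n)}$ removes any obstruction at every stage; the remainder is a diagram chase together with the standard fact that covering dimension is monotone on subspaces of a metric space.
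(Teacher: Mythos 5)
Your proposal is correct and follows the same overall strategy as the paper: work in the universal covers, construct compatible sections of the maps $\wt\f_{m,m+1}\colon\wt G^{(m+1)}\to\wt G^{(m)}$, pass to a neighbourhood of the identity on which $\tau_n$ is injective to get a topological embedding of an open subset of $X_n$ into $X$, and conclude by monotonicity of covering dimension. The only genuine difference is how the sections are produced. The paper's Lemma \ref{lm:section-nilpotent} constructs $\psi_m\colon\wt G^{(m)}\to\wt G^{(m+1)}$ explicitly as $\exp\circ h\circ\exp^{-1}$, where $h$ is a linear right-inverse of the surjection $d\wt\f_{m,m+1}\colon\fg^{(m+1)}\to\fg^{(m)}$; this exploits that $\exp$ is a global diffeomorphism for connected simply connected nilpotent Lie groups and reduces everything to linear algebra. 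You instead observe that $\wt G^{(n)}$ is contractible (again via $\exp$), that $\wt\f_{m,m+1}$ is a principal $K_{m,m+1}$-bundle, and invoke triviality of principal bundles over contractible paracompact bases to lift $s_m$ to $s_{m+1}$. Both are valid; the paper's construction is more elementary in that it avoids the covering homotopy theorem and the classification of bundles, and it gives the sections as closed-form maps, whereas your argument is more abstract and would transfer to settings where only contractibility of the covers (rather than a global exponential chart) is available. Two small cosmetic points: the indexing on your $\wt\psi_{m,n}$ is inconsistent with the direction of the maps in your first sentence versus the rest of the paragraph, and at the very end you use a continuous-inverse argument to see that $\sigma$ is an embedding while the paper instead restricts to the compact closure $\overline{U}$ and uses compactness -- both work equally well.
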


We need two facts from dimension theory.
First, the Lebesgue covering dimension of an open subset of a manifold equals its ordinary dimension
(see \cite{E78}*{Theorem 1.8.2}).
Second, the Lebesgue covering dimension is monotone with respect to inclusion for separable metric spaces.
This property is proved in \cite{E78}*{Theorem 1.1.2} for the small inductive dimension, but in the
setting of separable metric spaces the two notions of dimension coincide (see \cite{E78}*{Theorem 1.7.7}).

Our objective is to show that for each $n$, a sufficiently small open neighbourhood of identity in
$\wt G^{(n)}$ can be embedded in $X$.
This proves our claim in light of the above remarks.

We begin with the following lemma.

\begin{lem}\label{lm:section-nilpotent}
For each $n$, there is an injective continuous map $\psi_n:\wt G^{(n)}\to\wt G^{(n+1)}$ such that
$\f_{n,n+1}\circ \psi_n=\Id$.
\end{lem}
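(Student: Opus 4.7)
The plan is to exploit the fact that $\wt G^{(n)}$ and $\wt G^{(n+1)}$ are simply connected nilpotent Lie groups, for which the exponential map is a global diffeomorphism onto the group; the desired section $\psi_n$ will then be obtained by transporting a linear section of the differential $d\wt\f_{n,n+1}$ through the exponential maps. (I interpret the $\f_{n,n+1}$ in the statement as $\wt\f_{n,n+1}$, since this is the only interpretation that makes the domains and codomains match.)

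The first step is to produce a linear section at the level of Lie algebras. By Lemma \ref{lm:simply-connected} the Lie group homomorphism $\wt\f_{n,n+1}:\wt G^{(n+1)}\to\wt G^{(n)}$ is surjective, so its differential at the identity
\[
d\wt\f_{n,n+1} : \wt\fg^{(n+1)}\to\wt\fg^{(n)}
\]
is a surjective linear map between finite-dimensional vector spaces. Choose any linear (hence continuous) section $\sigma:\wt\fg^{(n)}\to\wt\fg^{(n+1)}$ with $d\wt\f_{n,n+1}\circ\sigma=\Id$; note that $\sigma$ need not respect the Lie bracket, which is why $\psi_n$ will not be a homomorphism, but this is irrelevant for the statement.

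Now I would invoke the standard fact that for a simply connected nilpotent Lie group, the exponential map is a diffeomorphism from the Lie algebra onto the group. Write $\exp_n:\wt\fg^{(n)}\to\wt G^{(n)}$, with inverse $\log_n$, and similarly for $n+1$, and set
\[
\psi_n := \exp_{n+1}\circ\,\sigma\circ\log_n.
\]
This is a composition of continuous maps, and is injective because $\sigma$, $\exp_{n+1}$, and $\log_n$ all are. Using the naturality identity $\wt\f_{n,n+1}\circ\exp_{n+1}=\exp_n\circ\,d\wt\f_{n,n+1}$ (valid for any Lie group homomorphism), one computes
\[
\wt\f_{n,n+1}(\psi_n(g)) = \exp_n\bigl(d\wt\f_{n,n+1}(\sigma(\log_n g))\bigr) = \exp_n(\log_n g)=g,
\]
so that $\wt\f_{n,n+1}\circ\psi_n=\Id$ as required.

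There is no real obstacle in this argument: all the analytic content is packaged into the assertion that $\exp$ is a global diffeomorphism for simply connected nilpotent Lie groups, which is a classical structural fact about such groups and is the only non-formal ingredient. The remainder is linear algebra (choosing $\sigma$) and the chain rule / naturality of $\exp$ under Lie group homomorphisms.
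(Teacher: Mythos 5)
Your proof is correct and is essentially identical to the paper's: both choose a linear section of the surjective differential $d\wt\f_{n,n+1}$ and conjugate it by the (global) exponential diffeomorphisms, using the naturality of $\exp$ under homomorphisms to conclude. You also correctly noted the notational slip in the statement ($\f_{n,n+1}$ should be read as $\wt\f_{n,n+1}$), which the paper itself tacitly makes in its proof.
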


We denote by $\exp:\fg^{(n)}\to \wt G^{(n)}$ the exponential map and note that it is a homeomorphism, since
$\wt G^{(n)}$ is a connected simply connected nilpotent Lie group (see \cite{Knapp-beyond}*{Theorem 1.127}).

\begin{proof}
Since $d\wt\f_{n,n+1}:\fg^{(n+1)}\to \fg^{(n)}$ is a surjective linear map, there is an injective continuous map
$h: \fg^{(n)}\to \fg^{(n+1)}$ such that $d\wt\f_{n,n+1}\circ h=\Id$.
We define $\psi_n=\exp\circ h \circ \exp^{-1}$.

By \cite{Warner}*{Theorem 3.32}, we have
\[
\f_{n,n+1}=\exp\circ d\wt\f_{n,n+1}\circ \exp^{-1}.
\]
Thus
\[
\f_{n,n+1}\circ \psi_n=\exp\circ d\wt\f_{n,n+1}\circ \exp^{-1}\circ\exp \circ h \circ\exp^{-1}=\Id.
\]
\end{proof}

\begin{lem}\label{lm:dfn-f}
For each $n$ and $g\in \wt G^{(n)}$ there is a unique point $x\in X$ such that
\begin{equation}\label{eq:dfn-f}
\f_{m,\infty}(x)=\psi_{m-1}\circ\ldots\circ\psi_n(g)\cdot\wt\Gamma^{(m)}
\end{equation}
for all $m>n$.
\end{lem}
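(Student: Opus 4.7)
The plan is to realize $x$ as the point of the inverse limit $X=\varprojlim X_m$ corresponding to the family $y_m:=\psi_{m-1}\circ\cdots\circ\psi_n(g)\cdot\wt\Gamma^{(m)}$ for $m>n$. The first step is to verify that $(y_m)_{m>n}$ is compatible with the inverse system, i.e.~that $\f_{k,m}(y_m)=y_k$ for $n<k<m$. Recall from Lemma \ref{lm:simply-connected} that $\f_{k,m}$ is induced on quotients by a homomorphism $\wt\f_{k,m}:\wt G^{(m)}\to\wt G^{(k)}$, and by uniqueness of the universal-cover lift (two such lifts differ by a continuous map into the discrete kernel $\ker\tau$, which must be constantly $e$ since $\wt G^{(m)}$ is connected) one obtains the factorization $\wt\f_{k,m}=\wt\f_{k,k+1}\circ\cdots\circ\wt\f_{m-1,m}$. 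Applying this to $\psi_{m-1}\circ\cdots\circ\psi_n(g)$ and peeling off one factor at a time via the identity $\wt\f_{j,j+1}\circ\psi_j=\Id$ of Lemma \ref{lm:section-nilpotent}, the composition telescopes to $\psi_{k-1}\circ\cdots\circ\psi_n(g)$, which projects modulo $\wt\Gamma^{(k)}$ to $y_k$ as required.

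With compatibility in hand, existence and uniqueness of $x$ follow formally. Uniqueness is immediate from the separating property built into the definition of an inverse system recalled in Section \ref{sc:RP}: two points of $X$ agreeing under every $\f_{m,\infty}$ must coincide. For existence, the closed sets $K_m:=\f_{m,\infty}^{-1}(y_m)\subseteq X$ are nonempty (each $\f_{m,\infty}$ is surjective as a projection from a compact inverse limit with surjective bonding maps) and nested ($K_{m'}\subseteq K_m$ for $m'>m$, by the just-verified compatibility); compactness of $X$ yields $\bigcap_{m>n}K_m\neq\emptyset$, and any point of this intersection satisfies \eqref{eq:dfn-f}. There is no substantive obstacle in the argument—the lemma is essentially a matter of combining the section property of the $\psi_n$ with the standard inverse-limit formalism—though the factorization of $\wt\f_{k,m}$ into elementary factors, which powers the telescoping computation, is the one step worth recording explicitly as above.
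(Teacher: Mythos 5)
Your proof is correct and follows the same route as the paper: verify that the family $y_m=\psi_{m-1}\circ\cdots\circ\psi_n(g)\cdot\wt\Gamma^{(m)}$ is compatible under the bonding maps $\f_{k,m}$ (which the paper declares ``immediate'' and you make explicit via the telescoping cancellation $\wt\f_{j,j+1}\circ\psi_j=\Id$), then extract the unique $x\in X$ from the inverse-limit structure. Your argument is, if anything, slightly more careful than the paper's, since the separating-family definition of inverse limit recalled in Section \ref{sc:RP} gives uniqueness but not existence outright, and your nested-compact-sets argument closes that gap.
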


\begin{proof}
By the definition of  inverse limits we only need to verify that for all $m>k\ge n$ we have
\[
\f_{k,m}\Big(\psi_{m-1}\circ\ldots\circ\psi_n(g)\cdot\wt\Gamma^{(m)}\Big)
=\psi_{k-1}\circ\ldots\circ\psi_n(g)\cdot\wt\Gamma^{(k)},
\]
which is immediate from the definition of the maps $\psi_i$.
\end{proof}

\begin{proof}[Proof of Proposition \ref{pr:bdd-dim}]
We fix $n$ and choose a sufficiently small open neighborhood $U$ of the identity in $\wt G^{(n)}$
such that $\overline U\to \wt G^{(n)}/\wt\Gamma^{(n)}$ is injective.
We define the map $f: \overline U\to X$ such that $f(g)$ is the unique point $x\in X$ that satisfies
\eqref{eq:dfn-f}.
It follows easily from the definition of the inverse limit topology that $f$ is continuous.
Moreover, $f$ is injective, because $\f_{n,\infty}\circ f$ is injective.
Since $\overline U$ is compact, $f$ embeds $U$ in $X$, which proves our claim.
\end{proof}

\subsection{\texorpdfstring{$\Gamma^{(n)}$}{Gamma(n)} stabilizes}\label{sc:loc-connected}

In the previous section we proved that $\dim \wt G^{(n)}$  is bounded by the Lebesgue covering dimension
of $X$.
Removing an initial segment of the sequence, we can assume that $\dim \wt G^{(n)}$ is constant.
Then $d\wt \f_{m,n}:\fg^{(n)}\to\fg^{(m)}$ is invertible, hence $\wt \f_{m,n}$ is an isomorphism.
We can thus replace all $\wt G^{(n)}$ by isomorphic copies and assume that $\wt G=\wt G^{(n)}=\wt G^{(m)}$
and $\wt\f_{m,n}=\Id$ for all $m,n$.

Our objective in this section is to show that the sequence $\wt\Gamma^{(n)}$ also stabilizes.

\begin{prp}\label{pr:Gamma-stab}
If $m$ and $n$ are sufficiently large then $\wt\Gamma^{(n)}=\wt\Gamma^{(m)}$.
\end{prp}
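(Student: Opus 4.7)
The plan is to show that failure of the sequence $\wt\Gamma^{(n)}$ to stabilize would contradict the local connectedness of $X$. After the normalizations preceding the proposition, we have $\wt\f_{m,n}=\Id$, so the inclusions $\wt\f_{m,n}(\wt\Gamma^{(n)})\subseteq\wt\Gamma^{(m)}$ become $\wt\Gamma^{(n)}\subseteq\wt\Gamma^{(m)}$ for $n\ge m$; i.e.\ the $\wt\Gamma^{(n)}$ form a nested \emph{decreasing} chain of discrete cocompact subgroups of $\wt G$. A Haar covolume computation gives $[\wt\Gamma^{(m)}:\wt\Gamma^{(n)}]<\infty$ for all $n\ge m$, so all the relevant quotients are finite.

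Next I would establish a local product description of $X$. Fix $n$ and choose a connected open neighborhood $\wt V\subseteq \wt G$ of the identity so small that $\wt V^{-1}\wt V\cap\wt\Gamma^{(n)}=\{e\}$; set $V=\pi_n(\wt V)\subseteq X_n$, which is connected open. Since $\wt\Gamma^{(m)}\subseteq\wt\Gamma^{(n)}$ for $m\ge n$, the analogous condition holds with $\wt\Gamma^{(m)}$ in place of $\wt\Gamma^{(n)}$, so $V$ is evenly covered by every $\f_{m,n}$ and direct unwinding gives
\[
  \f_{m,n}^{-1}(V)\cong V\times \wt\Gamma^{(n)}/\wt\Gamma^{(m)}
\]
(discrete second factor), compatibly with the transition maps. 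Passing to the inverse limit yields a homeomorphism $\f_{n,\infty}^{-1}(V)\cong V\times F$, where
\[
  F:=\invlim_{m\ge n}\wt\Gamma^{(n)}/\wt\Gamma^{(m)}
\]
is a compact totally disconnected (profinite) space, and $\f_{n,\infty}^{-1}(V)$ is viewed as an open subset of $X$ with its subspace topology.

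Now I use local connectedness. Since $V$ is connected and $F$ is totally disconnected, the connected components of any basic open $U\times N\subseteq V\times F$ (with $U$ connected open in $V$ and $N$ clopen in $F$) are the slices $U\times\{f\}$ for $f\in N$. For a point $(v,f)$ to admit a connected open neighborhood inside $V\times F$, one of these slices $U\times\{f\}$ must itself be open, which forces $\{f\}$ to be open in $F$; that is, $f$ must be isolated. Applying this to every point in the fiber and using that $X$ (hence $V\times F$) is locally connected, we conclude that $F$ is discrete; being compact, it is therefore finite.

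Finally, $F$ being finite is equivalent to the inverse system $\wt\Gamma^{(n)}/\wt\Gamma^{(m)}$ stabilizing as $m\to\infty$, which says that $\wt\Gamma^{(m)}$ is eventually constant in $m$; this is precisely the content of the proposition. The main technical point requiring care is verifying the local product identification $\f_{n,\infty}^{-1}(V)\cong V\times F$ as a \emph{homeomorphism} with respect to the inverse limit topology on $X$, and not merely a set-theoretic bijection; once that is in place the local-connectedness argument above is essentially formal.
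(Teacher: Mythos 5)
Your proof is correct, and it reaches the conclusion via a route that, while using the same underlying ingredients (the nested covering structure of the $X_m$ and local connectedness of $X$), is organized quite differently from the paper's. You build the full local picture: over a connected evenly-covered open $V\subseteq X_n$, the inverse limit $\f_{n,\infty}^{-1}(V)$ is a product $V\times F$ with $F=\invlim\wt\Gamma^{(n)}/\wt\Gamma^{(m)}$ a profinite fiber, and then observe that local connectedness of $X$ forces $F$ to be discrete hence finite; the algebra (surjective maps, non-decreasing cardinalities) then gives stabilization. The paper works by contradiction and never assembles the inverse-limit fiber: it takes a connected open set $D\subseteq\f_{1,\infty}^{-1}(U\cdot\wt\Gamma^{(1)})$, shrinks to a basic open $\f_{n,\infty}^{-1}(V)\subseteq D$, and notes that connectedness of $\f_{m,\infty}(D)$ forces it into a single sheet $U\cdot\g_i\cdot\wt\Gamma^{(m)}$ of the finite covering $X_m\to X_1$; but if some $\g\in\wt\Gamma^{(n)}\setminus\wt\Gamma^{(m)}$ existed, then $g\cdot\wt\Gamma^{(m)}$ and $g\g\cdot\wt\Gamma^{(m)}$ would be two points of $\f_{m,\infty}(D)$ in different sheets, and the algebra with $U^{-1}U\cap\wt\Gamma^{(1)}=\{e\}$ derives a contradiction. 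Your version is more structural and arguably illuminating --- it isolates the clean topological fact that $V\times F$ locally connected (with $V$ connected, $F$ profinite) forces $F$ finite --- at the cost of having to verify the local product identification in the inverse-limit topology (which you correctly flag as the technical crux); the paper's version is more economical in that it never needs to pass to the inverse limit of fibers and instead extracts the contradiction at a single finite level $m$. One small remark: your final step should note explicitly that the transition maps $\wt\Gamma^{(n)}/\wt\Gamma^{(m)}\to\wt\Gamma^{(n)}/\wt\Gamma^{(m')}$ are surjective, so finiteness of $F$ bounds the (non-decreasing) cardinalities $|\wt\Gamma^{(n)}/\wt\Gamma^{(m)}|$, which is what actually yields the stabilization; "$F$ finite" alone is not equivalent to stabilization for an arbitrary inverse system. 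Also, watch the index convention --- the map $X_m\to X_n$ for $m\ge n$ is denoted $\f_{n,m}$ in the paper, so the displayed preimage should read $\f_{n,m}^{-1}(V)$.
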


\begin{proof}
Suppose to the contrary that the claim of the proposition fails.

Let $U$ be a sufficiently small open neighborhood of the identity in $\wt G$ such that
$ U^{-1}U\cap \Gamma^{(1)}=\{\Id\}$,
i.e. $U\to \wt G/\wt\Gamma^{(1)}$ is injective.

Let $D$ be an open connected subset of $\f_{1,\infty}^{-1}(U\cdot \wt\Gamma^{(1)})$, which exists by the
local connectedness of $X$.
Since $D$ is open, there is a number $n$ and an open subset $V\subseteq X_n$ such that
$\f_{n,\infty}^{-1}(V)\subseteq D$.

Let $m> n$ be such that $\wt\Gamma^{(m)}\subsetneqq\wt\Gamma^{(n)}$.
Let $\g_1,\ldots, \g_k$ be a system of representatives for left cosets of $\wt\Gamma^{(m)}$
in $\wt\Gamma^{(1)}$.
Then
\[
\f_{1,m}^{-1}(U\cdot \wt\Gamma^{(1)})=U\cdot\g_1\cdot\wt\Gamma^{(m)}\stackrel{\circ}{\cup}
\ldots\stackrel{\circ}{\cup}
U\cdot\g_k\cdot\wt\Gamma^{(m)}.
\]

Each of the sets $U\cdot\g_i\cdot\wt\Gamma^{(m)}$ is open and hence closed in
$\f_{1,m}^{-1}(U\cdot \wt\Gamma^{(1)})$.
Since $\f_{m,\infty}(D)$ is connected, it must be contained in one of the sets $U\cdot\g_i\cdot\wt\Gamma^{(m)}$.

Let $g\cdot \wt\Gamma^{(m)}\in \f_{n,m}^{-1}(V)$ be an arbitrary point
and let $\g\in \wt\Gamma^{(n)}\backslash\wt\Gamma^{(m)}$.
We note that
\[
\f_{n,m}(g\cdot \wt\Gamma^{(m)})=\f_{n,m}(g\cdot\g \cdot\wt\Gamma^{(m)})\in V,
\]
hence
\[
g\cdot \wt\Gamma^{(m)},\;g\cdot\g \cdot\wt\Gamma^{(m)}\in \f_{m,\infty}(D).
\]

Thus there is some $i$ such that
\[
g\cdot \wt\Gamma^{(m)},\;g\cdot\g \cdot\wt\Gamma^{(m)}\in U\cdot\g_i\cdot\wt\Gamma^{(m)}.
\]
Then there are elements $u_1,u_2\in U$ and $\b_1,\b_2\in\wt\Gamma^{(m)}$ such that $g=u_1\g_i\b_1$
and $g\g=u_2\g_i\b_2$.
Then
\[
u_1\g_i\b_1\g=u_2\g_i\b_2
\]
and
\[
u_2^{-1}u_1=\g_i\b_2\g^{-1}\b_1^{-1}\g_i^{-1}\in U^{-1}U\cap \wt\Gamma^{(1)}.
\]
Hence $\g_i\b_2\g^{-1}\b_1^{-1}\g_i^{-1}=\Id$.
This in turns gives $\b_1^{-1}\b_2=\g$, which contradicts $\g\notin \wt\Gamma^{(m)}$
and completes the proof.
\end{proof}

\subsection{A dynamical analogue}\label{sc:proof-dyn-alg-struc}
The purpose of this section is to prove Theorem \ref{th:Lie-dynamical}, which is analogous to
Theorem \ref{th:top-conditions} in the dynamical setting.

By Theorem \ref{th:dyn-alg-struc}, there is a sequence of Lie-fibred nilspaces
$\{X_n\}$ together with an inverse system of fibrations $\{\f_{m,n}:X_n\to X_m\}$ and continuous
homomorphisms $\a_n: H\to\Aut_1(X_n)$ such that
$(H,X)=\invlim (H,X_n)$, where $H$ acts on $X_n$ through $\a_n$.

We aim to show that $\f_{m,n}$ is a homeomorphism if $m,n$ are sufficiently large and then we can deduce
that $(H,X)$ is a nilsystem, e.g. from Theorem \ref{th:dyn-Lie2}.

We begin with an observation about connected components.

\begin{lem}\label{lm:components}
$X$ has finitely many connected components, which are both open and closed.
\end{lem}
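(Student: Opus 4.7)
The plan is to deduce this purely from point-set topology, using two hypotheses on $X$ that are in force in the statement of Theorem \ref{th:Lie-dynamical}: namely, that $X$ is compact and locally connected.

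First I would recall the standard topological fact that in any locally connected space, the connected components are open. Indeed, if $x$ lies in a component $C$, then by local connectedness $x$ has a connected open neighborhood $U$, and since $U$ is connected and meets $C$, we must have $U\subseteq C$; hence $C$ is a neighborhood of each of its points. Since connected components of any topological space are always closed (they equal the intersection of the clopen sets containing a given point, or more directly, the closure of a connected set is connected), the components of $X$ are therefore clopen.

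Next, I would use compactness. The connected components of $X$ form a partition of $X$ by disjoint clopen sets; in particular, they form a disjoint open cover of $X$. By compactness this cover admits a finite subcover, but since the components are pairwise disjoint the cover itself must already be finite. This gives the two conclusions simultaneously.

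There is no real obstacle here; the lemma is essentially a restatement of standard facts, and the only content is keeping track of which hypotheses on $X$ are being used (compactness and local connectedness). The finite Lebesgue covering dimension hypothesis plays no role in this particular lemma.
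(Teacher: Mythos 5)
Your proof is correct. It takes a slightly different route from the paper's. You first establish the standard textbook facts that in a locally connected space components are open, and that components are always closed (by ``closure of a connected set is connected''), giving clopenness directly; then compactness applied to the disjoint open cover by components gives finiteness. The paper instead starts from a finite cover of $X$ by connected open sets $U_1,\dots,U_n$, observes that every clopen subset of $X$ is a union of some of the $U_i$ (so there are at most $2^n$ clopen sets), and then argues that the component of $x$ equals the intersection of all clopen sets containing $x$, which is therefore a finite intersection, hence clopen; finiteness of the set of components follows from the bound on the number of clopen sets. The paper's argument thus implicitly uses (and re-derives in this special case) the identification of components with quasi-components, whereas your argument avoids that issue entirely by invoking the more elementary closure fact. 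Both are complete; yours is the more modular and arguably cleaner version. One small caution: your parenthetical claim that the component equals the intersection of clopen sets containing the point is false in general topological spaces (that is the quasi-component, which can be strictly larger); it does hold here because $X$ is compact Hausdorff, but since you immediately offer the correct ``more directly'' justification, nothing is lost.
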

\begin{proof}
Since $X$ is locally connected, every point of it has a connected open neighborhood.
Since $X$ is compact, we can cover it with finitely many connected open sets $U_1,\ldots,U_n$.

If $A\subseteq X$ is open and closed then either $U_i\subseteq A$ or $A\cap U_i=\varnothing$ for each $i$.
Hence every closed and open subset of $X$ is a union of some $U_i$.
There are at most $2^n$ such sets.
This implies that the connected component of a point $x$ in $X$ is the intersection of all closed and open subsets
that contain $x$, and this is a finite intersection.
The claim follows.
\end{proof}

It follows from the lemma that the number of connected components of $X_n$ is bounded.
We may assume therefore that the number of connected components is the same along the sequence.

We fix an arbitrary point $x_0\in X$ and denote by $X_n^\circ$ the connected component of $\f_{n,\infty}(x_0)$
in $X_n$.
\begin{lem}\label{lm:restr-homeo}
The map $\f_{m,n}|_{X_n^\circ}$ is a homeomorphism to $X_m^\circ$ if $m$ and $n$ are
large enough.
\end{lem}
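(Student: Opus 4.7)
The plan is to reduce the statement to the already-proven Theorem \ref{th:top-conditions}, applied to the setting of the connected component $X^\circ \subseteq X$ containing $x_0$ and its images $X_n^\circ$.

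First, I would show that $\f_{n,\infty}(X^\circ) = X_n^\circ$ for each $n$, so that the restricted maps $\f_{m,n}|_{X_n^\circ}$ form an inverse system whose limit is $X^\circ$. By Lemma \ref{lm:components} applied both to $X$ and each $X_n$, the connected components are open and closed and finite in number. Since $\f_{n,\infty}\colon X \to X_n$ is surjective (fibrations are), it induces a surjection of the set of components; after removing an initial segment we may assume these sets have constant cardinality, so the induced map on components is a bijection. As the continuous image $\f_{n,\infty}(X^\circ)$ is connected and contains $\f_{n,\infty}(x_0)$, it must equal $X_n^\circ$. This identifies $X^\circ$ with $\varprojlim X_n^\circ$.

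Second, I would verify that each $X_n^\circ$ is a homogeneous space of the identity component $G^{(n),\circ}$ of $G^{(n)}$. Picking $g_0^{(n)} \in G^{(n)}$ with $g_0^{(n)}\cdot\Gamma^{(n)} = \f_{n,\infty}(x_0)$, the connected component $X_n^\circ$ is precisely the orbit of this point under $G^{(n),\circ}$, so
\[
X_n^\circ \;\cong\; G^{(n),\circ}/\wt\Gamma^{(n),\circ}, \qquad \wt\Gamma^{(n),\circ} \,=\, G^{(n),\circ} \cap (g_0^{(n)})^{-1}\Gamma^{(n)}g_0^{(n)},
\]
with $G^{(n),\circ}$ a connected nilpotent Lie group and $\wt\Gamma^{(n),\circ}$ a discrete cocompact subgroup. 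Moreover, the homomorphism $\psi_{m,n}\colon G^{(n)}\to G^{(m)}$ restricts to a surjective homomorphism $G^{(n),\circ}\to G^{(m),\circ}$ inducing $\f_{m,n}|_{X_n^\circ}$ (after an appropriate translation).

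Third, I would run the arguments of Sections \ref{sc:simply-connected}--\ref{sc:loc-connected} on this restricted inverse system. Since $X^\circ$ is open and closed in $X$, it inherits local connectedness and finite Lebesgue covering dimension. The proof of Proposition \ref{pr:bdd-dim} — which uses only Lemma \ref{lm:section-nilpotent} and the fact that $\wt G^{(n)}$ are connected, simply-connected nilpotent Lie groups — then shows that $\dim \wt{G^{(n),\circ}}$ is bounded by the covering dimension of $X^\circ$. After a further initial segment is discarded we may assume this dimension is constant and (by Lemma \ref{lm:simply-connected}) identify all the universal covers so that $\wt\f_{m,n}=\Id$. Finally, the argument of Proposition \ref{pr:Gamma-stab}, which uses only local connectedness of the inverse limit to produce a connected open set $D$ and derive a contradiction from a strict inclusion of lattices, applies verbatim to conclude that $\wt\Gamma^{(n),\circ}$ stabilizes. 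Thus $\f_{m,n}|_{X_n^\circ}$ is a homeomorphism for $m,n$ large.

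The main obstacle is the bookkeeping in the second step: checking carefully that restricting to identity components genuinely reduces the problem to the pure-nilspace setting of Theorem \ref{th:top-conditions}, in particular that the chosen lifts $g_0^{(n)}$ can be made compatible under the $\psi_{m,n}$ so that the restricted maps between homogeneous spaces are induced by actual group homomorphisms. Once this compatibility is set up, however, the dimension and lattice-stabilization steps require no new ideas and can be quoted essentially from the preceding sections.
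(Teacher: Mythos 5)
Your proposal is correct and follows essentially the same route as the paper: restrict to the connected components, observe that each $X_n^\circ$ is a homogeneous space of the identity component $G_n^\circ$ of the acting group (the paper establishes this transitivity via a Baire category argument on the countably many cosets of $G_n^\circ$, whereas you describe the coset structure directly via a choice of basepoint lifts $g_0^{(n)}$, which amounts to the same thing since $G_n^\circ$ is an open subgroup), and then invoke the dimension-bound and lattice-stabilization arguments from the proof of Theorem \ref{th:top-conditions}. The bookkeeping you flag about compatible basepoints resolves as you expect, since one may simply set $x_0^{(n)}=\f_{n,\infty}(x_0)$ and use that $\f_{m,n}(g\cdot x_0^{(n)})=\psi_{m,n}(g)\cdot x_0^{(m)}$, so no extra translation is required.
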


We claim that this is enough for the proof of Theorem \ref{th:Lie-dynamical}.
We only need to show that $\f_{m,n}$ is injective for $m,n$ large enough.
We know from the lemma that the restriction of $\f_{m,n}$ to the connected components are injective, so we just
need to rule out that the image of two different connected components have a non-empty
intersection.
Since $\f_{m,n}$ maps connected components into connected components, $\f_{m,n}(X_n)$
has fewer connected components than $X_n$ if $\f_{m,n}$ is not injective.
This is not possible, because $X_n$ and $X_m$ have the same number of connected components
and $\f_{m,n}$ is onto.

\begin{proof}[Proof of Lemma \ref{lm:restr-homeo}]
We recall from Theorem \ref{th:dyn-alg-struc} that there are nilpotent Lie groups $G_n$
acting on $X_n$ and homomorphisms $\psi_{m,n}:G_n\to G_m$ such that
$\f_{m,n}(g.x)=\psi_{m,n}(g).\f_{m,n}(x)$ for all $g\in G_n$ and $x\in X_n$.

We denote by $G_n^\circ$ the connected component of the identity in $G_n$.
We note that $G_n^\circ$ acts transitively on $X_n^\circ$.
One way to see this is the following argument.
The Lie group $G_n$ has at most countably many connected components, hence the orbit of $\f_{n,\infty}(x)$
under one of the cosets of $G_n^\circ$ must have non-empty interior by the Baire category theorem.
This implies that the orbit of $\f_{n,\infty}(x)$ under $G_n^\circ$ is open.
Since it is also connected, it must equal the connected component $X_n^\circ$.

Now the proof can be completed by the same argument as in the proof of Theorem \ref{th:top-conditions}
explained in the previous sections.
\end{proof}


\bibliography{invlim2}
\bibliographystyle{abbrv}

\end{document}